\documentclass[oneside]{amsart}
\usepackage[utf8]{inputenc}
\usepackage[percent]{overpic}
\usepackage{amsmath}
\usepackage{amssymb}
\usepackage{tikz}
\usepackage{tikz-cd}
\usepackage{mathtools}
\usepackage{stmaryrd}
\usepackage{amsthm}
\usepackage{array} 
\usepackage[english]{babel}
\usepackage{enumitem}
\usepackage{lmodern}
\usepackage{xfrac}  
\usepackage{comment}

\usepackage{csquotes}
\usepackage[style=alphabetic, maxalphanames=99, maxbibnames=99]{biblatex}
\usepackage[paper=a4paper,  margin=3cm]{geometry}
\definecolor{cobalt}{rgb}{0.0, 0.28, 0.67}
\usepackage[hidelinks]{hyperref}
\hypersetup{
    pdftitle={Combinatorial K-Theory},    
    pdfauthor={Dupraz, M.; Gross, A.; Monin, L.},     
    pdfkeywords= {Poincare duality; Frobenius rings; Shift operators; Differential operators},
    pdfsubject= {},
    colorlinks=true,       
    linkcolor=cobalt,          
    citecolor=cobalt,        
}

\usepackage{cleveref}

\theoremstyle{plain}
\newtheorem{theorem}{Theorem}[section]
\newtheorem{lemma}[theorem]{Lemma}
\newtheorem{proposition}[theorem]{Proposition}
\newtheorem{corollary}[theorem]{Corollary}

\theoremstyle{definition}
\newtheorem{definition}[theorem]{Definition}
\newtheorem{remark}[theorem]{Remark}

\newtheorem{example}[theorem]{Example}
\newtheorem{question}[theorem]{Question}
\newtheorem*{warning}{Warning}

\newcommand{\rleft}{\mathopen{}\mathclose\bgroup\left}
\newcommand{\rright}{\aftergroup\egroup\right}

\newcommand{\C}{{\mathbb{C}}}

\newcommand{\R}{{\mathbb{R}}}
\newcommand{\Z}{{\mathbb{Z}}}
\newcommand{\N}{{\mathbb{N}}}
\newcommand{\p}{{\mathbb{P}}}
\newcommand{\Q}{{\mathbb{Q}}}

\newcommand{\Om}{{\mathcal{O}}}

\newcommand{\cL}{{\mathcal{L}}}
\newcommand{\cO}{{\mathcal{O}}}
\newcommand{\cA}{{\mathcal{A}}}

\newcommand{\cP}{{\mathcal{P}}}
\newcommand{\cV}{{\mathcal{V}}}
\newcommand{\cS}{{\mathcal{S}}}
\newcommand{\cQ}{{\mathcal{Q}}}

\newcommand{\Ann}{{\mathrm{Ann}}}
\newcommand{\Fun}{{\mathrm{Fun}}}
\newcommand{\rk}{{\mathrm{rk}}}

\newcommand{\Diff}{{\mathrm{Diff}}}

\newcommand{\SL}{{\mathrm{SL}}}

\newcommand{\Gr}{\operatorname{Gr}}
\newcommand{\gr}{\operatorname{gr}}
\newcommand{\Hom}{{\mathrm{Hom}}}

\newcommand{\Pic}{{\mathrm{Pic}}}

\newcommand{\im}{{\mathrm{im}}}
\newcommand{\Sym}{{\mathrm{Sym}}}
\newcommand{\Vol}{{\mathrm{Vol}}}

\newcommand{\Ehr}{{\mathrm{Ehr}}}
\newcommand{\Snap}{{\mathrm{Snap}}}
\newcommand{\Sh}{{\mathrm{Sh}}}
\newcommand{\ch}{{\mathrm{ch}}}
\newcommand{\ex}{{\mathrm{ex}}}
\newcommand{\td}{{\mathrm{td}}}
\newcommand{\Td}{{\mathrm{Td}}}

\newcommand{\Poch}{{\mathrm{Poch}}}
\newcommand{\PL}{{\mathrm{PL}}}
\newcommand{\PE}{{\mathrm{PE}}}
\newcommand{\mc}[1]{{\mathcal{#1}}}
\newcommand{\num}{{\mathrm{num}}}
\newcommand{\MW}{{\mathrm{MW}}}

\newcommand{\psibar}{\overline{\psi}}

\renewcommand{\vec}[1]{\ensuremath{\mathbf{#1}}}

\usepackage{pgf,tikz}
\usetikzlibrary{arrows}

\title{A Riemann-Roch theorem for Frobenius quotients}
\author{Matthew Dupraz
    \and
    Andreas Gross
    \and
    Leonid Monin
}

\newcommand{\Addresses}{{
  \bigskip
  \footnotesize

  Matthew~Dupraz, \textsc{Institut für Mathematik, Freie Universität Berlin}\par\nopagebreak
  \textit{E-mail address}: \texttt{matthew.dupraz@fu-berlin.de}

  \medskip

  Andreas Gross, \textsc{Institut für Mathematik, Johann Wolfgang Goethe-Universität}\par\nopagebreak
  \textit{E-mail address}: \texttt{gross@math.uni-frankfurt.de}
  
  \medskip

  Leonid~Monin, \textsc{Institut de Math\'ematiques, Ecole Polytechnique F\'ed\'erale de Lausanne}\par\nopagebreak
  \textit{E-mail address}: \texttt{leonid.monin@epfl.ch}

}}

\subjclass[2020]{14C40 (Primary) 05E14, 13H10, 19L10, 14M25, 14M27, 52B20 (Secondary)}
\keywords{Frobenius rings; Poincaré duality; Riemann-Roch theorem; Chern character; Todd class; K-theory; Chow rings; toric varieties; matroids; Serre duality; lambda-rings; 
shift operators; differential operators; Ehrhart polynomials; Grothendieck-Riemann-Roch}

\begin{document}

\begin{abstract}
We construct two families of commutative Frobenius rings: \emph{discrete} and \emph{continuous} Frobenius quotients $K_f$, resp.\ $A_g$, which are defined as the quotients of shift, resp.\ differential operator algebras by the annihilator of a polynomial. These constructions model the numerical $K$-rings and Chow rings of smooth complete varieties. We show that there is a naturally defined isomorphism $\Q K_f \cong \Q A_g$ playing the role of the Chern character, precisely when the polynomials satisfy a combinatorial analogue of the Hirzebruch--Riemann--Roch theorem, which states that there exists an invertible element $\td \in \Q A_g^\times$, called the \emph{Todd class}, such that $f = \td \cdot g$. In this case we show that $K_f$ carries all the structure needed to make it a suitable model for $K$-rings of complete smooth varieties: $K_f$ is a $\lambda$-ring, has well-defined Chern classes, determinants, and satisfies a combinatorial analogue of Serre duality.
We further show that the construction is functorial and obtain a combinatorial analogue of the Grothendieck-Riemann-Roch theorem.
Lastly, we investigate the existence of larger families of isomorphisms between Frobenius quotients defined by the action of a power series on a generating set, such as the truncated Chern character which yields an integral isomorphism $K_f \cong A_g$.

We provide numerous examples and applications: we give a new formula for computing Snapper polynomials of matroids, show that the dualizing class of $K_f$ coincides with dualizing classes of matroids and linear families of polytopes, realize $K$-rings of toric variety bundles as Frobenius quotients, and study $K$-rings of Ehrhart fans as well as exceptional isomorphisms in this setting. 
\end{abstract}

\maketitle


\section{Introduction}
\label{sec:intro}

Since the early days of intersection theory, polynomials played a central role in encoding algebraic information. Van der Waerden \cite{VanDerWaerden} made the connection between Hilbert polynomials and B\'ezout's theorem. Later, this was generalized by Snapper \cite{Snapper} and Kleiman \cite{KleimanSnapper} to define intersection numbers using Snapper polynomials, which capture the holomorphic Euler characteristics of various products of line bundles. With the introduction of $K$-theory, the holomorphic Euler characteristic can be viewed as a natural functional on the $K$-ring. The connection back to intersection numbers can then be expressed via the celebrated Hirzebruch-Riemann-Roch theorem (HRR) \cite{hirzebruch1956neue}, which states that the holomorphic Euler characteristic $\chi: K_\num(X) \to \Z$ on the numerical $K$-ring $K_\num(X)$ of a smooth complete variety $X$ is related to the degree $\deg: A_\num(X) \to \Z$ on the numerical Chow ring $A_\num(X)$ by the equality
\[
    \chi(\xi)= \deg(\td(X)\cdot\ch(\xi))
\]
for all $\xi \in \Q K_\num(X)$, where $\ch: \Q K_\num(X) \to \Q A_\num(X)$ is the Chern character and $\td(X) \in \Q A_\num(X)$ is the Todd class of $X$. Let $\Lambda$ be a lattice and let $\Lambda \to \Pic(X), u \mapsto \cL_u$ be a homomorphism. Then, using HRR,  Snapper and Kleiman's theorem on the polynomiality of the Snapper function 
\[
\Snap\colon \Lambda\to \Z, \; u \mapsto \chi(\cL_u)
\]
follows directly, as does the fact that its top homogeneous part is of degree $d=\dim(X)$ and equal to the volume polynomial
\[
\Vol\colon \Lambda\to \Q,\; u \mapsto \frac{1}{d!}\deg(c_1(\cL_u)^d).
\]

If $X$ is a complete toric variety, the Euler characteristic of an ample line bundle can be expressed as a lattice point count of a lattice polytope. This observation equates the Snapper polynomials of line bundles on $X$ with the Ehrhart polynomials from combinatorics. A special case of this is when $X$ is the permutahedral variety, which can be associated to a uniform matroid of maximal rank. In the seminal work of Adiprasito, Huh, and Katz \cite{CombinatorialHodge}, it was shown that the Chow rings of the (usually incomplete) toric varieties associated to arbitrary matroids enjoy many of the appealing properties of Chow rings of complete varieties. The polynomials that arise from this perspective have likewise received much attention \cite{NormalComplexes, LLPP24, EhrhartFans}.

If $K_\num(X)$ and $A_\num(X)$ are the numerical $K$-ring and Chow ring of a smooth complete variety
$X$ (or of a matroid), then both $K_\num(X)$ and $A_\num(X)$ are commutative Frobenius rings: the Euler characteristic $\chi: K_\num(X) \to \Z$ and the degree map $\deg: A_\num(X) \to \Z$ each induce non-degenerate pairings on these rings. 
Using variants of Macaulay's inverse systems \cite{InverseSystem}, which we call \emph{Frobenius quotients}, it follows that both $A_\num(X)$ and $K_\num(X)$ are in fact fully determined by the Snapper and volume polynomial, respectively.

The \emph{discrete Frobenius quotient} $K_f$ is the quotient of the ring of shift operators on an affine semigroup $\Sigma$ by the ideal annihilating a function $f\colon \Sigma\to \Q$:
\[K_f \coloneqq \Sh(\Sigma)/\Ann_{\Sh}(f).\]
This ring is a commutative Frobenius ring with non-degenerate pairing induced by
$\chi_f([E]) = (E \cdot f)(0)$ for all $[E] \in K_f$. Moreover, all commutative Frobenius rings are of this form \cite{MoninSmirnov23}. In particular, if we choose for $\Sigma\subseteq K_\num(X)$ a multiplicative affine sub-semigroup generating the numerical $K$-ring of $X$, then $K_\num(X) \cong K_\Snap$ for the Snapper function $\Snap: \Sigma \to \Z,\; \xi \mapsto \chi(\xi)$. 

Similarly, if $\Gamma$ is a graded lattice with a weighted homogeneous polynomial $g: \Gamma \to \Q$,
the \emph{continuous Frobenius quotient} $A_g$ is the quotient of the ring of differential operators with constant coefficients by the ideal annihilating $g$:
\[A_g \coloneqq \Diff(\Gamma)/\Ann_{\Diff}(g).\]
This is a graded Frobenius ring (i.e.\ a Poincaré duality algebra when working over a field) with non-degenerate pairing induced by
$\deg_g([D]) = (D \cdot g)(0)$ for all $[D] \in A_g$.
Moreover, all graded Frobenius rings are of this form \cite{Kaveh11, khovanskii2021gorenstein}. In particular, if we choose $\Gamma\subset A_\num(X)$ which generates $A_\num(X)$ as a ring, then $A_\num(X)=A_\Vol$ for the volume polynomial $\Vol\colon \Gamma\to \Q,\;\alpha\mapsto\deg(\exp(\alpha))$.

In this paper we study pairs $(K_f, A_g)$ of such rings. We provide criteria for when the rings are related by an isomorphism mimicking the Chern character from algebraic geometry, and show that in this case many structures we would expect from algebraic geometry transfer naturally to this setting.

The richest situation arises when $\Sigma = \Gamma = \Lambda$ is a lattice and $f$ and $g$ are polynomials on
$\Lambda$. We call this the \emph{standard graded setting}, because in this case $A_g$ is a standard graded ring.
In this setting, there is a natural candidate for the Chern character. Indeed, if $T^u$ is the shift by $u$, that is $(T^u h)(x)=h(x+u)$ for a function $h$ on $\Lambda$, and $\partial_u$ is the directional derivative in the direction of $u$, then 
\[
T^u h =\exp(\partial_u) \cdot h
\]
for every polynomial $h$ (see Lemma \ref{lem:exponential and shifts}). Therefore, the assignment
\[T^u \mapsto \exp(\partial_u)\]
not only mimics the classical definition of the Chern character in topology and geometry, but is also compatible with our interpretation of $K$- and Chow-rings via operators.
Theorem~\ref{intro_thm:Chern character descends} provides criteria for this map to be a well-defined isomorphism, and Theorems \ref{intro_thm:Operators}-\ref{intro_thm:Serre-duality} show that when we have a Chern character, we also recover many structures one has in algebraic geometry.

\begin{theorem}
    \label{intro_thm:Chern character descends}
    Let $\Lambda$ be a lattice and let $f$ and $g$ be polynomials on $\Lambda$ with $g$ homogeneous.
    Then the assignment $T^u \mapsto \exp(\partial_u)$ defines an isomorphism
    \[
        \ch \colon \Q K_f\to \Q A_g,
    \]
    if and only if there exists a unit $\td \in \Q A_g^\times$, called the \emph{Todd class} of $(K_f, A_g)$, such that $\td \cdot g=f$, or equivalently such that
    \[
        \chi_f(\xi)= \deg_g(\td\cdot\ch(\xi))
    \]
    for all $\xi\in \Q K_f$.
\end{theorem}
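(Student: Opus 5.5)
The plan is to realize both rings inside the single space $\Q[\Lambda]$ of rational polynomials on $\Lambda$, on which shifts and constant-coefficient differential operators both act. Let $\widehat{\Diff}$ be the completed ring of differential operators (formal power series in the $\partial_u$). It acts on polynomials because every polynomial is killed by high powers of the derivations. By Lemma~\ref{lem:exponential and shifts}, the ring map $\Phi\colon \Q\Sh(\Lambda)\to\widehat{\Diff}$ given by $T^u\mapsto\exp(\partial_u)$ is compatible with the actions: $E\cdot h=\Phi(E)\cdot h$ for every polynomial $h$. Moreover $\Q A_g$ is finite dimensional and graded, with $\partial_u$ nilpotent there. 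Hence $\widehat{\Diff}\to \Q A_g$ is well defined and surjective, and $\Q A_g=\widehat{\Diff}/\Ann_{\widehat{\Diff}}(g)$. Using this, the composite $\Q\Sh\to \Q A_g$ is a well-defined ring map. Its image contains $\log(\exp(\partial_u))=\partial_u$, where the logarithm series terminates by nilpotence. The $\partial_u$ generate $\Q A_g$, so the composite is surjective. Therefore $\ch$ is a well-defined isomorphism $\Q K_f\to \Q A_g$ if and only if $\Ann_{\Sh}(f)=\Phi^{-1}(\Ann_{\widehat{\Diff}}(g))$ in $\Q\Sh$.

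For the direction ($\Leftarrow$), suppose $\td\cdot g=f$ with $\td$ a unit, and lift $\td$ to $t\in\widehat{\Diff}$ invertible, i.e.\ with nonzero constant term. Then $f=t\cdot g$ and $g=t^{-1}\cdot f$, so by commutativity $\Ann_{\widehat{\Diff}}(f)=\Ann_{\widehat{\Diff}}(g)$. Combined with the compatibility $E\cdot f=\Phi(E)\cdot f$, this gives $E\in\Ann_{\Sh}(f)$ if and only if $\Phi(E)\in \Ann(g)$, which is the required equality. The formula $\chi_f(\xi)=(E\cdot f)(0)=(\Phi(E)t\cdot g)(0)=\deg_g(\td\cdot\ch(\xi))$ then follows directly.

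For the direction ($\Rightarrow$), assume $\ch$ is an isomorphism. The functional $\chi_f\circ\ch^{-1}$ is a linear form on $\Q A_g$, and $\deg_g$ induces a perfect pairing on $\Q A_g$. So there is a unique $\td\in \Q A_g$ with $\chi_f(\xi)=\deg_g(\td\cdot\ch(\xi))$ for all $\xi$. Likewise $\chi_f$ is nondegenerate on $\Q K_f$, and $\ch$ is a ring isomorphism. Hence multiplication by $\td$ must be injective: if $\td a=0$ then $\deg_g(\td a b)=0$ for all $b$, forcing $a=0$. Therefore $\td$ is a unit in the finite-dimensional algebra. To obtain $\td\cdot g=f$, I will show that the two polynomials $f$ and $t\cdot g$ (for a lift $t$) agree. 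Both are annihilated by high powers of the augmentation ideal, and for every $E$ we have $(E\cdot f)(0)=(\Phi(E)t\cdot g)(0)$. Since the $E=T^u$ span, this gives $f(u)=(t g)(u)$ for all $u\in\Lambda$, so $f=t\cdot g$. The same pointwise identity also proves the stated equivalence between $\td\cdot g=f$ and the Riemann--Roch formula, in both directions.

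The main obstacle is the formal bookkeeping in the first step. One must check that $\Phi$ lands in something acting on $\Q A_g$, and that $\Ann_{\widehat{\Diff}}(g)$ restricted to polynomial operators recovers $\Ann_{\Diff}(g)$, so that completing does not change $A_g$. This holds because $\Diff\to\Q A_g$ is already surjective and kills a power of the augmentation ideal, so every power series can be truncated modulo that ideal without changing its action on $g$. Homogeneity of $g$ is used to guarantee that $\Q A_g$ is graded with nilpotent positive part. $f$ need not be homogeneous; only its polynomiality is used, through Lemma~\ref{lem:exponential and shifts}.
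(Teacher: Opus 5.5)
Your proof is correct. The surjectivity step (via the terminating logarithm) matches the paper's \Cref{lem:ch-surjective}. Your direction ($\Rightarrow$) matches \Cref{thm:map-descends-iff-hrr} and \Cref{rem:hrr-equivalent-statement}: you take $\td$ as the $\deg_g$-dual of $\chi_f\circ\ch^{-1}$, show it is a non-zero-divisor because $\chi_f$ is nondegenerate, and read off $f(u)=(\td\cdot g)(u)$ from the Riemann--Roch identity at $E=T^u$.

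You genuinely differ in direction ($\Leftarrow$). The paper first turns $f=\td\cdot g$ into the Riemann--Roch identity. It then proves $\ker(\varphi)=\Ann(f)$ from that identity alone, together with invertibility of $\td$, surjectivity of $\varphi$, and nondegeneracy of $\deg_g$. You instead use an invertible lift $t$ to get $\Ann_{\widehat{\Diff}}(f)=\Ann_{\widehat{\Diff}}(g)$, and pull this back along the intertwining $E\cdot f=\ch(E)\cdot f$.

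Your version is shorter and lands directly on differential equivalence (essentially \Cref{cor:polynomial-equivalence}). However, it depends on $\ch(E)$ acting on polynomials exactly as $E$ does. The paper's argument works for an arbitrary surjection $\Sh_\Q(\Sigma)\to\Q A_g$. That is why it can be reused verbatim for the exceptional maps $\varphi_p$ of \Cref{thm:exceptional-general} (e.g.\ $T^{e_i}\mapsto 1+\partial_{e_i}$, which does not intertwine the actions) and for the general pairs of \Cref{sec:hrr-pairs}.

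One small correction to your closing remark: homogeneity of $g$ plays no role in this argument. $\partial_u^{\deg g+1}$ annihilates any polynomial $g$, and $\Q A_g$ is local for any $g\neq 0$, so units still lift to invertible power series. Accordingly, the paper's \Cref{thm:ch-descends-iff-todd} drops that hypothesis.
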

We show a more general statement in \Cref{thm:map-descends-iff-hrr}; we refer to \Cref{sec:rings-with-duality} for details. When any of the equivalent conditions of \Cref{intro_thm:Chern character descends} is satisfied, we say $f$ and $g$ are \emph{differentially equivalent}, which we denote by $f \sim g$.
If that is the case, we call the pair $(K_f, A_g)$ a \emph{Riemann-Roch pair}. As the next results show,
such pairs carry much of the structure one expects from algebraic geometry.

\begin{theorem}
\label{intro_thm:Operators}
Let $(K_f, A_g)$ be a Riemann-Roch pair, then there exist:
\begin{enumerate}
    \item \emph{Chern classes}, that is, a group homomorphism
    \[
        c\colon K_f \to A_g^\times
    \]
    with $c(T^u)=1+\partial_u$ for all $u\in \Lambda$,
    
    \item a \emph{$\lambda$-ring structure} on $K_f$, that is, a group homomorphism
    \[
        \lambda \colon K_f\to K_f\llbracket t\rrbracket^\times
    \]
    with $\lambda(T^u)=1+T^u t$ for all $u\in \Lambda$,

    \item a \emph{determinant map}, that is, a group  homomorphism
    \[
        \det \colon K_f\to K_f^1 \ ,
    \]
    where $K_f^1$ is the image of $\Lambda$ in $K_f^\times$ and $\det(T^u)=T^u$ for all $u\in \Lambda$,

    \item a \emph{Todd operator}, that is, a group homomorphism
    \[
        \Td\colon K_f\to A_g^\times
    \]
    satisfying
    \[
        \Td(T^u)= \frac{\partial_u}{1-\exp(-\partial_u)}
    \]
    for all $u\in \Lambda$.
\end{enumerate}
\end{theorem}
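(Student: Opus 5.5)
The plan is to define each of the four maps first on the group ring $\Z[\Lambda] = \Sh(\Lambda)$, where $T^u$ sends to the prescribed value. Since $K_f$ is a quotient of $\Sh(\Lambda)$ and the targets are abelian groups under multiplication, there is a unique additive-to-multiplicative extension $\Z[\Lambda] \to (\text{target})^\times$ given by $\sum_i n_i T^{u_i} \mapsto \prod_i \phi(T^{u_i})^{n_i}$. Each statement then reduces to a descent problem: show that this homomorphism kills every element of $\Ann_{\Sh}(f)$, i.e.\ depends only on the class in $K_f$.

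The key step is to express each map through the Chern character of \Cref{intro_thm:Chern character descends}. In the geometric setting, each of these operations is computed by a multiplicative characteristic class of power-series type, via a "splitting principle". Concretely, take a power series $\phi(x) \in \Q\llbracket x\rrbracket$ with $\phi(0)=1$, and set $\Phi(T^u) = \phi(\partial_u)$.

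1. On $\Q A_g$, every $\partial_u$ is nilpotent, because $A_g$ is finite-dimensional and graded in positive degrees. Hence $\log\phi(\partial_u)$ makes sense, and it is a polynomial in $\partial_u$ without constant term.
2. Write $\log\phi(x) = \sum_{k\ge1} a_k x^k$. Then
\[
\log \Phi\Bigl(\sum_i n_i T^{u_i}\Bigr) = \sum_k a_k \sum_i n_i \partial_{u_i}^k = \sum_k a_k\, k!\, \ch_k\Bigl(\sum_i n_i T^{u_i}\Bigr),
\]
where $\ch_k$ is the degree-$k$ part of $\ch$.
3. By \Cref{intro_thm:Chern character descends}, $\ch$ is well defined on $\Q K_f$, so each graded piece $\ch_k$ kills $\Ann_{\Sh}(f)$. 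Therefore $\log\Phi$, and hence $\Phi = \exp\log\Phi$, factor through $K_f$.

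Apply this to the four maps:

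1. For Chern classes, use $\phi(x) = 1+x$.
2. For the Todd operator, use $\phi(x) = x/(1-e^{-x})$.
3. For the $\lambda$-structure, work coefficientwise in $t$. Transport through $\ch$: $\ch(\lambda(T^u)) = 1 + e^{\partial_u} t$. This gives a map $\Z[\Lambda] \to (\Q A_g\llbracket t\rrbracket)^\times$, and the argument above applies with $\log(1+e^{x}t)$ expanded in powers of $x$ with coefficients in $\Q\llbracket t\rrbracket$. Then pull back via $\ch^{-1}$.
4. For the determinant, use $\ch(\det) = \exp(\ch_1)$. The element $\ch_1(\xi)$ equals $\partial_v$ with $v = \sum_i n_i u_i$, so $\det \xi = T^v$. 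This is well defined because $\ch_1$ descends and $\exp(\partial_v) = \ch(T^v)$.

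The main obstacle is integrality: the maps above land a priori in $\Q A_g$ and $\Q K_f$, while the theorem asserts targets $A_g^\times$ and $K_f$. This matters in two places:

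1. For $c$ and $\lambda$, the value on a generator $\sum_i n_i T^{u_i}$ is visibly an integral product of elements of $A_g$ or $K_f$. So I would define $c$ and $\lambda$ integrally by these products and use the rational computation only to prove independence of the representative. This requires that $A_g \to \Q A_g$ and $K_f \to \Q K_f$ be injective. That holds because both are submodules of function spaces valued in $\Q$: $K_f$ embeds in functions $\Lambda\to\Q$ via $E\mapsto E\cdot f$, and similarly for $A_g$, so they are torsion-free.
2. For $\det$, the class $T^v$ is integral.
3. For $\Td$, the target is $\Q A_g^\times$ implicitly, since its values are rational.

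A second subtlety is that $\ch_k$ of an element of $\Ann$ vanishes only after tensoring with $\Q$. This is fine, because $\Ann_{\Sh}(f)\subset \Ann_{\Q\Sh}(f)$ and $\ch$ is defined on $\Q K_f$.
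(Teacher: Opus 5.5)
Your proof is correct, but it reaches the result by a different route from the paper's.

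\textbf{The paper's route.} The paper proves one general descent result, Theorem~\ref{thm:mult-map-A_g}: for any ring $R$ of characteristic $0$ and any $a\in R\llbracket x\rrbracket$ with $a(0)$ invertible, the assignment $T^u\mapsto a(\partial_u)$ descends to $K_f$. The steps are:
\begin{enumerate}
\item It reduces to \emph{positive} elements $E=\sum_{i=1}^r T^{u_i}$ and $F=\sum_{i=1}^r T^{v_i}$ with $E\cdot f=F\cdot f$; these have equal rank by Proposition~\ref{lemma:annihilator-rank-0}.
\item It replaces $f$ by the homogeneous $g$ and reads off from $\ch(E)\cdot f=\ch(F)\cdot f$ that all power sums in the $\partial_{u_i}$ and the $\partial_{v_i}$ agree on $f$.
\item The set of $q$ with $q(\partial_{u_\bullet})\cdot f=q(\partial_{v_\bullet})\cdot f$ is a ring. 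It contains the power sums, hence all symmetric polynomials, hence $\prod_i a(\partial_{u_i})$.
\item $\det$ and $\lambda$ are obtained as $\ch^{-1}\circ\psi$ with $a=p\circ\exp$. Integrality is checked by noting that $p([T^u])$ is $p(1)$ plus a nilpotent.
\end{enumerate}

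\textbf{Your route.} The identity $\log\Phi=\sum_k a_k\,k!\,\ch_k$ is the same mathematics in linearized form: it replaces the Newton-identity step. It works directly with virtual elements, so you need neither the reduction to positive elements nor the rank lemma. The inputs are the same as the paper's: $\ch$ descends, and $\Ann(g)$ is homogeneous.

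\textbf{Trade-offs.} The paper's version gives more generality ($R$ arbitrary of characteristic $0$, $a(0)$ merely invertible). Its formulation in terms of positive elements is also reused almost verbatim in the splitting-principle section. Your version gives a shorter proof, and a particularly transparent determinant: $\det\xi=T^v$ with $\partial_v=\ch_1(\xi)$. Your integrality check (define the maps integrally, then test descent inside $\Q K_f$ and $\Q A_g$ using torsion-freeness) is equally valid.

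\textbf{Points to tighten.}
\begin{itemize}
\item For $\lambda$ your normalization $\phi(0)=1$ fails. The series $\log(1+e^x t)$ has constant term $\log(1+t)$, so the expansion picks up a term $\log(1+t)\,\ch_0(\xi)=\log(1+t)\rk(\xi)$. This is harmless because $\ch_0=\rk$ also descends, but you should say so.
\item $\log$ and $\exp$ on $\Q A_g\llbracket t\rrbracket$ converge $t$-adically, not merely because of nilpotence.
\item If you want the full $\lambda$-ring axioms rather than just the homomorphism, they follow once $\lambda$ descends. The quotient map from the special $\lambda$-ring $\Z[\Lambda]$ is then a surjective $\lambda$-homomorphism; the paper cites Knutson for this.
\end{itemize}
You are right that $\Td$ only lands in $\Q A_g^\times$, as in Definition~\ref{def:chern-class-todd}.
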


Theorem~\ref{intro_thm:Operators} relies on a general construction of multiplicative maps defined on $K_f$ given in  \Cref{thm:mult-map-A_g}. The Chern classes and Todd operator are defined in \Cref{def:chern-class-todd}, and determinants and $\lambda$-operations are defined in \Cref{def:det-lambda}. Finally, the fact that $\lambda$-operations give $K_f$ a $\lambda$-ring structure is proven in \Cref{thm:lambda-ring}.

We further show the existence of the dualizing morphism and use it to give a combinatorial version of \emph{Serre duality} in our setting.
\begin{theorem}[{\Cref{prop:dualizing-morphism} and \Cref{prop:serre duality}}]\label{intro_thm:Serre-duality}
    Let $(K_f, A_g)$ be a Riemann-Roch pair. Then there is a \emph{dualizing morphism}, that is, an involution
    \[
        (-)^\vee \colon K_f\to K_f
    \]
    with $(T^u)^\vee=T^{-u}$ for all $u\in \Lambda$.
    Furthermore, there is an element $\omega \in \Q K_f$ called the \emph{dualizing class}, uniquely determined by the relation
    \[
        \chi_f(\xi) = (-1)^d\chi_f(\omega \cdot \xi^\vee)
    \]
    for all $\xi\in \Q K_f$.
    If there exists a \emph{cotangent class}, that is, an element $\Omega\in K_f$ with $\td=\Td(\Omega^\vee)$,
    then we have 
    \[
    \omega=\det(\Omega).
    \]
\end{theorem}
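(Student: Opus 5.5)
We plan to construct the dualizing morphism first, then the dualizing class, and finally compare $\omega$ with $\det(\Omega)$, working throughout with the Chern character $\ch\colon \Q K_f\to \Q A_g$ from \Cref{intro_thm:Chern character descends}.

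\textbf{Step 1: the dualizing morphism.} On $\Q A_g$ we have the grading involution $\iota$, which acts by $(-1)^k$ in degree $k$. Since $g$ is homogeneous, $\Ann_{\Diff}(g)$ is a homogeneous ideal, so $D\mapsto D(-\partial)$ descends to a ring involution of $A_g$. We then set $\xi^\vee := \ch^{-1}(\iota(\ch \xi))$, which is an involution of $\Q K_f$ sending $T^u$ to $T^{-u}$, because $\iota(\exp(\partial_u))=\exp(-\partial_u)$.

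It remains to see that this involution preserves the integral ring $K_f$. The cleanest route is to observe that $T^u\mapsto T^{-u}$ defines a ring involution of $\Sh(\Lambda)$. We must check that it preserves $\Ann_{\Sh}(f)$, which amounts to checking that $E\cdot f=0$ implies $E^\vee\cdot f=0$. Via $\ch$, the condition $E\cdot f=0$ means $\ch(E)\,\td\cdot g=0$ in the $\Diff$-module generated by $g$. Applying $\iota$ and using $\iota(g)=\pm g$ (homogeneity), we get $\ch(E^\vee)\,\iota(\td)\cdot g=0$. Since $\iota(\td)\td^{-1}$ is a unit of $\Q A_g$, it follows that $\ch(E^\vee)\,\td\cdot g=0$, and hence $E^\vee\cdot f=0$. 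So the involution descends to $K_f$.

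\textbf{Step 2: the dualizing class.} Consider the linear functional $\xi\mapsto(-1)^d\chi_f(\xi^\vee)$ on $\Q K_f$. The pairing $\chi_f$ is non-degenerate, so this functional is represented by a unique $\omega'$ with $\chi_f(\omega'\xi)=(-1)^d\chi_f(\xi^\vee)$. Replacing $\xi$ by $\xi^\vee$ and using that $\vee$ is a ring involution gives the required identity with $\omega:=\omega'$ (or its dual, depending on convention). Uniqueness also follows from non-degeneracy.

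Concretely, we can compute $\omega$ through Riemann-Roch:
\[
\chi_f(\xi^\vee)=\deg_g(\td\cdot\iota\ch\xi)=(-1)^d\deg_g(\iota(\td)\ch\xi),
\]
where we use that $\deg_g\circ\iota=(-1)^d\deg_g$, since $\deg_g$ is supported in degree $d$. Therefore the class $\omega$ must satisfy
\[
\td\cdot\ch(\omega)=\iota(\td),
\]
that is, $\omega=\ch^{-1}(\iota(\td)\,\td^{-1})$ after applying duals appropriately.

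\textbf{Step 3: comparison with $\det(\Omega)$.} Suppose $\td=\Td(\Omega^\vee)$. Both $\Td$ and $\det$ are group homomorphisms on $K_f$ (\Cref{intro_thm:Operators}), and $\ch$ is multiplicative. It therefore suffices to verify the identity
\[
\iota(\Td(\xi^\vee))=\Td(\xi^\vee)\cdot\ch(\det\xi)^{\pm1}
\]
on generators $\xi=T^u$, and then extend by multiplicativity to all of $K_f$. On generators this is the classical identity
\[
\frac{-x}{1-e^{x}}=\frac{x}{1-e^{-x}}\,e^{-x},
\]
with $x=\partial_u$ and using $\iota(\partial_u)=-\partial_u$. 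It yields $\iota(\td)\td^{-1}=\ch(\det\Omega)$, up to a sign convention on $\Omega$ versus $\Omega^\vee$, and hence $\omega=\det(\Omega)$.

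\textbf{Main obstacle.} The delicate part is Step 1: ensuring that $\vee$ is defined integrally on $K_f$, and that $\iota$ commutes appropriately with the maps $\Td$, $\det$ and $\ch$ built in \Cref{thm:mult-map-A_g}. This requires checking that the multiplicative extensions respect the involution, which we would do by verifying it on the generators $T^u$ and invoking uniqueness of the multiplicative extension.
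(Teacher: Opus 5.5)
Your proposal is correct and follows essentially the paper's route. The involution is the grading sign on $\Q A_g$ transported by $\ch$; integrality is in fact automatic, since it sends the additive generators $[T^u]$ to $[T^{-u}]$. The class $\omega$ comes from non-degeneracy of $\chi_f$, and $\omega=\det(\Omega)$ reduces to the identity $\ch(\det\Omega)^\vee\Td(\Omega)=\Td(\Omega^\vee)$, checked on line elements and extended multiplicatively.

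Your hedges about duals and sign conventions are unnecessary: the computation gives exactly $\ch(\omega)=\iota(\td)\,\td^{-1}$. With $\td=\Td(\Omega^\vee)$ one gets $\iota(\td)=\Td(\Omega)=\td\cdot\ch(\det\Omega)$, with no sign ambiguity.
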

Besides agreeing with the canonical line bundle when $K_f$ is the numerical $K$-ring of a smooth variety, this also provides a generalization of the notion of dualizing class for a linear family of polytopes (see \Cref{subsec:linear-families}) and of matroids, in the sense of \cite{Cheng2025,cheng2026tangent}.

We also define a filtration on $K_f$ via the kernel of the rank map, which is the unique ring homomorphism sending $T^u \mapsto 1$, and show that the associated graded ring $\Q \gr K_f$ is isomorphic to $\Q A_g$
(\Cref{thm:ass-graded}). This generalizes the classical fact that over the rational numbers, the associated graded ring of the topological filtration on the $K$-ring of a smooth variety is isomorphic to its Chow ring, and shows that $\Q A_g$
can be recovered from $\Q K_f$ by a purely ring-theoretic construction.

Finally, we study another class of isomorphisms between $\Q K_f$ and $\Q A_g$,
defined by their action on a fixed basis $u_1, \dots, u_n$ of $\Lambda$, which we call \emph{exceptional isomorphisms}.
In particular, we consider the truncated Chern character, which sends $T^{u_i} \mapsto 1 + \partial_{u_i}$, and characterize when it gives rise to an integral isomorphism $K_f \cong A_g$.
\begin{theorem}[{\Cref{thm:exceptional-general} and \Cref{prop:exceptional and dual exceptional}}]
    The map sending $T^{u_i} \mapsto 1 + \partial_{u_i}$ (resp.\ $T^{u_i}\mapsto \frac{1}{1-\partial_{u_i}}$) for all $i$ defines an isomorphism $K_f \cong A_g$ if and only if
    $f \sim \Poch^\downarrow(g)$ (resp.\ $f \sim \Poch^\uparrow(g)$), where the Pochhammer transform $\Poch^\downarrow(g)$ (resp. $\Poch^\uparrow(g)$) is the polynomial obtained from $g$ by replacing powers with falling (resp.\ rising) factorials. 
\end{theorem}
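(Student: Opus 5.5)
The plan is to reduce to a general criterion for a ring map $\Sh(\Lambda)\to\Diff(\Lambda)$ to descend to an isomorphism of Frobenius quotients, in the spirit of \Cref{thm:map-descends-iff-hrr}. Let $\phi\colon \Q\Sh(\Lambda)\to \Q A_g$ be the ring homomorphism with $T^{u_i}\mapsto 1+\partial_{u_i}$; it is well defined because $1+\partial_{u_i}$ is a unit in $\Q A_g$, as $\partial_{u_i}$ is nilpotent there. Each $\phi(E)$ is a finite-order differential operator, so it acts on $g$ and yields a polynomial. The key identity I would aim for is
\[
(E\cdot \Poch^\downarrow(g))(0) = (\phi(E)\cdot g)(0)\qquad\text{for all } E\in\Sh(\Lambda),
\]
that is, $\chi_{\Poch^\downarrow(g)} = \deg_g\circ\phi$. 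By linearity it suffices to check this on monomials $g=x^a$ in the coordinates dual to $u_1,\dots,u_n$ and on $E=T^{m}$ with $m\in\N^n$; Laurent monomials will be handled by inverting. Then $\phi(T^m)=\prod_i(1+\partial_i)^{m_i}$, and
\[
\bigl((1+\partial)^m x^a\bigr)(0)=\prod_i\binom{m_i}{a_i}a_i! = \prod_i (m_i)_{a_i},
\]
which is exactly $\Poch^\downarrow(x^a)$ evaluated at $m$. Both sides are polynomial, respectively rational, in $m$, so the identity extends from $\N^n$ to $\Z^n$. The rising case is the same computation with $(1-\partial)^{-m}$, which produces rising factorials.

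With this identity in hand, I would argue as follows. Since $\deg_g$ is nondegenerate on $A_g$, the kernel of $\phi$ modulo $\Ann(g)$ equals $\Ann_{\Sh}(\Poch^\downarrow g)$. Surjectivity holds because the $\partial_{u_i}=\phi(T^{u_i})-1$ generate. Hence $\phi$ induces an isomorphism $K_{\Poch^\downarrow g}\cong A_g$, and this works integrally: $\Z$-Frobenius structure on both sides and $\phi$ maps $\Z\Sh$ onto $\Z$-span generators of $A_g$.

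For the ``if'' direction, I would use the fact that $f\sim h$ gives $K_f=K_h$ as quotients of $\Sh$, via $\Ann(f)=\Ann(h)$. To see this, note that $f=\td\cdot h$ with $\td$ a unit commuting with shifts, which holds since both are expressed through the Chern character on the ring $\Q A_h$. I still have to check that the Chern character identification $\Ann_{\Sh}(f)=\Ann_{\Sh}(h)$ follows from \Cref{intro_thm:Chern character descends}.

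The converse is the main obstacle. Suppose the map is an isomorphism $K_f\cong A_g$, so its kernel is $\Ann(f)$. The functional $\deg_g\circ\phi$ then factors through $K_f$, and it equals $\chi_{\Poch^\downarrow g}$. By Frobenius duality of $K_f$, every functional on $\Q K_f$ has the form $\chi_f(\eta\,\cdot)$ with $\eta$ in $\Q K_f$. Hence $\Poch^\downarrow g=\eta\cdot f$, and $\eta$ is a unit because the induced pairing is nondegenerate. Transporting $\eta$ via $\ch$, which exists since $K_{\Poch^\downarrow g}$ and $A_g$ now have the same annihilator, produces $\td^{-1}$. The delicate points are, first, the existence of $\ch$ for the pair $(f, g)$, which I would obtain by first establishing $\Poch^\downarrow(g)\sim g$ unconditionally. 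Second, units in $K_f$ must correspond to units in $A_g$; I would obtain this from nilpotence of the augmentation ideal.
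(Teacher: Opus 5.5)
\textbf{Gap in the converse.} Your computation of $(E\cdot\Poch^\downarrow(g))(0)=(\phi(E)\cdot g)(0)$ is correct. So is your deduction that $\ker\bigl(\Sh(\Lambda)\to A_g\bigr)=\Ann_\Sh(\Poch^\downarrow(g))$, i.e.\ that $\phi$ always induces $K_{\Poch^\downarrow(g)}\cong A_g$. The problem is your plan for the converse. You intend to get a Chern character for the pair $(f,g)$ by first proving $\Poch^\downarrow(g)\sim g$ unconditionally, and that claim is false.

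Take $\Lambda=\Z^2$ with coordinates $x_1,x_2$ dual to $u_1,u_2$, and $g=(x_1+2x_2)^2$. Then $\Poch^\downarrow(g)=g-x_1-4x_2$. The degree-one part of $\Diff_\Q(\Lambda)\cdot g$ is spanned by $x_1+2x_2$, so $\Poch^\downarrow(g)\notin\Diff_\Q(\Lambda)\cdot g$ and hence $\Poch^\downarrow(g)\not\sim g$. Yet your own argument shows the truncated Chern character gives $K_{\Poch^\downarrow(g)}\cong A_g$. \Cref{ex:perumutahedron-no-exceptional} is another instance: $\Snap_P\sim\Vol_P$ by Hirzebruch--Riemann--Roch, but $\Snap_P\not\sim\Poch^\downarrow(\Vol_P)$. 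In general an exceptional isomorphism can exist while $\ch$ does not descend (\Cref{thm:ch-descends-iff-todd}). So the converse cannot be routed through $\ch$, and your transport of $\eta$ via $\ch$ rests on the same false claim.

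\textbf{The detour is unnecessary.} By your kernel identity, the map descends to an isomorphism $K_f\cong A_g$ if and only if $\Ann_\Sh(f)=\Ann_\Sh(\Poch^\downarrow(g))$. That is condition (3) of \Cref{cor:polynomial-equivalence}, i.e.\ the definition of $f\sim\Poch^\downarrow(g)$. This settles both directions at once, including the check you left open in the ``if'' direction. Integrality holds because $\Sh(\Lambda)\to A_g$ is already surjective. If you prefer your duality argument, you can stop at $\Poch^\downarrow(g)=\eta\cdot f$ with $\eta\in\Q K_f^\times$. That is condition (6) of the same corollary, and units need no transport because $\phi$ is itself a ring isomorphism.

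\textbf{Comparison with the paper.} With this repair your route is shorter than the paper's. The paper proves \Cref{thm:exceptional-general} for general $p$ in three steps: it passes through \Cref{thm:map-descends-iff-hrr} to an exceptional Todd class, obtains $f=\Psi_p(\td_p\cdot g)$, and then uses the intertwining relation of \Cref{lem:Psi-shift} to get $f\sim\Psi_p(g)$. You instead observe directly that $\Psi_p(g)$ is the function whose annihilator is $\ker\varphi_p$ (the case $\td_p=1$). What the paper's route buys in addition is the explicit Todd-class formula $f=\Psi_p(\td_p\cdot g)$, which is what feeds the matroid identity in \Cref{cor:wonderful-compactification-exceptional}.
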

We give a concrete application of this theorem for matroids. 
Let $M$ be a matroid, and $\Lambda$ the lattice with basis $u_F$, for all the flats $F$ of $M$.
The authors of \cite{LLPP24} introduced natural elements $[\cL_F]$ in the $K$-ring $K(M)$ and $c_1(\cL_F)\in A(M)$ in the Chow ring $A(M)$ (here, we use the notation from the case where $M$ is realizable, but the elements exist in general). $K(M)$ and $A(M)$ are Frobenius rings which are generated by these elements, so we can represent them as $K_{\Snap_M}$, resp. $A_{\Vol_M}$, where $\Snap_M$ and $\Vol_M$ are polynomials on $\Lambda$. Using the description of the exceptional Todd class of \cite{LLPP24}, we obtain the following identity.

\begin{corollary}[\Cref{cor:wonderful-compactification-exceptional}]
There is an equality of polynomials
\[
    \Snap_M = T^{u_E} \cdot \Poch^\uparrow(\Vol_M),
\]    
where $E$ is the ground set of $M$ and the Pochhammer transform is taken with respect to the basis $\{u_F\}_{F \in \cL(M)}$.
\end{corollary}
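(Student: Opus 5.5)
The identity is obtained by applying the Pochhammer theorem (\Cref{thm:exceptional-general} and \Cref{prop:exceptional and dual exceptional}) in its rising-factorial form, using the exceptional isomorphism of \cite{LLPP24}.

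\emph{Setup.} We realize the matroid rings as Frobenius quotients: $K(M)\cong K_{\Snap_M}$ with $T^{u_F}\mapsto[\cL_F]$, and $A(M)\cong A_{\Vol_M}$ with $\partial_{u_F}\mapsto c_1(\cL_F)$. Both descriptions are valid because these elements generate the respective rings and $\chi$, $\deg$ are non-degenerate. Concretely, $\Snap_M(u)=\chi\big(\prod_F[\cL_F]^{u_F}\big)$ and $\Vol_M(u)=\deg\exp\big(\sum_F u_F c_1(\cL_F)\big)$.

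\emph{Input from \cite{LLPP24}.} That paper gives a ring isomorphism $\zeta\colon K(M)\to A(M)$ with two properties. First, on the generators it is $[\cL_F]\mapsto \frac{1}{1-c_1(\cL_F)}$; if their normalization differs, e.g.\ $[\cL_F]^{-1}\mapsto 1-c_1(\cL_F)$, this is the same map. Second, it intertwines the functionals up to a twist by a line bundle:
\[
\chi(\xi)=\deg\big(\zeta(\xi\cdot[\cL_E])\big),
\]
which I would read off from their exceptional Hirzebruch--Riemann--Roch statement. Transported to Frobenius quotients, $\zeta$ is exactly the map $T^{u_F}\mapsto \frac{1}{1-\partial_{u_F}}$, with the $u_F$ as a basis of $\Lambda$.

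\emph{Reduction to the Pochhammer theorem.} Apply \Cref{prop:exceptional and dual exceptional} to the pair $\big(K_{T^{u_E}\Snap_M},A_{\Vol_M}\big)$. Let $\zeta'$ be $\zeta$ precomposed with multiplication by $T^{u_E}$. Then the displayed identity says $\chi_f=\deg_g\circ\zeta'$, so $f=T^{u_E}\Snap_M$ is the functional pulled back along the exceptional map. The direct route, which avoids the $\sim$-equivalence, is the computation underlying that proposition: for $g$ a polynomial and $\psi(T^{u_i})=(1-\partial_{u_i})^{-1}$, one has
\[
\deg_g\Big(\prod_i(1-\partial_{u_i})^{-a_i}\Big)=\Poch^\uparrow(g)(a).
\]
This holds because $(1-\partial)^{-a}=\sum_k \binom{a+k-1}{k}\partial^k$ and $\partial^k$ evaluated at $0$ on monomials picks out coefficients times $k!$, which turns $x^k$ into the rising factorial $a^{\overline{k}}$. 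Hence
\[
\Poch^\uparrow(\Vol_M)(a)=\deg\,\zeta\Big(\prod_F[\cL_F]^{a_F}\Big)=\chi\Big(\prod_F[\cL_F]^{a_F}[\cL_E]^{-1}\Big)=\Snap_M(a-u_E).
\]
Rearranging gives $\Snap_M=T^{u_E}\Poch^\uparrow(\Vol_M)$.

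\emph{Main obstacle.} The hard part is bookkeeping, not mathematics. One must match the sign and normalization conventions of \cite{LLPP24}: whether the twist is by $[\cL_E]$ or its inverse, and whether $\zeta$ sends $[\cL_F]$ to $(1-c_1)^{-1}$ or to $1+c_1$, the latter of which would give $\Poch^\downarrow$. This determines the direction of the shift $T^{\pm u_E}$. I would resolve it by checking the identity on $U_{1,1}$ or the rank-one Boolean matroid, where $\Snap$ and $\Vol$ can be computed by hand.
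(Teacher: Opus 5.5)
Your direct computation is correct and is essentially the paper's own argument unwound. The paper takes $\td_{\frac{1}{1-t}}=\frac{1}{1-\partial_{u_E}}=\varphi_{\frac{1}{1-t}}(T^{u_E})$ from \cite[Theorem 1.2]{LLPP24}, obtains $\Snap_M=\Poch^\uparrow(\td_{\frac{1}{1-t}}\cdot\Vol_M)$ from \Cref{thm:exceptional-general}, and moves $T^{u_E}$ outside via \Cref{lem:Psi-shift}; this is exactly your evaluation $\Poch^\uparrow(\Vol_M)(a)=\deg\zeta\bigl(\prod_F[\cL_F]^{a_F}\bigr)=\Snap_M(a-u_E)$. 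The only slip is in the intermediate paragraph you then abandon: the functional pulled back along $\zeta$ is $T^{-u_E}\Snap_M$, and along $\zeta'$ it is $\Snap_M$, not $T^{u_E}\Snap_M$; your final computation has the sign right.
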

In particular, this provides an efficient way to compute the Snapper polynomial of a matroid.

\medskip

Riemann-Roch pairs are also well-behaved with respect to lattice homomorphisms. Given a lattice homomorphism $\phi: \Lambda_1 \to \Lambda_2$, under certain conditions, we obtain a pushforward map $\phi_*: K_{f_1} \to K_{f_2}$ and a pullback map $\phi^*: K_{f_2} \to K_{f_1}$, and similarly for the continuous Frobenius quotients. In \Cref{thm:grr} we show that the pullbacks\footnote{Here the \emph{pullback} of the lattice homomorphism would correspond to the \emph{pushforward} of the corresponding map of smooth algebraic varieties in the algebro-geometric setting. See a warning in the beginning of \Cref{sec:grr} for more details.} satisfy a combinatorial analogue of the Grothendieck-Riemann-Roch theorem. That is, the following diagram is commutative:
\[
\begin{tikzcd}
\Q K_{f_2} \arrow[d, "\td_2\cdot \ch(-)"'] \arrow[r, "\phi^*"] & \Q K_{f_1} \arrow[d, "\td_1\cdot\ch(-)"] \\
\Q A_{g_2} \arrow[r, "\phi^*"]                          & \Q A_{g_1}.                    
\end{tikzcd}
\]

\medskip

The paper is structured as follows. In \Cref{sec:rings-with-duality} we study discrete and continuous Frobenius quotients and give a criterion for the existence of a Chern character connecting the two.

\Cref{sec:standard} focuses on the standard graded case where $\Sigma=\Gamma=\Lambda$ and develops the structural properties like the existence of a $\lambda$-structure, Chern classes, dualizing classes, etc.\ Moreover, we give a criterion for the existence of exceptional isomorphisms, with applications for matroids and full flag varieties.

In \Cref{sec:hrr-pairs}, we generalize the notion of Riemann-Roch pairs to the case where $\Sigma$ and $\Gamma$ may be distinct, but the rings $K_f$ and $A_g$ are nevertheless related via an isomorphism $\ch: \Q K_f \to \Q A_g$, behaving like the Chern character. We show that the pair $(K_f, A_g)$ can be embedded into a standard graded Riemann-Roch pair, which allows us to make use of the splitting principle to transfer all of the structures above---the $\lambda$-ring structure, Serre duality, and the associated graded ring---to the general case. This generality is needed for examples such as the Grassmannian $\Gr(2,4)$, whose $K$-ring is not generated by line elements (\Cref{example:grassmannian-2-4}).

Finally, we provide a range of applications in \Cref{sec:examples}. We connect our results on dualizing classes to the anticanonical polytopes of \cite{kavvil} in the context of linear families of polytopes. We further express McMullen's polytope algebra as a limit of Frobenius quotients and show that such limits inherit a Chern character, well-defined Chern classes and a $\lambda$-ring structure (\Cref{thm:limit-lambda-chern}). Moreover, we give explicit formulas expressing the Snapper and volume polynomials on toric variety bundles, allowing us to give concrete expressions of the Chow and $K$-rings of these varieties as Frobenius quotients in \Cref{thm:bundle K and Chow}. As a corollary, we show that the ring of conditions and its $K$-theoretic version of horospherical homogeneous space have a Chern character, well-defined Chern classes and a $\lambda$-ring structure (\Cref{cor:conditions-ring-horospherical}). Lastly, we study the Chow and $K$-theory of fans. We translate the Ehrharticity condition of \cite{EhrhartFans} into a system of difference equations and give a necessary condition for the existence of exceptional isomorphisms in this setting.

\subsection*{Acknowledgements}
We would like to thank Ben Briggs, Christian Haase, Matt Larson, Dustin Ross and Evgeny Smirnov for stimulating discussions on the topic. During the final stages of the project, LM was a visitor at the Institute for Theoretical Studies at ETH Z\"urich. LM thanks the ITS for its  hospitality and inspiring research environment.

The work of MD and LM was supported by the SPP 2458 ``Combinatorial Synergies", funded by the Deutsche Forschungsgemeinschaft (DFG, German Research Foundation)-- project number 539974215, as well as Swiss National Science Foundation (SNSF) grant -- 
224099. AG has received funding from the Deutsche Forschungsgemeinschaft (DFG, German Research Foundation) TRR 326 \emph{Geometry and Arithmetic of Uniformized Structures}, project number 444845124, from the Deutsche Forschungsgemeinschaft (DFG, German Research Foundation) Sachbeihilfe \emph{From Riemann surfaces to tropical curves (and back again)}, project number 456557832, and from the Marie-Sk\l{}odowska-Curie-Stipendium Hessen (as part of the HESSEN HORIZON initiative).

\subsection*{Notation}
Below you may find a concise reference for the notation which will be used throughout the paper.
\begin{center}
\begin{tabular}{r|l}
    $\N$ & non-negative integers \\
    $\vec u$ & a tuple $\vec u = (u_1, \dots, u_n)$ \\
    $\Q R$ & $\Q \otimes_\Z R$, where $R$ is a ring \\
    $\Lambda$ & a lattice \\
    $\Gamma$ & a \emph{graded} lattice \\
    $\Sigma$ & an affine semigroup \\
    $\Diff(\Gamma)$ & differential operators on $\Gamma$ with $\Z$ coefficients \\
    $\Sh(\Sigma)$ & shift operators on $\Sigma$ with $\Z$ coefficients \\
    $K_f$ & the discrete Frobenius quotient $\Sh(\Sigma)/\Ann(f)$ \\
    $A_g$ & the continuous Frobenius quotient $\Diff(\Gamma)/\Ann(g)$ \\
    $T^u$ & shift by $u$, $T^uf(x) = f(x + u)$\\
    $\Delta_u$ & $T^u - 1$\\
    $D_u$ & $1 - T^{-u}$
\end{tabular}
\end{center}
\medskip

Throughout the paper we will use multi-index notation, i.e.\ for two tuples $\vec a, \vec b$ of the same size $n$, we let
\[
    \vec a^{\vec b} := \prod_{i = 1}^n a_i^{b_i},
    \qquad
    \vec a! = \prod_{i = 1}^n a_i!,
    \qquad\textrm{and}\qquad
    \|\vec a\| = \sum_{i = 1}^n |a_i|.
\]


\section{Rings with Frobenius duality}
\label{sec:rings-with-duality}

In this section we will explore two constructions of commutative Frobenius rings. We will construct these rings as quotients of shift and differential operator algebras respectively, and call these the discrete, resp. continuous Frobenius quotients. Let us start by recalling the definition of a commutative Frobenius ring.

\begin{definition}
    A commutative ring $A$ with identity is called a (commutative) \emph{Frobenius ring}
    if it admits an additive homomorphism $\ell: A \to \Q$ such that the pairing
    \begin{align*}
    A \times A &\to \Q\\
       (a, b) &\mapsto \ell(a\cdot b) 
    \end{align*}
    is non-degenerate.
\end{definition}

The objects motivating the study of such rings are numerical $K$-rings and numerical Chow rings of smooth complete algebraic varieties.

\begin{example}
\label{ex:numerical-K}
    Let $X$ be a smooth complete variety of dimension $d$. Then its Grothendieck group of vector bundles $K(X)$, also called its $K$-ring, admits the Euler characteristic $\chi: K(X) \to \Z$, which is given by the pushforward to a point.
    This allows us to define a pairing on $K(X)$ given by $\chi(-\cdot -)$. In general, this pairing might be degenerate, so $K(X)$ is not necessarily a Frobenius ring.

    We can however take a quotient of $K(X)$ to make it into a Frobenius ring.
    We will say that two classes $[E], [E']$ are numerically equivalent, denoted $[E] \sim [E']$,
    whenever $\chi([E] \cdot [F]) = \chi([E'] \cdot [F])$ for all $[F] \in K(X)$. The quotient
    \[K_\mathrm{num}(X) := K(X)/\sim\]
    is called the \emph{numerical $K$-ring} of $X$.
    By construction, the pairing induced on $K_\mathrm{num}(X)$ is non-degenerate and so $K_\mathrm{num}(X)$ is a Frobenius ring.
\end{example}

\begin{example}
\label{ex:numerical-chow}
    Let $X$ be any smooth complete variety of dimension $d$. Then $A(X)$ admits the degree map $\deg: A(X) \to \Z$, which is given by the pushforward to a point. In general, the induced pairing may be degenerate. However, if we quotient out by numerical equivalence like in \Cref{ex:numerical-K}, the induced pairing on the resulting ring $A_\mathrm{num}(X)$ is non-degenerate, making $A_\mathrm{num}(X)$ a Frobenius ring.
\end{example}

Chow rings are in addition graded rings, and the Frobenius pairing is compatible with this grading. In such case one usually speaks about \emph{Poincar\'e duality rings}.

\begin{definition}\label{def:poincare-duality}
    When $A = A^0 \oplus \dots \oplus A^d$ is a graded commutative ring
    with identity, we say that $A$ satisfies \emph{Poincar\'e duality} (of dimension $d$)
    when there is an additive homomorphism $\deg: A^d \to \Q$ such that the induced pairing
    \begin{align*}
    A^i \times A^{d-i} &\to \Q\\
    (\alpha, \beta) &\mapsto \deg(\alpha \cdot \beta),
    \end{align*}
    is non-degenerate for all $i$.
\end{definition}

The map $\deg$ may be seen as a map $A \to \Z$ after extending by zero to graded components of lower degree.
The non-degeneracy of the pairing $A^i \times A^{d-i} \to \Z$ for all $i$ implies that the pairing $A \times A \to \Z, (\alpha, \beta) \mapsto \deg(\alpha \cdot \beta)$ is also non-degenerate. 

In certain cases, $A(X)$ turns out to be a Poincar\'e duality ring, and $K(X)$ a Frobenius ring even before passing to the numerical quotients. We give two examples of where this happens.

\begin{example}
    Any smooth complex projective variety $X$ with an affine stratification has $A(X)$ freely generated by the classes of the strata by \cite{totaro14}. In particular, the same holds for $H^{\bullet}(X, \Z)$, and hence odd degrees vanish, and we have $A(X) \cong H^{2\bullet}(X, \Z)$. Since there is no torsion, $H^{2\bullet}(X, \Z)$ has Poincar\'e duality in the sense of \Cref{def:poincare-duality}. We conclude that in this case $A(X)$ also has Poincar\'e duality.
\end{example}

\begin{example}
    A wonderful compactification $W_\cA$ of a hyperplane arrangement complement $\cA$ in $\p^n$ also has $A(W_\cA) \cong H^{2\bullet}(W_\cA, \Z)$, because it's a repeated blowup of $\p^n$ along (strict transforms of) linear subspaces of $\p^n$. In particular it has Poincar\'e duality.
    Let $M$ be the matroid associated to $\cA$ and $\Sigma_M$ its Bergman fan.
    By \cite[Theorem 3]{FY04}, the Chow ring of the toric variety $X_{\Sigma_M}$ is isomorphic to $A(W_\cA)$, and so this implies that it's also a Poincar\'e duality ring.
    Note that the socle degree of $A(X_{\Sigma_M})$ is not $\dim(X_{\Sigma_M})$, but rather $\dim(W_\cA)$.
\end{example}

We will now delve into the aforementioned constructions yielding Frobenius rings.

\subsection{Discrete Frobenius quotients}

The first construction, which we call the \emph{discrete Frobenius quotient}, is the quotient of the ring of shift operators by the ideal annihilating a function on a given affine semigroup.

Let $\Sigma$ be an affine semigroup (i.e.\ up to isomorphism a finitely generated submonoid of $\N^k$ for some $k$).
We will denote by $\Fun(\Sigma,\Q)$ the set of maps $f:\Sigma\to \Q$.
Let us further denote by $\Z[\Sigma]$ the semigroup algebra of $\Sigma$. An element of $\Z[\Sigma]$ can be written as a formal sum
\[
\sum_{i = 1}^r a_i T^{u_i},
\]
where $a_i \in \Z$ and $u_i \in \Sigma$.

The semigroup algebra $\Z[\Sigma]$ is acting on the set of functions
$\Fun(\Sigma,\Q)$ via shifts.
That is we define $T^u f(x) = f(x + u)$ and extend by linearity to all of $\Z[\Sigma]$.
To emphasize this action, we denote $\Z[\Sigma]$ by $\Sh(\Sigma)$, and we call it the ring of shift operators.
This makes $\Fun(\Sigma, \Q)$ into a $\Sh(\Sigma)$-module.

Let $f: \Sigma \to \Q$ be a function, then 
\[
\Ann_{\Sh(\Sigma)}(f) = \{E \in \Sh(\Sigma) \,|\, E \cdot f \equiv 0\}
\]
is an ideal of $\Sh(\Sigma)$ by associativity of the action of $\Sh(\Sigma)$ on $\Fun(\Sigma, \Q)$.

\begin{definition}
We call the ring 
\[K_f := \Sh(\Sigma)/\Ann(f)\]
the \emph{discrete Frobenius quotient} associated to the function $f$.
\end{definition}

A similar construction appears in \cite[Theorem 2.2, Theorem 2.3]{MoninSmirnov23} for algebras over a field, where it was used to describe the $K$-ring of smooth complete toric varieties.

\begin{theorem}\label{thm:shiftalg}
    Let $f:\Sigma \to \Q$ be a function.
    Then $K_f$ is a Frobenius ring whose non-degenerate pairing is
    given by the linear map
    \[
    \chi_f: K_f\to \Q,\qquad [E] \mapsto (E \cdot f)(0).
    \]
    
    Furthermore, any finitely generated commutative Frobenius ring comes from this construction.
\end{theorem}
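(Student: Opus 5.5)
The plan has two parts: non-degeneracy of the pairing, and a realization of every finitely generated Frobenius ring as some $K_f$.

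\textbf{Part 1: $\chi_f$ is well defined and non-degenerate.} The map is well defined because if $E\in\Ann(f)$ then $E\cdot f\equiv 0$, so $(E\cdot f)(0)=0$. It is additive since the action is linear. For non-degeneracy, suppose $[E]\in K_f$ satisfies $\chi_f([E][F])=0$ for all $[F]$. We test against $F=T^u$ for each $u\in\Sigma$. Since the action is by a commutative semigroup algebra,
\[
\chi_f([E][T^u]) = (T^uE\cdot f)(0) = (E\cdot f)(u).
\]
Hence $E\cdot f$ vanishes at every $u\in\Sigma$, so $E\in\Ann(f)$ and $[E]=0$. Thus the pairing $(a,b)\mapsto\chi_f(ab)$ is non-degenerate and $K_f$ is Frobenius.

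\textbf{Part 2: converse.} Let $A$ be a finitely generated commutative Frobenius ring with functional $\ell\colon A\to\Q$.
\begin{enumerate}
\item Choose a surjection from a semigroup algebra. If $A$ is generated by $a_1,\dots,a_k$, send $T^{e_i}\mapsto a_i$, which gives a surjection $\pi\colon \Z[\N^k]\to A$. So we may take $\Sigma=\N^k$, an affine semigroup.
\item Define the candidate function $f\colon\N^k\to\Q$ by $f(u)=\ell(\pi(T^u))$.
\item Compute the pairing. Multiplicativity of $\pi$ gives
\[
(E\cdot f)(u)=\ell\bigl(\pi(E)\,\pi(T^u)\bigr)
\]
for every $E$. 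The elements $\pi(T^u)$ span $A$ additively, so $E\in\Ann(f)$ iff $\ell(\pi(E)b)=0$ for all $b\in A$. By non-degeneracy of $\ell$ this holds iff $\pi(E)=0$.
\item Conclude. We get $\Ann(f)=\ker\pi$, so $K_f\cong A$. Under this isomorphism $\chi_f([E])=\ell(\pi(E))$, so the functional also matches.
\end{enumerate}

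The main obstacle is the monoid-versus-ring issue: $\pi$ must be surjective from a semigroup algebra with $\N$-exponents, not a group algebra. This is handled by using ring generators with nonnegative exponents only. That suffices because monomials $\prod a_i^{n_i}$ with $n_i\ge0$ span the subring generated by the $a_i$ over $\Z$. One also needs $1=\pi(T^0)$, which holds since $0\in\N^k$. Otherwise everything reduces to the two identities $(T^uE\cdot f)(0)=(E\cdot f)(u)$ and $(E\cdot f)(u)=\ell(\pi(E)\pi(T^u))$.
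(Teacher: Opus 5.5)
Your proposal is correct and follows essentially the same argument as the paper. Non-degeneracy comes from testing against the shifts $T^u$ via $(T^uE\cdot f)(0)=(E\cdot f)(u)$, and the converse takes a surjection from the free monoid algebra on a finite generating set, sets $f=\ell\circ\pi$ on $\Sigma$, and shows $\Ann(f)=\ker\pi$ using non-degeneracy of $\ell$; the only cosmetic difference is that you get both inclusions in one chain of equivalences, whereas the paper proves them separately.
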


\begin{proof}
    The map $\chi_f$ is well-defined, since by definition $[E] = [F]$ if and only if $E \cdot f = F \cdot f$.
    We need to show that $\chi_f$ induces a non-degenerate pairing. Let $[E] \in K_f$.
    When $[E] \neq 0$, then $E \cdot f \neq 0$ and so there exists some $u \in \Sigma$ such that
    $(E \cdot f)(u) \neq 0$. Hence
    \[
    \chi_f([T^u][E]) = (T^u E \cdot f)(0) = (E \cdot f)(u) \neq 0,
    \]
    which shows that the pairing is non-degenerate.
    
    Now, let $K$ be a commutative Frobenius algebra with Frobenius form given by $\ell$.
    Choose a set $S \subseteq K$, which generates $K$ as a ring,
    and consider $\Sigma := \N S$ the free commutative monoid generated by $S$.
    We have a surjection $\psi: \Z[\Sigma] \to K$. Let $f: \Sigma \to \Q$ be the restriction of
    $$
    \Z[\Sigma] \xrightarrow{\psi} K \xrightarrow{\ell} \Q
    $$
    to $\Sigma \subseteq \Z[\Sigma]$.
    It follows that for any $E = \sum_{i}a_iT^{u_i} \in \Z[\Sigma]$,
    \[\ell(\psi(E)) = \sum_{i}a_i\ell(\psi(T^{u_i})) = \sum_{i}a_i f(u_i)
    = \sum_{i}a_i T^{u_i}f(0) =  (E \cdot f)(0).\]
    
    Let $E \in \ker(\psi)$ and $u \in \Sigma$, then
    $(E \cdot f)(u) = (T^u E \cdot f)(0) = \ell(\psi(T^u E)) = 0,$ which implies that
    $E \cdot f = 0$, and hence $E \in \Ann(f)$.
    Reciprocally, if $E \in \Ann(f)$, then for
    all $\xi \in K$, there is some $F \in \Z[\Sigma]$ such that $\xi = \psi(F)$ and so
    $\ell(\psi(E) \xi) = \ell(\psi(EF)) = (EF \cdot f)(0) = 0$,
    from which follows by non-degeneracy of $\ell(-\cdot -)$ that 
    $\psi(E) = 0$ and hence $E \in \ker(\psi)$.

    We conclude that $\ker(\psi) = \Ann(f)$, which shows that $K \cong K_f$ as rings, and $\ell$ agrees with $\chi_f$ via this identification, making $K$ and $K_f$ isomorphic as Frobenius algebras.
\end{proof}

\begin{remark}\label{rem:action-orbit-sh}
    By the commutativity of the action of $\Sh(\Sigma)$ on $\Fun(\Sigma, \Q)$, 
    we obtain naturally 
    an action of $K_f = \Sh(\Sigma)/\Ann(f)$ on the orbit $\Sh(\Sigma) \cdot f$.
\end{remark}

There are many representations of a fixed commutative Frobenius ring, so the choice of a pair $(\Sigma, f)$ carries strictly more information than just the data of the ring $K_f$, along with the non-degenerate pairing. For example, we get from this data a notion of \emph{positive} elements, and \emph{line} elements.

\begin{definition}
A \emph{positive} element of $K_f$ is an element that may be written as $\sum_i [T^{u_i}]$ for some $u_i \in \Sigma$. We denote the set of positive elements by $K_f^+$. Note that as an additive group, $K_f$ is the groupification of $K_f^+$.

We call \emph{line} elements the invertible positive elements of $K_f$,
and we denote the set of line elements by
$K_f^1$.
\end{definition}

We will also consider \emph{$\Q$-algebras} with Frobenius duality. We could construct such an algebra like a discrete Frobenius quotient, where we instead take the quotient ring of shift operators \emph{with rational coefficients} by the annihilator of some function. As we will now show, this is equivalent to tensoring the discrete Frobenius quotient with $\Q$.

Let $\Sh_\Q(\Sigma) := \Sh(\Sigma) \otimes_\Z \Q$. The action of $\Sh(\Sigma)$ on $\Fun(\Sigma, \Q)$ extends naturally to an action of $\Sh_\Q(\Sigma)$.

\begin{proposition}
\label{prop:ann-sh-rational}
    Let $f \in \Fun(\Sigma, \Q)$ be a function.
    Then
    $\Ann_{\Sh_\Q(\Sigma)}(f) = \Q\Ann_{\Sh(\Sigma)}(f)$.
\end{proposition}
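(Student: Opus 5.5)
Both inclusions are elementary. Write $\Q\Ann_{\Sh(\Sigma)}(f)$ for the $\Q$-span of $\Ann_{\Sh(\Sigma)}(f)$ inside $\Sh_\Q(\Sigma)$, i.e.\ the image of $\Q\otimes_\Z \Ann_{\Sh(\Sigma)}(f)$. This image is injective because $\Q$ is flat over $\Z$.

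For the inclusion $\supseteq$, let $E\in\Ann_{\Sh(\Sigma)}(f)$ and $q\in\Q$. The action of $\Sh_\Q(\Sigma)$ on $\Fun(\Sigma,\Q)$ is $\Q$-linear, so $(qE)\cdot f = q(E\cdot f)=0$. Sums of annihilating elements again annihilate $f$, so the whole $\Q$-span lies in $\Ann_{\Sh_\Q(\Sigma)}(f)$.

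For the inclusion $\subseteq$, take $E=\sum_i a_i T^{u_i}\in \Sh_\Q(\Sigma)$ with $a_i\in\Q$ (finitely many terms) and $E\cdot f=0$. Choose a common denominator $N\in\Z_{>0}$ with $Na_i\in\Z$ for all $i$. Then $NE\in \Sh(\Sigma)$, and $(NE)\cdot f = N(E\cdot f)=0$, so $NE\in \Ann_{\Sh(\Sigma)}(f)$. Hence $E=\tfrac1N (NE)\in \Q\Ann_{\Sh(\Sigma)}(f)$.

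There is no real obstacle. The only point to be careful about is that functions take values in $\Q$, which is torsion-free, so clearing denominators does not affect whether $E$ annihilates $f$. As a consequence, $\Sh_\Q(\Sigma)/\Ann_{\Sh_\Q(\Sigma)}(f)\cong \Q K_f$, again by flatness of $\Q$ over $\Z$.
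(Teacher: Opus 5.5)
Your proof is correct and matches the paper's argument: the inclusion $\supseteq$ is immediate, and for $\subseteq$ you clear denominators with a common $N$, so that $NE\in\Ann_{\Sh(\Sigma)}(f)$ and $E=\frac{1}{N}(NE)$. You also make explicit a step the paper leaves implicit, namely that $NE$ still annihilates $f$; this uses only $\Q$-linearity of the action, not torsion-freeness.
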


\begin{proof}
The inclusion $\supseteq$ is obvious. Let $E = \sum_{i}a_iT^{u_i} \in \Ann_{\Sh_\Q(\Sigma)}(f)$, then there exists some $q \in \Z$ such that $qa_i \in \Z$ for all $i$. In particular,
\[
E = \frac{1}{q}\sum_{i} qa_i T^{u_i} \in \Q \Ann_{\Sh}(f).
\qedhere
\]
\end{proof}

\begin{corollary}
    Let $f \in \Fun(\Sigma, \Q)$ be a function.
    Then $\Sh_\Q(\Sigma)/\Ann_{\Sh_\Q(\Sigma)}(f) = K_f \otimes_\Z \Q$.
\end{corollary}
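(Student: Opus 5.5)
The plan is to view $\Sh_\Q(\Sigma)$ as $\Sh(\Sigma)\otimes_\Z\Q$ and apply the exact functor $-\otimes_\Z\Q$ to the short exact sequence of $\Sh(\Sigma)$-modules
\[
0 \to \Ann_{\Sh(\Sigma)}(f) \to \Sh(\Sigma) \to K_f \to 0 .
\]
Since $\Q$ is flat over $\Z$, this yields an exact sequence
\[
0 \to \Q\otimes_\Z\Ann_{\Sh(\Sigma)}(f) \to \Sh_\Q(\Sigma) \to K_f\otimes_\Z\Q \to 0 .
\]

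The first step is to identify the image of $\Q\otimes_\Z\Ann_{\Sh(\Sigma)}(f)$ in $\Sh_\Q(\Sigma)$. By injectivity this image is exactly the $\Q$-span $\Q\Ann_{\Sh(\Sigma)}(f)$ of the integral annihilator inside $\Sh_\Q(\Sigma)$. By \Cref{prop:ann-sh-rational}, this span equals $\Ann_{\Sh_\Q(\Sigma)}(f)$.

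Hence the kernel of the surjection $\Sh_\Q(\Sigma)\to K_f\otimes_\Z\Q$ is $\Ann_{\Sh_\Q(\Sigma)}(f)$. This gives the claimed identification $\Sh_\Q(\Sigma)/\Ann_{\Sh_\Q(\Sigma)}(f)\cong K_f\otimes_\Z\Q$. The map is induced by the ring map $\Sh(\Sigma)\to K_f$, so it is a ring isomorphism and not merely an additive one.

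The only point that needs care is the injectivity of $\Q\otimes_\Z\Ann(f)\to \Sh(\Sigma)\otimes_\Z\Q$, and this is exactly where flatness of $\Q$ is used. One could avoid flatness by noting that $\Sh(\Sigma)$ is torsion-free, so tensoring with $\Q$ is localization at $\Z\setminus\{0\}$. Localization is exact and commutes with quotients, so $(\Sh(\Sigma)/\Ann(f))_{(0)}=\Sh(\Sigma)_{(0)}/\Ann(f)_{(0)}$, and \Cref{prop:ann-sh-rational} identifies $\Ann(f)_{(0)}$ with $\Ann_{\Sh_\Q(\Sigma)}(f)$. I expect no real obstacle; the argument is routine.
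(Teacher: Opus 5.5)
Your proof is correct and is essentially the argument the paper has in mind; the corollary is stated without proof as an immediate consequence of \Cref{prop:ann-sh-rational}. Both present $K_f$ as $\Sh(\Sigma)/\Ann_{\Sh(\Sigma)}(f)$, tensor with $\Q$, and identify the resulting kernel $\Q\Ann_{\Sh(\Sigma)}(f)$ with $\Ann_{\Sh_\Q(\Sigma)}(f)$.

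One small correction: flatness of $\Q$ is not the step that needs care. Right exactness of $-\otimes_\Z\Q$ already shows that the kernel of $\Sh_\Q(\Sigma)\to K_f\otimes_\Z\Q$ is the image of $\Q\otimes_\Z\Ann_{\Sh(\Sigma)}(f)$, i.e.\ the $\Q$-span of the integral annihilator, whether or not that map is injective.
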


For simplicity, in the future we will denote $\Q K_f := K_f \otimes_\Z \Q$. 
Since $K_f$ has no $\Z$-torsion, it may be seen as a subring of $\Q K_f$. When $\xi \in K_f \subseteq \Q K_f$, we say $\xi$ is \emph{integral}.

\begin{remark}
    It can be easily seen that any finitely generated commutative Frobenius $\Q$-algebra, may be obtained as $\Q K_f$ for some function $f: \Sigma \to \Q$, as in \Cref{thm:shiftalg}.
\end{remark}

\subsection{Continuous Frobenius quotients}

We will now define \emph{continuous} Frobenius quotients, which are defined similarly as discrete Frobenius quotients, but in this case we consider the action of differential operators on the ring of polynomials on some graded lattice.

Let $\Gamma = \Gamma_1 \oplus \Gamma_2 \oplus \dots \oplus \Gamma_d$ be a graded lattice. Choose a basis $e_1, \dots, e_n$ of homogeneous elements
and let $x_1, \dots, x_n$ be the dual basis.
We denote by $P(\Gamma)$ the set of polynomials functions with rational coefficients on $\Gamma$, that is we identify
\[P(\Gamma) \cong \Q[x_1, \dots, x_n].\]
The grading on $\Gamma$ induces a grading on $P(\Gamma)$, such that the $x_i$ are homogeneous elements of degree equal to the degree of $e_i$.

Denote by $\Diff(\Gamma)$ the ring of differential operators with constant integer coefficients acting on $P(\Gamma)$. Via the above identification we get
\[
    \Diff(\Gamma) \cong \Z\left[\frac{\partial}{\partial x_1},\ldots,\frac{\partial}{\partial x_n}\right].
\]
Differentiation of polynomials endows $P(\Gamma)$ with a
$\Diff(\Gamma)$-module structure. $\Diff(\Gamma)$ also inherits a grading from $\Gamma$, and the grading is compatible with that on $P(\Gamma)$, in the sense that $\Diff(\Gamma)_k \cdot P(\Gamma)_d \subseteq P(\Gamma)_{d-k}$.

Consider the map $\Gamma \to \Diff(\Gamma)$ generated by
\[e_i \in \Gamma \mapsto \frac{\partial}{\partial x_i} \in \Diff(\Gamma).\]
We denote the image of $u\in \Gamma$ by $\partial_u$ and call it the directional derivative along $u$.

As in the discrete case, given a polynomial $g$, we consider here the ideal of differential operators annihilating $g$.
\[
\Ann_{\Diff(\Gamma)}(g) = \{D\in \Diff(\Gamma) \,|\, D\cdot g \equiv 0\}
\]

\begin{definition}
We call the ring
\[A_g := \Diff(\Gamma)/\Ann_{\Diff(\Gamma)}(g)\]
the \emph{continuous Frobenius quotient} associated to the polynomial $g$.
\end{definition}

The following Lemma shows that when $g$ is weighted homogeneous, then $A_g$ is a graded ring.

\begin{lemma}\label{lem:lambda-homogeneous}
    Let $g\in P(\Gamma)$ be a weighted homogeneous polynomial of degree $d$, then $\Ann_{\Diff(\Gamma)}(g)$
    is a homogeneous ideal in $\Diff(\Gamma)$.
\end{lemma}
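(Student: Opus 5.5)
The plan is to show directly that the homogeneous components of any annihilating operator again annihilate $g$. The grading on $\Diff(\Gamma)$ gives $\partial/\partial x_i$ degree $\deg e_i$. An operator $D$ homogeneous of degree $k$ sends weighted homogeneous polynomials of degree $m$ to weighted homogeneous polynomials of degree $m-k$, where a negative degree means the result is zero. This is the compatibility $\Diff(\Gamma)_k \cdot P(\Gamma)_m \subseteq P(\Gamma)_{m-k}$ noted above. It suffices to check it on monomial operators $\partial^{\vec a}$ applied to monomials $x^{\vec b}$: the result is a scalar multiple of $x^{\vec b - \vec a}$, or zero, and its weighted degree drops by exactly $\sum_i a_i \deg e_i$.

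With this in hand, take $D \in \Ann_{\Diff(\Gamma)}(g)$ and write $D = \sum_k D_k$ with $D_k$ homogeneous of degree $k$; only finitely many $D_k$ are nonzero. Since $g$ is weighted homogeneous of degree $d$, each $D_k \cdot g$ is weighted homogeneous of degree $d-k$. The polynomials $D_k\cdot g$ therefore lie in distinct graded pieces of $P(\Gamma)$ for distinct $k$. Because $P(\Gamma)=\bigoplus_m P(\Gamma)_m$ is a direct sum, the relation $0 = D\cdot g = \sum_k D_k\cdot g$ forces $D_k \cdot g = 0$ for every $k$. Hence every homogeneous component of $D$ lies in the annihilator, which is exactly the definition of a homogeneous ideal. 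That $\Ann(g)$ is an ideal at all follows, as in the discrete case, from associativity and commutativity of the action.

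I expect no real obstacle here. The only point needing care is the convention on the grading. The degrees are weighted by the $\deg e_i$, so weighted homogeneity of $g$ must be taken with respect to the same weights, and both the decomposition of $D$ and the degree-shift statement must use them. Once this is fixed consistently, the argument is just a comparison of graded components.
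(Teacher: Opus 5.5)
Your proposal is correct and follows the paper's own proof: decompose $D$ into homogeneous components $D_k$ and note that $D_k\cdot g$ lies in $P(\Gamma)_{d-k}$. Directness of the grading then forces each $D_k\cdot g=0$. Your extra check of the degree shift on monomials, and your care that the weights on $g$ and on $\Diff(\Gamma)$ match, fill in what the paper takes for granted.
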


\begin{proof}
    Let $D \in \Ann_{\Diff(\Gamma)}(g)$ and write $D = D_0 + \cdots + D_k$, where each $D_i$ is homogeneous of degree $i$. Since $D_i \cdot g \in P(\Gamma)_{d - i}$, it follows that for each $i$ we must have $D_i\cdot g = 0$, and hence $D_i \in \Ann_{\Diff(\Gamma)}(g)$.
\end{proof}

As the following proposition shows, the continuous Frobenius quotient $A_g$ is tightly related to Macaulay's inverse system, see for example \cite{iarrobino1999}.
\begin{proposition}
\label{prop:solution set to differential equation}
Let $\Lambda$ be a lattice, let $I\subseteq \Diff(\Lambda)$ be an ideal, and $A=
\Diff(\Lambda)/I$. Moreover, let $S$ 
be the set of polynomial functions on $\Lambda$ that are solutions to the partial differential equations given by $I$. Then the map
\[
    \Psi: \Hom_\Z(A,\Q) \to S, \;\; \varphi\mapsto
    \left( \mathbf{u} \mapsto 
    \sum_{\mathbf{k} \in \N^n} \frac{\mathbf{u}^{\mathbf{k}}}{\mathbf{k}!} \varphi([\partial^{\mathbf{k}}]) \right)
\]
is an isomorphism of $A$-modules with inverse
\[
    S \to \Hom_\Z(A,\Q),\;\; f\mapsto (\alpha \mapsto (\alpha \cdot f)(0)).
\]
Here we identify $\Lambda \cong \Z^n$ by fixing a basis $(e_1,\dots, e_n)$, and denote
\[
    \partial^\mathbf{k} := \partial_{e_1}^{k_1} \cdots \partial_{e_n}^{k_n}
\]
The infinite sum is well-defined, because $[\partial^\mathbf{k}] = 0$ for $\mathbf{k}$ large enough.
\end{proposition}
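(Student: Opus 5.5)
The plan is to check, in order, that $\Psi$ is well defined, that its image lies in $S$, that the proposed inverse lands in $\Hom_\Z(A,\Q)$, that the two maps are mutually inverse, and finally that $\Psi$ is $A$-linear.

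\textbf{Well-definedness.} Each $\varphi\in\Hom_\Z(A,\Q)$ is determined by its values $\varphi([\partial^{\mathbf k}])$, since the monomials $[\partial^{\mathbf k}]$ span $A$ over $\Z$. For $\Psi$ to produce a polynomial, only finitely many of these values may be nonzero, which the statement takes as given. This is automatic when $A$ is a finitely generated abelian group, for instance when $I\supseteq\Ann(g)$ for a polynomial $g$; I will read the statement with this finiteness assumption in force.

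\textbf{Image lies in $S$.} Write $\Psi(\varphi)=F$ with $F(\mathbf u)=\sum_{\mathbf k}\frac{\mathbf u^{\mathbf k}}{\mathbf k!}\varphi([\partial^{\mathbf k}])$. Since $\partial_{e_i}\frac{\mathbf u^{\mathbf k}}{\mathbf k!}=\frac{\mathbf u^{\mathbf k-e_i}}{(\mathbf k-e_i)!}$, reindexing gives the key identity
\[
\partial^{\mathbf m}F(\mathbf u)=\sum_{\mathbf k}\frac{\mathbf u^{\mathbf k}}{\mathbf k!}\varphi([\partial^{\mathbf m}\partial^{\mathbf k}]).
\]
By linearity, for any $D\in\Diff(\Lambda)$ this becomes
\[
(D\cdot F)(\mathbf u)=\sum_{\mathbf k}\frac{\mathbf u^{\mathbf k}}{\mathbf k!}\varphi([D\partial^{\mathbf k}]).
\]
If $D\in I$, then $D\partial^{\mathbf k}\in I$ for every $\mathbf k$, so each term vanishes and $D\cdot F=0$. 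Hence $F\in S$. The same identity shows that $\Psi$ is $A$-linear, where $A$ acts on $\Hom_\Z(A,\Q)$ by $(\alpha\varphi)(\beta)=\varphi(\alpha\beta)$: it says exactly $D\cdot\Psi(\varphi)=\Psi([D]\varphi)$.

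\textbf{The inverse map.} Send $f\in S$ to $\varphi_f$ with $\varphi_f(\alpha)=(\alpha\cdot f)(0)$. This is well defined on $A$ because $I$ annihilates $f$, and it is visibly additive.

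\textbf{Mutually inverse.} By Taylor's formula for polynomials,
\[
f(\mathbf u)=\sum_{\mathbf k}\frac{\mathbf u^{\mathbf k}}{\mathbf k!}(\partial^{\mathbf k}f)(0),
\]
so $\Psi(\varphi_f)=f$. In the other direction, evaluating the identity above at $\mathbf u=0$ gives $(D\cdot\Psi(\varphi))(0)=\varphi([D])$, so $\varphi_{\Psi(\varphi)}=\varphi$. The compatibility of the inverse with the $A$-action is immediate: $\varphi_{\alpha f}(\beta)=(\beta\alpha\cdot f)(0)=(\alpha\varphi_f)(\beta)$.

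\textbf{Main obstacle.} The only delicate step is the reindexing identity for $\partial^{\mathbf m}F$, which carries the whole argument; it is a direct termwise differentiation. Beyond that, one must be careful that the finiteness needed for $\Psi(\varphi)$ to be a polynomial is assumed rather than proved in full generality.
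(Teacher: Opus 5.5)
Your proof is correct and follows the paper's argument essentially step for step: the paper also derives the reindexing identity $\partial^{\mathbf a}\Psi(\varphi)=\Psi([\partial^{\mathbf a}]\cdot\varphi)$ to obtain $A$-linearity (and hence $\Psi(\varphi)\in S$), evaluates at $0$ to get $(\alpha\cdot\Psi(\varphi))(0)=\varphi(\alpha)$, and uses the Taylor expansion of $f\in S$ for the other composite. Your finiteness caveat is well taken, since it fails for example when $I=0$. The paper likewise just asserts that $[\partial^{\mathbf k}]=0$ for large $\mathbf k$ (``$\Psi(\varphi)$ is clearly a polynomial''), and this does hold in the cases where the proposition is applied, such as $I=\Ann(g)$ or the kernel of a surjection onto a ring generated by nilpotent elements.
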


\begin{proof}
    Let $\varphi \in \Hom_\Z(A, \Q)$, then $\Psi(\varphi)$ is clearly a polynomial. The fact that $\Psi(\varphi) \in S$ will follow if we show that $\Psi$ is an $A$-module homomorphism. Since the map is $\Z$-linear it suffices to show this for monomials $\partial^\mathbf{a}$.
    We have that
    \begin{align*}
        \partial^\mathbf{a} \Psi(\varphi)(\mathbf{u}) &=
        \sum_{\substack{\mathbf{k} \in \N^n\\\mathbf{k} \geq \mathbf{a}}}
        \frac{\mathbf{u}^{\mathbf{k} - \mathbf{a}}}{(\mathbf{k} - \mathbf{a})!} \varphi([\partial^{\mathbf{k}}])\\
        &= \sum_{\mathbf{k'} \in \N^n }
        \frac{\mathbf{u}^{\mathbf{k'}}}{\mathbf{k'}!} \varphi([\partial^\mathbf{a}][\partial^{\mathbf{k'}}])
        = \Psi([\partial^\mathbf{a}] \cdot \varphi)(\mathbf{u}).
    \end{align*}
    This shows that $\Psi$ is a well-defined $A$-module homomorphism.
    
    Note that $\partial^\mathbf{k} \Psi(\varphi)(0) = \varphi([\partial^\mathbf{k}])$, and so it follows by linearity that for any $\alpha \in A$, $(\alpha \cdot \Psi(\varphi))(0) = \varphi(\alpha)$.
    
    Reciprocally, let $f \in S$ and $\vec u \in \Lambda$, then it follows from the Taylor expansion of $f$ that
    \[
        f(\vec u) = \sum_{\mathbf{k} \in \N^n} \frac{\mathbf{u}^\mathbf{k}}{\mathbf{k}!} (\partial^{\mathbf{k}}f)(0)
        = \Psi\big(\alpha \mapsto (\alpha \cdot f)(0)\big)
    \]
    and hence this shows that the two maps are inverses to each other.
\end{proof}

As in the discrete case, $A_g$ is a Frobenius ring, and furthermore any finitely generated Frobenius ring is of this form.
Similar statements are shown for example in
\cite{iarrobino1999}, \cite[Ex. 21.7]{Eis} or \cite[Theorem 1.1]{Kaveh11} for algebras over a field.
In \cite[Theorem 4.8]{khovanskii2021gorenstein} a more general version of this statement is shown for possibly infinitely generated graded algebras over a field and which are not necessarily generated in degree 1.

\begin{theorem}\label{thm:diffalg}
Let $g\in P(\Gamma)$ be a polynomial of degree $d$.
Then $A_g$ is a Frobenius algebra with Frobenius form defined by the function 
\[\deg_g: A_g \to \Q, \qquad [D] \mapsto (D \cdot  g)(0),\]
Furthermore, if $g$ is weighted homogeneous, then $A_g$ is a graded ring of socle degree $d$, and $\deg_g$ is zero outside of the top graded component, so in particular $A_g$ is a
Poincar\'e duality ring  of dimension~$d$.

Furthermore, any commutative Frobenius ring, which is generated by finitely many nilpotent elements, arises from this construction. 
\end{theorem}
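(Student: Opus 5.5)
The argument has three parts, mirroring the proof of \Cref{thm:shiftalg}: well-definedness and non-degeneracy of $\deg_g$, the graded statement, and the converse construction.

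\textbf{Non-degeneracy.} The map $\deg_g$ is well defined because $[D]=[D']$ exactly when $D\cdot g=D'\cdot g$. Let $[D]\neq 0$, so that $h:=D\cdot g$ is a nonzero polynomial. Taylor expansion at the origin, as in \Cref{prop:solution set to differential equation}, gives
\[
h(\vec u)=\sum_{\vec k}\frac{\vec u^{\vec k}}{\vec k!}(\partial^{\vec k}h)(0).
\]
Hence some multi-index $\vec k$ satisfies $(\partial^{\vec k}h)(0)\neq 0$, that is, $\deg_g([\partial^{\vec k}][D])\neq 0$. This is exactly the non-degeneracy of the pairing; $\partial^{\vec k}$ plays the role that $T^u$ played in the discrete case.

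\textbf{The graded statement.} Assume $g$ is weighted homogeneous of degree $d$. By \Cref{lem:lambda-homogeneous} the ideal $\Ann(g)$ is homogeneous, so $A_g$ inherits the grading of $\Diff(\Gamma)$. For $D$ homogeneous of degree $k$, the polynomial $D\cdot g$ is homogeneous of degree $d-k$. Its value at $0$ therefore vanishes unless $k=d$, and it is zero when $k>d$. Consequently $A_g^k=0$ for $k>d$ and $\deg_g$ is supported on $A_g^d$.

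To get Poincar\'e duality, take a nonzero homogeneous class $[D]\in A_g^i$. Then $D\cdot g$ is a nonzero homogeneous polynomial of degree $d-i$, so some monomial operator $\partial^{\vec k}$ of weighted degree $d-i$ has $(\partial^{\vec k}D\cdot g)(0)\neq 0$. This shows the pairing $A_g^i\times A_g^{d-i}\to\Q$ is non-degenerate. The socle degree is $d$ because $\deg_g(1\cdot[\partial^{\vec k}])\neq 0$ for some monomial of top degree, so $A_g^d\neq 0$.

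\textbf{The converse.} Let $K$ be a Frobenius ring with form $\ell$, generated by nilpotent elements $s_1,\dots,s_n$. Let $\Lambda=\Z^n$ and consider the surjection $\psi\colon\Diff(\Lambda)\to K$, $\partial_{e_i}\mapsto s_i$. Since the generators are nilpotent and finite in number, $\psi(\partial^{\vec k})=0$ for $\vec k$ large. So
\[
g(\vec u):=\sum_{\vec k}\frac{\vec u^{\vec k}}{\vec k!}\,\ell(\psi(\partial^{\vec k}))
\]
is a polynomial; it is $\Psi(\ell\circ\psi)$ in the notation of \Cref{prop:solution set to differential equation}, applied to $I=\ker\psi$. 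That proposition gives $(D\cdot g)(0)=\ell(\psi(D))$ for all $D$.

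It remains to show $\ker\psi=\Ann(g)$, which is the step needing the most care.
\begin{itemize}
\item If $\psi(D)=0$, then $(\partial^{\vec k}D\cdot g)(0)=\ell(\psi(\partial^{\vec k}D))=0$ for all $\vec k$. By Taylor expansion $D\cdot g=0$.
\item If $D\cdot g=0$, then $\ell(\psi(D)\psi(F))=(FD\cdot g)(0)=0$ for every $F$. Surjectivity of $\psi$ and non-degeneracy of $\ell$ give $\psi(D)=0$.
\end{itemize}
Thus $K\cong A_g$, and under this identification $\ell$ corresponds to $\deg_g$.

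The only subtlety is integrality. The Taylor coefficients of $g$ lie in $\Q$, which is fine because polynomials in $P(\Gamma)$ have rational coefficients. Also, the $A$-module identification in \Cref{prop:solution set to differential equation} is stated over $\Z$-linear maps into $\Q$, which is exactly the setting needed here.
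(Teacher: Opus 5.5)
Your proposal is correct and follows essentially the same route as the paper: non-degeneracy via Taylor expansion at the origin, the grading via \Cref{lem:lambda-homogeneous}, and the converse by taking $g=\Psi(\ell)$ as in \Cref{prop:solution set to differential equation} and checking $\ker\psi=\Ann(g)$ in both directions. You are more explicit than the paper about the Poincar\'e duality claims (vanishing of $A_g^k$ for $k>d$, support of $\deg_g$ on $A_g^d$, and non-degeneracy of each pairing $A_g^i\times A_g^{d-i}$), which the paper leaves implicit.
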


\begin{proof}
    Clearly $\deg$ is well-defined. Suppose there is some $[D] \in A_g$ such that
    $\deg([D] \cdot [D']) = 0$ for all $[D'] \in A_g$. Then in particular
    $(D' \cdot (D \cdot g)) (0) = 0$ for all $D' \in \Diff(\Gamma)$,
    which implies that $D \cdot g = 0$, and hence $[D] = 0$ by definition of $A_g$. This shows that the pairing is non-degenerate.

    The fact that $A_g$ is graded when $g$ is homogeneous follows by \Cref{lem:lambda-homogeneous}.

    Now, suppose $A$ is a finitely generated commutative Frobenius ring. 
    Let $\alpha_1, \dots, \alpha_n \in A$ be a set of nilpotent generators.
    Fix $e_1, \dots, e_n$ a basis of $\Gamma$.
    Consider the homomorphism
    $\varphi: \Diff(\Gamma) \to A$
    generated by $\partial_i \mapsto \alpha_i$. Clearly, $\varphi$ is surjective. Denote by $I$ the kernel of this map and identify $A \cong \Diff(\Gamma)/I$.

    Let $g: \Gamma \to \Q$ be the polynomial defined by
    \[g(\mathbf{u}) = \sum_{\mathbf{k} \in \N^n} \frac{\mathbf{u}^\mathbf{k}}{\mathbf{k}!}\deg([\partial^\mathbf{k}]).\]
    The fact that $g$ is a polynomial follows from the assumption that the generators are nilpotent. Then $I \subseteq \Ann(g)$ by \Cref{prop:solution set to differential equation}, since $g = \Psi(\deg)$.
    
    Now, suppose $D \in \Ann(g)$.
    Then for all $D' \in \Diff(\Gamma)$,
    $\deg([D'][D]) = D'D \cdot g = 0$.
    By non-degeneracy of the pairing $\deg(-\cdot-)$, it follows that $[D] = 0$,
    which shows that $D \in I$.

    We conclude that $I = \Ann(g)$, and hence this gives an isomorphism $A \cong A_g$.
\end{proof}

\begin{remark}
    As in \Cref{rem:action-orbit-sh}, we have an induced action of $A_g$ on the orbit
    $\Diff(\Gamma) \cdot g$.
\end{remark}

Write $\Diff_\Q(\Gamma) := \Diff(\Gamma) \otimes_\Z \Q$ for the ring of differential operators on $\Gamma$ with constant rational coefficients.
As was the case for the ring of shift operators, the action of $\Diff(\Gamma)$ on $P(\Gamma)$
extends naturally to an action of $\Diff_\Q(\Gamma)$, and we get the following fact.

\begin{proposition}\label{prop:ann-diff-rational}
    Let $g \in P(\Gamma)$ be a polynomial.
    Then
    $\Ann_{\Diff_\Q(\Gamma)}(g) = \Q\Ann_{\Diff(\Gamma)}(g)$.
    It follows that 
    \[\Diff_\Q(\Gamma)/\Ann_{\Diff_\Q(\Gamma)}(g) = \Q A_g,\]
    where we denote $\Q A_g := A_g \otimes_\Z \Q$.
\end{proposition}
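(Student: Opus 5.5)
The plan mirrors the proof of \Cref{prop:ann-sh-rational}, the only change being that monomials $T^{u}$ are replaced by differential monomials. The inclusion $\Q\Ann_{\Diff(\Gamma)}(g) \subseteq \Ann_{\Diff_\Q(\Gamma)}(g)$ holds because the action of $\Diff_\Q(\Gamma)$ on $P(\Gamma)$ is $\Q$-linear in the operator. If $D \in \Ann_{\Diff(\Gamma)}(g)$ and $q \in \Q$, then $(qD)\cdot g = q(D\cdot g) = 0$. Sums of such elements also annihilate $g$.

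For the reverse inclusion, I take $D = \sum_{\mathbf k} a_{\mathbf k}\partial^{\mathbf k} \in \Ann_{\Diff_\Q(\Gamma)}(g)$, which has finitely many nonzero coefficients $a_{\mathbf k} \in \Q$. I choose a common denominator $q \in \Z_{>0}$ with $q a_{\mathbf k} \in \Z$ for all $\mathbf k$. Then $qD \in \Diff(\Gamma)$ and $(qD)\cdot g = q(D\cdot g) = 0$, so $qD \in \Ann_{\Diff(\Gamma)}(g)$. Writing $D = \frac1q (qD)$ shows that $D \in \Q\Ann_{\Diff(\Gamma)}(g)$.

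For the quotient statement, I use flatness of $\Q$ over $\Z$. Tensoring the exact sequence
\[
0 \to \Ann_{\Diff(\Gamma)}(g) \to \Diff(\Gamma) \to A_g \to 0
\]
with $\Q$ keeps it exact. The image of $\Q\otimes_\Z \Ann_{\Diff(\Gamma)}(g)$ in $\Diff_\Q(\Gamma)$ is $\Q\Ann_{\Diff(\Gamma)}(g)$, and by the first part this equals $\Ann_{\Diff_\Q(\Gamma)}(g)$. Hence
\[
\Q A_g = A_g\otimes_\Z\Q \cong \Diff_\Q(\Gamma)/\Ann_{\Diff_\Q(\Gamma)}(g).
\]
There is no real obstacle here. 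The only point to watch is identifying $\Q\otimes_\Z I$ with its image $\Q I$ in $\Diff_\Q(\Gamma)$, which is legitimate because $\Q$ is flat over $\Z$.
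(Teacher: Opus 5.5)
Your proof is correct and follows essentially the same approach as the paper: the paper gives no separate argument for this proposition and instead points to the shift-operator case (\Cref{prop:ann-sh-rational}), whose proof is exactly your common-denominator argument, with the quotient identification then following immediately. For the last step, right exactness of $-\otimes_\Z\Q$ already suffices, so flatness is not actually needed, though invoking it does no harm.
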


\begin{remark}
    As in the discrete case, any finitely generated commutative $\Q$-algebra with Poincar\'e duality may be obtained as $\Q A_g$ for some weighted homogeneous polynomial $g$, as in \Cref{thm:diffalg}.
\end{remark}

\subsection{Isomorphisms and Todd Classes}

As seen in Examples \ref{ex:numerical-K} and \ref{ex:numerical-chow}, the rings $K_f$ and $A_g$ model the numerical $K$-rings and Chow rings of any smooth complete variety.
For such a variety $X$, the Hirzebruch-Riemann-Roch theorem states that for any vector bundle $E$ on $X$, the following formula holds
\[\chi(E) = \deg(\ch(E) \cdot \td(X)).\]
In particular, this implies that two vector bundles are numerically equivalent if and only if their images by the Chern character are numerically equivalent, and so the Chern character descends to an isomorphism $\Q K_\num(X) \to \Q A_\num(X)$. This illustrates that there is a natural connection between the existence of an isomorphism between the numerical $K$- and Chow rings, and the Todd class, which relates the pairings on the two rings. In what follows, we will generalize this connection to Frobenius quotients.

Let $g\in P(\Gamma)$ be a polynomial and let
\[A_g = \Diff(\Gamma)/\Ann(g)\]
be its associated continuous Frobenius quotient.

\begin{theorem}
\label{thm:map-descends-iff-hrr}
    Let $\varphi: \Sh_\Q(\Sigma) \to \Q A_g$ be a surjective map and $f: \Sigma \to \Q$ a function. Then $\varphi$ descends to an 
    isomorphism $\Q K_f \cong \Q A_g$ if and only if there exists an invertible element
    $\td \in \Q A_g^\times$ such that
    \begin{equation}
    \label{eq:hrr}
        \chi_f(E) = \deg_g(\td \cdot \varphi(E))\qquad\textrm{for all }E \in \Sh(\Sigma).
    \end{equation}
    We call $\td$ the \emph{Todd class} of $\varphi$.
\end{theorem}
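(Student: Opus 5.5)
The proof rests on the non-degeneracy of the pairings on $\Q K_f$ and $\Q A_g$ (Theorems \ref{thm:shiftalg} and \ref{thm:diffalg}, extended to $\Q$ via Propositions \ref{prop:ann-sh-rational} and \ref{prop:ann-diff-rational}). Throughout I assume that $\varphi$ is a $\Q$-algebra homomorphism; this is how it will be used, for instance with $T^u\mapsto\exp(\partial_u)$. The statement says only ``map'', but the argument needs multiplicativity. Since $\varphi$ is surjective, $\varphi$ descends to an isomorphism $\Q K_f\cong\Q A_g$ exactly when $\ker\varphi=\Q\Ann_{\Sh}(f)=\Ann_{\Sh_\Q}(f)$. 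So the task is to characterize this equality of ideals.

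\emph{($\Leftarrow$)} Assume $\td$ exists and satisfies \eqref{eq:hrr}. Extending $\Q$-linearly, \eqref{eq:hrr} holds for all $E\in\Sh_\Q(\Sigma)$. Then
\[
E\in\Ann(f) \iff (FE\cdot f)(0)=0 \text{ for all } F \iff \chi_f(FE)=0 \text{ for all } F.
\]
The first equivalence holds because $(E\cdot f)(u)=(T^uE\cdot f)(0)$. By \eqref{eq:hrr} and multiplicativity, this is equivalent to $\deg_g(\td\,\varphi(F)\varphi(E))=0$ for all $F$. Since $\varphi$ is surjective and $\td$ is a unit, $\td\,\varphi(F)$ ranges over all of $\Q A_g$. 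Non-degeneracy of $\deg_g$ then gives $\varphi(E)=0$. Hence $\ker\varphi=\Ann(f)$, and $\varphi$ induces an isomorphism.

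\emph{($\Rightarrow$)} Assume $\bar\varphi\colon\Q K_f\to\Q A_g$ is an isomorphism. Consider the linear functional $\ell=\chi_f\circ\bar\varphi^{-1}$ on $\Q A_g$. Because $\Q A_g$ is a finite-dimensional $\Q$-algebra (as in \Cref{prop:solution set to differential equation}: the quotient is finite since $g$ is a polynomial) and $\deg_g$ is non-degenerate, the map $a\mapsto\deg_g(a\cdot -)$ is an isomorphism $\Q A_g\to\Hom(\Q A_g,\Q)$. Therefore there is a unique $\td\in\Q A_g$ with $\ell=\deg_g(\td\cdot -)$, and this is precisely \eqref{eq:hrr}.

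The remaining point, and the main one, is that $\td$ is a unit. Suppose $\td\,a=0$ for some $a\in\Q A_g$. Then for every $b$,
\[
\ell(ab)=\deg_g(\td\,ab)=0,
\]
so $\chi_f(\bar\varphi^{-1}(a)\,\xi)=0$ for all $\xi\in\Q K_f$. Non-degeneracy of $\chi_f$ on $\Q K_f$ forces $\bar\varphi^{-1}(a)=0$, hence $a=0$. Thus multiplication by $\td$ is injective on the finite-dimensional space $\Q A_g$, so it is bijective, and $\td$ is invertible. The finite-dimensionality of $\Q A_g$ is the only input beyond non-degeneracy, and it is used both for the representability of $\ell$ and for this invertibility step.
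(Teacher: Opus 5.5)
Your proof is correct and follows essentially the same route as the paper: for ($\Rightarrow$) you represent $\chi_f\circ\bar\varphi^{-1}$ as $\deg_g(\td\cdot -)$ via the perfect pairing on the finite-dimensional $\Q A_g$ and show $\td$ is a non-zero-divisor, hence a unit; for ($\Leftarrow$) you prove $\ker\varphi=\Ann(f)$ using surjectivity of $\varphi$, invertibility of $\td$ and non-degeneracy of $\deg_g$. The one cosmetic difference is that you invoke non-degeneracy of $\chi_f$ on $\Q K_f$ directly in the unit step, where the paper unfolds it by evaluating $E\cdot f$ at shifts. Your explicit assumption that $\varphi$ is a ring homomorphism is also used implicitly in the paper's argument.
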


\begin{proof}
    Suppose that $\varphi$ induces an isomorphism $\hat{\varphi}: \Q K_f \to \Q A_g$. Since $\Q A_g$ is a finite-dimensional vector space, the Frobenius pairing is perfect, and hence $\Q A_g \cong \Hom_\Q(\Q A_g, \Q)$.
    So let $\td \in \Q A_g$ be the dual of the composite 
    \[\Q A_g \xrightarrow{\hat{\varphi}^{-1}} \Q K_f \xrightarrow{\chi_f} \Q.\]
    Then by definition
    \[\chi_f(\hat{\varphi}^{-1}(\alpha)) = \deg_g(\td\cdot \alpha)\textrm{ for all }\alpha \in A_g,\]
    and we obtain \Cref{eq:hrr} by setting $\alpha=\varphi(E)$ for $E\in \Sh(\Sigma)$.

    Because $A_g$ is zero-dimensional, to show that $\td$ is invertible, it suffices to show that it is not a zero-divisor. Let $E\in \Sh(\Sigma)$ be such that $\td\cdot \varphi(E)=0$. Then for all $u\in \Sigma$ we have
        \begin{equation}\label{eq:hrr-shift}
        (E\cdot f)(u) = \chi_f(T^u\cdot E) = \deg_g(\td\cdot\varphi(T^u)\cdot\varphi(E) )
        = 0,
        \end{equation}
    and hence $E \in \Ann(f)$. Since $\hat\varphi$ is an isomorphism, $\Ann(f) = \ker(\varphi)$, and hence
    $\varphi(E)=0$. By surjectivity of $\varphi$, we conclude that $\td$ is not a zero-divisor.

    Now, suppose that \Cref{eq:hrr} holds for some $\td \in \Q A_g^\times$. We will show that
    the $\varphi$ descends to the quotient and induces an isomorphism by showing that $\ker(\varphi)=\Ann(f)$. Suppose that $E \in \ker(\varphi)$.
    Then for any $u \in \Sigma$, \Cref{eq:hrr-shift} shows that
    $E \in \Ann(f)$. 
    
    Now, let $E \in \Ann(f)$. Then for any $F \in \Sh_\Q(\Sigma)$ by \Cref{eq:hrr} we have
    \[\deg_g(\td \cdot \varphi(F) \cdot \varphi(E)) = \chi_f(E \cdot F) = (F \cdot E \cdot f)(0) = 0.\]
    Since $\td$ is invertible and $\varphi$ surjective, this implies that for any $\alpha \in \Q A_g$, 
    \[\deg_g(\alpha \cdot \varphi(E)) = 0,\]
    and so since $\deg_g(-\cdot -)$ is non-degenerate, we get $\varphi(E) = 0$. We conclude that the map induced by $\varphi$ on the quotient is an isomorphism.
\end{proof}

\begin{remark}
    \label{rem:hrr-equivalent-statement}
    By linearity, \Cref{eq:hrr} is equivalent to the statement
    \[
        f(u) = \deg_g(\td \cdot \varphi(T^u))\qquad\textrm{for all }u \in \Sigma.
    \]
    It follows that in this case $f$ is fully determined by $g$, $\td$, and $\varphi$.
\end{remark}

\begin{remark}
    Note in the situation of \Cref{thm:map-descends-iff-hrr},
    $\Ann_{\Sh_\Q}(f) = \ker(\varphi)$. In particular, if $f$ and $f'$ satisfy the condition of 
    \Cref{thm:map-descends-iff-hrr} for a fixed $g$, then $\Ann_{\Sh_\Q}(f) = \Ann_{\Sh_\Q}(f')$
\end{remark}

\section{Standard graded Riemann-Roch pairs}
\label{sec:standard}

In this section we will reduce generality and consider Frobenius quotients $K_f$ and $A_g$, where $f$ and $g$ are two polynomials on some fixed lattice $\Lambda$.
We call this setting \emph{standard graded}. If $g$ is homogeneous, then $A_g$ is a standard graded Poincar\'e duality ring, which explains the choice of terminology.

Later in this section we will take a closer look at the situation, when $K_f$ and $A_g$ are connected via the \emph{Chern character}. Such pairs are what we are going to call \emph{Riemann-Roch pairs}.

The main motivating examples for considering the standard graded case come from toric geometry, and from full flag varieties.
\begin{example}
    \label{example:toric}
    Let $P$ be a full-dimensional smooth lattice polytope and $\Sigma_P$ its normal fan. Denote $X_P$ the associated smooth complete toric variety. In particular, its Chow ring
    $A(X_P)$ is isomorphic to the cohomology ring of $X_P$ (see \cite[\S 5.2]{fulton93}), so has Poincar\'e duality and since by the Hirzebruch-Riemann-Roch theorem,
    \[\chi(X_P, E) = \int_{X_P} \td(X) \ch(E)\]
    for all vector bundles $E$, it follows that $K(X_P)$ is a Frobenius ring.

    For each ray $\rho \in \Sigma_P(1)$, let $D_\rho$ be the associated torus-invariant divisor.
    Then the line bundle classes $[\cO(D_\rho)]$ generate the $K$-ring of $X_P$ and
    the cycles $[D_\rho]$ generate the Chow ring.
    It follows that if we let $\Lambda := \Z^{\Sigma_P(1)}$, then we have surjections
    \begin{align*}
        \Sh(\Lambda) \to K(X_P),& \qquad T^{e_\rho} \mapsto [\cO(D_\rho)], \\
        \Diff(\Lambda) \to A(X_P),& \qquad \partial_{e_\rho} \mapsto [D_\rho],
    \end{align*}
    whose kernels are the annihilators of two polynomials, $\Snap_P$ and $\Vol_P$ respectively, which correspond to the linear maps $\chi$ and $\int_{X_P}(-)$,
    as in Theorems \ref{thm:shiftalg} and \ref{thm:diffalg}.
    More explicitly, letting $\mathbf{t} \in \Lambda$, and $D_\mathbf{t}$ the associated divisor, we have that 
    \begin{align*}
        \Snap_P(\mathbf{t}) &= \chi(X_P, \cO(D_\mathbf{t})), \\
        \Vol_P(\mathbf{t}) &= \frac{1}{d!} \int_{X_P}D_{\mathbf{t}}^d.
    \end{align*}
    When $D_\mathbf{t}$ is nef, and $P_{\mathbf t}$ is the polytope associated to $D_\mathbf{t}$, then 
    $\Vol_P(\mathbf{t})$
    is equal to the volume of $P_{\mathbf t}$ and
    $\Snap_P(\mathbf{t}) = \#(P_{\mathbf t} \cap M)$.
    For classes that are not nef the above can be extended if we allow working with virtual polytopes as in \cite{PK92}.
    In other words, the Snapper polynomial in this case agrees with the Ehrhart lattice point counting function of a linear family of polytopes, see \Cref{subsec:linear-families}.
\end{example}

\begin{example}
\label{ex:full-flags}
Let $G$ be a reductive group with Borel subgroup $B$. Let $\Lambda$ be the character lattice of $B$. 
Every character $\lambda\in \Lambda$ defines a line bundle $L_\lambda$ on the full flag variety $G/B$ via:
\[
L_\lambda = G\times_B \C_\lambda \coloneqq \frac{G\times \C_\lambda}{B},
\]
where $\C_\lambda$ is the 1-dimensional representation of $B$ given by the character $\lambda$.
Moreover, the line bundles $L_\lambda$ generate the $K$-ring of $G/B$ and their Chern classes $c_1(L_\lambda)$ generate the Chow ring. Hence we have surjective maps:
\begin{align*}
        \Sh(\Lambda) \to K(G/B),& \qquad T^{\lambda} \mapsto [L_\lambda],\\
        \Diff(\Lambda) \to A(G/B),& \qquad \partial_{\lambda} \mapsto [c_1(L_\lambda)].
    \end{align*}
The kernels of the above maps are the annihilators of the Snapper, resp. volume polynomial:
\begin{align*}
   \mathrm{Snap}_{G/B}(\lambda) &= \chi(G/B, L_\lambda),\\
    \Vol_{G/B}(\lambda) &= \frac{1}{d!} \int_{G/B} c_1(L_\lambda)^{d},
\end{align*}
where $d = \dim(G/B)$.
By the Borel-Weil-Bott theorem, for every dominant weight $\lambda$, we get
\begin{align*}
H^0(G/B,L_\lambda) &= V_\lambda,\\
H^i(G/B,L_\lambda) &= 0 \text{ for } i>0,
\end{align*}
where $V_\lambda$ is the irreducible representation of $G$ with highest weight $\lambda$. Hence the 
Snapper polynomial for $G/B$ is given by Weyl dimension polynomial and $\Vol_{G/B}$ by its top homogeneous component:
\begin{align*}
   \mathrm{Snap}_{G/B}(\lambda) &= \frac{\prod_{\alpha\in \Delta^+}(\lambda+\rho,\alpha)}{\prod_{\alpha\in \Delta^+}(\rho,\alpha)},\\
    \Vol_{G/B}(\lambda) &= \frac{\prod_{\alpha\in \Delta^+}(\lambda,\alpha)}{\prod_{\alpha\in \Delta^+}(\rho,\alpha)}.
\end{align*}
Here, $\Delta^+$ denotes the set of positive roots and $\rho$ is the half-sum of all positive roots.

As in toric case, $\Vol$ and $\mathrm{Snap}$ can also be related  to volumes and lattice count polynomials. There exists a full-dimensional cone $C\subset \Lambda_\R$ and polytopes $\mathrm{S}(\lambda)\subseteq \R^d$ called string polytopes \cite{littelmanncones,BerZel} with the property that
\[
\mathrm{S}(\lambda+\mu)=\mathrm{S}(\lambda) + \mathrm{S}(\mu) \text{ for every } \lambda,\mu\in C,
\]
and such that $\Vol_{G/B}(\lambda)$ is the volume of $\mathrm{S}(\lambda)$
and $\mathrm{Snap}_{G/B}(\lambda) = \#(\mathrm{S}(\lambda) \cap \Z^d)$.
One can extend the above construction to all characters $\lambda$ if we allow working with virtual polytopes.
For more details on this example see \cite{MoninSmirnov23,Kaveh11}.
As in the toric case, the Snapper polynomial agrees with the Ehrhart lattice point counting function of a linear family of polytopes, see \Cref{subsec:linear-families}.
\end{example}

\subsection{\texorpdfstring{$K_f$}{Kf} in the standard graded setting}

The discrete Frobenius quotient $K_f$ behaves nicely in this setting. First of all, it is generated by invertible elements and $K_f$ is a finitely generated $\Z$-module for any choice of polynomial $f$.

\begin{proposition}\label{prop:polyart}
  If $f \in P(\Lambda)$ is a polynomial on $\Lambda$, then $K_f$ is a finitely generated $\Z$-module.
\end{proposition}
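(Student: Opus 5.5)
The plan is to show that $K_f$ embeds as an additive subgroup into a finitely generated free $\Z$-module. Then finite generation follows from the fact that $\Z$ is Noetherian. The natural candidate for the ambient module is built from the orbit $\Sh(\Lambda)\cdot f$ in the space of polynomials of degree at most $\deg f$. Since $K_f=\Sh(\Lambda)/\Ann(f)$, the map $[E]\mapsto E\cdot f$ is an injective additive homomorphism from $K_f$ into $\Fun(\Lambda,\Q)$. Its image lies in $P_{\le m}(\Lambda)$, the space of polynomials of degree at most $m=\deg f$, because shifting a polynomial does not raise its degree. However, $P_{\le m}(\Lambda)$ is a finite-dimensional $\Q$-vector space, not a finitely generated $\Z$-module, so we must control denominators.

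To do this, we fix a basis $e_1,\dots,e_n$ of $\Lambda$. We then evaluate at the finitely many lattice points $\vec k=(k_1,\dots,k_n)$ with $k_i\in\{0,\dots,m\}$, obtaining the map
\[
  \mathrm{ev}\colon K_f\to \Q^{\{0,\dots,m\}^n},\qquad [E]\mapsto \big((E\cdot f)(\vec k)\big)_{\vec k}.
\]
This map is injective, because a polynomial of degree at most $m$ in each variable is determined by its values on the grid $\{0,\dots,m\}^n$ (by Lagrange interpolation, or by induction on $n$). Next we bound the denominators. Writing $E=\sum a_iT^{u_i}$ with $a_i\in\Z$, we have $(E\cdot f)(\vec k)=\sum a_i f(\vec k+u_i)$. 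It therefore suffices to show that $f(\Lambda)\subseteq \frac1N\Z$ for some integer $N$.

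This holds because $f$ is a polynomial with rational coefficients. Expanding $f$ in the binomial basis $\prod_i\binom{x_i}{j_i}$, which is a $\Q$-basis of $P(\Lambda)$, the coefficients are rational. Integer-valued binomial polynomials take integer values on $\Z^n$, so if $N$ is a common denominator of these coefficients, then $Nf(\Lambda)\subseteq\Z$. Consequently the image of $\mathrm{ev}$ lies in $\frac1N\Z^{\{0,\dots,m\}^n}$. This is a free $\Z$-module of finite rank, so its subgroup $K_f$ is finitely generated, and in fact free.

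The only mildly delicate points are the injectivity of evaluation on the grid and the uniform denominator bound. Both are standard, and I expect no real obstacle.
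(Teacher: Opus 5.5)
Your proof is correct, but it takes a different route from the paper's.

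The paper works inside the operator ring. It notes that the difference operators $\Delta_{\pm e_i}=T^{\pm e_i}-1$ generate $\Sh(\Lambda)$ as a $\Z$-algebra. Each $\Delta_u$ lowers the degree of a polynomial by at least one, so every monomial of degree $>d=\deg f$ in these operators annihilates $f$. Hence the finitely many classes of monomials of degree $\le d$ span $K_f$ over $\Z$.

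You instead identify $K_f$ with the orbit $\Sh(\Lambda)\cdot f$, embed it into the lattice $\frac1N\Z^{\{0,\dots,m\}^n}$ by evaluation on a finite grid, and use that subgroups of finitely generated free abelian groups are finitely generated.

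What the paper's route buys:
\begin{itemize}
\item It is constructive: it gives an explicit finite spanning set.
\item It needs no control of denominators and no Noetherian argument.
\item It exhibits the nilpotence of $\ker(\rk)$ in $K_f$, which is exactly the structure exploited later for the filtration and the associated graded ring.
\end{itemize}

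What your route buys:
\begin{itemize}
\item It places $K_f$ directly inside a lattice, so freeness comes for free.
\item It uses only that the shifts of $f$ span a finite-dimensional space of functions with uniformly bounded denominators. It would therefore also cover, for example, quasi-polynomials, where difference operators need not be nilpotent.
\item The price is that the generators it produces are not explicit.
\end{itemize}

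A small simplification: you do not need the binomial basis. If $N$ is a common denominator of the coefficients of $f$ in the monomial basis, then already $Nf(\Lambda)\subseteq\Z$.
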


\begin{proof}
For any $\mathbf{u} \in \Lambda$, let $\Delta_\mathbf{u} := T^\mathbf{u} - 1$ be the difference operator with respect to $\mathbf{u}$.

Choose a basis $e_1,\ldots, e_n$ of $\Lambda$,
so that we identify $\Lambda \cong \Z^n$.
We claim that $\Delta_{\pm e_1}, \dots, \Delta_{\pm e_r}$ generate
$\Sh(\Lambda)$ as a $\Z$-algebra. Let $\vec u \in \Lambda$ and suppose without loss of generality that $u_i \geq 0$ for all $i$.
We will show by induction on $\|\vec u\|$ that $T^{\vec u}$ is in the subalgebra generated by
$\Delta_{e_1}, \dots, \Delta_{e_n}$. We have that 
$$\Delta_{e_1}^{u_1} \cdots \Delta_{e_n}^{u_n} = T^{\vec u} + \sum_{i = 1}^s \alpha_i T^{\vec v_i},$$
where for all $i$, $\|\vec v_i\| < \|\vec u\|$. By induction each $T^{\vec v_i}$ is in the span of $\Delta_{e_1}, \dots, \Delta_{e_r}$, and hence the same holds for $T^{\vec u}$.

We deduce that the equivalence classes of $\Delta_{\pm e_1}, \dots, \Delta_{\pm e_r}$ generate $K_f = \Sh(\Lambda)/\Ann(f)$ as a ring.
However since any monomials in $\Delta_{\pm e_1}, \dots, \Delta_{\pm e_r}$
of degree larger than $d$
vanish in $K_f$, we also obtain that $K_f$ is finitely generated as a $\Z$-module.
\end{proof}

In this setting, we also have that $K_f$ admits a natural notion of \emph{rank}, which comes from the rank function on $\Sh(\Lambda)$.

\begin{definition}    
Let $E = \sum_{i} a_iT^{u_i} \in \Sh(\Lambda)$ be a shift operator.
We define the \emph{rank} of $E$ to be $\rk(E) := \sum_{i} a_i$.
\end{definition}

Note that $\rk$ is a ring homomorphism 
$\Sh(\Lambda) \to \Z$. As the following proposition shows, when $f$ is a polynomial, $\rk$ descends to a ring homomorphism
$K_f \to \Z$.

\begin{proposition}
    \label{lemma:annihilator-rank-0}
    Let $f$ be a polynomial function on $\Lambda$. Then for every $E\in \Sh(\Lambda)$ with $E\cdot f=0$ we have $\rk(E)=0$. 
\end{proposition}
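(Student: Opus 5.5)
The plan is a leading-term argument: a shift of a polynomial differs from the polynomial itself only in lower degree. We assume $f \neq 0$. This is implicit in the statement, since for $f = 0$ every operator, including the identity of rank $1$, annihilates $f$.

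Write $E = \sum_i a_i T^{u_i}$ and let $d = \deg f$. Decompose $f = f_d + f_{<d}$, where $f_d \neq 0$ is the top homogeneous component.

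The key observation is that for every $u \in \Lambda$ the difference $T^u f - f$ has degree strictly less than $d$. I will check this in one of two ways:
\begin{enumerate}
    \item Directly: expanding each monomial in $f(x+u)$ by the binomial theorem, the degree-$d$ terms reproduce $f_d(x)$ exactly, and all other terms have lower degree.
    \item Via the identity $T^u h = \exp(\partial_u)\cdot h$ for polynomials $h$ (Lemma~\ref{lem:exponential and shifts}): then $T^u f - f = (\exp(\partial_u)-1)\cdot f$, and every term of $\exp(\partial_u)-1$ is a derivative of positive order, so it lowers the degree.
\end{enumerate}

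Consequently
\[
E\cdot f = \sum_i a_i\, T^{u_i} f = \rk(E)\, f + \sum_i a_i\,(T^{u_i} f - f),
\]
and the degree-$d$ homogeneous component of the right-hand side is exactly $\rk(E)\, f_d$. If $E\cdot f = 0$, this component vanishes, so $\rk(E) f_d = 0$. Since $f_d \neq 0$ and we work over $\Q$, this forces $\rk(E) = 0$.

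There is no real obstacle here. The only points needing care are excluding the degenerate case $f = 0$ and making sure the top-degree comparison uses the homogeneous component rather than a leading monomial, so that no monomial order is needed.
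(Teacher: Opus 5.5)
Your proposal is correct and is essentially the paper's argument: the paper also uses the Taylor expansion $T^{u}f=\exp(\partial_u)\cdot f$ to see that $T^u f$ and $f$ have the same top homogeneous component, so the top component of $E\cdot f$ is $\rk(E)$ times that of $f$. Your remark that one must assume $f\neq 0$ is a valid caveat that the paper leaves implicit.
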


\begin{proof}
    By fixing a basis of $\Lambda$, identify $\Lambda \cong \Z^n$.
    Using the Taylor series expansion of $f$ around $\vec x$,
    we have
    \[
        T^{\vec u}f(x) = f(\vec x + \vec u) = \sum_{\mathbf{k} \in \N^n} \frac{\mathbf{u}^\mathbf{k}}{\mathbf{k}!} (\partial^{\mathbf{k}}f)(\vec x).
    \]
    In particular, the polynomials $f$ and $T^{\vec u}f$ have the same top-degree homogeneous component. It follows that the top homogeneous component of $E\cdot f$ is just $\rk(E)$ times the top homogeneous component of $f$, from which the result follows.
\end{proof}

\begin{remark}
    Line elements are precisely the positive elements of rank 1, justifying the notation $K_f^1$.
\end{remark}

\subsection{The Chern character}
\label{subsec:chern-character}

In this subsection we construct the \emph{Chern character}, which under certain conditions yields an isomorphism $\Q K_f \cong \Q A_g$. Pairs $(K_f, A_g)$ which are related by the Chern character, and where $A_g$ is a Poincar\'e duality ring, will be called \emph{Riemann-Roch pairs}.

We denote the formal completion of $\Diff_\Q(\Lambda)$ at the maximal ideal generated by all directional derivatives by $\Diff_\Q\llbracket \Lambda \rrbracket$. After fixing a basis of $\Lambda$ we get the identification
\[
\Diff_\Q\llbracket \Lambda \rrbracket \cong 
\Q\left\llbracket\frac{\partial}{\partial x_1},\ldots,\frac{\partial}{\partial x_n}\right\rrbracket
\]

For every $u \in \Lambda$ it we define the exponential $\exp(\partial_u)\in \Diff_\Q\llbracket \Lambda \rrbracket$ by
\[
\exp(\partial_u)= \sum_{i=0}^\infty \frac{\partial_u^i}{i!}.
\]
By the standard argument, $\exp$ is multiplicative, i.e.\ for $u, v \in \Lambda$, 
\[\exp(\partial_{u + v}) = \exp(\partial_u + \partial_v) = \exp(\partial_u)\exp(\partial_v),\]
and hence induces a ring homomorphism
\[
\ch\colon \Sh(\Lambda)\to \Diff_\Q\llbracket \Lambda \rrbracket,
\]
sending $T^u$ to $\exp(\partial_u)$ for all $u\in\Lambda$.
We call this homomorphism the \emph{Chern character}. 

The following result, which can be generalized to hold for analytic functions, gives us a way to relate the action of shift operators and differential operators on polynomials.

\begin{lemma}
\label{lem:exponential and shifts}
For every $u\in \Lambda$, and polynomial $h \in P(\Lambda)$, we have 
\[
\exp(\partial_u) \cdot h = T^u h .
\]
In particular, we have 
\[
\ch(E) \cdot h=E \cdot h
\]
for all $E\in \Sh(\Lambda)$.
\end{lemma}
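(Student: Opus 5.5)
The identity $\exp(\partial_u)\cdot h = T^u h$ is Taylor's theorem for polynomials in the direction $u$; the second statement then follows by multiplicativity and linearity.

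First, the action of $\Diff_\Q\llbracket \Lambda \rrbracket$ on $P(\Lambda)$ is well defined. If $h$ has degree $m$, every monomial in the directional derivatives of total order greater than $m$ kills $h$. Hence applying a formal power series to $h$ involves only finitely many nonzero terms, so the action is given by a finite sum. It is compatible with products: the truncations of a product agree with products of truncations modulo operators of order greater than $m$, and those act by zero. This makes $P(\Lambda)$ a module over the completed ring.

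For the main identity, fix $u$ and $x\in\Lambda$ and consider the one-variable polynomial $p(t) = h(x+tu)$ in $t\in\Q$. By the chain rule, $p^{(i)}(t) = (\partial_u^i h)(x+tu)$. Since $p$ is a polynomial, its Taylor expansion at $t=0$ is exact:
\[
p(1)=\sum_{i\ge 0} \frac{p^{(i)}(0)}{i!}.
\]
This gives
\[
h(x+u)=\sum_{i\ge0}\frac{(\partial_u^i h)(x)}{i!} = (\exp(\partial_u)\cdot h)(x).
\]
Alternatively, one can use the multivariate Taylor expansion already written in the proof of \Cref{lemma:annihilator-rank-0}, together with the multinomial expansion of $\partial_u^i=(\sum_j u_j\partial_{e_j})^i$. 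Either way, this proves the first claim.

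For the second claim, write $E=\sum_i a_iT^{u_i}$. Since $\ch$ is additive by construction and both actions are linear, we get
\[
\ch(E)\cdot h=\sum_i a_i \exp(\partial_{u_i})\cdot h = \sum_i a_i T^{u_i}h = E\cdot h.
\]
The only mild obstacle is checking that the completed ring acts on polynomials, which is the finite-truncation argument of the first step. Everything else is routine.
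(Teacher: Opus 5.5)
Your proof is correct and is essentially the paper's argument. The paper fixes a basis, expands $\exp(\partial_{\vec u})=\sum_{\vec k}\frac{\vec u^{\vec k}}{\vec k!}\partial^{\vec k}$ by the multinomial theorem and matches it with the multivariate Taylor expansion of $h(\vec x+\vec u)$, which is exactly your stated alternative; your main route through the restriction $t\mapsto h(x+tu)$ and the chain rule is the same Taylor identity in one variable, and your remarks on the action of the completed ring on $P(\Lambda)$ and the linearity step for $\ch(E)$ make explicit what the paper leaves implicit.
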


\begin{proof}
    After fixing a basis $(e_1, \dots, e_n)$,
    identify $\Lambda \cong \Z^n$. Let $\vec u \in \Lambda$. First note that
    \[
    \exp(\partial_{\vec u}) = \sum_{k \geq 0} \frac{\partial_{\vec u}^k}{k!}
    = \sum_{k \geq 0} \frac{(u_1 \partial_{e_1} + \dots + u_n \partial_{e_n})^k}{k!}
    = \sum_{\vec k \in \N^n}\frac{\vec u ^{\vec k}}{\vec k !}\partial^{\vec k}.
    \]
    Then using the Taylor series expansion we obtain the following identity
    \[
        (T^{\vec u}h)(\vec x)= h(\vec x + \vec u) = \sum_{\vec k \in \N^n} \frac{\vec u ^{\vec k}}{\vec k!} (\partial^{\vec k} h)(\vec x) = (\exp(\partial_{\vec u})\cdot h)(\vec x).
    \qedhere
    \]
\end{proof}

\begin{lemma}
    \label{lem:ch-surjective}
    Let $g \in P(\Lambda)$ be a polynomial. The Chern character induces a surjective map
    $\ch: \Sh_\Q(\Lambda) \to \Q A_g$. 
\end{lemma}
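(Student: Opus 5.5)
First I make sense of the map. The Chern character lands in the completion $\Diff_\Q\llbracket\Lambda\rrbracket$, not in $\Diff_\Q(\Lambda)$. Since $g$ is a polynomial of some degree $d$, every monomial $\partial^{\vec k}$ with $\|\vec k\|>d$ annihilates $g$. Hence the ideal $\mathfrak m^{d+1}$, where $\mathfrak m$ is generated by the directional derivatives, lies in $\Ann_{\Diff_\Q}(g)$, and the quotient map $\Diff_\Q(\Lambda)\to\Q A_g$ extends to the completion: one truncates each power series in degrees $>d$. Composing $\ch\colon\Sh_\Q(\Lambda)\to\Diff_\Q\llbracket\Lambda\rrbracket$ with this map gives a ring homomorphism $\Sh_\Q(\Lambda)\to\Q A_g$ sending $T^u\mapsto[\exp(\partial_u)]$, which is a finite sum in $\Q A_g$. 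Equivalently, by \Cref{lem:exponential and shifts}, $\ch(E)\cdot g=E\cdot g$, which is consistent with this description.

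For surjectivity, $\Q A_g$ is generated as a $\Q$-algebra by the classes $[\partial_{e_i}]$ for a basis $e_1,\dots,e_n$ of $\Lambda$. The image of a ring homomorphism is a subalgebra, so it suffices to show that each $[\partial_{e_i}]$ is in the image. The key step is a logarithm. Set $N=\exp(\partial_{e_i})-1=\partial_{e_i}+\tfrac{1}{2}\partial_{e_i}^2+\cdots$, which is $\ch(\Delta_{e_i})$ with $\Delta_{e_i}=T^{e_i}-1$. Its class in $\Q A_g$ is nilpotent, with $N^{d+1}=0$. So
\[
\log(T^{e_i}) := \sum_{k=1}^{d}\frac{(-1)^{k+1}}{k}\Delta_{e_i}^k\in\Sh_\Q(\Lambda)
\]
maps to $\sum_{k=1}^{d}\frac{(-1)^{k+1}}{k}N^k=\log(1+N)=\log\exp(\partial_{e_i})=\partial_{e_i}$ modulo $\mathfrak m^{d+1}$. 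This uses the formal power series identity $\log(\exp(x))=x$ in $\Q\llbracket x\rrbracket$, truncated in degree $d$. Therefore $[\partial_{e_i}]$ lies in the image, and the map is surjective.

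The only subtle point is the well-definedness of the truncation. Concretely, this is checking that the ideal of the completion generated by $\mathfrak m^{d+1}$ acts by zero on $g$, so that the truncated map is a ring homomorphism. This is immediate from degree considerations.

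Alternatively, I could avoid the logarithm and argue by filtration instead. The image of $\Delta_{e_i}$ equals $\partial_{e_i}$ plus terms of higher order. An induction downward on degree, starting from degree $d$ where $\Delta$-monomials and $\partial$-monomials agree modulo $\mathfrak m^{d+1}$, would then show that all $[\partial^{\vec k}]$ lie in the image.
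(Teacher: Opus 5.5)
Your proposal is correct and follows the paper's proof. Both reduce to showing that the generators $[\partial_u]$ lie in the image, and both realize each one as $\ch$ of the truncated logarithm $\sum_{k=1}^{d}\frac{(-1)^{k+1}}{k}\Delta_u^k$, using that $\exp([\partial_u])-1$ is nilpotent with vanishing $(d+1)$-st power in $\Q A_g$. You also check explicitly that $\mathfrak m^{d+1}\subseteq\Ann(g)$, so that $\ch$ factors from the completion through $\Q A_g$; the paper leaves this point implicit.
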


\begin{proof}
    Since $\Q A_g$ is generated by elements of the form $[\partial_u]$ with $u \in \Lambda$,
    it suffices to show that these belong to the image. Recall that the Taylor series expansion of $\log(1 + x)$
    is $\sum_{k \geq 1} (-1)^{k + 1} x^k/k$. It follows that
    \[
        x = \log(1 + (\exp(x) - 1)) = \sum_{k = 1}^\infty (-1)^{k+1} \frac{(\exp(x) - 1)^k}{k}
    \]
    as formal series. Note that $\exp([\partial_u]) - 1$ is a well-defined and nilpotent element of $\Q A_g$, more precisely, $(\exp([\partial_u]) - 1)^k = 0$ for all $k > d$.
    It follows that in $\Q A_g$ we have the equality
    \[
        [\partial_u] = \sum_{k = 1}^d (-1)^{k+1} \frac{(\exp([\partial_u]) - 1)^k}{k}
        = \ch\left(\sum_{k = 1}^d (-1)^{k+1} \frac{(T^u - 1)^k}{k}\right).
        \qedhere
    \]
\end{proof}

Thanks to these lemmas, we may restate \Cref{thm:map-descends-iff-hrr} in this setting in a simpler form.

\begin{theorem}
\label{thm:ch-descends-iff-todd}
    Let $f : \Lambda \to \Q$ a function and $g \in P(\Lambda)$ a polynomial.
    The Chern character induces an isomorphism $\ch: \Q K_f \to \Q A_g$ if and only if there exists an invertible element $\td \in \Q A_g^\times$ such that $f = \td \cdot g$.
\end{theorem}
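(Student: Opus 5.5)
We reduce to \Cref{thm:map-descends-iff-hrr}, applied with $\Sigma=\Lambda$ and $\varphi=\ch\colon \Sh_\Q(\Lambda)\to \Q A_g$. This map is surjective by \Cref{lem:ch-surjective}, so the hypotheses of that theorem hold. Hence $\ch$ descends to an isomorphism $\Q K_f\cong \Q A_g$ if and only if there is a unit $\td\in\Q A_g^\times$ with
\[
\chi_f(E)=\deg_g(\td\cdot\ch(E))\quad\text{for all }E\in\Sh(\Lambda).
\]
By \Cref{rem:hrr-equivalent-statement}, this is equivalent to
\[
f(u)=\deg_g(\td\cdot\ch(T^u))\quad\text{for all }u\in\Lambda.
\]
It therefore remains to show that this pointwise identity is equivalent to the identity of functions $f=\td\cdot g$.

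Here $\td\cdot g$ is understood via the action of $\Q A_g$ on the orbit $\Diff_\Q(\Lambda)\cdot g$. Choose any lift $D\in\Diff_\Q(\Lambda)$ of $\td$; then $\td\cdot g:=D\cdot g$, and this is well defined because two lifts differ by an element of $\Ann(g)$. Similarly, although $\ch(T^u)=\exp(\partial_u)$ is a power series, its class in $\Q A_g$ is represented by a finite truncation, since high powers of $\partial_u$ lie in $\Ann(g)$.

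For each $u\in\Lambda$ we compute, using commutativity of the constant-coefficient operators and \Cref{lem:exponential and shifts} applied to the polynomial $D\cdot g$:
\[
\deg_g(\td\cdot\ch(T^u))=\big(D\exp(\partial_u)\cdot g\big)(0)=\big(\exp(\partial_u)\cdot(D\cdot g)\big)(0)=(D\cdot g)(u).
\]
Thus the pointwise condition says exactly that $f(u)=(\td\cdot g)(u)$ for all $u\in\Lambda$, i.e.\ $f=\td\cdot g$ as functions on $\Lambda$. The invertibility requirement on $\td$ is the same on both sides, so this proves the equivalence.

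The main point needing care is the well-definedness just described: both $\td\cdot g$ and $\exp(\partial_u)$ must be treated modulo $\Ann(g)$, so that the completion $\Diff_\Q\llbracket\Lambda\rrbracket$ causes no trouble. Apart from that, the argument only combines \Cref{thm:map-descends-iff-hrr} with the shift/exponential identity. Note also that if the condition holds, then $f=\td\cdot g$ is automatically a polynomial, which is consistent with \Cref{rem:hrr-equivalent-statement}.
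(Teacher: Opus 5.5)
Your proposal is correct and is essentially the paper's proof. The paper's one-line argument combines exactly your ingredients: \Cref{thm:map-descends-iff-hrr} applied to $\ch$, which is surjective by \Cref{lem:ch-surjective}; \Cref{rem:hrr-equivalent-statement} to reduce to $T^u$; and \Cref{lem:exponential and shifts} to identify $\deg_g(\td\cdot\ch(T^u))$ with $(\td\cdot g)(u)$. You additionally spell out the well-definedness modulo $\Ann(g)$, which the paper leaves implicit.
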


\begin{proof}
    This follows from \Cref{lem:exponential and shifts} and \Cref{lem:ch-surjective}, along with \Cref{thm:map-descends-iff-hrr}, and \Cref{rem:hrr-equivalent-statement}.
\end{proof}

Note that $f = \td \cdot g$ implies that $f$ is a polynomial too.

\begin{proposition}\label{prop:polynomial-equivalence-inclusion}
    Let $f, g \in P(\Lambda)$ be polynomials.
    The following are equivalent:
    \begin{enumerate}
        \item $\Ann_{\Sh_\Q}(g) \subseteq \Ann_{\Sh_\Q}(f)$
        \item $\Ann_{\Diff_\Q}(g) \subseteq \Ann_{\Diff_\Q}(f)$
        \item $\Ann_{\Sh}(g) \subseteq \Ann_{\Sh}(f)$
        \item $\Ann_{\Diff}(g) \subseteq \Ann_{\Diff}(f)$
        \item There exists some $\alpha \in \Q A_g$ such that $f = \alpha \cdot g$.
        \item There exists some $\xi \in \Q K_g$ such that $f = \xi \cdot g$.
    \end{enumerate}
\end{proposition}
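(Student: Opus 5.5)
The plan is to organize the six conditions into two clusters and then link them: the integral/rational annihilator conditions (1)–(4), and the divisibility conditions (5)–(6). Between the two clusters we will use the inverse-system description from \Cref{prop:solution set to differential equation} and the exponential identity from \Cref{lem:exponential and shifts}.

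First, the rational and integral versions agree. By \Cref{prop:ann-sh-rational}, $\Ann_{\Sh_\Q}(h)=\Q\Ann_{\Sh}(h)$ and $\Ann_{\Sh}(h)=\Ann_{\Sh_\Q}(h)\cap\Sh(\Lambda)$. Hence (1)$\Leftrightarrow$(3) is immediate. \Cref{prop:ann-diff-rational} gives (2)$\Leftrightarrow$(4) in the same way.

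Next, the shift and differential versions agree. By \Cref{lem:exponential and shifts}, $E\cdot h=\ch(E)\cdot h$ for every polynomial $h$, where $\ch(E)$ acts through the finite truncation that is valid on polynomials of bounded degree. Take $N$ larger than both degrees. Then $\ch$ gives a surjection $\Sh_\Q(\Lambda)\to\Diff_\Q(\Lambda)/\mathfrak m^{N}$, by the logarithm argument of \Cref{lem:ch-surjective} applied in this truncated ring. Every $D\in\Diff_\Q$ therefore agrees modulo $\mathfrak m^N$ with some $\ch(E)$, and $\mathfrak m^N$ kills both $f$ and $g$. So $D\cdot h=E\cdot h$ for $h\in\{f,g\}$, and the conditions $D\cdot g=0\Rightarrow D\cdot f=0$ and $E\cdot g=0\Rightarrow E\cdot f=0$ are equivalent. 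This proves (1)$\Leftrightarrow$(2).

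Finally, the link with divisibility. (5)$\Rightarrow$(2) and (6)$\Rightarrow$(1) are trivial: if $f=\alpha\cdot g$ and $Dg=0$, then $Df=\alpha Dg=0$ by commutativity. (5)$\Leftrightarrow$(6) follows from the surjection $\ch:\Sh_\Q\to\Q A_g$ of \Cref{lem:ch-surjective} together with $\ch(E)\cdot g=E\cdot g$.

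The main step is (2)$\Rightarrow$(5). Assume $\Ann_{\Diff_\Q}(g)\subseteq\Ann_{\Diff_\Q}(f)$. Then $f$ lies in the solution set $S$ of the ideal $I=\Ann_{\Diff_\Q}(g)$, and \Cref{prop:solution set to differential equation}, applied over $\Q$, identifies $S\cong\Hom_\Q(\Q A_g,\Q)$ as $\Q A_g$-modules. Since $\Q A_g$ is Frobenius with form $\deg_g$ (\Cref{thm:diffalg}) and finite-dimensional, the pairing is perfect, so $\Hom_\Q(\Q A_g,\Q)$ is free of rank one generated by $\deg_g$. Under $\Psi$, $\deg_g$ maps to $g$ itself, because $\Psi(\deg_g)(\vec u)=\sum_{\vec k}\frac{\vec u^{\vec k}}{\vec k!}(\partial^{\vec k}g)(0)=g(\vec u)$ by Taylor expansion. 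Hence $S=\Q A_g\cdot g$, and in particular $f=\alpha\cdot g$ for some $\alpha\in\Q A_g$.

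The only care needed is to check that \Cref{prop:solution set to differential equation} holds verbatim with $\Q$ coefficients and a rational ideal, which its proof does. A possible gap is that $\Q A_g$ is finite-dimensional; this holds because $g$ is a polynomial, so $\mathfrak m^{\deg g+1}\subseteq\Ann(g)$.
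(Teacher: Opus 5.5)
Your proposal is correct and follows essentially the paper's route. The paper also gets (1)$\Leftrightarrow$(3) and (2)$\Leftrightarrow$(4) from the rational-versus-integral annihilator propositions, and obtains (2)$\Rightarrow$(5) from the inverse-system identification $S\cong\Hom_\Q(\Q A_g,\Q)\cong\Q A_g\cdot g$; it handles the shift/differential comparison and (5)$\Leftrightarrow$(6) via the Chern character. Your direct truncation argument for (1)$\Leftrightarrow$(2), with a common $N>\max(\deg f,\deg g)$, is an unpacked version of the paper's appeal to the isomorphisms $\Q K_h\cong\Q A_h$ for $h=f,g$. It also makes explicit that the induced map $\Q A_g\to\Q A_f$ is the natural quotient map, which the paper leaves implicit.
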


\begin{proof}
    The equivalences $(1) \iff (3)$ and $(2) \iff (4)$ follow from \Cref{prop:ann-sh-rational} and \Cref{prop:ann-diff-rational} noting that annihilators are $\Z$-saturated and none of the rings involved have $\Z$-torsion.

    Suppose $f = \alpha \cdot g$ for some $\alpha \in \Q A_g$, then clearly $\Ann_\Diff(g) \subseteq \Ann_\Diff(f)$.
    Reciprocally, if $\Ann_\Diff(g) \subseteq \Ann_\Diff(f)$, then $f$ belongs to the set $S$ of solutions to the differential equations given by $\Ann_\Diff(g)$. By \Cref{prop:solution set to differential equation}, $S$ is isomorphic as an $\Q A_g$-module to $\Hom_\Q (\Q A_g, \Q)$. The latter is isomorphic to $\Q A_g$ by Poincar\'e duality. In particular, this implies that $S$ is a cyclic $\Q A_g$-module generated by $g$, and so there exists some $\alpha \in \Q A_g$ such that $f = \alpha \cdot g$. This shows $(4) \iff (5)$.

    (1) is equivalent to the statement that the quotient map $\Q K_g \to \Q K_f$ is well-defined, which via the Chern character is equivalent to the statement that the quotient map $\Q A_g \to \Q A_f$ is well-defined. The latter is equivalent to (2).

    Finally, (5) is equivalent to (6), because the Chern character induces an isomorphism $\Q K_g \cong \Q A_g$.
\end{proof}

As an immediate corollary we obtain:
\begin{corollary} \label{cor:polynomial-equivalence}
    Let $f, g \in P(\Lambda)$ be polynomials.
    The following are equivalent:
    \begin{enumerate}
        \item $\Ann_{\Sh_\Q}(f) = \Ann_{\Sh_\Q}(g)$
        \item $\Ann_{\Diff_\Q}(f) = \Ann_{\Diff_\Q}(g)$
        \item $\Ann_{\Sh}(f) = \Ann_{\Sh}(g)$
        \item $\Ann_{\Diff}(f) = \Ann_{\Diff}(g)$
        \item There exists some invertible $\alpha \in \Q A_g^\times$ such that $f = \alpha \cdot g$.
        \item There exists some invertible $\xi \in \Q K_g^\times$ such that $f = \xi \cdot g$.
    \end{enumerate}
\end{corollary}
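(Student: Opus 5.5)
The plan is to apply Proposition~\ref{prop:polynomial-equivalence-inclusion} twice, once with the roles of $f$ and $g$ swapped. Conditions (1)--(4) of the corollary are each the conjunction of the two inclusions in the corresponding items (1)--(4) of the proposition, applied to the pairs $(f,g)$ and $(g,f)$. Hence (1)--(4) are equivalent to each other. Each is also equivalent to the pair of statements ``$f=\alpha\cdot g$ for some $\alpha\in\Q A_g$ and $g=\beta\cdot f$ for some $\beta\in\Q A_f$.'' The work is to turn this pair of divisibility statements into a single invertible element.

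For (5) $\Rightarrow$ (2): if $f=\alpha\cdot g$ with $\alpha$ invertible, then $g=\alpha^{-1}\cdot f$, where $\alpha^{-1}$ acts through $\Diff_\Q(\Lambda)$ on the orbit $\Diff_\Q\cdot g\ni f$. Thus each of $f,g$ lies in the $\Diff_\Q$-orbit of the other, and the annihilators coincide. I should check that the action of $\Q A_g$ on $f=\alpha g$ is well defined. It is, because $\Ann(g)\subseteq\Ann(f)$, so any representative of $\alpha^{-1}$ in $\Diff_\Q$ sends $f$ to $g$.

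For (2) $\Rightarrow$ (5): with $\Ann_{\Diff_\Q}(f)=\Ann_{\Diff_\Q}(g)$ we have $\Q A_f=\Q A_g$ as quotient rings of $\Diff_\Q$. By the proposition, $f=\alpha g$ and $g=\beta f$ for some $\alpha,\beta$ in this common ring. Then $(1-\beta\alpha)\cdot g=0$, so $1-\beta\alpha\in\Ann(g)$, i.e.\ $\beta\alpha=1$ in $\Q A_g$. Hence $\alpha$ is invertible. This step is the only one needing thought: I must ensure $\alpha$ and $\beta$ are viewed in the same quotient ring, and this is exactly what the equality of annihilators provides. Alternatively, since $\Q A_g$ is a finite-dimensional local-type (Artinian) ring acting faithfully on the cyclic module $\Q A_g\cdot g$, one can argue that multiplication by $\alpha$ is surjective onto that module and hence bijective. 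I would present the direct $\beta\alpha=1$ argument.

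Finally, (5) $\Leftrightarrow$ (6). Under the equality of annihilators, the Chern character gives an isomorphism $\ch:\Q K_g\to\Q A_g$ by Theorem~\ref{thm:ch-descends-iff-todd} with $f=g$ and $\td=1$. By Lemma~\ref{lem:exponential and shifts}, this isomorphism is compatible with the actions on polynomials, since $\ch(E)\cdot h=E\cdot h$. So $\xi\mapsto\ch(\xi)$ matches invertible elements with invertible elements while preserving $\xi\cdot g$. The same reasoning applies verbatim starting from (6) using the proposition's item (6), so no further argument beyond the proposition and the Chern character isomorphism is needed.
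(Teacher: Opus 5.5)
Your proposal is correct and follows the paper's route. The paper states the corollary as an immediate consequence of \Cref{prop:polynomial-equivalence-inclusion} applied to both orderings of $(f,g)$. Your argument that $f=\alpha\cdot g$ and $g=\beta\cdot f$ in the common ring $\Q A_f=\Q A_g$ force $\beta\alpha=1$, together with transporting units along $\ch\colon \Q K_g\cong\Q A_g$ via \Cref{lem:exponential and shifts}, just makes explicit the invertibility step the paper leaves unstated. One small point: that Chern character isomorphism holds for every polynomial $g$, so your qualifier ``under the equality of annihilators'' is unnecessary.
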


\begin{definition}
    If any of the conditions of \Cref{cor:polynomial-equivalence} is met, we will say $f$ and $g$ are \emph{differentially equivalent},
    and denote it by $f \sim g$.
    A polynomial $f$ is \emph{differentially homogeneous} whenever it is differentially equivalent to some homogeneous polynomial $g$.
\end{definition}

It turns out that when $g$ is homogeneous, and the Frobenius quotients $K_f$ and $A_g$ are connected via the Chern character, the pair $(K_f, A_g)$ satisfies many properties one may expect from algebraic geometry.

\begin{definition}
\label{def:RR-pair-std-graded}
    When $f$ is a polynomial differentially equivalent to a homogeneous polynomial $g$, we call $(K_f, A_g)$ a \emph{Riemann-Roch pair}.

    The associated \emph{Todd} class is the element $\td \in \Q A_g^\times$, such that $f = \td \cdot g$.
\end{definition}

\subsection{Grothendieck-Riemann-Roch}\label{sec:grr}

In this subsection we study maps between Riemann-Roch pairs coming from maps of lattices, and state a version of the Grothendieck-Riemann-Roch theorem in our setting.

Let $\phi\colon \Lambda_1\to \Lambda_2$ be a homomorphism of lattices. Then $\phi$ induces a pullback map
$\phi^*:\Fun(\Lambda_2, \Q) \to \Fun(\Lambda_1, \Q)$, which sends polynomials to polynomials.
Let $\phi_*: \Sh(\Lambda_1) \to \Sh(\Lambda_2)$ be the pushforward given by $T^u \mapsto T^{\phi(u)}$. 
When $f_i$ are polynomials on $\Lambda_i$, these maps induce, under certain conditions, pushforwards and pullbacks between the quotient rings $K_{f_i}$.

\begin{warning}
    If $f: X \to Y$ is a proper morphism of smooth algebraic varieties, one also has the pushforwards and pullbacks between the respective $K$- and Chow rings. The map between lattices corresponds to the pullback on the Picard groups, and so goes in the opposite direction. For this reason, pullbacks (resp. pushforwards) of a map of lattices would correspond to pushforwards (resp. pullbacks) associated to the morphism of algebraic varieties via our combinatorial--geometric dictionary. Hence it may appear that in the following statements, pullbacks are interchanged with pushforwards, but this is just an artifact of our construction.
\end{warning}

The following lemma is the combinatorial analog of the projection formula,
which tells us how the pullback and pushforward are related.

\begin{lemma}\label{lemma:pushforward-pullback-relation}
For every $E\in \Sh(\Lambda_1)$ and $h\in \Fun(\Lambda_2,\Q)$, we have
\[
    \phi^*(\phi_*(E)\cdot h) = E\cdot \phi^*(h).
\]
\end{lemma}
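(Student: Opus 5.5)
Both sides of the identity are $\Z$-linear in $E$: the action of $\Sh(\Lambda_1)$ on $\Fun(\Lambda_1,\Q)$ is linear, $\phi_*$ is additive, and $\phi^*$ is additive on functions. So it suffices to check the identity on monomials $E = T^u$ with $u \in \Lambda_1$, and then extend by linearity to $E = \sum_i a_i T^{u_i}$.

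For $E = T^u$ we have $\phi_*(T^u) = T^{\phi(u)}$. We evaluate both sides at an arbitrary point $x \in \Lambda_1$, using the definition of shifts and of pullback. The left-hand side is
\[
    \phi^*(T^{\phi(u)} h)(x) = (T^{\phi(u)}h)(\phi(x)) = h(\phi(x)+\phi(u)).
\]
The right-hand side is
\[
    (T^u \phi^*(h))(x) = \phi^*(h)(x+u) = h(\phi(x+u)).
\]
Since $\phi$ is a group homomorphism, $\phi(x+u)=\phi(x)+\phi(u)$, so the two values agree.

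Summing over the terms of $E$ with their coefficients $a_i$ gives the statement for general $E$. There is no real obstacle here; the only thing to watch is that the shift is applied on the correct side of $\phi^*$, which the pointwise evaluation above makes explicit.
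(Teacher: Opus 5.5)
Your proposal is correct and follows essentially the same route as the paper: check the identity for $E = T^u$ by evaluating both sides at a point and using $\phi(x+u)=\phi(x)+\phi(u)$, then extend by linearity in $E$.
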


\begin{proof}
Suppose $E=T^u$ with $u \in \Lambda_1$, then for any $v\in \Lambda_1$, we have that
\begin{align*}
    \phi^*(\phi_*(T^u)\cdot h)(v) &= (T^{\phi(u)}\cdot h) (\phi(v))\\
    &= h(\phi(u + v)) = (T^u \cdot \phi^*(h))(v),
\end{align*}
and hence $\phi^*(\phi_*(T^u)\cdot h) = T^u\cdot \phi^*(h)$.
The result then follows by linearity.
\end{proof}

Let $f_1, f_2$ be two polynomials on $\Lambda_1, \Lambda_2$ respectively.
Then $\phi_*$ descends to a map $K_{f_1} \to K_{f_2}$ if and only if $\phi_*(\Ann_\Sh(f_1)) \subseteq \Ann_\Sh(f_2)$.

If $\phi^*(\Sh(\Lambda_2)\cdot f_2) \subseteq \Sh(\Lambda_1) \cdot f_1$ (seen as subsets of $\Fun(\Lambda_i, \Q)$), then
$\phi^*$ induces a map $K_{f_2} \to K_{f_1}$ via the isomorphisms 
$K_{f_i} \cong \Sh(\Lambda_i) \cdot f_i$ defined by $\xi \mapsto \xi \cdot f_i$.

Over the rational numbers, these conditions are equivalent. 

\begin{proposition}\label{prop:pushforward-pullback-rational}
    $\phi^*(\Sh_\Q(\Lambda_2)\cdot f_2) \subseteq \Sh_\Q(\Lambda_1) \cdot f_1$ if and only if $\phi_*(\Ann(f_1)) \subseteq \Ann(f_2)$.
\end{proposition}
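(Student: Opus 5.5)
Both conditions will be translated into statements about the linear functional $E \mapsto (E\cdot f)(0)$, using the projection formula of \Cref{lemma:pushforward-pullback-relation}. Here $\Ann(f_i)$ denotes the annihilator in $\Sh_\Q(\Lambda_i)$; by \Cref{prop:ann-sh-rational} this is just the rationalization of the integral one, so nothing depends on this choice. The key observation is that for $E\in \Sh_\Q(\Lambda_1)$ and $F\in\Sh_\Q(\Lambda_2)$, \Cref{lemma:pushforward-pullback-relation} together with commutativity gives
\[
\bigl(E\cdot \phi^*(F\cdot f_2)\bigr)(0) = \phi^*\bigl(\phi_*(E)\,F\cdot f_2\bigr)(0) = \bigl(F\cdot \phi_*(E)\cdot f_2\bigr)(0).
\]
Also, $\Sh_\Q(\Lambda_2)\cdot f_2\cong \Q K_{f_2}$ is finite-dimensional by \Cref{prop:polyart}.

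\emph{Forward direction.} Assume $\phi^*(\Sh_\Q(\Lambda_2)\cdot f_2)\subseteq \Sh_\Q(\Lambda_1)\cdot f_1$, and let $E\in\Ann(f_1)$. For every $F\in\Sh_\Q(\Lambda_2)$ write $\phi^*(F\cdot f_2)=G\cdot f_1$. Then the left-hand side of the display equals $(EG\cdot f_1)(0)=0$. Hence $(F\phi_*(E)\cdot f_2)(0)=0$ for all $F$. Taking $F=T^v$ gives $(\phi_*(E)\cdot f_2)(v)=0$ for all $v\in\Lambda_2$, so $\phi_*(E)\in\Ann(f_2)$.

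\emph{Converse.} Assume $\phi_*(\Ann(f_1))\subseteq\Ann(f_2)$, and fix $F$. Set $h=\phi^*(F\cdot f_2)$. The display shows that $(E\cdot h)(0)=0$ whenever $E\in\Ann(f_1)$. So the functional $\ell_h\colon E\mapsto (E\cdot h)(0)$ on $\Sh_\Q(\Lambda_1)$ factors through $\Q K_{f_1}$. By non-degeneracy of the finite-dimensional Frobenius pairing $\chi_{f_1}$ from \Cref{thm:shiftalg}, there is $G\in\Sh_\Q(\Lambda_1)$ with $\ell_h(E)=\chi_{f_1}(EG)=(EG\cdot f_1)(0)$ for all $E$. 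Evaluating at $E=T^u$ gives $h(u)=(G\cdot f_1)(u)$ for all $u\in\Lambda_1$. Thus $h=G\cdot f_1\in\Sh_\Q(\Lambda_1)\cdot f_1$.

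The only delicate point is the representability step in the converse. It needs $\Q K_{f_1}$ to be finite-dimensional, so that non-degeneracy of the pairing yields a perfect duality. This holds because $f_1$ is a polynomial, by \Cref{prop:polyart}. It is exactly the reason the equivalence is stated over $\Q$: integrally, a functional need not be represented by an integral element.
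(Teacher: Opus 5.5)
Your proof is correct and is essentially the paper's duality argument made explicit. Your key display says that $\phi^*$ on function orbits is the transpose of $\phi_*$ under $\Hom_\Q(\Q K_{f_i},\Q)\cong \Sh_\Q(\Lambda_i)\cdot f_i$, and your representability step via the finite-dimensional Frobenius pairing is the identification the paper invokes; the only difference is that you prove the implication from the pullback inclusion to $\phi_*(\Ann(f_1))\subseteq\Ann(f_2)$ by direct evaluation at $T^v$, without needing duality there.
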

\begin{proof}
    If $\phi_*(\Ann(f_1)) \subseteq \Ann(f_2)$, then $\phi$ gives rise to the pushforward $\phi_*: \Q K_{f_1} \to \Q K_{f_2}$, and we also get a pullback $(\phi_*)^*: \Hom_\Q(\Q K_{f_2}, \Q) \to \Hom_\Q(\Q K_{f_1}, \Q)$. We have that $\Hom_\Q(\Q K_{f_i}, \Q) \cong \Sh_\Q(\Lambda_i) \cdot f_i$ via the map $\varphi \mapsto \varphi|_{\Lambda_i}$.
    \footnote{
    The fact that this isomorphism is well-defined can be seen using a discrete analog of \Cref{prop:solution set to differential equation}, where $\Psi$ is replaced by the restriction map $\varphi \mapsto \varphi|_\Sigma$.
    }
    The induced map $\Sh_\Q(\Lambda_2) \cdot f_2 \to \Sh_\Q(\Lambda_1) \cdot f_1$ is precisely the pullback of $\phi$, and so $\phi^*(\Sh_\Q(\Lambda_2)\cdot f_2) \subseteq \Sh_\Q(\Lambda_1) \cdot f_1$ holds.
    
    Similarly, if $\phi$ gives rise to a pullback $\phi^*: \Q K_{f_2} \to \Q K_{f_1}$, then we can recover the pushforward using the fact that
    $\Hom_\Q(\Q K_{f_i}, \Q) \cong \Q K_{f_i}$ via the Frobenius pairing.
\end{proof}

Similarly, we have a map $\phi_*: \Diff(\Lambda_1) \to \Diff(\Lambda_2)$.
It is clear from the definitions that both pushforwards $\phi_*$ commute with the Chern character. In particular, if $(K_{f_1}, A_{g_1})$
and $(K_{f_2}, A_{g_2})$ are two standard graded Riemann-Roch pairs, $\phi_*$ induces a map on the quotient $\Q A_{g_1} \to \Q A_{g_2}$ if and only if it induces a map on the quotient $\Q K_{f_1} \to \Q K_{f_2}$.
The same argument as in the proof of \Cref{prop:pushforward-pullback-rational} shows that this is equivalent to $\phi^*(\Diff_\Q(\Lambda_2)\cdot g_2) \subseteq \Diff_\Q(\Lambda_1) \cdot g_1$ and hence we also obtain a pullback $\phi^*: \Q A_{g_2} \to \Q A_{g_1}$.

This is enough to formulate the Grothendieck-Riemann-Roch theorem in our setting.

\begin{theorem}\label{thm:grr}
When $\phi$ is such that $\phi_*(\Ann_\Sh(f_1)) \subseteq \Ann_\Sh(f_2)$, we get that the following diagram commutes
\[
\begin{tikzcd}
\Q K_{f_2} \arrow[d, "\td_2\cdot \ch(-)"'] \arrow[r, "\phi^*"] & \Q K_{f_1} \arrow[d, "\td_1\cdot\ch(-)"] \\
\Q A_{g_2} \arrow[r, "\phi^*"]                          & \Q A_{g_1}                         
\end{tikzcd}
\]
\end{theorem}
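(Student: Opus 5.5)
Throughout, $(K_{f_i},A_{g_i})$ are Riemann--Roch pairs with Todd classes $\td_i$, so $f_i=\td_i\cdot g_i$. The plan is to identify every ring in the diagram with an orbit of functions and then compare the two paths on these orbits. Concretely, $\Q K_{f_i}\cong \Sh_\Q(\Lambda_i)\cdot f_i$ via $\xi\mapsto \xi\cdot f_i$, and $\Q A_{g_i}\cong \Diff_\Q(\Lambda_i)\cdot g_i$ via $\alpha\mapsto\alpha\cdot g_i$. By construction, both pullbacks $\phi^*$ are the restriction of the function-level pullback $h\mapsto h\circ\phi$ to these orbits. The vertical map $\xi\mapsto \td_i\cdot\ch(\xi)$ then becomes, on functions, the map $\xi\cdot f_i\mapsto (\td_i\ch(\xi))\cdot g_i$.

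The key computation is the following. By \Cref{lem:exponential and shifts} and $f_i=\td_i g_i$,
\[
(\td_i\cdot\ch(\xi))\cdot g_i=\ch(\xi)\cdot(\td_i\cdot g_i)=\ch(\xi)\cdot f_i=\xi\cdot f_i .
\]
Here I am using that $\Q A_{g_i}$ acts on the orbit $\Diff_\Q\cdot g_i$ and that the orbit is closed under $\Diff_\Q$, so $\td_i\cdot g_i$ is an honest polynomial on which $\ch(\xi)$ acts. Consequently, under these identifications both vertical arrows are literally the identity map on the underlying functions. In particular $\Sh_\Q(\Lambda_i)\cdot f_i$ sits inside $\Diff_\Q(\Lambda_i)\cdot g_i$, and equality holds since $\td_i$ is a unit and $\ch$ is surjective.

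With this, commutativity reduces to the statement that both horizontal arrows are the same map $h\mapsto h\circ\phi$. Let $\xi\in\Q K_{f_2}$ and set $h=\xi\cdot f_2$. Going right then down, $\phi^*\xi$ corresponds to $h\circ\phi\in \Sh_\Q(\Lambda_1)\cdot f_1$, and $\td_1\ch(\phi^*\xi)$ corresponds to the same function $h\circ\phi$. Going down then right, $\td_2\ch(\xi)$ corresponds to $h$, and $\phi^*$ on $\Q A_{g_2}$ sends it to $h\circ\phi$. The two images in $\Q A_{g_1}$ therefore define the same function, and since $\alpha\mapsto\alpha\cdot g_1$ is injective on $\Q A_{g_1}$, they agree.

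The main obstacle is confirming that the pullbacks are well defined, namely that $\phi^*(\Sh_\Q(\Lambda_2) f_2)\subseteq \Sh_\Q(\Lambda_1) f_1$ and $\phi^*(\Diff_\Q(\Lambda_2) g_2)\subseteq\Diff_\Q(\Lambda_1) g_1$. The first follows from \Cref{prop:pushforward-pullback-rational}. The second follows from the discussion before the theorem (pushforwards commute with $\ch$). By the orbit equality from the key computation, the two containments are in fact the same statement.
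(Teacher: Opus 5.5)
Your proof is correct, but it follows a different route from the paper's. The paper starts from the evident commutativity of the pushforward square $\phi_*\circ\ch^{-1}=\ch^{-1}\circ\phi_*$ and dualizes it to a square of $\Hom_\Q(-,\Q)$'s. It then uses the identity $\chi_{f_i}(\ch^{-1}(-)\cdot\xi)=\deg_{g_i}(-\cdot\ch(\xi)\cdot\td_i)$, together with the Frobenius identifications $\Q K_{f_i}\cong\Hom_\Q(\Q K_{f_i},\Q)$ and $\Q A_{g_i}\cong\Hom_\Q(\Q A_{g_i},\Q)$. Under these, the dual of $\ch^{-1}$ becomes $\td_i\cdot\ch(-)$ and the duals of the pushforwards become the pullbacks.

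You instead stay in the function models in which $\phi^*$ is actually defined. Your identity $(\td_i\cdot\ch(\xi))\cdot g_i=\xi\cdot f_i$ is precisely the function-level form of that Hirzebruch--Riemann--Roch identity. It makes both vertical arrows the identity on the common subspace $\Sh_\Q(\Lambda_i)\cdot f_i=\Diff_\Q(\Lambda_i)\cdot g_i$ of $P(\Lambda_i)$, after which commutativity is tautological.

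Your version has three advantages. It is more elementary. It sidesteps the check, implicit in the paper (cf.\ the proof of \Cref{prop:pushforward-pullback-rational}), that the orbit-defined pullbacks agree with the Frobenius duals of the pushforwards. And it shows as a by-product that the $K$-side and $A$-side pullbacks exist under literally the same condition.

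The paper's dualization argument, on the other hand, is formal in the maps involved. That is why it transfers verbatim to morphisms of general Riemann--Roch pairs in \Cref{sec:hrr-pairs}. There $\phi_K,\phi_A$ need not come from a lattice map, and $\Q K_f$, $\Q A_g$ are modeled by functions on different domains $\Sigma$ and $\Gamma$, so your ``identity on functions'' trick is not directly available.
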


\begin{proof}
It is clear from the definition of the pushforward that the diagram
\[
\begin{tikzcd}
\Q A_{g_1} \arrow[d, "\ch^{-1}"] \arrow[r, "\phi_*"] & \Q A_{g_2} \arrow[d, "\ch^{-1}"] \\
\Q K_{f_1} \arrow[r, "\phi_*"]                       & \Q K_{f_2}                    
\end{tikzcd}
\]
commutes. We get an induced commutative diagram
\[
\begin{tikzcd}
\Hom_\Q(\Q K_{f_2}, \Q) \arrow[r, "(\phi_*)^*"] \arrow[d, "(\ch^{-1})^*"] & \Hom_\Q(\Q K_{f_1}, \Q) \arrow[d, "(\ch^{-1})^*"] \\
\Hom_\Q(\Q A_{g_2}, \Q) \arrow[r, "(\phi_*)^*"]                           & \Hom_\Q(\Q A_{g_1}, \Q)                          
\end{tikzcd}
\]
Note that for any $\xi \in \Q K_{f_i}$, we have by \Cref{thm:map-descends-iff-hrr} that
\begin{align*}
    (\ch^{-1})^*(\chi_{f_i}(-\cdot \xi)) &= 
    \chi_{f_i}(\ch^{-1}(-) \cdot \xi)\\
    &= \deg_{g_i}(- \cdot \ch(\xi) \cdot \td_i).
\end{align*}
Using the identifications $\Q K_{f_i} \cong \Hom_\Q(\Q K_{f_i}, \Q)$
and $\Q A_{g_i} \cong \Hom_\Q(\Q A_{g_i}, \Q)$ coming from the Frobenius pairings, this diagram corresponds exactly to the diagram in the theorem.
\end{proof}

We now study further the conditions for when $\phi_*(\Ann_\Sh(f_1)) \subseteq \Ann_\Sh(f_2)$. Suppose that's the case, then \Cref{lemma:pushforward-pullback-relation} implies that $\Ann_\Sh(f_1) \subseteq \Ann_\Sh(\phi^*(f_2))$, which by \Cref{prop:polynomial-equivalence-inclusion} is the case if and only if $\phi^*(f_2) \in \Sh_\Q(\Lambda_1) \cdot f_1$. However, this is not a sufficient condition, as the following example shows.

\begin{example}
    Consider the following map of lattices:
\[
\phi\colon \Z \to \Z^2,\quad x\mapsto (x,0).
\]
Let $g_1(x)=0$ and $g_2(x,y) = xy$. Then  $g_1=\phi^*(g_2)$ and, in particular, 
$\phi^*(g_2) \in \Sh_\Q(\Lambda_1) \cdot g_1$. However, we get 
\[
\Ann_{\Sh(\Z)}(g_1)=\Sh(\Z),\quad \Ann_{\Sh(\Z^2)}(g_2)=\langle D_x^2,D_y^2\rangle,
\]
and in particular the map $\phi$ does not induce a map $K_{g_1}\to K_{g_2}$.
\end{example}

Choose a decomposition $\Lambda_2 = \Lambda \oplus \Lambda'$, where $\phi(\Lambda_1)\subseteq \Lambda$ is full-rank.
Then we have an isomorphism between the rings of polynomials $P(\Lambda_2) \cong P(\Lambda) \otimes P(\Lambda')$.
In particular, we may write
\[
    f_2 = \sum_{i} h_i \otimes h_i',
\]
where $h_i \in P(\Lambda)$, and $h_i' \in P(\Lambda')$ for all $i$, with the $h_i'$ being linearly independent.

\begin{proposition}\label{prop:push-forward}
We have that $\phi_*(\Ann_\Sh(f_1)) \subseteq \Ann_\Sh(f_2)$ if and only if
$\phi^*(h_i) \in \Sh_\Q(\Lambda_1)\cdot f_1$ for all $i$.
\end{proposition}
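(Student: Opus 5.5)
The idea is to reduce the condition $\phi_*(E)\cdot f_2 = 0$ to conditions on the individual components $h_i$, using the tensor decomposition $P(\Lambda_2)\cong P(\Lambda)\otimes P(\Lambda')$.

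Take $E\in \Sh(\Lambda_1)$. Since $\phi(\Lambda_1)\subseteq \Lambda$, the operator $\phi_*(E)$ lies in $\Sh(\Lambda)\subseteq \Sh(\Lambda_2)$. It therefore shifts only the first tensor factor:
\[
\phi_*(E)\cdot f_2 = \sum_i \bigl(\phi_*(E)\cdot h_i\bigr)\otimes h_i',
\]
where on the right $\phi_*(E)$ acts on $P(\Lambda)$. Because the $h_i'$ are linearly independent, this vanishes if and only if $\phi_*(E)\cdot h_i = 0$ for every $i$. So $\phi_*(\Ann_\Sh(f_1))\subseteq \Ann_\Sh(f_2)$ is equivalent to $\phi_*(\Ann_\Sh(f_1))\subseteq \Ann_{\Sh(\Lambda)}(h_i)$ for all $i$.

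Next I transport each condition back to $\Lambda_1$. Write $\psi\colon \Lambda_1\to \Lambda$ for the corestriction of $\phi$. Its image is of full rank, so $\psi_\Q\colon \Q\Lambda_1\to \Q\Lambda$ is surjective. I claim
\[
\psi_*(E)\cdot h = 0 \iff E\cdot \psi^*(h) = 0
\]
for any polynomial $h$ on $\Lambda$. The forward direction is \Cref{lemma:pushforward-pullback-relation}. For the converse, \Cref{lemma:pushforward-pullback-relation} gives $\psi^*(\psi_*(E)\cdot h) = E\cdot\psi^*(h) = 0$. Since $\psi_*(E)\cdot h$ is a polynomial on $\Lambda$, and $\psi^*$ is injective on polynomials (a polynomial vanishing on the full-rank sublattice $\psi(\Lambda_1)$ vanishes on its Zariski-dense rational span, hence everywhere), we get $\psi_*(E)\cdot h=0$. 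Hence
\[
\phi_*(\Ann_\Sh(f_1))\subseteq \Ann_\Sh(f_2) \iff \Ann_\Sh(f_1)\subseteq \Ann_\Sh(\phi^*(h_i)) \text{ for all } i.
\]

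Finally, \Cref{prop:polynomial-equivalence-inclusion} (the equivalence of (3) and (6), which applies since $f_1$ and $\phi^*(h_i)$ are polynomials) turns $\Ann_\Sh(f_1)\subseteq \Ann_\Sh(\phi^*(h_i))$ into the existence of $\xi\in\Q K_{f_1}$ with $\phi^*(h_i)=\xi\cdot f_1$, i.e.\ $\phi^*(h_i)\in \Sh_\Q(\Lambda_1)\cdot f_1$. I expect the main subtlety to be the injectivity of $\psi^*$ on polynomials, which is where full rank is essential. It is exactly what fails in the preceding example, where $g_2=xy$ splits as $h_1=x$, $h_1'=y$, and $\phi^*(x)=x\notin \Sh_\Q\cdot 0$.
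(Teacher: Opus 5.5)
Your proof is correct and matches the paper's argument step by step. You split $\phi_*(E)\cdot f_2$ along the linearly independent $h_i'$, transport each condition to $\Lambda_1$ via \Cref{lemma:pushforward-pullback-relation} together with injectivity of the pullback $P(\Lambda)\to P(\Lambda_1)$ (which is where full rank is used), and conclude with \Cref{prop:polynomial-equivalence-inclusion}, being somewhat more explicit than the paper about the corestriction, the Zariski-density justification, and the fact that the equivalence used is (3)$\iff$(6).
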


\begin{proof}
    We have the following chain of equivalences:
    \begin{align*}
        \phi_*(\Ann_\Sh(f_1)) \subseteq \Ann_\Sh(f_2) &\iff
        \phi_*(E)\cdot f_2 = 0\text{ for every }E\in \Ann_\Sh(f_1)\\
        &\iff
        \phi_*(E) \cdot h_i = 0\text{ for all }i\text{ and }E \in \Ann_\Sh(f_1)\\
        &\iff
        E \cdot \phi^*(h_i) = 0\text{ for all }i\text{ and }E \in \Ann_\Sh(f_1)\\
        &\iff
        \Ann_\Sh(f_1) \subseteq \Ann_\Sh(\phi^*(h_i))\text{ for all }i.
    \end{align*}
    The second equivalence follows from the fact that the $h_i'$ are assumed to be linearly independent. The third equivalence is \Cref{lemma:pushforward-pullback-relation} along with the fact that $\phi^*: P(\Lambda) \to P(\Lambda_1)$ is injective, because $\phi(\Lambda_1)$ is full rank in $\Lambda$.
    The conclusion follows by \Cref{prop:polynomial-equivalence-inclusion}.
\end{proof}

\subsection{Multiplicative maps}
\label{subsec:multiplicative-maps}

In this section, we present a general approach to constructing maps on $K_f$. This will allow us to define Chern and Todd classes, determinants, and exterior powers of elements of $K_f$. Note that in the geometric setting (when the pair $(K_f, A_g)$ corresponds to the numerical $K$-ring and Chow ring of a smooth algebraic variety), these constructions will recover the geometric notions carrying the same name.

\begin{theorem}
    \label{thm:mult-map-A_g}
    Let $R$ be a ring of characteristic $0$ and let
    $a \in R \llbracket x \rrbracket$ be a formal series with $a(0)$ invertible in $R$.
    Then there exists a unique multiplicative map
    \[
        \psi\colon K_f\to R\otimes A_g
    \]
    with $\psi([T^u])=a([\partial_u])$ for all $u\in \Lambda$.    
\end{theorem}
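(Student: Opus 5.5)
The plan is to read ``multiplicative'' as meaning that $\psi$ is a group homomorphism from the additive group of $K_f$ to the unit group $(R\otimes A_g)^\times$. Throughout, $(K_f,A_g)$ is a Riemann-Roch pair, so $g$ is homogeneous, $\Q K_f\cong\Q A_g$ via $\ch$, and every $[\partial_u]$ is nilpotent in $A_g$. I also read ``characteristic $0$'' as ``$\Z$-torsion-free'', so that $R\hookrightarrow \Q R$.

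\emph{Uniqueness.} This is immediate, because $K_f$ is generated as an abelian group by the classes $[T^u]$.

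\emph{Existence on $\Sh(\Lambda)$.} Since $[\partial_u]$ is nilpotent and $a(0)\in R^\times$, the element $a([\partial_u])$ is a unit in $R\otimes A_g$. As $\Sh(\Lambda)$ is the free abelian group on the $T^u$, the assignment
\[
\tilde\psi\Bigl(\sum_i a_iT^{u_i}\Bigr)=\prod_i a([\partial_{u_i}])^{a_i}
\]
defines a homomorphism $\Sh(\Lambda)\to(R\otimes A_g)^\times$. It remains to show that $\tilde\psi(E)=1$ for every $E\in\Ann_\Sh(f)$.

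\emph{Well-definedness, which I expect to be the main step.} Write $a(x)=a(0)\,b(x)$ with $b(0)=1$. For $E=\sum_i a_iT^{u_i}\in\Ann_\Sh(f)$, \Cref{lemma:annihilator-rank-0} gives $\rk(E)=\sum_i a_i=0$. Hence the constant factor $a(0)^{\rk E}$ equals $1$, and only $\prod_i b([\partial_{u_i}])^{a_i}$ remains.

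To handle this product I will pass to $\Q R\otimes A_g$. This is harmless: $A_g$ is a finitely generated torsion-free $\Z$-module (a quotient of $\Diff$ by a saturated ideal, as in \Cref{prop:polyart}), hence free, so $R\otimes A_g\hookrightarrow \Q R\otimes A_g$. There I take the formal logarithm $\log b(x)=\sum_{k\ge1}c_kx^k$ with $c_k\in\Q R$; this is legitimate because $b(0)=1$, and the sum is finite after substituting nilpotents. This gives
\[
\tilde\psi(E)=\exp\Bigl(\sum_{k\ge1}c_k\sum_i a_i[\partial_{u_i}]^k\Bigr).
\]
Next, $\sum_i a_i\partial_{u_i}^k=k!\,\ch(E)_k$, where $\ch(E)_k$ is the degree-$k$ homogeneous part of $\ch(E)\in\Diff_\Q\llbracket\Lambda\rrbracket$.

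Since $E\in\Ann(f)$ and $\ch\colon\Q K_f\to\Q A_g$ is a well-defined isomorphism (\Cref{thm:ch-descends-iff-todd}), the class of $\ch(E)$ vanishes in $\Q A_g$. Note that $\ch(E)$ acts on $g$ through finitely many graded pieces. Because $g$ is homogeneous, $\Ann_{\Diff_\Q}(g)$ is a homogeneous ideal by \Cref{lem:lambda-homogeneous}, so every graded piece satisfies $[\ch(E)_k]=0$. Therefore the exponent is zero and $\tilde\psi(E)=1$. Thus $\tilde\psi$ descends to the desired $\psi$ on $K_f$.

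The point needing the most care is this last step: passing from the vanishing of the full class $[\ch(E)]$ to the vanishing of each homogeneous component $[\ch(E)_k]$, together with the legitimacy of using logarithms only after tensoring with $\Q$.
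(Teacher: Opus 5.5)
Your proof is correct and is essentially the paper's argument. Both hinge on the homogeneity of $g$, which forces each graded piece $\ch(E)_k=\frac{1}{k!}\sum_i a_i\partial_{u_i}^k$ (a signed power sum in the $\partial_{u_i}$) to annihilate $g$ on its own; your formal logarithm writes $\prod_i b(\partial_{u_i})^{a_i}$ explicitly in terms of these power sums, where the paper instead uses that power sums generate the symmetric polynomials over $\Q$, and this lets you handle virtual $E\in\Ann(f)$ directly rather than reducing to positive $E,F$ of equal rank. Reading ``characteristic $0$'' as torsion-free costs you nothing relative to the paper, whose Newton-identity step likewise only yields the equality after tensoring with $\Q$; if you want the statement for arbitrary $R$, run your argument for the universal series $\sum_i s_i x^i$ over the torsion-free ring $\Z[s_0^{\pm 1},s_1,s_2,\dots]$ and then specialize.
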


\begin{proof}

Let $\psibar: \Sh(\Lambda) \to R \otimes A_g$ be the map uniquely determined by 
$\psibar(T^u) = a([\partial_u])$ for all $u \in \Lambda$. Since for any $u \in \Lambda$, the degree-$0$ coefficient of $a([\partial_u])$ is $a(0)$, which is invertible, we have that $a([\partial_u])$ is also invertible, and so the map is well-defined. The statement of the theorem is now equivalent to the statement that $\psibar$ descends to the quotient, yielding a map $\psi: K_f \to R \otimes A_g$. Since $K_f = K_g$, we may assume w.l.o.g. that $f=g$, and so that $f$ is homogeneous.

We claim that it suffices to show that if for all $E, F \in \Sh(\Lambda)^+$ with $E\cdot f = F\cdot f$, we have $\psibar(E) = \psibar(F)$.
Indeed, suppose that's the case, and let $E, F \in \Sh(\Lambda)$ not necessarily positive, with
$E \cdot f = F \cdot f$.
Split $E = E_+ - E_-$ and $F = F_+ - F_-$ with $E_\pm, F_\pm$ positive, then $E \cdot f = F \cdot f$ is equivalent to $(E_+ + F_-) \cdot f = (F_+ + E_-)\cdot f$. 
Then by assumption we have that $\psibar(E_+ + F_-) = \psibar(F_+ + E_-)$.
Multiplying this equality by $\psibar(-E_- -F_-)$, we obtain $\psibar(E) = \psibar(F)$, showing that $\psibar$ descends to the quotient.

So suppose $E$, $F$ are positive elements with $E \cdot f = F \cdot f$.
\Cref{lemma:annihilator-rank-0} implies that $E$ and $F$ have the same rank $d\coloneqq\rk(E)=\rk(F)$.
Hence we may write $E=\sum_{i=1}^d T^{u_i}$ and $F=\sum_{i=1}^d T^{v_i}$ for some $u_i,v_i\in \Lambda$.

By \Cref{lem:exponential and shifts}, we have 
\begin{equation}
\label{eq:apply chern character to prove lambda-ideal}
\ch(E)\cdot f=E \cdot f=F \cdot f=\ch(F) \cdot f.  
\end{equation}
Note that 
\begin{equation*}
    \ch(E)= \sum_{i=1}^d \ch(T^{u_i})=\sum_{k=0}^\infty \frac 1{k!} p_k(\partial_{u_1}, \dots, \partial_{u_d}) \ ,
\end{equation*}
where $p_k$ is the degree-$k$ power sum symmetric polynomial. A similar equality holds for $F$.
Since we assumed $f$ is homogeneous, \Cref{eq:apply chern character to prove lambda-ideal} implies that 
\[
p_k(\partial_{u_1}, \dots, \partial_{u_d}) \cdot f=
p_k(\partial_{v_1}, \dots, \partial_{v_d}) \cdot f.
\]
for all $k>0$. We note that the set 
\[
W=\{q\in R\otimes \Q[t_1,\ldots,t_d]\mid
q(\partial_{u_1}, \dots, \partial_{u_d}) \cdot f=
q(\partial_{v_1}, \dots, \partial_{v_d}) \cdot f\}
\]
defines a subring of $R\otimes \Q[t_1, \dots, t_d]$.
As the power sum symmetric polynomials generate the ring of symmetric polynomials (since $R \otimes \Q \supseteq \Q$ as $R$ is of characteristic $0$), it follows that the ring of symmetric polynomials is contained in $W$.
In particular, this implies that
\[
    \psibar(E) \cdot f = \left( \prod_{i = 1} ^ d a(\partial_{u_i}) \right) \cdot f
    = \left( \prod_{i = 1} ^ d a(\partial_{v_i}) \right) \cdot f = \psibar(F) \cdot f,
\]
which finishes the proof.
\end{proof}

\begin{definition}\label{def:chern-class-todd}
\begin{enumerate}
    \item
    The \emph{total Chern class} is the multiplicative map $c: K_f \to \Q A_g$ generated by
    \[
        c([T^u]) = 1 + [\partial_u].
    \]
    The $k$-th graded component of $c(\xi)$ is denoted by $c_k(\xi)$ and is called the $k$-th Chern class of $\xi$.
    \item
    The \emph{Todd operator} is the multiplicative map $\Td: K_f \to \Q A_g$ generated by
    \[
        \Td([T^u]) = \frac{[\partial_u]}{1 - \exp(-[\partial_u])}.
    \]
\end{enumerate} 
\end{definition}

\begin{corollary}
    Let $p \in R[x]$ be a polynomial with $p(1)$ invertible in $R$.
    There is a unique multiplicative map
    $\mu: K_f \to R \otimes K_f$, such that for all $u \in \Lambda$,
    $\mu([T^u]) = p([T^u])$.
\end{corollary}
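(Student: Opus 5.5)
Since this is a corollary of \Cref{thm:mult-map-A_g}, the plan is to transport the problem to $\Q A_g$ through the Chern character and apply that theorem there. We work in the Riemann-Roch setting of this subsection: $f\sim g$ with $g$ homogeneous, so that $\ch\colon \Q K_f\to \Q A_g$ is an isomorphism by \Cref{thm:ch-descends-iff-todd}. It extends $R$-linearly to an isomorphism $R\otimes K_f\hookrightarrow R\otimes \Q K_f\cong R\otimes \Q A_g$. Here $R\otimes K_f\to R\otimes\Q K_f$ is injective because $R$ has characteristic $0$ and $K_f$ is a finitely generated free $\Z$-module (\Cref{prop:polyart} together with torsion-freeness). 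If $R$ has $\Z$-torsion, we instead target $R\otimes\Q$, which is a ring of characteristic $0$ into which $R$ maps, and check integrality at the end.

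The steps are as follows.

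\begin{enumerate}
\item Set $a(x):=p(\exp(x))\in (R\otimes\Q)\llbracket x\rrbracket$. Then $a(0)=p(1)$ is invertible.
\item Apply \Cref{thm:mult-map-A_g} with the ring $R\otimes\Q$. This gives a unique multiplicative map $\psi\colon K_f\to R\otimes\Q A_g$ with $\psi([T^u])=a([\partial_u])=p(\exp[\partial_u])=p(\ch[T^u])$.
\item Define $\mu:=\ch^{-1}\circ\psi$. It is multiplicative, because $\ch^{-1}$ is a ring isomorphism, and it satisfies $\mu([T^u])=p([T^u])$.
\end{enumerate}

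Uniqueness is immediate. The elements $[T^u]$ span $K_f$ additively, and a multiplicative map (a group homomorphism from $(K_f,+)$ to units under multiplication) is determined by its values on them.

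The main obstacle is integrality: we must show that $\mu$ lands in $R\otimes K_f$ and not merely in $R\otimes\Q K_f$. For this I would avoid the $\Q$ detour and argue directly with the descent criterion from the proof of \Cref{thm:mult-map-A_g}. Define $\bar\mu\colon \Sh(\Lambda)\to R\otimes K_f$ by $T^u\mapsto p([T^u])$. The value $p([T^u])$ is a unit: $[T^u]-1=[\Delta_u]$ is nilpotent in $K_f$, so $p([T^u])=p(1)+(\text{nilpotent})$. Extending additively-to-multiplicatively, it suffices to show $\bar\mu(E)=\bar\mu(F)$ whenever $E,F$ are positive with $E\cdot f=F\cdot f$. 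The key point is that both sides are of the form $\prod_i p([T^{u_i}])$ versus $\prod_i p([T^{v_i}])$, that is, a symmetric polynomial with $R$-coefficients evaluated on $[T^{u_i}]$, resp.\ $[T^{v_i}]$. By the fundamental theorem of symmetric polynomials, which holds over $\Z$, such an expression is a polynomial with coefficients in $R$ in the elementary symmetric functions $e_k$. Hence it suffices to show
\[
e_k([T^{u_1}],\dots,[T^{u_d}])=e_k([T^{v_1}],\dots,[T^{v_d}])\quad\text{in } K_f.
\]
Since $K_f\subseteq\Q K_f$, this can be checked rationally. There, by Newton's identities over $\Q$, it reduces to equality of the power sums $\sum_i [T^{ku_i}]=\sum_i [T^{kv_i}]$. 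That equality in turn follows by applying $\ch$ and the homogeneity argument of \Cref{thm:mult-map-A_g}, which yields equality of all symmetric expressions in the $\partial_{u_i}$ versus the $\partial_{v_i}$ acting on $f$, in particular those of $\exp(k\partial_{u_i})$. This is the step requiring care; the rest is formal.
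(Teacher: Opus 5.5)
Your steps 1--3 are exactly the paper's construction ($a=p\circ\exp$, $\psi$ from \Cref{thm:mult-map-A_g}, $\mu=\ch^{-1}\circ\psi$). For integrality, however, you take a genuinely different and longer route.

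The paper finishes with the observation you already made. $\mu$ is a homomorphism out of $(K_f,+)$, which is generated by the elements $\pm[T^u]$. Both $\mu([T^u])=p([T^u])$ and $\mu(-[T^u])=p([T^u])^{-1}$ lie in $R\otimes K_f$, because $p([T^u])=p(1)+(\text{nilpotent})$ is a unit there. So every $\mu(\xi)$ is a product of elements of $R\otimes K_f$, and the ``main obstacle'' disappears.

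Your direct descent is nonetheless correct, once you make two things explicit:
\begin{enumerate}
\item $E$ and $F$ have equal rank (\Cref{lemma:annihilator-rank-0}).
\item The power-sum identity $\sum_i[T^{ku_i}]=\sum_i[T^{kv_i}]$ comes from running the homogeneity argument on $g$, which is legitimate since $\Ann(f)=\Ann(g)$. Use $\sum_i\exp(k\partial_{u_i})=\sum_{m\ge 0}\tfrac{k^m}{m!}\sum_i\partial_{u_i}^m$.
\end{enumerate}
Newton's identities in the torsion-free $K_f\subseteq\Q K_f$ and the fundamental theorem of symmetric polynomials over $R$ then close the argument.

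What your route buys is that it builds $\mu$ directly into $R\otimes K_f$ for any commutative $R$ with $p(1)$ invertible. This makes steps 1--3 superfluous. It also avoids two technical points:
\begin{itemize}
\item passing to $R\otimes\Q$, which is needed since $p\circ\exp$ need not have coefficients in $R$;
\item the injectivity of $R\otimes K_f\to R\otimes\Q K_f$. This requires $R$ to be $\Z$-torsion-free, not merely of characteristic $0$ as you claim.
\end{itemize}
The paper's route buys brevity by reusing \Cref{thm:mult-map-A_g} wholesale.
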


\begin{proof} 
    Set $a = p \circ \exp$ and let $\psi: K_f \to R \otimes A_g$ be the map defined in \Cref{thm:mult-map-A_g}, and let $\mu: K_f \to R \otimes \Q K_f$ be the map given by the composition $\mu := \ch^{-1} \circ \psi$.
    Then for any $u \in \Lambda$, we have that 
    \[\mu([T^u]) = \ch^{-1}\left(p(\exp([\partial_u])\right)
    = p(\ch^{-1}(\exp([\partial_u]))) = p([T^u]).\]
    We claim that the image of $\mu$ is contained in $R \otimes K_f$.
    Let $u \in \Lambda$,
    then $p([T^u]) = p(1 + [\Delta_u]) = p(1) + \xi$, where $\xi \in [\Delta_u]\cdot (R\otimes K_f)$ is a nilpotent element in $R \otimes K_f$. It follows that $p([T^u])$ is invertible in $R\otimes K_f$, and hence in particular
    $\mu(-[T^u]) = p([T^u])^{-1} \in R \otimes K_f$, which shows that $\im(\mu) \subseteq R \otimes K_f$.
\end{proof}

\begin{definition}\label{def:det-lambda}
\begin{enumerate}
    \item []
    \item 
    The \emph{determinant} map is the multiplicative map $\det: K_f \to K^1_f$
    generated by
    \[\det([T^u]) = [T^u].\]

    \item
    The \emph{$\lambda$-operator} is the multiplicative map
    $\lambda: K_f \to K_f\llbracket t \rrbracket$ generated by
    \[\lambda([T^u]) = 1 + [T^u] t.\]
    We denote by $\lambda^k(\xi) \in K_f$ the coefficient of $t^k$ in $\lambda(\xi)$
\end{enumerate} 
\end{definition}

\begin{remark}
    For any positive element $\xi \in K_f^+$, we have that $\det(\xi) = \lambda^{\rk(\xi)}(\xi)$.
\end{remark}

\subsection{\texorpdfstring{$\lambda$}{λ}-ring structure}

The $\lambda$-operator allows us to equip $K_f$ with the structure of a $\lambda$-ring.
In the geometric setting, this structure arises naturally by taking exterior powers of vector bundles.

Let $\xi = \sum_{i = 1}^r [T^{u_i}]$ be a positive element, where $u_i \in \Lambda$.
From the definition it follows that $\lambda^k(\xi) = [\sigma_k(T^{u_1}, \dots, T^{u_r})]$,
where $\sigma_k$ is the elementary symmetric polynomial of degree $k$.

\begin{theorem}\label{thm:lambda-ring}
    The $\lambda$-operator induces on $K_f$ the structure of a $\lambda$-ring.
    Furthermore, $\lambda$ extends multiplicatively to $\Q K_f$, also making it a $\lambda$-ring.
\end{theorem}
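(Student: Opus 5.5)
We need to verify the $\lambda$-ring axioms. The operator $\lambda\colon K_f\to K_f\llbracket t\rrbracket^\times$ is already a group homomorphism from $(K_f,+)$ to $(1+tK_f\llbracket t\rrbracket,\cdot)$ by the preceding corollary and \Cref{def:det-lambda}, and $\lambda^0=1$, $\lambda^1=\mathrm{id}$. The second fact holds because it is true on generators $[T^u]$ and both sides are additive. What remains are the two universal identities
\[
\lambda^k(\xi\eta)=P_k(\lambda^1\xi,\dots,\lambda^k\xi;\lambda^1\eta,\dots,\lambda^k\eta),\qquad
\lambda^k(\lambda^l\xi)=P_{k,l}(\lambda^1\xi,\dots,\lambda^{kl}\xi),
\]
with $P_k,P_{k,l}$ the universal integer polynomials defined via elementary symmetric functions.

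The plan is a splitting-principle argument, which in our setting is essentially free: every element of $K_f$ is a difference of positive elements, and positive elements are sums of line elements $[T^u]$. First I would treat positive elements. If $\xi=\sum_{i=1}^r[T^{u_i}]$ and $\eta=\sum_{j=1}^s[T^{v_j}]$, then $\lambda_t(\xi)=\prod_i(1+[T^{u_i}]t)$, and $\xi\eta=\sum_{i,j}[T^{u_i+v_j}]$ is again positive with $\lambda_t(\xi\eta)=\prod_{i,j}(1+[T^{u_i}][T^{v_j}]t)$. By the very definition of $P_k$ via $\prod_{i,j}(1+x_iy_jt)$ in terms of the $\sigma$'s of $x$ and $y$, the first identity holds, since it is the image of a polynomial identity in $\Z[x_i,y_j]$ under $x_i\mapsto[T^{u_i}]$, $y_j\mapsto[T^{v_j}]$. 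Similarly, $\lambda^l(\xi)=\sigma_l([T^{u_1}],\dots,[T^{u_r}])=\sum_{|I|=l}[T^{u_I}]$ with $u_I=\sum_{i\in I}u_i$ is positive, so $\lambda_t(\lambda^l\xi)=\prod_{|I|=l}(1+[T^{u_I}]t)$. This is the image of the defining identity for $P_{k,l}$.

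The main obstacle is extending to arbitrary (virtual) elements, since the $\lambda$-ring axioms are not additive in an obvious way. The standard route is to show that the \emph{pre}-$\lambda$-ring $K_f$ is a $\lambda$-ring if it is generated additively by elements $x$ with $\lambda_t(x)=1+xt$ (line elements), and these are closed under multiplication. This is a classical lemma (e.g.\ Atiyah--Tall / Fulton--Lang: a pre-$\lambda$-ring generated as a group by a multiplicatively closed set of line elements is a $\lambda$-ring). Alternatively, one can avoid citing it by lifting: the identities hold in $\Sh(\Lambda)$ with $\lambda$ defined by $\lambda_t(T^u)=1+T^ut$. Indeed, $\Z[\Lambda]$ is the group ring of a free abelian group, hence a $\lambda$-ring (the representation ring of a torus), so all axioms hold there. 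Because $\lambda$ on $K_f$ is the map induced from $\Sh(\Lambda)$ through the quotient, which is well defined by the corollary to \Cref{thm:mult-map-A_g}, and the quotient map is a surjective ring homomorphism commuting with $\lambda$, all polynomial identities descend. I would present this second argument, as it handles negatives automatically.

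For $\Q K_f$, I would extend $\lambda$ by $\Q$-linearity in the exponent. Since $\lambda_t(\xi)\in 1+tK_f\llbracket t\rrbracket$, for $q\in\Q$ the power $\lambda_t(\xi)^q$ is defined by the binomial series and is multiplicative in $q$. Setting $\lambda_t(q\xi)=\lambda_t(\xi)^q$ is well defined on $\Q\otimes K_f$, because $K_f$ is torsion-free and $(1+tR\llbracket t\rrbracket)$ is uniquely divisible over a $\Q$-algebra $R$. Checking that the $\lambda$-ring identities survive this extension is where I expect genuine difficulty, because the axioms are not compatible with rational scaling in general. For this part I would only claim what the statement needs: a multiplicative extension. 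I would then verify the axioms on $\Q K_f$ via the same lifting to $\Q\otimes\Sh(\Lambda)$, interpreting the identities as holding after writing elements with a common denominator. If that fails, I would restrict the claim to the pre-$\lambda$-structure.
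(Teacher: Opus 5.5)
Your treatment of $K_f$ itself is correct, and it is more explicit than the paper, which simply cites Knutson. The ring $\Sh(\Lambda)=\Z[\Lambda]$ with $\lambda_t(T^u)=1+T^ut$ is a $\lambda$-ring. The quotient map $\Sh(\Lambda)\to K_f$ is surjective and intertwines the two $\lambda$-operations, because both composites are additive-to-multiplicative and agree on the $T^u$. So the universal identities descend. Your extension $\lambda_t(q\xi)=\lambda_t(\xi)^q$ to $\Q K_f$ is also fine, and it is the only additive extension, since $1+t\,\Q K_f\llbracket t\rrbracket$ is uniquely divisible.

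\textbf{The gap.} You never prove the last assertion of the theorem, namely that $\Q K_f$ with this extension satisfies the $\lambda$-ring axioms. Falling back to a pre-$\lambda$-structure would not prove the statement, which explicitly claims a $\lambda$-ring.

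Your suggested way out does not work as stated. Lifting to $\Q\otimes\Sh(\Lambda)$ only moves the problem: you would then need $\Q[\Lambda]$ with $\lambda_t(qT^u)=(1+T^ut)^q$ to be a $\lambda$-ring. Clearing denominators cannot reduce the axioms to the integral case, because the $\lambda^k$ do not commute with scaling. For example, $\lambda^2(\tfrac12[T^u])=\binom{1/2}{2}[T^u]^2=-\tfrac18[T^u]^2$. This value comes from $\lambda_t([T^u])$ only through a square root of a power series, not from any integral identity.

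\textbf{The missing ingredient.} The paper uses that $\Q$ with $\lambda^k(a)=\binom{a}{k}$ is a (binomial) $\lambda$-ring, and that a tensor product of $\lambda$-rings is again a $\lambda$-ring \cite[p.~21]{Knu73}. On a line element $\ell$ the tensor structure gives $\lambda_t(q\ell)=\sum_k\binom{q}{k}\ell^kt^k=(1+\ell t)^q$. By the uniqueness noted above, it therefore coincides with your extension.

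\textbf{An elementary alternative.} You could also close the gap by a polynomiality argument.
\begin{enumerate}
\item Work in the Laurent ring $\Q[\ell_1^{\pm1},\dots,\ell_m^{\pm1}]$ and write $x=\sum_i a_i\ell_i$, $y=\sum_i b_i\ell_i$.
\item Every $t^k$-coefficient of both sides of each axiom is a polynomial in $(a,b)$. For the composition axiom, note that $\lambda^l x=\sum_{n_1+\dots+n_m=l}\prod_i\binom{a_i}{n_i}\,\ell^{\mathbf n}$ with distinct monomials $\ell^{\mathbf n}$.
\item The identities hold on the Zariski-dense set $\N^{2m}$ by the integral case, hence for all rational $(a,b)$.
\item They then push forward to $\Q K_f$ along $\ell_i\mapsto[T^{u_i}]$, a ring map that commutes with $\lambda$.
\end{enumerate}
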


\begin{proof}
    The fact that $K_f$ is a $\lambda$-ring follows from \cite[p. 17]{Knu73}.
    
    Note that $\Q$ is a binomial ring, which means that the binomial coefficient
    \[
        \binom{a}{k} := \frac{a(a-1) \dots (a-k + 1)}{k!}
    \]
    is well-defined. Setting $\lambda^k(a) = \binom{a}{k}$ endows $\Q$ with the structure of a $\lambda$-ring, and hence also the tensor product $\Q K_f = K_f \otimes_\Z \Q$, by \cite[p. 21]{Knu73}.
\end{proof}

The $\lambda$-ring structures on $K_f$ and $\Q K_f$ as constructed above agree with the natural $\lambda$-ring structures on $\Sh(\Lambda)$ and $\Sh_\Q(\Lambda)$. 
We may evaluate the $\lambda$-operator on $\Sh_\Q(\Lambda)$ as follows. One may show using the splitting principle that for all $a \in \Q$, $u \in \Lambda$, 
\[
    \lambda(aT^u)
    = \sum_{k=0}^\infty \lambda^k(a) T^{ku} t^u
    = \sum_{k=0}^\infty \binom{a}{k}T^{ku}t^u =: (1 + T^ut)^a.
\]
It follows that for any $\sum_i a_i T^{u_i} \in \Sh_\Q(\Lambda)$,
\begin{equation}
\label{rem:lambda-rational}
    \lambda\left(\sum_i a_iT^{u_i}\right)
    = \prod_i \lambda(a_iT^{u_i})
    = \prod_i (1 + T^{u_i}t)^{a_i}.
\end{equation}

\subsection{Serre duality}\label{subsec:serre-duality}

Let $X$ be a smooth variety of dimension $d$, which is proper over some field $k$. Serre duality states that for any algebraic vector bundle $E$ on $X$, and any $0 \leq i \leq d$, we have that
\[
    H^i(X, E) \cong H^{d-i}(X, \omega_X \otimes E^\vee)^\vee,
\]
where $\omega_X$ is the canonical line bundle of $X$.

Since the Euler characteristic of $E$ is defined by
\[
\chi(X, E) = \sum_{i \geq 0} (-1)^i \dim_k H^i(X, E),
\]
Serre duality implies the relation
\[
\chi(X, E) = (-1)^d \chi(X, \omega_X \otimes E^\vee).
\]

We can make sense of this statement for Riemann-Roch pairs $(K_f, A_g)$, but first we have to translate the dualizing morphism into our setting.

\begin{proposition} \label{prop:dualizing-morphism}
    There is an involution $(-)^\vee$ on $K_f$, generated by
    $[T^u]^\vee = [T^{-u}]$. We call this the \emph{dualizing morphism} on $K_f$.
\end{proposition}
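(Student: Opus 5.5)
The idea is to realize the duality on $\Sh(\Lambda)$ as the ring automorphism induced by the lattice map $-\mathrm{id}\colon\Lambda\to\Lambda$, and to check that it preserves $\Ann_\Sh(f)$. Let $\iota\colon \Sh(\Lambda)\to\Sh(\Lambda)$ be the ring homomorphism $T^u\mapsto T^{-u}$. In the notation of \Cref{sec:grr}, $\iota=\phi_*$ for $\phi=-\mathrm{id}$. It is an involution, so the claim reduces to the inclusion $\iota(\Ann_\Sh(f))\subseteq \Ann_\Sh(f)$. Once this holds, $\iota$ descends to a ring endomorphism of $K_f$. That endomorphism squares to the identity and is therefore an involution, and by construction it satisfies $[T^u]^\vee=[T^{-u}]$.

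First I compute the annihilator of the reflected polynomial. Set $\check f := \phi^*f$, so $\check f(x)=f(-x)$. By \Cref{lemma:pushforward-pullback-relation}, $\phi^*(\iota(E)\cdot f)=E\cdot \check f$. Since $\phi^*$ is injective, $\iota(E)\in\Ann(f)$ holds if and only if $E\in \Ann(\check f)$. Hence $\iota(\Ann(f))=\Ann(\check f)$, and it suffices to show $\Ann_\Sh(\check f)=\Ann_\Sh(f)$. By \Cref{cor:polynomial-equivalence}, this is the same as showing $\check f\sim f$.

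The key step is to prove this differential equivalence, which is where the Riemann--Roch pair hypothesis enters. Since $(K_f,A_g)$ is a Riemann--Roch pair, we have $f=\td\cdot g$ with $\td=D+\Ann(g)$ a unit in $\Q A_g$, and $g$ homogeneous of degree $d$. Reflecting gives $\check g=(-1)^d g$, and for a differential operator one has $(D\cdot f)^\vee=\check D\cdot\check f$, where $\check D$ is obtained by replacing each $\partial_u$ with $-\partial_u$. Therefore
\[
\check f=(-1)^d\,\check D\cdot g .
\]
Now $D\mapsto\check D$ preserves $\Ann_{\Diff}(g)$, because it acts on homogeneous components by signs and $\Ann(g)$ is homogeneous by \Cref{lem:lambda-homogeneous}. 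So it induces a ring automorphism of $\Q A_g$, and that automorphism sends units to units. Consequently $\check f=\alpha\cdot g$ with $\alpha=(-1)^d[\check D]$ invertible, which gives $\check f\sim g\sim f$.

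The main obstacle is exactly this step: without homogeneity of $g$, the sign twist need not preserve the annihilator. With the equivalence in hand, the integral statement follows directly, since \Cref{cor:polynomial-equivalence} gives equality of the integral annihilators $\Ann_\Sh$ and not only the rational ones. Thus $\iota$ descends to $K_f$ itself, and the resulting map is the dualizing morphism $(-)^\vee$.
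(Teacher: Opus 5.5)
Your proof is correct, but it takes a different route from the paper's.

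\textbf{The paper's route (Chow side).} The paper defines the sign involution $\partial_u\mapsto-\partial_u$ on $\Diff_\Q(\Lambda)$. It observes that this involution preserves the homogeneous ideal $\Ann(g)$, and so descends to $\Q A_g$. It then transports the involution to $\Q K_f$ through $\ch$. Since $\exp(\partial_u)^\vee=\exp(-\partial_u)$, the transported map sends $[T^u]$ to $[T^{-u}]$, and therefore restricts to $K_f$.

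\textbf{Your route (shift side).} You construct the map directly on $\Sh(\Lambda)$ and show it preserves $\Ann_\Sh(f)$. You identify $\iota(\Ann_\Sh(f))$ with $\Ann_\Sh(\check f)$ via the projection formula, then prove $\check f\sim f$ using the Todd class and the unit $[\check D]$.

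\textbf{Comparison.} Both arguments rest on the same fact: homogeneity of $g$ makes the sign twist compatible with $\Ann(g)$. Your version yields the integral involution as a descent from $\Sh(\Lambda)$. It also gives the by-product that $f(-x)$ is differentially equivalent to $f$. The paper's version is shorter once the Chern character isomorphism is available.

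\textbf{A simplification of your key step.} You do not need the Todd class or the invertibility of $[\check D]$. By $f\sim g$ you already have $\Ann_\Sh(f)=\Ann_\Sh(g)$. The same projection-formula argument gives $\iota(\Ann_\Sh(g))=\Ann_\Sh(\check g)$. Since $\check g=(-1)^d g$, this equals $\Ann_\Sh(g)$, so $\iota(\Ann_\Sh(f))=\Ann_\Sh(f)$ directly.
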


\begin{proof}
    On $\Diff_\Q(\Lambda)$ we have an involution generated by $(\partial_u)^\vee = -\partial_{u}$.
    Since $\Ann(g)$ is a homogeneous ideal and $(-)^\vee$ acts on each graded component $\Diff_\Q^i(\Lambda)$ by multiplication with $(-1)^{i}$, this involution descends to the quotient
    $\Diff_\Q(\Lambda)/\Ann(g) = \Q A_g$.
    We may pull back this involution via the Chern character, to obtain
    an involution on $K_f$, which by construction satisfies
    $[T^u]^\vee = [T^{-u}]$.
\end{proof}

\begin{definition}
    \label{def:dualizing-class}
    An element of $\Q K_f$ is called the \emph{dualizing class} if for all $\xi \in \Q K_f$,
    \[\chi_f(\xi) = (-1)^d\chi_f(\omega \cdot \xi^\vee),\]
    where $d=\deg(g)$
\end{definition}

\begin{proposition}\label{prop:serre duality}
    The dualizing class always exists in $\Q K_f$
    and is unique.
\end{proposition}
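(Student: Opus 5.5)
The plan is to use the Frobenius pairing to reduce the statement to linear algebra. Both existence and uniqueness come from the non-degeneracy of $\chi_f(-\cdot-)$ on the finite-dimensional $\Q$-vector space $\Q K_f$.

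\textbf{Uniqueness.} Suppose $\omega$ and $\omega'$ both satisfy the defining relation. Then $\chi_f((\omega-\omega')\cdot\xi^\vee)=0$ for all $\xi\in\Q K_f$. Since $(-)^\vee$ is an involution by \Cref{prop:dualizing-morphism}, extended $\Q$-linearly to $\Q K_f$, the element $\xi^\vee$ ranges over all of $\Q K_f$ as $\xi$ does. Non-degeneracy of the pairing (\Cref{thm:shiftalg}) then forces $\omega=\omega'$.

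\textbf{Existence.} Consider the linear functional $\ell\colon \Q K_f\to\Q$ given by $\ell(\eta)=(-1)^d\chi_f(\eta^\vee)$. Here $(-)^\vee$ is a ring automorphism, being pulled back from a ring involution on $\Q A_g$ via $\ch$. Since $\Q K_f$ is finite-dimensional (\Cref{prop:polyart}) and the pairing is non-degenerate, it is perfect, so there is a unique $\omega\in\Q K_f$ with $\ell(\eta)=\chi_f(\omega^\vee\cdot\eta)$ for all $\eta$. Put $\omega_0=\omega^\vee$, so that $\ell(\eta)=\chi_f(\omega_0\cdot\eta)$. Now take $\eta=\xi^\vee$ and use that $\vee$ is an involution:
\[
\chi_f(\xi)=(-1)^d\,\ell(\xi^\vee)=(-1)^d\chi_f(\omega_0\cdot\xi^\vee).
\]
Hence $\omega_0$ is a dualizing class.

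The only points needing care are that $\vee$ is $\Q$-linear and multiplicative on $\Q K_f$, and that the pairing is perfect over $\Q$ rather than merely non-degenerate. Both follow from what was established earlier. I do not expect a genuine obstacle here.

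As an optional refinement, one can identify $\omega$ more concretely via the Chern character. The functional $\eta\mapsto\chi_f(\eta^\vee)$ corresponds to $\alpha\mapsto\deg_g(\td\cdot\alpha^\vee)=(-1)^d\deg_g(\td^\vee\cdot\alpha)$, using that $\deg_g$ lives in degree $d$. This gives
\[
\ch(\omega)=\td^\vee\cdot\td^{-1}\cdot(\text{sign adjustments}).
\]
That identification is not needed for the existence and uniqueness claim itself.
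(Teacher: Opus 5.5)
Your argument is correct and is essentially the paper's proof. Both represent the functional $\xi\mapsto(-1)^d\chi_f(\xi^\vee)$ through the perfect Frobenius pairing on $\Q K_f$ and then substitute $\xi\mapsto\xi^\vee$. Your detour through $\omega^\vee$ is unnecessary, since representing $\ell$ directly as $\chi_f(\omega\cdot -)$ already gives the dualizing class, and your explicit uniqueness check just spells out what the paper leaves implicit.

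In your optional remark there are in fact no sign adjustments: the two factors of $(-1)^d$ cancel, and one gets exactly $\ch(\omega)=\td^\vee\cdot\td^{-1}$.
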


\begin{proof}
    Consider the $\Q$-linear map
    \begin{align*}
        \Q K_f &\to \Q\\
        \xi &\mapsto (-1)^d \chi_f(\xi^\vee),
    \end{align*}
    where $d$ is the degree of $f$.
    Then because the Frobenius pairing on $\Q K_f$ is perfect, this map is the dual of a unique class $\omega \in \Q K_f$.
\end{proof}

Note that without any assumption on $f$, the dualizing class $\omega$ might be not be integral, and even if that was the case, it might not be a line element. However, at least $\omega$ has the expected rank.

\begin{lemma}
    We have that $\rk(\omega) = 1$.
\end{lemma}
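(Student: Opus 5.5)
The plan is to pass through the Chern character and compare lowest-degree terms in $\Q A_g$. Since $(K_f,A_g)$ is a Riemann-Roch pair, we have $f=\td\cdot g$ with $\td\in\Q A_g^\times$, and $\chi_f(\xi)=\deg_g(\td\cdot\ch(\xi))$ for all $\xi\in\Q K_f$ by \Cref{thm:map-descends-iff-hrr}. For $\xi\in\Q K_f$, the degree-$0$ component of $\ch(\xi)$ is $\rk(\xi)$, because $\ch(T^u)=1+(\text{higher order terms})$. The rank is well defined on $\Q K_f$ by \Cref{lemma:annihilator-rank-0}.

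The key step is to test the defining identity of $\omega$ against a socle element. Choose $\eta\in\Q A_g^{d}$ with $\deg_g(\eta)=1$; this exists because $g\neq 0$ is homogeneous of degree $d$ and $\deg_g$ is nonzero on the top component by \Cref{thm:diffalg}. Set $\xi:=\ch^{-1}(\td^{-1}\eta)$. Since $\td^{-1}$ has invertible degree-$0$ part $t_0\in\Q^\times$ and $\eta$ lies in top degree, we get $\td^{-1}\eta=t_0^{-1}\eta$. Then
\[
\chi_f(\xi)=\deg_g(\td\cdot\td^{-1}\eta)=1 .
\]

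For the other side, I would use how the dualizing morphism acts after the Chern character. By the proof of \Cref{prop:dualizing-morphism}, $\ch(\xi^\vee)=\ch(\xi)^\vee$, where $(-)^\vee$ acts on $\Q A_g^i$ by $(-1)^i$. Since $\ch(\xi)=t_0^{-1}\eta$ is purely of degree $d$, we get $\ch(\xi^\vee)=(-1)^d t_0^{-1}\eta$. Multiplying by anything only keeps the degree-$0$ part of the other factor, so
\[
\chi_f(\omega\cdot\xi^\vee)=\deg_g\bigl(\td\cdot\ch(\omega)\cdot(-1)^d t_0^{-1}\eta\bigr)=(-1)^d\, t_0\,\rk(\omega)\,t_0^{-1}=(-1)^d\rk(\omega).
\]
Substituting into $\chi_f(\xi)=(-1)^d\chi_f(\omega\cdot\xi^\vee)$ gives $1=\rk(\omega)$.

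The main point needing care is bookkeeping. We need $d$ in \Cref{def:dualizing-class} to equal the socle degree $\deg g$, and $d=\deg f$ in the proof of \Cref{prop:serre duality} agrees with this since $f$ and $g$ share the top homogeneous part up to the scalar $t_0$. We also need the degree-$0$ component of $\ch$ to be the rank on $\Q K_f$, which holds because $\ch$ is a ring map that reduces to $\rk$ modulo the maximal ideal. The remaining steps are routine.
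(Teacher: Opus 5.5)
Your proof is correct, but it takes a different route from the paper. The paper's proof stays on the polynomial side. It specializes the defining identity to $\xi=[T^{x}]$, which gives $f(x)=(-1)^d(\omega\cdot f)(-x)$, and then compares top homogeneous components. By the observation behind \Cref{lemma:annihilator-rank-0}, the top component of $\omega\cdot f$ is $\rk(\omega)$ times that of $f$, and $f_d(-x)=(-1)^d f_d(x)$, so $f_d=\rk(\omega)f_d$. That argument never uses $\td$, $\ch$, or the socle; it only needs $f$ to be a polynomial of degree $d$ satisfying the identity.

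You instead test the identity against a single class $\xi$ whose Chern character is concentrated in the socle degree. HRR, the relation $\ch(\xi^\vee)=\ch(\xi)^\vee$, and the vanishing of $\Q A_g$ above degree $d$ then isolate the degree-$0$ part of $\ch(\omega)$. The two arguments are essentially dual: pairing with a socle-degree class is what reads off the top-degree coefficients of $f$ (compare \Cref{prop:solution set to differential equation}). The paper's version is shorter and more elementary. Yours is purely ring-theoretic, so it should go through essentially unchanged for the general Riemann--Roch pairs of \Cref{sec:hrr-pairs}. There $f$ is only a function on a semigroup, ``top homogeneous component of $f$'' has no direct meaning, and $\rk$ would be read as the degree-$0$ part of $\ch$.

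One small notational slip: your final computation takes $t_0$ to be the degree-$0$ part of $\td$. In that case the degree-$0$ part of $\td^{-1}$ is $t_0^{-1}$, not $t_0$. With that reading, every step is consistent.
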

\begin{proof}
    We may rewrite the equation in \Cref{def:dualizing-class} as
    \[f(x) = (-1)^d (\omega \cdot f)(-x).\]
    The result follows by comparing the top homogeneous components of both sides.
\end{proof}

\begin{proposition}
    Write $\omega = \sum_{i} a_i [T^{u_i}]$.
    Then $\td_1 = -\frac{1}{2}\sum_{i}a_i[\partial_{u_i}] = -\frac{1}{2}c_1(\omega)$.
\end{proposition}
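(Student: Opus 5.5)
We translate the defining relation of $\omega$ into an identity in $\Q A_g$ via the Chern character and then compare degree-one parts. Throughout, $\td_k$ denotes the degree-$k$ component of $\td$.

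\textbf{Step 1: rewrite Serre duality in $A_g$.} By \Cref{thm:map-descends-iff-hrr} we have $\chi_f(\xi)=\deg_g(\td\cdot\ch(\xi))$ for all $\xi\in\Q K_f$. By construction of the dualizing morphism in \Cref{prop:dualizing-morphism}, $\ch(\xi^\vee)=\ch(\xi)^\vee$, where $(-)^\vee$ acts on $\Q A_g^i$ by $(-1)^i$. The defining relation of $\omega$ therefore becomes
\[
\deg_g(\td\cdot\alpha)=(-1)^d\deg_g\big(\td\cdot\ch(\omega)\cdot\alpha^\vee\big)\quad\text{for all }\alpha\in\Q A_g .
\]
Since $\deg_g$ is supported in degree $d$, applying $(-)^\vee$ to an element of $\Q A_g$ multiplies its degree-$d$ part by $(-1)^d$. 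Hence $(-1)^d\deg_g(\beta)=\deg_g(\beta^\vee)$, and since $(-)^\vee$ is a ring involution, the right-hand side equals $\deg_g\big(\td^\vee\cdot\ch(\omega)^\vee\cdot\alpha\big)$. Non-degeneracy of the pairing then gives
\[
\td=\td^\vee\cdot\ch(\omega)^\vee .
\]

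\textbf{Step 2: compare low-degree parts.} The degree-$0$ part of the identity reads $\td_0=\td_0\cdot\rk(\omega)$, which is consistent with the lemma $\rk(\omega)=1$. For degree one, write $\ch(\omega)=1+\sum_i a_i[\partial_{u_i}]+(\text{higher})$; here the constant term is $\sum_i a_i=\rk(\omega)=1$. Then $\ch(\omega)^\vee=1-\sum_i a_i[\partial_{u_i}]+\dots$ and $\td^\vee=\td_0-\td_1+\dots$, so the degree-one part of the identity is
\[
\td_1=-\td_1-\td_0\sum_i a_i[\partial_{u_i}],
\qquad\text{that is,}\qquad
\td_1=-\tfrac12\,\td_0\sum_i a_i[\partial_{u_i}].
\]

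\textbf{Step 3: the main obstacle, the normalization $\td_0=1$.} The statement implicitly assumes $\td_0=1$, and I expect justifying this to be the main difficulty. Comparing the top homogeneous parts of $f=\td\cdot g$ shows that $\td_0$ is the ratio of the top-degree part of $f$ to $g$. In the geometric normalization, where $g$ is the top-degree part of $f$, this gives $\td_0=1$. Without this normalization the formula should read with a factor $\td_0$, so I would either add the hypothesis that $g$ is the top homogeneous part of $f$ or rescale $g$ to achieve it.

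\textbf{Step 4: identify with $c_1(\omega)$.} Since $c$ is multiplicative, $c(\omega)=\prod_i(1+[\partial_{u_i}])^{a_i}$. Its degree-one part is $\sum_i a_i[\partial_{u_i}]$, which gives $c_1(\omega)=\sum_i a_i[\partial_{u_i}]$ and completes the argument.
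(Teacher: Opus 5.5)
Your argument is correct and is essentially the paper's proof. The paper writes the duality relation as the polynomial identity $f(-x)=(-1)^d\sum_i a_i f(x+u_i)$, substitutes $f=\td\cdot g$ and compares the homogeneous parts of degree $d-1$; that polynomial identity is exactly your identity $\td^\vee=\td\cdot\ch(\omega)$ applied to $g$, and the paper then reads off the first-order term using $\sum_i a_i=1$, as you do.

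Your caveat in Step 3 is well taken. The paper also tacitly assumes $\td_0=1$ by writing $\td=1+\td_1+\dots+\td_d$, and in general the formula only holds with the factor $\td_0$.
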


\begin{proof}
    We have that
    \[
        f(-x) = (-1)^d\sum_i a_i f(x + u_i)
        = (-1)^d\sum_i\sum_{k} a_i\frac{1}{k!}(\partial_{u_i}^kf)(x).
    \]
    Write $\td = 1 + \td_1 + \dots + \td_d$, where $\td_i$ is homogeneous component of degree $i$, then if we replace $f = \td \cdot g$ and equate the degree $d-1$ homogeneous parts in the above equation, we obtain 
    \[
        (-1)^{d+1}\td_1\cdot g = (-1)^d\sum_i a_i (\td_1 \cdot g + \partial_{u_1} g).
    \]
    Note that $\sum a_i = 1$, and so we get
    \[\td_1 \cdot g = -\frac{1}{2}\sum_i a_i \partial_{u_i} g,\]
    which finishes the proof.
\end{proof}

In general $\omega$ is not a line element, but the following proposition specifies under which conditions on $\td$ it turns out to be the case.

\begin{proposition}\label{prop:dualizing-class-characterization}
    The dualizing class $\omega$ is a line element if and only if $\exp(-\td_1) \cdot \td$ is zero in odd degrees in $\Q A_g$ and $\td_1 = -[\partial_w]/2$ for some $w\in \Lambda$.
\end{proposition}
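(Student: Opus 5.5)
Translate the defining relation of $\omega$ into an identity in $\Q A_g$ via the Chern character, and then compare it with the parity condition on $\td$. The first step is to write Serre duality entirely on the Chow side. By \Cref{thm:map-descends-iff-hrr} we have $\chi_f(\xi)=\deg_g(\td\cdot\ch(\xi))$. By construction of $(-)^\vee$ in \Cref{prop:dualizing-morphism}, $\ch(\xi^\vee)=\ch(\xi)^\vee$, where $(-)^\vee$ acts on $\Q A_g^i$ by $(-1)^i$. Since $\deg_g$ only sees degree $d$, we get $\deg_g(\beta^\vee)=(-1)^d\deg_g(\beta)$. Hence
\[
(-1)^d\chi_f(\omega\xi^\vee)=(-1)^d\deg_g\bigl(\td\cdot\ch(\omega)\cdot\ch(\xi)^\vee\bigr)=\deg_g\bigl(\td^\vee\cdot\ch(\omega)^\vee\cdot\ch(\xi)\bigr).
\]
Comparing with $\deg_g(\td\cdot\ch(\xi))$ for all $\xi$, and using non-degeneracy and surjectivity of $\ch$, the dualizing class is characterized by
\[
\ch(\omega)^\vee=\td/\td^\vee,\qquad\text{equivalently}\qquad \ch(\omega)=\td^\vee/\td .
\]

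Now set $\tau:=\exp(-\td_1)\cdot\td$. Then $\tau^\vee=\exp(\td_1)\td^\vee$, so $\td^\vee/\td=\exp(2\td_1)\cdot\tau^\vee/\tau$, and therefore
\[
\ch(\omega)=\exp(2\td_1)\,\tau^\vee/\tau .
\]

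For the direction ($\Leftarrow$), assume $\tau$ has vanishing odd components and $\td_1=-[\partial_w]/2$. Then $\tau^\vee=\tau$, so $\ch(\omega)=\exp(-[\partial_w])=\ch([T^{-w}])$. Thus $\omega=[T^{-w}]$, which is a line element.

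For the direction ($\Rightarrow$), let $\omega=[T^{u}]$ be a line element. The earlier proposition gives $\td_1=-\tfrac12 c_1(\omega)=-[\partial_u]/2$, so we may take $w=u$. Then $\ch(\omega)=\exp([\partial_u])=\exp(-2\td_1)$. Plugging into the displayed identity gives $\exp(-4\td_1)=\tau^\vee/\tau$.

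The main obstacle is to conclude from $\exp(-4\td_1)=\tau^\vee/\tau$ that $\tau^\vee=\tau$. The plan is to take logarithms, which is legitimate since $\tau$ is a unit with constant term $1$. Write $L:=\log\tau=\sum_i L_i$, with $L_i$ the homogeneous components. Note that $\tau$ has no degree-$1$ part, because $(\exp(-\td_1)\td)_1=\td_1-\td_1=0$, so $L_1=0$. Taking logarithms gives
\[
-4\td_1=\log\tau^\vee-\log\tau=L^\vee-L=-2\sum_{i\text{ odd}}L_i .
\]
Compare degrees. The left side is concentrated in degree $1$. On the right, the degree-$1$ part is $-2L_1=0$, which forces $\td_1=0$ in this computation. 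That contradicts nothing: it only says the equation is inconsistent unless I have a sign wrong somewhere.

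So the first thing to do is recheck the sign convention in $\ch(\omega)=\td^\vee/\td$ versus $\td/\td^\vee$, using $\td_1=-\tfrac12 c_1(\omega)$ as the test. With the correct sign, the degree-$1$ parts must cancel identically, leaving $\sum_{i\text{ odd},\,i\ge3}L_i=0$. That means $\log\tau$ is even, so $\tau=\exp(L)$ is even, i.e.\ $\tau$ vanishes in odd degrees, which is the claimed condition.
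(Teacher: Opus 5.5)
\textbf{Verdict: the strategy is sound, but a sign error in the decomposition of $\td^\vee/\td$ leaves the ($\Rightarrow$) direction unproved and gives the wrong line element in ($\Leftarrow$).}

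Your characterization $\ch(\omega)=\td^\vee/\td$ is correct. Its degree-$1$ part is $-2\td_1$, which matches $c_1(\omega)=-2\td_1$. You also correctly note $\tau^\vee=\exp(\td_1)\td^\vee$. The error is in the next step, where you invert this relation. From $\tau=\exp(-\td_1)\td$ you get $\td=\exp(\td_1)\tau$ and $\td^\vee=\exp(-\td_1)\tau^\vee$. Hence $\td^\vee/\td=\exp(-2\td_1)\,\tau^\vee/\tau$, not $\exp(2\td_1)\,\tau^\vee/\tau$.

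This single sign causes the inconsistency $\exp(-4\td_1)=\tau^\vee/\tau$ that you ran into. The remedy you propose would not find it: rechecking $\ch(\omega)=\td^\vee/\td$ against $\td_1=-\tfrac12 c_1(\omega)$ only confirms that formula, and flipping it to $\td/\td^\vee$ would fail that same test. As written, the ($\Rightarrow$) direction is therefore not proved. It ends in a contradiction, and the last paragraph only asserts what ``the correct sign'' would give.

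The ($\Leftarrow$) direction is also wrong as computed. It outputs $\omega=[T^{-w}]$, which contradicts $\td_1=-\tfrac12 c_1(\omega)=-[\partial_w]/2$. For example, take $g(x)=x$, $f(x)=x+1$ and $\td=1+[\partial]$. Then $w=-2$ and $\omega=[T^{-2}]$, not $[T^{2}]$. The conclusion ``line element'' survives only because $[T^{\pm w}]$ are both line elements.

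With the sign fixed, your argument closes at once, and the logarithm detour becomes unnecessary.
\begin{itemize}
\item[($\Rightarrow$)] If $\omega=[T^u]$, then $\td_1=-[\partial_u]/2$ and $\ch(\omega)=\exp([\partial_u])=\exp(-2\td_1)$. Hence $\tau^\vee/\tau=1$, i.e.\ $\tau^\vee=\tau$, which says exactly that $\tau$ vanishes in odd degrees.
\item[($\Leftarrow$)] If $\tau^\vee=\tau$ and $\td_1=-[\partial_w]/2$, then $\ch(\omega)=\exp(-2\td_1)=\exp([\partial_w])=\ch([T^w])$, so $\omega=[T^w]$.
\end{itemize}

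The corrected argument is a valid alternative to the paper's proof. The paper works on the polynomial side. It rewrites Serre duality for $\omega=[T^w]$ as $f(-x)=(-1)^d f(x+w)$. It then reads the parity of $\exp(-\td_1)\td$ off the $(-1)^d$-symmetry of the translate $h(x)=f(x+w/2)=\exp(-\td_1)\td\cdot g$. Your route is the operator-side version of the same computation. It has the advantage of yielding the general formula $\ch(\omega)=\td^\vee/\td$ for the dualizing class, and it avoids translating by the non-lattice vector $w/2$.
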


\begin{proof}

Suppose $\omega = [T^w]$ for some $w \in \Lambda$,
then $\td_1 = -c_1(\omega)/2= -[\partial_w]/2$. Moreover, the defining property of $\omega$ is given by the equation
\begin{equation}\label{eq:serre-line-bundle}
f(-x) = (-1)^d f(x + w).
\end{equation}
In particular, if we define $h(x) = f(x + w/2)$, then $h$ is $(-1)^d$-symmetric. Note that $h = \alpha \cdot g$, where $\alpha = \exp([\partial_w]/2) \cdot \td$. Denote by $\alpha_i$ the homogeneous component of $\alpha$ of degree $i$. Then $h(-x) = (-1)^d h(x)$ implies that $\alpha_i = 0$ for all odd $i$.

Reciprocally, 
suppose $\exp(-\td_1) \cdot \td$ is zero in odd degrees and $\td_1 = -[\partial_w]/2$ for some $w\in \Lambda$.
Then the function 
\[ h = \exp([\partial_w]/2) \cdot \td \cdot g\]
is $(-1)^d$-symmetric.
Since $h(x) = f(x + w/2)$, it follows that \Cref{eq:serre-line-bundle} holds and
hence the dualizing class is the line element $\omega = [T^w]$.
\end{proof}

\begin{example}
    In \Cref{subsec:linear-families}, we show how the canonical class of a linear family of polytopes in the sense of \cite{kavvil} may be interpreted as the dualizing class in a corresponding discrete Frobenius quotient.
\end{example}

\subsection{Cotangent class}

When $X$ is a smooth complete scheme, its dualizing sheaf is the canonical line bundle, that is,
it is a line element.
This tells us that in order to model what is happening on the algebro-geometric side, we should restrict the class of admissible choices of $\td \in A_g^\times$. \Cref{prop:dualizing-class-characterization} gives a first property we should expect holds.
Recall that the Todd class of $X$ is obtained by applying the Todd operator to the tangent bundle. Furthermore, the canonical bundle is just the determinant of the cotangent bundle. These observations hint at the fact that instead of fixing $\td \in A_g^\times$, we can fix an analog of the (co)tangent bundle.

\begin{remark}
    Following the above intuition, in \cite[Theorem 1.1]{Cheng2025}, the author defines the (co)tangent class in the $K$-ring of a matroid, which satisfies, among other conditions, the desired relation with the Todd class. In \cite[Proposition 3.20]{Cheng2025}, they express the dualizing class as the determinant of the cotangent class and use it to write down Serre duality for matroids.
\end{remark}

Cheng's result generalizes readily to our setting.

\begin{proposition}
\label{prop:cotangent-todd-dualizing}
    Suppose there is an element $\Omega \in K_f$, such that $\td = \Td(\Omega^\vee)$.
    We call the choice of such an element a \emph{cotangent class}.
    Then $\omega = \det(\Omega)$ is the dualizing class of $\Q K_f$, that is, the equation
    \begin{equation}\label{eq:serre-duality}
        \chi_f(\xi) = (-1)^d \chi_f(\omega \cdot \xi^\vee)
    \end{equation}
    holds for all $\xi \in \Q K_f$.
\end{proposition}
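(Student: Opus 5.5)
Since both sides of \eqref{eq:serre-duality} are $\Q$-linear in $\xi$ and $\Q K_f$ is spanned by the classes $[T^u]$, it suffices to check the identity for $\xi = [T^u]$. By \Cref{rem:hrr-equivalent-statement} this amounts to the functional identity
\[
  f(u) = (-1)^d (\omega \cdot f)(-u) \qquad \text{for all } u \in \Lambda,
\]
with $\omega = \det(\Omega)$. Both sides are polynomials in $u$, so I would prove it as an equality in $\Q A_g$ after applying the Chern character. Write $\iota$ for the involution $\partial_u \mapsto -\partial_u$ on $\Q A_g$. This involution corresponds under $\ch$ to $(-)^\vee$, and as an operator it satisfies $(\iota\alpha)\cdot g(-x) = (-1)^d(\alpha\cdot g)(-x)$ up to the sign bookkeeping coming from the homogeneity of $g$. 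More precisely, for homogeneous $g$ of degree $d$ we have $g(-x) = (-1)^d g(x)$, and precomposing $h$ with $x\mapsto -x$ turns the action of $\alpha$ into the action of $\iota\alpha$. Substituting $f = \td\cdot g$, the desired identity becomes the equality in $\Q A_g$
\[
  \td = \iota(\td)\cdot \ch(\omega),
\]
or equivalently, after applying $\iota$ and using that $\ch$ intertwines $\vee$ and $\iota$, $\iota(\td) = \td \cdot \ch(\omega^\vee)$.

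The next step is to verify this identity multiplicatively. The maps $\xi\mapsto \Td(\xi^\vee)$, $\xi \mapsto \iota(\Td(\xi^\vee))$ and $\xi\mapsto \ch(\det \xi)$ are all multiplicative maps $K_f \to \Q A_g$ of the kind in \Cref{thm:mult-map-A_g}. Indeed, $\iota\circ\Td\circ\vee$ sends $T^u$ to $a(\partial_u)$ with $a(x) = x/(1-e^{-x})$ after the two sign changes cancel, while $\Td\circ \vee$ sends $T^u$ to $a(-\partial_u)$, and $\ch\circ\det$ sends $T^u$ to $e^{\partial_u}$. By the uniqueness part of \Cref{thm:mult-map-A_g}, it is enough to check the relation on generators $T^u$, namely the power series identity
\[
  \frac{x}{1-e^{-x}} = \frac{-x}{1-e^{x}}\cdot e^{x},
\]
which holds. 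This gives $\iota(\Td(\Omega^\vee)) = \Td(\Omega^\vee)\cdot \ch(\det \Omega)$ with the appropriate placement of $\vee$.

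The main obstacle is the sign and dual bookkeeping: deciding whether $\omega$ or $\omega^\vee$ appears, and checking carefully that $(\alpha\cdot g)(-x) = (-1)^d (\iota(\alpha)\cdot g)(x)$ holds for homogeneous $g$. For this I would first verify the claim on monomials $\partial^{\vec k}$ via degree counting. Then I would track $\det(\Omega^\vee) = \det(\Omega)^\vee$, which follows from multiplicativity on generators, so that the generator identity lands on exactly $\omega = \det(\Omega)$ and not on its dual. Finally, uniqueness of the dualizing class from \Cref{prop:serre duality} gives the conclusion.
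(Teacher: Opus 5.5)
Your strategy is the paper's. You apply Hirzebruch--Riemann--Roch to both sides and use that your involution $\iota$ changes $\deg_g$ by $(-1)^d$. This reduces Serre duality to an identity in $\Q A_g$ relating $\td$, $\iota(\td)$ and $\ch(\omega)$, which you then check on line elements. The paper writes $\Omega=\sum_i a_i\ell_i$; your appeal to uniqueness of multiplicative maps is the same check.

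The problem is that the reduction in your first paragraph lands on the wrong identity, precisely in the bookkeeping you postpone. Apply your own (correct) rule $(\alpha\cdot g)(-x)=(-1)^d(\iota(\alpha)\cdot g)(x)$ to $\alpha=\ch(\omega)\,\td$:
\[
(-1)^d(\omega\cdot f)(-u)=\bigl(\iota(\ch(\omega)\,\td)\cdot g\bigr)(u)=\bigl(\ch(\omega^\vee)\,\iota(\td)\cdot g\bigr)(u).
\]
So what must be shown is $\td=\iota(\td)\cdot\ch(\omega^\vee)$, equivalently $\iota(\td)=\td\cdot\ch(\omega)$. You forgot that $\iota$ also acts on the factor $\ch(\omega)$. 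The identity you wrote, $\td=\iota(\td)\cdot\ch(\omega)$ (equivalently $\iota(\td)=\td\cdot\ch(\omega^\vee)$), is false in general. Take $\Lambda=\Z$, $g=x$, $f=x+1$ (the case of $\mathbb{P}^1$). Then $\td=1+\partial$ and $\Omega=\omega=[T^{-2}]$, but $\iota(\td)\,\ch(\omega)=(1-\partial)(1-2\partial)=1-3\partial\neq\td$.

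Your generator identity $x/(1-e^{-x})=\frac{-x}{1-e^{x}}\cdot e^{x}$ proves $\iota(\Td(\Omega^\vee))=\Td(\Omega^\vee)\cdot\ch(\det\Omega)$, i.e.\ $\iota(\td)=\td\cdot\ch(\omega)$. That is the correct target, but not the one you announced. The phrase ``with the appropriate placement of $\vee$'' hides the fact that the two halves of your argument do not fit together. Your plan to repair this via $\det(\Omega^\vee)=\det(\Omega)^\vee$ looks in the wrong place, since the generator computation needs no change. Once the reduction is corrected as above, the halves match and the proof is complete. It is then exactly the paper's argument: its key identity $\ch(\det\Omega)^\vee\,\Td(\Omega)=\Td(\Omega^\vee)$ becomes your $\iota(\td)=\td\cdot\ch(\omega)$ after applying the involution.
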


\begin{proof}
    By the Hirzebruch-Riemann-Roch theorem, the left hand side is equal to \[\deg_g(\ch(\xi)\cdot \Td(\Omega^\vee))\] and the right hand side to
    \begin{align*}
        (-1)^d\deg_g(\ch(\omega) \cdot \ch(\xi)^\vee \cdot \Td(\Omega^\vee)) &= 
        \deg_g(\ch(\xi) \cdot \ch(\omega)^\vee \cdot \Td(\Omega))\\
        &=\deg_g\big(\ch(\xi) \cdot \big(\ch(\det(\Omega))^\vee \cdot \Td(\Omega)\big)\big)
    \end{align*}
    where we used the fact $\deg_g(\alpha^\vee) = (-1)^d\deg_g(\alpha)$, since $g$ is homogeneous, and that $\Td(\Omega^\vee) = \Td(\Omega)^\vee$.
    
    A standard computation (as in \cite[Proposition 5.2]{FultonLang85}, or \cite[Proposition 3.20]{Cheng2025}) shows that
    \[\ch(\det(\Omega))^\vee\Td(\Omega) = \Td(\Omega^\vee).\]
    Indeed, we may write $\Omega = \sum_i a_i \ell_i$ for some line elements $\ell_i$ and $a_i \in \Z$.
    Denote also $\alpha_i = c_1(\ell_i)$.
    Then
    \begin{align*}
        \ch(\det(\Omega))^\vee\Td(\Omega) = \exp(-c_1(\Omega))\Td(\Omega)
        &= \prod_i\left(\exp(-\alpha_i)\frac{\alpha_i}{1 - \exp(-\alpha_i)}\right)^{a_i}\\
        &= \prod_i\left(\frac{-\alpha_i}{1 - \exp(\alpha_i)}\right)^{a_i} = \Td(\Omega^\vee).
    \end{align*}
    Therefore, we get that \Cref{eq:serre-duality} holds for $\omega = \det(\Omega)$.
\end{proof}

\subsection{Associated graded ring of \texorpdfstring{$K_f$}{Kf}}
\label{sec:associated-graded}

In this section we define the topological filtration on $K_f$, which yields an associated graded ring $\gr K_f$. We show that there is a canonical isomorphism $\Q\gr K_f \cong \Q A_g$.

Let $I = \langle [D_u] \mid u \in \Lambda \rangle \subseteq K_f$ be the ideal generated by the classes of all the difference operators $D_u = 1 - T^{-u}$. This ideal is prime, because it's equal to $\ker(\rk)$. Note we may also write $I = \langle [\Delta_u] \mid u \in \Lambda \rangle$.
This defines a filtration on $K_f$:
\[
    K_f = I^{(0)} \supset I^{(1)} \supset I^{(2)} \supset \dots\,,
\]
where $I^{(i)} = \{\xi \in K_f \mid \eta\xi \in I^i\textrm{ for some }\eta \in K_f\setminus I\}$ is the $i$-th symbolic power of the ideal $I$. This filtration is multiplicative, and so
we obtain an associated graded ring
\[
    \gr K_f = \bigoplus_{i=0}^\infty I^{(i)}/I^{(i+1)}.
\]
Note that since $\Z \subseteq K_f \setminus I$ and the elements of $K_f \setminus I$ are invertible in $\Q K_f$,
we have that $(K_f)_I = \Q K_f$, and so in particular $I^{(i)} = \Q I^i \cap K_f$.

\begin{lemma}
    \label{lem:filtration-characterization}
We have equality
    \[I^{(i)} = \{\xi \in K_f \mid \xi \cdot f \textrm{ has degree at most $d-i$}\}.\]
\end{lemma}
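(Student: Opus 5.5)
We will transport the problem to $\Q A_g$ via the Chern character, where the filtration becomes the grading. Throughout, $(K_f,A_g)$ is a Riemann--Roch pair with $g$ homogeneous of degree $d$ and $f=\td\cdot g$. Since $I^{(i)}=\Q I^i\cap K_f$, and the condition ``$\xi\cdot f$ has degree at most $d-i$'' is insensitive to rational scaling, it suffices to prove the rational statement:
\[
\Q I^i=\{\xi\in\Q K_f \mid \deg(\xi\cdot f)\le d-i\}.
\]

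\textbf{Step 1: identify $\ch(\Q I^i)$.} Let $\mathfrak m=\bigoplus_{k\ge1}(\Q A_g)_k$ be the ideal of positive-degree elements. We claim $\ch(\Q I^i)=\mathfrak m^i=\bigoplus_{k\ge i}(\Q A_g)_k$; the second equality holds because $A_g$ is standard graded.
\begin{itemize}
\item For ``$\subseteq$'': $\ch(\Delta_u)=\exp(\partial_u)-1\in\mathfrak m$, so $\ch(\Q I^i)\subseteq\mathfrak m^i$.
\item For ``$\supseteq$'': the identity in the proof of \Cref{lem:ch-surjective} shows $[\partial_u]=\ch(\eta_u)$ with $\eta_u=\sum_k(-1)^{k+1}\Delta_u^k/k\in\Q I$. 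Hence any product of $i$ elements $[\partial_{u_j}]$ lies in $\ch(\Q I^i)$, and these products span $\mathfrak m^i$.
\end{itemize}
Since $\ch$ is an isomorphism, this gives $\Q I^i=\ch^{-1}(\mathfrak m^i)$.

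\textbf{Step 2: translate the degree condition.} By \Cref{lem:exponential and shifts}, $\xi\cdot f=\ch(\xi)\cdot f=\ch(\xi)\,\td\cdot g$. Set $\alpha=\ch(\xi)\,\td$. Because $\td$ is invertible with invertible degree-$0$ part, multiplication by $\td$ preserves $\mathfrak m^i$ in both directions. So $\xi\in\Q I^i$ if and only if $\alpha\in\mathfrak m^i$, and it remains to show:
\[
\alpha\in\mathfrak m^i \iff \deg(\alpha\cdot g)\le d-i.
\]

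\textbf{Step 3: prove the last equivalence.} Write $\alpha=\sum_k\alpha_k$ with $\alpha_k$ homogeneous of degree $k$. Then $\alpha_k\cdot g$ is homogeneous of degree $d-k$, so the degree-$(d-k)$ component of $\alpha\cdot g$ is exactly $\alpha_k\cdot g$.
\begin{itemize}
\item The forward direction follows immediately: if $\alpha_k=0$ for all $k<i$, then every component of $\alpha\cdot g$ has degree at most $d-i$.
\item For the reverse direction, suppose $\deg(\alpha\cdot g)\le d-i$. Then $\alpha_k\cdot g=0$ for all $k<i$, i.e.\ $\alpha_k\in\Ann(g)$. Hence $\alpha_k=0$ in $\Q A_g$, and so $\alpha\in\mathfrak m^i$.
\end{itemize}

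\textbf{Main obstacle.} The only delicate point is Step 1, specifically the inclusion $\mathfrak m^i\subseteq\ch(\Q I^i)$, which requires using the logarithm series to express $[\partial_u]$ through $\Delta_u$. The remaining steps, including the use of homogeneity of $g$, are bookkeeping.
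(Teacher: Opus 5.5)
Your proof is correct and follows essentially the same route as the paper: elements of $I^i$ lower the degree of $f$ by at least $i$ while units preserve degree, and conversely the homogeneous components of $\ch(\xi)$ of degree below $i$ annihilate $g$, after which the logarithm series $\sum_k (-1)^{k+1}\Delta_u^k/k$ pulls the remaining operator back into $\Q I^i$. Your packaging via $\ch(\Q I^i)=\mathfrak m^i$, working in $\Q K_f$ throughout with $I^{(i)}=\Q I^i\cap K_f$, is slightly cleaner than the paper's witness $\eta\notin I$ and clearing of denominators, and it makes \Cref{thm:ass-graded} nearly immediate. One small wording point: products of $i$ classes $[\partial_u]$ generate $\mathfrak m^i$ as an ideal rather than span it, which suffices because $\ch(\Q I^i)$ is an ideal.
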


\begin{proof}
W.l.o.g.\ $f = g$ is homogeneous.
First, suppose we have $\xi \in I^{(i)}$, then there exists some $\eta \in K_f \setminus I$ such that
$\eta\xi \in I^i$. In particular, $\eta\xi \cdot f$ has degree at most $d-i$. Note however that since $\eta \notin I$, $\ch(\eta)$ has non-zero constant term and so $\deg(\eta\cdot(\xi \cdot f)) = \deg(\xi \cdot f)$, which implies that $\xi \cdot f$ has degree at most $d-i$.

For the other inclusion, let $\xi \in K_f$ be such that $\xi \cdot f$ has degree $d-i$. 
Since $\xi \cdot f = \ch(\xi) \cdot f$, all the homogeneous terms of $\ch(\xi)$ of degree lower than $i$ have to annihilate $f$. Let $\alpha \in \Diff_\Q(\Lambda)$ a representative of $\ch(\xi)$ which has no homogeneous component in degree lower than $i$. Now, consider the map $\zeta: \Diff_\Q(\Lambda) \to \Sh_\Q(\Lambda)$ given by
\[
\zeta(\partial_u)
= \sum_{k=1}^d (-1)^{k + 1}\frac{\Delta_u^k}{k}.
\]
Since higher powers of difference operators annihilate $f$, this map agrees with $\ch^{-1}$ modulo the annihilator of $f$. Let $k$ be large enough, so that $\zeta(k\alpha) \in \Sh(\Lambda)$.
Then we get that $k\xi = [\zeta(k \alpha)] \in \Q I^i \cap K_f = I^{(i)}$.
\end{proof}

\begin{theorem}\label{thm:ass-graded}
    There is an isomorphism of graded rings $\Phi: \Q A_g \xrightarrow{\sim} \Q \gr K_f$,
    which sends $[\partial_u]$
    to $[D_u]$ in the degree 1 graded component of $\Q \gr K_f$ for all $u \in \Lambda$.
\end{theorem}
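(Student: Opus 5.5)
We construct $\Phi$ by using the Chern character to transport the grading of $\Q A_g$ to a filtration on $\Q K_f$, and then compare that filtration with the topological one. As in the proof of \Cref{lem:filtration-characterization}, we may assume $f = g$ is homogeneous of degree $d$, since $K_f = K_g$ and both filtrations depend only on the annihilator. Write $F^i := \bigoplus_{j \geq i} \Q A_g^j$, the decreasing filtration of $\Q A_g$ by degree.

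\textbf{Step 1: $\ch$ is a filtered isomorphism.} We claim that for every $i$,
\[
\ch(\Q I^{(i)}) = F^i ,
\]
where $\Q I^{(i)} = \Q I^i$ because $(K_f)_I = \Q K_f$.

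For the inclusion $\subseteq$, take $\xi$ with $\xi\cdot f$ of degree at most $d-i$, which is allowed by \Cref{lem:filtration-characterization}. By \Cref{lem:exponential and shifts}, $\ch(\xi)\cdot f = \xi\cdot f$. Each homogeneous component $\ch(\xi)_j$ sends $f$ to degree $d-j$, so $\ch(\xi)_j\cdot f = 0$ for $j<i$. Hence $\ch(\xi)_j = 0$ in $\Q A_g$ for $j<i$, that is, $\ch(\xi)\in F^i$.

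The reverse inclusion is exactly the second half of the proof of \Cref{lem:filtration-characterization}: the map $\zeta$ sends a representative $\alpha$ of degree $\geq i$ into $\Q I^i$.

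\textbf{Step 2: the associated graded map.} A filtered isomorphism induces an isomorphism of associated graded rings. Since $\gr F \cong \Q A_g$ canonically, we obtain a graded ring isomorphism
\[
\Phi := \gr(\ch)^{-1} \colon \Q A_g \to \Q\gr K_f .
\]
Concretely, $\Phi([\alpha]) = [\ch^{-1}(\alpha)] \in I^{(i)}/I^{(i+1)}$ for $\alpha$ homogeneous of degree $i$. Multiplicativity is automatic, because $\ch$ is a ring map respecting the filtrations. We also use that $\Q\gr K_f = \bigoplus \Q I^{(i)}/\Q I^{(i+1)}$, since tensoring with $\Q$ is exact.

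\textbf{Step 3: the degree-one generators.} It remains to check that $\Phi([\partial_u]) = [D_u]$. We have
\[
\ch(D_u) = 1 - \exp(-\partial_u) = \partial_u + (\text{terms of degree} \geq 2),
\]
so $\ch(D_u) \equiv \partial_u$ modulo $F^2$. By Step 1, $\ch^{-1}(\partial_u) \equiv D_u$ modulo $\Q I^{(2)}$, as required.

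\textbf{Main obstacle.} The only delicate point is Step 1: the symbolic powers must coincide exactly with the Chern-character preimages of $F^i$, including the rational saturation. That is precisely what \Cref{lem:filtration-characterization} provides, and after that the remaining steps are formal.
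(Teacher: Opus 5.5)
Your proof is correct, but it is organized differently from the paper's.

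The paper builds $\Phi$ from generators. It defines $\widetilde\Phi\colon \Diff_\Q(\Lambda)\to \Q\gr K_f$ by $\partial_u\mapsto [D_u]$ and checks additivity, $D_{u+v}\equiv D_u+D_v$ modulo $I^{(2)}$. It then uses \Cref{lem:filtration-characterization} together with $f=\td\cdot g$ to show that every homogeneous $D\in \Q\Ann_{\Diff}(g)$ of degree $k$ lands in $\Q I^{(k+1)}$. Bijectivity comes at the end, from surjectivity plus the dimension count $\dim \Q\gr K_f=\dim \Q K_f=\dim \Q A_g$.

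You instead show that $\ch$ is a strict filtered isomorphism, $\ch(\Q I^{(i)})=F^i$, and set $\Phi=\gr(\ch)^{-1}$. Well-definedness, multiplicativity and bijectivity then come for free, and the only computation left is $\ch(D_u)\equiv \partial_u \bmod F^2$.

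Your route also gives an explicit description of $\Phi^{-1}$ that the generator-based construction leaves implicit. It sends the class of $\xi\in I^{(i)}$ to the degree-$i$ component $\ch(\xi)_i$, which is the analogue of the classical fact that the lowest-order term of the Chern character of a class in the $i$-th filtration step is its image in the Chow ring.

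Your Step 1 does not actually need \Cref{lem:filtration-characterization}. Since $\Q I^{(i)}=\Q I^i$, the inclusion $\ch(\Q I^i)\subseteq F^i$ follows from multiplicativity and $\ch(\Delta_u)=\exp(\partial_u)-1\in F^1$. The reverse inclusion only uses that each $[\partial_u]$ is $\ch$ of the truncated logarithm $\sum_{k=1}^d(-1)^{k+1}\Delta_u^k/k\in \Q I$, as in the proof of \Cref{lem:ch-surjective}. Using this, your argument becomes self-contained.
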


\begin{proof}
    We need to check that this map is well-defined.

    Let $\widetilde{\Phi}: \Diff_\Q(\Lambda) \to \Q \gr K_f$ be the ring homomorphism obtained
    by sending $\partial_u$
    to $[D_u]$ in the degree 1 graded component of $\gr K_f$ for all $u \in \Lambda$.
    
    This map is well-defined, since for all $u, v \in \Lambda$,
    \begin{align*}
    D_{u + v} &= 1 - T^{-u-v}\\
    &= 1 - T^{-u} + T^{-u} - T^{-u-v}\\
    &= D_u + T^{-u}D_v\\
    &\sim D_u + D_v\mod I^{(2)}
    \end{align*}
    The congruence on the last line holds, because $(1 - T^{-u})D_v = D_uD_{v} \sim 0$ in $\Q I^{(1)}/\Q I^{(2)}$.

    We now check that $\widetilde{\Phi}$ descends to the quotient, showing that $\Phi$ is well-defined.
    Note that two elements $\xi, \eta \in \Q I^{(k)}$ represent the same class in $\Q \gr^k K_f$ iff $\xi - \eta \in \Q I^{(k + 1)}$.
    By \Cref{lem:filtration-characterization}, this means
    that $(\xi - \eta) \cdot f$ is of degree at most $d - k - 1$, or in other terms, 
    $\xi \cdot f$ and $\eta \cdot f$ have the same homogeneous component of degree $d - k$.
    We will denote this by $[\xi \cdot f]_{d - k} = [\eta \cdot f]_{d - k}$.
    
    Let $D \in \Q \Ann_\Diff(g)$ be homogeneous of degree $k$. We have that
    \[
        D_u \cdot g = (1 - \exp(-\partial_u)) \cdot g = (\partial_u + \textrm{``higher order terms"}) \cdot g.
    \]
    Hence it follows that
    \[
        [\widetilde{\Phi}(D) \cdot f]_{d - k} = [(D + \textrm{``higher order terms"}) \cdot \td \cdot g]_{d - k}
        = [D \cdot \td \cdot g]_{d - k} = 0.
    \]
    and therefore $\widetilde{\Phi}(D)$ is $0$ in $\Q \gr K_f$. We conclude that $\Phi$ is well-defined.

    Note that $\Phi$ is clearly surjective. Furthermore, $\Q A_g$ and $\Q \gr K_f$ have the same dimension as $\Q$-vector spaces, since the Chern character yields an isomorphism $\Q A_g \cong \Q K_f$ and $\gr \Q K_f$ has the same dimension as $\Q K_f$. It follows that $\Phi$ is an isomorphism.
\end{proof}

\begin{remark}
    Note that this filtration agrees with the $\gamma$-filtration defined in \cite[Ch. III]{FultonLang85}.
\end{remark}

\subsection{Exceptional isomorphisms}
\label{sec:exceptional-isoms}

In \cite{LLPP24}, an exceptional isomorphism $K(M)\to A(M)$ between the integral $K$-ring and Chow-ring of a loopless matroid $M$ is given. If $M$ is represented by a hyperplane arrangement $\cA$, then $K(M)$ and $A(M)$ are isomorphic to the $K$-ring and Chow-ring of the wonderful variety $\mc W_\cA$, and there is an explicit generating set consisting of line bundles $[\mc L_F^{-1}]$ for each flat $F \in \cL(M)$ (in the notation of \emph{loc.\ cit.}) which are sent to $1+c_1(\cL_F^{-1})$ under the exceptional isomorphism. This is precisely the truncation of the Chern character of $\cL_F^{-1}$ at degree $1$, motivating the study of such maps more generally.

In this section, we study under which conditions the truncated Chern character gives a well-defined isomorphism $K_f\to A_g$. More generally, we consider maps $\Sh_\Q(\Lambda) \to \Q A_g$ which act symmetrically on a fixed basis, and characterize when they descend to isomorphisms
$\Q K_f \cong \Q A_g$. 
In case no denominators appear, as is the case for the truncated Chern character, these isomorphisms are integral, meaning that they restrict to an isomorphism $K_f \cong A_g$. Given an exceptional isomorphism, we obtain as a direct consequence of \Cref{thm:map-descends-iff-hrr} an exceptional Todd class and a corresponding Hirzebruch-Riemann-Roch formula.

Fix a basis $e_1, \dots, e_n$ of $\Lambda$, and a series of the form
\[
    p(t) = 1 + \sum_{i = 1}^d a_i t^i  \in \Q\llbracket t \rrbracket
\] with $a_1 \neq 0$.
Define the map
\begin{align*}
    \varphi_p: \Sh_\Q(\Lambda) &\to \Diff_\Q\llbracket \Lambda \rrbracket\\
    T^{e_i} &\mapsto p(\partial_{e_i})
\end{align*}
Using the chosen basis, we identify the ring of polynomials on $\Lambda$ with $\Q[x_1, \dots, x_n]$.

In the following lemma we define, and characterize a linear mapping $\Psi_p: P(\Lambda) \to P(\Lambda)$, which will help us understand when $\varphi_p$ descends to an isomorphism.

\begin{lemma}\label{prop:psi-formula}
    For any polynomial $h \in P(\Lambda)$, let
    $\Psi_p(h)$ be the function on $\Lambda$ defined by
    $\Psi_p(h)(u) = (\varphi_p(T^u) \cdot h) (0)$.
    Then $\Psi_p$ defines a linear mapping $P(\Lambda) \to P(\Lambda)$. Furthermore,
    if we set
    \begin{equation*}
    \psi_p(t^l) := \sum_{\substack{\mathbf{k} \in \N^d\\ \mathbf{k} \cdot [d] = l}} \binom{t}{\mathbf{k}} \mathbf{a}^\mathbf{k}l!\ ,
    \end{equation*}
    where $[d] = (1, \dots, d)$, $\vec a = (a_1, \dots, a_d)$ and
    \[
    \binom{t}{\vec k} := \frac{t(t - 1) \cdots (t - |\mathbf{k}| + 1)}{\mathbf{k}!},
    \]
    then we have
    \[
        \Psi_p\left(\prod_{i = 1}^n x_i^{k_i}\right) = \prod_{i= 1}^n \psi_p(t^{k_i})|_{t = x_i}.
    \]
\end{lemma}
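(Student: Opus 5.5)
Write $\vec u=(u_1,\dots,u_n)$ in the fixed basis, so $T^{\vec u}=\prod_i (T^{e_i})^{u_i}$. Since $\varphi_p$ is a ring homomorphism, this gives $\varphi_p(T^{\vec u})=\prod_i p(\partial_{e_i})^{u_i}$. Here $p(\partial_{e_i})$ is invertible in $\Diff_\Q\llbracket\Lambda\rrbracket$ because its constant term is $1$, so negative $u_i$ are allowed. The action of $\Diff_\Q\llbracket\Lambda\rrbracket$ on a polynomial $h$ is well defined, since only finitely many terms act nontrivially. The operators in different variables commute, and the evaluation at $0$ of a product of one-variable operators applied to a monomial $\prod_i x_i^{k_i}$ factors. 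Thus
\[
\Psi_p\Bigl(\prod_i x_i^{k_i}\Bigr)(\vec u)=\prod_i \bigl(p(\partial_{x_i})^{u_i}\cdot x_i^{k_i}\bigr)(0).
\]
This reduces the statement to a one-variable computation. Linearity of $\Psi_p$ is clear from the definition. The fact that the image consists of polynomials will follow once the one-variable quantity is shown to be a polynomial in $u_i$.

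In one variable, $(D\cdot x^l)(0)=l!\,[t^l]D(t)$ for any power series $D(t)$, so we need the coefficient of $t^l$ in $p(t)^u$ for $u\in\Z$. Write $p(t)=1+q(t)$ with $q(t)=\sum_{j=1}^d a_jt^j$. The binomial series gives
\[
(1+q)^u=\sum_{m\ge 0}\binom{u}{m}q^m,
\]
valid for all integers $u$ (including negative ones) as formal series because $q$ has no constant term. This agrees with the honest power or inverse in $\Q\llbracket t\rrbracket$, since $m\mapsto\binom{u}{m}$ satisfies the same multiplicative identities (Vandermonde). Expanding $q^m$ by the multinomial theorem gives
\[
q^m=\sum_{|\vec k|=m}\frac{m!}{\vec k!}\,\vec a^{\vec k}\,t^{\vec k\cdot[d]}.
\]
Then $\binom{u}{m}\frac{m!}{\vec k!}=\frac{u(u-1)\cdots(u-|\vec k|+1)}{\vec k!}=\binom{u}{\vec k}$ in the notation of the statement. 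Collecting the terms with $\vec k\cdot[d]=l$ and multiplying by $l!$ gives exactly $\psi_p(t^l)|_{t=u}$. Each such term is a polynomial in $u$ (only finitely many $\vec k$ occur), so $\Psi_p(h)$ is indeed a polynomial. Extending linearly shows that $\Psi_p$ maps $P(\Lambda)$ to $P(\Lambda)$.

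The main point requiring care is the justification of the binomial expansion for negative $u$, together with the identification of $p(\partial)^{u}$ as an operator whose $t^l$ coefficient is what we computed. I would handle this by noting two facts. First, both sides are polynomial in $u$ coefficientwise, once truncated at degree $l$. Second, the two sides agree for all $u\ge 0$ by the finite binomial theorem, and they satisfy $p^{u}p^{-u}=1$. Hence they agree for negative $u$ as well, because $p^{-u}$ is the unique inverse of $p^{u}$.
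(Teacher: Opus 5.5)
Your proof is correct and follows the same route as the paper: write $\varphi_p(T^{\vec u})=\prod_i p(\partial_{e_i})^{u_i}$, expand each factor by the (generalized) multinomial theorem, then apply the result to the monomial and evaluate at $0$ to read off the coefficient of $\partial_i^{l_i}$. Your two-step expansion (binomial series in $q$, then multinomial expansion of $q^m$) and your Vandermonde-based argument for negative $u_i$ spell out what the paper compresses into the phrase ``by the generalized multinomial theorem.''
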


\begin{proof}
    Linearity is clear, so it suffices to show the formula for $h = \prod_{i = 1}^n x_i^{l_i}$ a monomial in $\Q[x_1, \dots, x_n]$.
    Then for any $\vec u \in \Lambda$,
\begin{align*}
    \Psi_p(h)(\vec u) &= (\varphi_p(T^{\vec u}) \cdot h) (0) = \left(\prod_{i = 1}^n p(\partial_i)^{u_i}\cdot h\right)(0)\\
    &= \left(\prod_{i = 1}^n \left(1 + \sum_{j = 1}^d a_j \partial_i^j\right)^{u_i} \cdot h\right)(0)\\
    &= \left(\left(\prod_{i = 1}^n\sum_{\mathbf{k} \in \N^d} 
    \binom{u_i}{\mathbf{k}}\mathbf{a}^\mathbf{k}\partial_i^{\sum_{j = 1}^d j k_j}\right) \cdot h\right)(0),
\end{align*}
noting that this also holds if $u_i < 0$ by the generalized multinomial theorem.
Using the fact that $h = \prod_{i = 1}^n x_i^{l_i}$ is a monomial, we get from the above
\begin{equation*}
\Psi_p(h)(\vec u)
= \prod_{i = 1}^n
\sum_{\substack{\mathbf{k} \in \N^d\\ \mathbf{k} \cdot [d] = l_i}} \binom{u_i}{\mathbf{k}} \mathbf{a}^\mathbf{k}l_i!\ .\qedhere
\end{equation*}
\end{proof}

The next result tells us how $\Psi_p$ interacts with the action of shift operators on $P(\Lambda)$.

\begin{lemma}\label{lem:Psi-shift}
    For any $E \in \Sh_\Q(\Lambda)$ and $h \in P(\Lambda)$, we have that
    \[
        E \cdot \Psi_p(h) = \Psi_p(\varphi_p(E) \cdot h).
    \]
\end{lemma}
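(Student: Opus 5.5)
By linearity in $E$ (both sides are $\Q$-linear in $E$, since $\varphi_p$ is a ring homomorphism and the action of $\Diff_\Q\llbracket\Lambda\rrbracket$ on polynomials is linear), I would reduce to the case $E = T^v$ for a single $v \in \Lambda$. Then I evaluate both sides at an arbitrary point $u \in \Lambda$ and compare them pointwise.

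For the left-hand side, the definition of the shift action and of $\Psi_p$ in Lemma~\ref{prop:psi-formula} give
\[
  (T^v \cdot \Psi_p(h))(u) = \Psi_p(h)(u+v) = (\varphi_p(T^{u+v}) \cdot h)(0).
\]
For the right-hand side, applying the definition of $\Psi_p$ to the polynomial $\varphi_p(T^v)\cdot h$ gives
\[
  \Psi_p(\varphi_p(T^v)\cdot h)(u) = \bigl(\varphi_p(T^u)\cdot(\varphi_p(T^v)\cdot h)\bigr)(0).
\]
Since $\varphi_p$ is a ring homomorphism, $\varphi_p(T^{u+v}) = \varphi_p(T^u)\varphi_p(T^v)$. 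The action of $\Diff_\Q\llbracket\Lambda\rrbracket$ on $P(\Lambda)$ is associative, so the two expressions agree.

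The only point needing care is that $\varphi_p(T^v)\cdot h$ is again a polynomial, so that $\Psi_p$ may be applied to it. This holds because only finitely many terms of the power series $\varphi_p(T^v)$ act nontrivially on $h$; the degree of $h$ bounds which terms survive. For the same reason, the action of the completed ring $\Diff_\Q\llbracket\Lambda\rrbracket$ on $P(\Lambda)$ is well defined and associative, since everything reduces to finite truncations.

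We should also confirm that $\varphi_p$ is a ring homomorphism on all of $\Sh_\Q(\Lambda)$, including at negative exponents. This requires $p(\partial_{e_i})$ to be invertible in the completion, which holds because $p(0)=1$. Given this, the defining property $\varphi_p(T^{u}) = \prod_i p(\partial_{e_i})^{u_i}$ is multiplicative in $u$.

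I expect no real obstacle here. The only thing to keep straight is these well-definedness checks on the completed operator ring.
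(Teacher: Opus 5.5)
Your proposal is correct and follows the paper's proof: reduce by linearity to a single shift $T^v$, evaluate at $u$, and use $\varphi_p(T^{u+v})=\varphi_p(T^u)\varphi_p(T^v)$ together with associativity of the operator action. Your extra well-definedness checks (invertibility of $p(\partial_{e_i})$ because $p(0)=1$, and $\varphi_p(T^v)\cdot h$ being a polynomial) are points the paper leaves implicit.
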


\begin{proof}
    By linearity of $\Psi_p$ and $\varphi_p$, it suffices to show this for simple shifts.
    Let $v \in \Lambda$, then
    \begin{align*}
        (T^v \cdot \Psi_p(h))(u) &= \Psi_p(h)(u + v)\\
        &= 
        (\varphi_p(T^{u + v}) \cdot h)(0)\\
        &= (\varphi_p(T^u)\varphi_p(T^v) \cdot h)(0)\\
        &= \Psi_p(\varphi_p(T^v) \cdot h)(u).
        \qedhere
    \end{align*}
\end{proof}

Using the above two lemmas, we are able to completely characterize when $\varphi_p$ descends to an isomorphism.

\begin{theorem}\label{thm:exceptional-general}
    The following are equivalent
    \begin{enumerate}
        \item The map $\varphi_p$ descends to an isomorphism $\Q K_f \cong \Q A_g$.
        \item There exists $\td_p \in \Q A_g^\times$ such that the equation 
            \begin{equation*}
                \chi_f(E) = \deg_g(\td_p \cdot \varphi_p(E))
            \end{equation*}
            holds for all $E \in \Sh(\Sigma)$.
        \item There exists $\td_p \in \Q A_g^\times$ such that
            \[
                f = \Psi_p(\td_p \cdot g).
            \]
        \item $f$ is differentially equivalent to $\Psi_p(g)$.
    \end{enumerate}
    
\end{theorem}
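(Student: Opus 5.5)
The plan is to prove $(1)\Leftrightarrow(2)\Leftrightarrow(3)\Leftrightarrow(4)$, with the first equivalence coming straight from \Cref{thm:map-descends-iff-hrr}. The only preliminary issue is that $\varphi_p$ must land in $\Q A_g$ and be surjective there. Since $[\partial_{e_i}]$ is nilpotent in $\Q A_g$, the image $p([\partial_{e_i}])$ is a unipotent element and hence a unit. So $\varphi_p$ composed with the projection gives a well-defined map $\Sh_\Q(\Lambda)\to\Q A_g$.

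For surjectivity, write $p(t)-1 = a_1 t(1+\dots)$ with $a_1\neq 0$. Then $p(t)-1$ has a compositional inverse $q(s)\in\Q\llbracket s\rrbracket$ with $q(0)=0$. It follows that $[\partial_{e_i}] = q(\varphi_p(T^{e_i})-1)$, which is a finite sum because $\varphi_p(T^{e_i})-1$ is nilpotent. This mirrors \Cref{lem:ch-surjective}, and with it \Cref{thm:map-descends-iff-hrr} gives $(1)\Leftrightarrow(2)$ directly.

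For $(2)\Leftrightarrow(3)$, I would rewrite (2), following \Cref{rem:hrr-equivalent-statement}, as the pointwise condition $f(u)=\deg_g(\td_p\cdot\varphi_p(T^u))$ for all $u\in\Lambda$. Commutativity of differential operators gives
\[
\deg_g(\td_p\varphi_p(T^u)) = (\varphi_p(T^u)\cdot(\td_p\cdot g))(0) = \Psi_p(\td_p\cdot g)(u),
\]
by the definition of $\Psi_p$ in \Cref{prop:psi-formula}. Here the class $\td_p\cdot g$ is well defined because $\td_p\in\Q A_g$ acts on the orbit of $g$. So (2) and (3) are literally the same statement.

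For $(3)\Leftrightarrow(4)$, apply \Cref{lem:Psi-shift} with $h=g$. Suppose $\td_p=[\varphi_p(E)]$ for some $E\in\Sh_\Q(\Lambda)$, which is possible by the surjectivity shown above. Then
\[
\Psi_p(\td_p\cdot g)=\Psi_p(\varphi_p(E)\cdot g)=E\cdot\Psi_p(g).
\]
So (3) says $f=\xi\cdot\Psi_p(g)$ with $\xi=[E]$. The point is to match invertibility: $\td_p$ is a unit if and only if $E$ can be chosen to be a unit in $\Q K_{\Psi_p(g)}$. Then \Cref{cor:polynomial-equivalence}(6) finishes the argument.

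I expect the main obstacle to be this invertibility transfer. My approach would be to first show that $\Psi_p$ is injective on $P(\Lambda)$. By \Cref{prop:psi-formula} it is triangular with respect to degree, with leading term $x^{\mathbf k}\mapsto a_1^{|\mathbf k|}x^{\mathbf k}$ plus lower-degree terms, and $a_1\neq 0$. Combined with \Cref{lem:Psi-shift}, injectivity gives $\Ann_{\Sh_\Q}(\Psi_p(g)) = \varphi_p^{-1}(\Ann_{\Diff_\Q}(g))$. Consequently $\varphi_p$ induces an isomorphism $\Q K_{\Psi_p(g)}\cong\Q A_g$ that intertwines $\xi\mapsto\xi\cdot\Psi_p(g)$ with $\alpha\mapsto\Psi_p(\alpha\cdot g)$. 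Units correspond under this ring isomorphism, which yields $(3)\Leftrightarrow(4)$. I would also check that $\Psi_p(g)$ is a polynomial, so that \Cref{cor:polynomial-equivalence} applies; this is immediate from \Cref{prop:psi-formula}.
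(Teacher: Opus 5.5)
Your proof is correct and follows the paper's decomposition exactly. Surjectivity comes from $a_1\neq 0$. Then $(1)\Leftrightarrow(2)$ follows from \Cref{thm:map-descends-iff-hrr}, $(2)\Leftrightarrow(3)$ from \Cref{rem:hrr-equivalent-statement} and the definition of $\Psi_p$, and $(3)\Leftrightarrow(4)$ from \Cref{lem:Psi-shift} together with surjectivity.

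The only divergence is the step you flagged as the obstacle, the transfer of invertibility. The paper settles it in one line. Since $p(0)=1$, the constant term of $\varphi_p(E)$ is $\rk(E)$. The units of the local rings $\Q A_g$ and $\Q K_{\Psi_p(g)}$ are exactly the elements with nonzero constant term, resp.\ nonzero rank, because the $[\partial_u]$ and $[\Delta_u]$ are nilpotent. Hence $[\varphi_p(E)]$ is a unit if and only if $[E]$ is, and no injectivity of $\Psi_p$ is needed.

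Your route also works. You prove $\Psi_p$ injective via the degree-triangularity in \Cref{prop:psi-formula}, and deduce that $\varphi_p$ induces a ring isomorphism $\Q K_{\Psi_p(g)}\cong\Q A_g$ intertwining the two actions. This is somewhat more informative. It shows directly that $\Ann_{\Sh_\Q}(\Psi_p(g))$ is the kernel of $\Sh_\Q(\Lambda)\to\Q A_g$, which is the case $f=\Psi_p(g)$, $\td_p=1$ of the theorem. It also shows that the class $[E]$ matched to $\td_p$ is unique. The cost is an extra lemma, where the paper's rank versus constant-term comparison already suffices.
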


\begin{proof}
    Note that since $a_1 \neq 0$, the composite map
    \[\Sh_\Q(\Lambda) \xrightarrow{\varphi_p} \Diff_\Q\llbracket \Lambda\rrbracket
    \to \Q A_g\]
    contains the elements $[\partial_{e_i}]$ in its image, and since these generate $\Q A_g$, it is surjective.

    (1) is equivalent to (2) by \Cref{thm:map-descends-iff-hrr}.
    (2) is equivalent to (3) by \Cref{rem:hrr-equivalent-statement} and the definition of $\Psi_p$.
    Note that for all $E \in \Sh(\Sigma)$, $\varphi_p(E)$ has constant term $\rk(E)$, and hence $\varphi_p(E)$ is invertible if and only if $E$ is invertible. It follows using \Cref{lem:Psi-shift} and surjectivity of $\varphi_p$, that (3) is equivalent to (4).
\end{proof}

\begin{example}\label{ex:exceptional-full-flag}
In this example we would like to study exceptional isomorphisms for full flag variety in type $A$, that is, for $G = \SL_n$. Recall from \Cref{ex:full-flags} that the volume polynomial on flag variety $G/B$ is given by
\[
 \Vol_{G/B}(\lambda) = \frac{\prod_{\alpha\in \Delta^+}(\lambda,\alpha)}{\prod_{\alpha\in \Delta^+}(\rho,\alpha)},
\]
which in Type A gives
\[
\Vol_{G/B}(\lambda_1,\ldots, \lambda_n) = \prod_{i<j}\frac{\lambda_i-\lambda_j}{i-j}.
\]
Thus up to constant factor we get 
\[
\Vol_{G/B}(\lambda) =    
    \prod_{i<j}(\lambda_j-\lambda_i)=\det \begin{pmatrix}
1  & \cdots & 1\\
\lambda_1 & \cdots & \lambda_n\\
\vdots & \ddots & \vdots\\
\lambda_1^{n-1} & \cdots & \lambda_n^{n-1}
\end{pmatrix}.
\]

Now let $p(t) = 1 + \sum_{i = 1}^d a_i t^i$ be a polynomial, then by definition
$\psi_p(t^k)$ is of degree at most $k$ with coefficient of $t^k$ equal to $a_1^k$.
Since $a_1 \neq 0$, we have that $\psi_p$ sends monomials to polynomials of the same degree.
Then $\Psi_p(\Vol_{G/B})= a_1^{\frac{n(n-1)}{2}}\Vol_{G/B}$. Indeed, by \Cref{prop:psi-formula},
we have
\begin{equation}
\label{eq:psi-vandermonde}
\Psi_p(\Vol_{G/B}) = \det
\begin{pmatrix}
    1 & \cdots & 1 \\
    \psi_p(t)|_{t = \lambda_1} & \cdots & \psi_p(t)|_{t = \lambda_n} \\
    \vdots & \ddots & \vdots \\
    \psi_p(t^{n - 1})|_{t = \lambda_1} & \cdots & \psi_p(t^{n - 1})|_{t = \lambda_n}\\
\end{pmatrix}
\end{equation}
Since for each $k$, the leading coefficient of $\psi_p(t^k)$ is $a_1^{k}$,
it follows that we may obtain the matrix 
from \Cref{eq:psi-vandermonde} by multiplying the Vandermonde matrix defining $\Vol_{G/B}$ on the left with a lower triangular matrix, whose diagonal terms are $(1, a_1, \dots, a_1^{n-1})$, and so we have that
\[\Psi_p(\Vol_{G/B}) = a_1^{\frac{n(n-1)}{2}} \Vol_{G/B}.\]
We conclude that $\varphi_p$ descends to an isomorphism
$\Q K_{\Snap_{G/B}} \cong \Q A_{\Vol_{G/B}}$ for any polynomial $p = 1 + \sum_{i} a_i t^i$ such that $a_i \neq 0$. 
\end{example}

A particular case of interest is when $p(t) = 1 + t$. In this case,
we call $\varphi_{1 + t}$ the \emph{exceptional Chern character}, which is given by the truncation
\[
    \varphi_{1 + t}(T^{e_i})=1+\partial_{e_i}.
\]
of the usual Chern character on the chosen bases.  Tightly related to the truncated Chern character is when $p(t) = \frac{1}{1 - t} = 1 + t + t^2 + \dots$.
Then $\varphi_p$ is precisely the map we would obtain if we constructed the exceptional Chern character with respect to the basis $\{-e_i\}_i$; we call it \emph{dual exceptional Chern character}.  It follows that its Todd class is equal to the exceptional Todd class with respect to that basis.

\begin{proposition}\label{prop:exceptional and dual exceptional}
    We have the following expressions for $\psi_p$ for the exceptional Chern character and its dual:
    \begin{align*}
     \psi_{1 + t}(t^k)  & = t(t - 1)\cdots (t - k + 1) =: t^{k\downarrow}\\
        \psi_{\frac{1}{1-t}}(t^k) & = t(t + 1)\cdots(t + k - 1) =: t^{k\uparrow}.
    \end{align*}
\end{proposition}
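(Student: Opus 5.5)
Both identities follow by specializing the formula for $\psi_p$ in \Cref{prop:psi-formula}. A shortcut is also available, since by definition $\psi_p(t^k)$ is the one-variable polynomial $u\mapsto (p(\partial)^u\cdot x^k)(0)$.

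For $p(t)=1+t$ we have $d=1$ and $a_1=1$, so the sum in \Cref{prop:psi-formula} has the single term $\mathbf{k}=(k)$:
\[
\psi_{1+t}(t^k)=\binom{t}{k}\,k! = t(t-1)\cdots(t-k+1).
\]
Directly, for $u\in\Z$ we have $\big((1+\partial)^u\cdot x^k\big)(0)=\sum_j \binom{u}{j}\,(\partial^j x^k)(0)=\binom{u}{k}k!$. This uses the generalized binomial theorem, which applies also for negative $u$ since $\partial$ is nilpotent on $x^k$. Two polynomials agreeing on all of $\Z$ are equal, which gives the falling factorial.

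For $p(t)=\frac1{1-t}$ the series is not a polynomial, so I would not use the multinomial formula with infinitely many $a_i$. One option is to note that only $a_1,\dots,a_k$ matter when acting on $x^k$ and truncate at degree $k$. Cleaner is to compute directly:
\[
(1-\partial)^{-u}\cdot x^k \text{ at } 0 \;=\; \binom{-u}{k}(-1)^k\,k!.
\]
This holds because $(1-\partial)^{-u}=\sum_j\binom{-u}{j}(-\partial)^j$. Then
\[
\binom{-u}{k}(-1)^k k! = u(u+1)\cdots(u+k-1),
\]
the rising factorial, and again equality on $\Z$ gives equality of polynomials.

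Alternatively, the second identity follows from the first via the remark that $\varphi_{1/(1-t)}$ is the exceptional Chern character for the basis $\{-e_i\}$. This gives $\psi_{1/(1-t)}(t^k)(u)=(-1)^k\,\psi_{1+t}(t^k)(-u)$, which is exactly the rising factorial.

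The only mild obstacle is justifying the series manipulations for negative exponents and for the infinite series $1/(1-t)$. This is handled by nilpotency of $\partial$ on polynomials of bounded degree, so all sums are finite.
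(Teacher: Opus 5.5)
Your proposal is correct and follows essentially the same route as the paper. The falling factorial comes directly from the formula for $\psi_p$ with the single term $\mathbf{k}=(k)$, and the rising factorial comes from expanding $(1-\partial)^{-u}$ as a generalized binomial series and reading off the coefficient of $\partial^k$. Your extra remarks are sound but go beyond what the paper's proof needs: the nilpotency justification for the infinite series, and the alternative derivation via the basis $\{-e_i\}$ giving $(-1)^k\psi_{1+t}(t^k)(-u)$.
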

\begin{proof}
    For the exceptional Chern character we apply the definition of $\psi_p$ directly to get
    \[
    \psi_{1 + t}(t^k) = k!\binom{t}{k} = t(t - 1)\cdots (t - k + 1).
    \]
For the dual exceptional, this follows from the expression
\begin{align*}
    \Psi_{\frac{1}{1-t}}(h)(\vec u) &= \left(\prod_{i = 1}^n (1 - \partial_i)^{-u_i}\cdot h\right)(0)\\
    &= \left(\prod_{i = 1}^n \sum_{k = 1}^\infty \frac{u_i(u_i + 1)\cdots(u_i + k - 1)}{k!} \partial_i^k \cdot h\right)(0).
    \qedhere
\end{align*}
\end{proof}

We call $t^{k\downarrow}$ the \emph{falling factorial}
and $\Poch^{\downarrow} := \Psi_{1 + t}$ the \emph{falling Pochhammer transform}. Similarly, we call
$t^{k\uparrow}$ the \emph{rising factorial} and $\Poch^{\uparrow} := \Psi_{\frac{1}{1 - t}}$
the \emph{rising Pochhammer transform}.

\begin{example}
\label{ex:perumutahedron-no-exceptional}
    Let $P$ be the 3-dimensional permutahedron, and let $\Vol_P$ and $\Snap_P$ be its associated volume and Snapper polynomials, which we defined in \Cref{example:toric}.
    One may verify by direct computation that $\Snap_P \not\sim \Poch^\downarrow(\Vol_P)$.
    In particular, there exists no isomorphism $\zeta: K(X_P) \to A(X_P)$, sending $[\cO(D_\rho)]$ to $1 + [D_\rho]$.
    However, $X_P$ is the wonderful compactification of the Boolean arrangement, and as noted at the beginning of this section, there is a well-defined exceptional isomorphism, which sends $[\cL_F^{-1}]$ to $1 + c_1(\cL_F^{-1})$. For the permutahedral variety, this was first shown in \cite[Theorem D]{BEST2023}.
\end{example}

The exceptional isomorphism from \Cref{ex:perumutahedron-no-exceptional} was generalized to matroids in \cite{LLPP24} which allows us to give a new presentation for Snapper polynomial of matroids.
Let $M$ be a matroid. We denote by $K(M)$ and $A(M)$ the $K$- and Chow rings of $M$. These are defined as the $K$- and Chow ring of the toric variety given by the Bergman fan of $M$. When $M$ is realizable by a hyperplane arrangement $\cA$, these are also isomorphic to the $K$- and Chow ring of the wonderful compactification of $\cA$.
These are also Frobenius rings with a natural degree map $\deg_M: A(M) \to \Z$ and Euler characteristic analog $\chi_M: K(M) \to \Z$. \cite{FY04, CombinatorialHodge, LLPP24}

Suppose for simplicity that $M$ is realizable. Let $\Lambda$ be a lattice with basis $u_F$ for all flats $F$ of $M$. Then the maps $T^{u_F} \mapsto [\cL_F]$ and
$\partial_{u_F} \mapsto c_1(\cL_F)$ give surjections $\Sh(\Lambda) \to K(M)$ and $\Diff(\Lambda) \to A(M)$.
Setting
\begin{align*}
    \Snap_M((t_F)_{F \in \cL(M)}) &:= \chi_M\left(\prod_{F \in \cL(M)} [\cL_F]^{t_F}\right),\\
    \Vol_M((t_F)_{F \in \cL(M)}) &:= \frac{1}{d!}\deg_M\left(\bigg(\sum_{F \in \cL(M)} t_F c_1(\cL_F)\bigg)^d\right),
\end{align*}
we obtain the representations $K_{\Snap_M}$ and $A_{\Vol_M}$ of $K(M)$ and $A(M)$ as Frobenius rings.

By \cite[Theorem 1.2]{LLPP24}, the map $\varphi_{\frac{1}{1 - t}}$ sending $[T^{u_F}] \mapsto \frac{1}{1 -[\partial_{u_F}]}$ gives an isomorphism $K(M) \cong A(M)$, and furthermore
\[\chi_M(\xi) = \deg_M\left(\frac{1}{1 - \partial_{u_E}} \cdot \varphi_{\frac{1}{1 - t}}(\xi)\right).\]
In other words, $\td_{\frac{1}{1 - t}} = \frac{1}{1 - \partial_{u_E}}$ and so by \Cref{thm:exceptional-general}, we have 
\[
    \Snap_M=\Poch^\uparrow\left(\frac{1}{1 - \partial_{u_E}}\cdot \Vol_M\right).
\]
Note that after plugging in the expression for $\Vol_M$ provided by \cite[Theorem 7.1]{LLPP24}, this yields directly the formula in \cite[Corollary 7.5]{LLPP24}.

We extend on this by noting that $\td_{\frac 1{1-t}} = \varphi_{\frac 1{1-t}}(T^{u_E})$. The following corollary is then a direct consequence of \Cref{lem:Psi-shift}.

\begin{corollary}\label{cor:wonderful-compactification-exceptional}
   Let $M$ be realizable matroid. Then we have the following relation between the Snapper and volume polynomials in the basis $\{u_F\}$:
    \[
        \Snap_M = T^{u_E} \cdot \Poch^\uparrow(\Vol_M),
    \]
    where $E$ is the ground set of $M$.
\end{corollary}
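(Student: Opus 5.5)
The plan is to start from the identity already obtained just before the statement: by \cite[Theorem 1.2]{LLPP24} combined with \Cref{thm:exceptional-general}(3), we have
\[
    \Snap_M=\Poch^\uparrow\left(\td_{\frac{1}{1-t}}\cdot \Vol_M\right),\qquad \td_{\frac{1}{1-t}} = \frac{1}{1 - [\partial_{u_E}]}.
\]
The goal is then to move the Todd class from the inside of the Pochhammer transform to the outside, where it becomes the shift operator $T^{u_E}$.

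The key observation is that the Todd class is the image of a shift: since $u_E$ is one of the chosen basis vectors $u_F$ (with $F=E$), the definition of $\varphi_p$ for $p=\frac{1}{1-t}$ gives
\[
    \varphi_{\frac{1}{1-t}}(T^{u_E}) = \frac{1}{1-\partial_{u_E}}.
\]
So $\td_{\frac{1}{1-t}}\cdot\Vol_M = \varphi_{\frac{1}{1-t}}(T^{u_E})\cdot \Vol_M$. One technical point needs checking. Here $\varphi_p(T^{u_E})$ is a formal power series in $\Diff_\Q\llbracket\Lambda\rrbracket$, while $\td$ is its class in $\Q A_{\Vol_M}$. Because $\Vol_M$ is a polynomial, only finitely many terms of the series act nontrivially on it. The action of $\Diff_\Q\llbracket\Lambda\rrbracket$ on $P(\Lambda)$ factors through $\Q A_{\Vol_M}$ on the orbit of $\Vol_M$, so the two expressions agree as polynomials.

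Next I would apply \Cref{lem:Psi-shift} with $E=T^{u_E}$ and $h=\Vol_M$:
\[
    T^{u_E}\cdot \Psi_{\frac{1}{1-t}}(\Vol_M) = \Psi_{\frac{1}{1-t}}\big(\varphi_{\frac{1}{1-t}}(T^{u_E})\cdot\Vol_M\big).
\]
Since $\Psi_{\frac{1}{1-t}}=\Poch^\uparrow$ by \Cref{prop:exceptional and dual exceptional}, the right-hand side equals $\Snap_M$, which gives the claimed identity.

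The only step that needs real care is the well-definedness remark above. \Cref{lem:Psi-shift} is stated for $\varphi_p$ with values in the completion, but $p=\frac{1}{1-t}$ is an infinite series, not a polynomial. I would handle this by noting that on polynomials every operator acts through a finite truncation, so the proof of \Cref{lem:Psi-shift} goes through verbatim. I also expect to check that the LLPP normalization of the isomorphism matches $\varphi_{\frac{1}{1-t}}$ on the basis $u_F$, including the sign conventions for $\cL_F$ versus $\cL_F^{-1}$.
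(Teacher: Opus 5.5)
Your proposal is correct and matches the paper's argument. The paper likewise notes that $\td_{\frac{1}{1-t}} = \varphi_{\frac{1}{1-t}}(T^{u_E})$ and applies Lemma~\ref{lem:Psi-shift} to $\Snap_M = \Poch^\uparrow(\td_{\frac{1}{1-t}}\cdot\Vol_M)$, which pulls the Todd class out as the shift $T^{u_E}$; your remarks about truncating the power series when it acts on polynomials are sound and make explicit what the paper leaves implicit.
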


\begin{remark}
    This generalizes to all matroids: there is a suitable sets of generators for $K(M)$ and $A(M)$ parametrized by the flats of $M$, for which the polynomials $\Snap_M$ and $\Poch^\uparrow(\Vol_M)$ are translates of each other.
\end{remark}

\section{General Riemann-Roch pairs}
\label{sec:hrr-pairs}

In the previous section, we defined Riemann-Roch pairs in the standard graded setting as pairs $(K_f, A_g)$, connected via the Chern character. The definition of the Chern character in that case was simple, because every positive element splits as a sum of line elements, on which the Chern character acts by $T^u \mapsto \exp(\partial_u)$.

In this section, we will generalize the notion of Riemann-Roch pairs to the case when $K_f$ may not be generated by invertible elements and $A_g$ by degree 1 elements. To do so, we start with a set of positive elements with specified $\lambda$-classes, which generate $K_f$ as a ring. We may then define the Chern character by specifying the Chern class of each generator.

Given such a Riemann-Roch pair, we construct an embedding into a standard graded Riemann-Roch pair that preserves positivity, which makes the splitting principle available and allows all the structures of the previous section -- Chern classes, $\lambda$-ring structure, Serre duality, and the associated graded ring -- to transfer to this general setting.

This generality is essential in practice: the $K$-ring of an algebraic variety is rarely generated by line elements. A fundamental example is the Grassmannian $\Gr(2,4)$, whose tautological bundle $\cS$ does not split into a sum of line elements in the $K$-ring.

\subsection{The Chern character}

Let $S = \{E_1, \dots, E_n\}$ be a set with a rank function $\rk: S \to \N_{>0}$.
Let 
\[\Sigma = \N\{\lambda^k(E_i) \mid 1 \leq i \leq n, 1 \leq k \leq \rk(E_i)\},\]
and 
\[\Gamma = \Z\{c_k(E_i) \mid 1 \leq i \leq n, 1 \leq k \leq \rk(E_i)\}.\]
For now we see the $\lambda^k(E_i)$ and $c_k(E_i)$ as independent variables. $\Gamma$ is a graded lattice with the $c_k(E_i)$ having degree $k$. We will identify $\Sigma$ to a multiplicative subgroup of $\Sh(\Sigma)$ and $\Gamma$ to an additive subgroup of $\Diff(\Gamma)$.

We will now define the Chern character $\ch: \Sh_\Q(\Sigma) \to \Diff_\Q\llbracket \Gamma \rrbracket$. To do so, we will embed $\Sh(\Sigma)$ into a larger ring, which is generated by line elements.
Let
\[\Lambda = \Z\{u_{i, j} \mid 1 \leq i \leq n, 1 \leq k \leq \rk(E_i)\},\]
then we have an embedding
\begin{align*}
    \iota: \Sh(\Sigma) &\to \Sh(\Lambda)\\
    \lambda^k(E_i) &\mapsto \sigma_k(T^{u_{i, 1}}, \dots, T^{u_{i, \rk(E_i)}}).
\end{align*}
Similarly, we have an embedding
\begin{align*}
    \nu: \Diff(\Gamma)&\to \Diff(\Lambda)\\
    c_k(E_i) &\mapsto \sigma_k(\partial_{u_{i, 1}}, \dots, \partial_{u_{i, \rk(E_i)}}).
\end{align*}
Let $\ch: \Sh_\Q(\Lambda) \to \Diff_\Q \llbracket \Lambda\rrbracket$
be the Chern character as defined in \Cref{subsec:chern-character}.
An element in the image of the composition 
\[\Sh_\Q(\Sigma) \xhookrightarrow{\iota} \Sh_\Q(\Lambda) \xhookrightarrow{\ch} \Diff_\Q\llbracket \Lambda\rrbracket\]
is by definition a symmetric function in the $\partial_{u_{i, j}}$ for any fixed $i$. In particular,
it is in the image of $\nu: \Diff_\Q\llbracket \Gamma \rrbracket \hookrightarrow \Diff_\Q\llbracket \Lambda \rrbracket$. It follows that the Chern character restricts to a well-defined map
$\Sh_\Q(\Sigma) \hookrightarrow \Diff_\Q\llbracket\Gamma\rrbracket$.

Let $g: \Gamma \to \Q$ be a weighted homogeneous polynomial. The following proposition shows that $\ch$ induces a surjective map to $\Q A_g$.

\begin{proposition}
    The map $\ch: \Sh_\Q(\Sigma) \to \Q A_g$ is surjective.
\end{proposition}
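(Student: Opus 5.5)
We need to show that the image of $\ch\colon \Sh_\Q(\Sigma)\to \Q A_g$ contains the generators $[c_k(E_i)]$ of $\Q A_g$. Since the image of a ring homomorphism is a subring, this gives surjectivity. The argument mirrors \Cref{lem:ch-surjective}, with the elementary symmetric functions of the Chern roots playing the role of $\partial_u$.

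\textbf{Step 1: the Adams-type combinations.} Fix $i$, write $r=\rk(E_i)$, and set $y_j=\partial_{u_{i,j}}$. For each $m\geq 1$, the symmetric polynomial $\sum_{j}(T^{u_{i,j}})^m$ is a polynomial with integer coefficients in the $\sigma_k(T^{u_{i,1}},\dots,T^{u_{i,r}})$, by Newton's identities. Hence it equals $\iota(\psi^m(E_i))$ for an explicit $\psi^m(E_i)\in\Sh(\Sigma)$. Its Chern character is
\[
\ch(\psi^m(E_i))=\sum_{j=1}^r \exp(m y_j)=\sum_{k\geq 0}\frac{m^k}{k!}\,p_k(y_1,\dots,y_r),
\]
where $p_k$ is the power sum, viewed in $\Diff_\Q\llbracket\Gamma\rrbracket$ via $\nu$.

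\textbf{Step 2: isolate the power sums.} In $\Q A_g$ all classes of degree greater than $d=\deg g$ vanish, so the sum above is finite, ending at $k=d$. Take $m=0,1,\dots,d$, where $m=0$ gives the constant $r$ and corresponds to $r\cdot 1$. The coefficient matrix $(m^k/k!)_{m,k}$ is an invertible Vandermonde matrix times a diagonal matrix. Inverting it over $\Q$ shows that each homogeneous class $[p_k(y_1,\dots,y_r)]$ is a $\Q$-linear combination of the $\ch(\psi^m(E_i))$, and so lies in the image.

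An alternative is to apply the logarithm trick of \Cref{lem:ch-surjective} to each root, but the roots are not individually in $\Sh(\Sigma)$. The power-sum route avoids this problem.

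\textbf{Step 3: recover the Chern classes.} By Newton's identities over $\Q$, each $\sigma_k(y)$ is a polynomial with rational coefficients in $p_1,\dots,p_k$. Therefore $[c_k(E_i)]=[\nu^{-1}\sigma_k(y)]$ lies in the image, since the image is a subring.

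The only delicate point is the bookkeeping that the identities hold in $\Diff_\Q\llbracket\Gamma\rrbracket$ and not just in $\Diff_\Q\llbracket\Lambda\rrbracket$. This follows from the injectivity of $\nu$ and the fact that every expression involved is symmetric in the roots of each $E_i$. Truncation is legitimate because $\Q A_g$ is graded with top degree $d$, and the map $\Diff_\Q\llbracket\Gamma\rrbracket\to\Q A_g$ is well defined since the maximal ideal acts nilpotently. I expect this bookkeeping, not the algebra, to be the main thing requiring care.
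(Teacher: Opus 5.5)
Your proof is correct, but it takes a different route from the paper's.

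\textbf{The paper's route.} The paper proves surjectivity onto an arbitrary class directly. Given $\alpha$ modulo degree $>d$, it writes $\nu(\alpha)$ as a polynomial in the roots $\partial_{u_{i,j}}$ that is symmetric in each block. It then substitutes for every root its truncated logarithm $\sum_{k=1}^d(-1)^{k+1}(T^{u_{i,j}}-1)^k/k$, as in \Cref{lem:ch-surjective}. The resulting $\eta\in\Sh_\Q(\Lambda)$ is still block-symmetric, so $\eta=\iota(\xi)$ for some $\xi\in\Sh_\Q(\Sigma)$. Finally, $\nu$ is graded, hence injective modulo degree $>d$, which gives $\ch(\xi)\equiv\alpha$.

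\textbf{Your remark on the logarithm trick.} You say the logarithm trick is blocked because the roots are not in $\Sh(\Sigma)$. That is not accurate. If you apply it to all roots at once inside an expression that is already symmetric, the result lands back in $\iota(\Sh_\Q(\Sigma))$ by the fundamental theorem on symmetric polynomials.

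\textbf{Your route.} You only hit the generators $[c_k(E_i)]$ and then use that the image is a $\Q$-subalgebra. You use the integral Adams-type elements $\psi^m(E_i)$ and invert the Vandermonde system for $m=0,\dots,d$ to isolate the pieces $[\nu^{-1}p_k]$. You then apply Newton's identities over $\Q$. Note that only $k\le d$ matters, since $[c_k(E_i)]=0$ for $k>d$.

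\textbf{What each buys.} The paper's argument produces an explicit preimage of every class, in effect an explicit inverse of $\ch$ modulo high degree, and it is uniform with the standard graded case. Yours is more intrinsic: the only elements of $\Sh(\Sigma)$ it uses are the Adams operations. The splitting enters only to compute $\ch(\psi^m(E_i))=\sum_k m^k\,\nu^{-1}(p_k)/k!$, which is the classical relation between Adams operations and the graded pieces of the Chern character.
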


\begin{proof}
    Let $d = \deg(g)$, and let $I = \Diff_\Q\llbracket \Gamma \rrbracket_{> d}$ and $J = \Diff_\Q \llbracket \Lambda \rrbracket_{> d}$ be the ideals of differential operators which only have terms of degrees larger than $d$.
    Let $\alpha \in \Diff_\Q \llbracket \Gamma \rrbracket / I$, then $\nu(\alpha)$ is a symmetric polynomial in the $\partial_{u_{i, j}}$ for each fixed $i$. As in \Cref{lem:ch-surjective}, we have that
    \[
    \partial_{u_{i , j}} = \ch\left(\sum_{i = 1}^d (-1)^{k + 1}\frac{(T^{u_{i, j}} - 1)^k}{k}\right)
    \mod J
    \]
    In particular, there is an element $\eta \in \Sh_{\Q}(\Lambda)$, which may be written as a symmetric polynomial in the $T^{u_{i, j}}$ for each fixed $i$, such that $\ch(\eta) = \nu(\alpha) \mod J$.
    Since all symmetric polynomials in the $T^{u_{i, j}}$
    are in the image of $\iota$, there is some $\xi \in \Sh_\Q(\Sigma)$ 
    such that $\iota(\xi) = \eta$. Since $\nu$ induces to an injective map
    $\Diff_\Q\llbracket \Gamma \rrbracket/I \to\Diff_\Q\llbracket \Lambda \rrbracket/J$,
    we conclude that $\ch(\xi) = \alpha \mod I$, and so this shows that the map $\ch: \Sh_\Q(\Sigma) \to \Diff_\Q \llbracket \Gamma \rrbracket / I$ is surjective. Since we have $\Ann_{\Diff_\Q(\Gamma)}(g) \subseteq I$, this finishes the proof.
\end{proof}

Let $f: \Sigma \to \Q$ be a function, then by 
\Cref{thm:map-descends-iff-hrr}, $\ch$ induces an isomorphism $\Q K_f \cong \Q A_g$ if and only if there exists
some $\td \in \Q A_g^\times$ such that 
\[f(\xi) = \deg_g(\td \cdot \ch(\xi))\qquad\textrm{for all }\xi \in \Sigma.\]

\begin{definition}
    Suppose the Chern character gives an isomorphism $\Q K_f \cong \Q A_g$,
    then we call the data defining $(K_f, A_g)$ a \emph{Riemann-Roch pair}.
\end{definition}

Note that this definition agrees with \Cref{def:RR-pair-std-graded}
in the standard graded case.

\begin{example}\label{example:grassmannian-2-4}
     The $K$-ring and Chow ring of the Grassmannian $\Gr(2, 4)$ form a Riemann-Roch pair. We will now compute its presentation as Frobenius quotients.
    
    Let $\cS$ be the tautological bundle of $\Gr(2, 4)$.
    The $K$-ring of $\Gr(2, 4)$ is generated by the classes $[\lambda^1(\cS)] = [\cS]$ and $[\lambda^2(\cS)] = [\det(\cS)]$, and
    its Chow ring is generated by the Chern classes $c_1(\cS)$ and $c_2(\cS)$.

    The Todd class of the Grassmannian is given by $\Td(\cS^\vee \otimes \cQ)$, where $\cQ$ is the quotient bundle. Using the short exact sequence
    \[
        0 \to \cS \to \cO_{\Gr(2, 4)} \to \cQ \to 0,
    \]
    we can relate the Chern classes of $\cS$ and $\cQ$, and a calculation shows that
    \[
        \Td(\cS^\vee \otimes \cQ) = 
        1 - 2 c_1(\cS) + \frac{23c_1(\cS)^2}{12}  - \frac{7c_1(\cS)^3}{6}
        -\frac{c_1(\cS)^2 c_2(\cS) - c_2(\cS)^2}{180} + \frac{c_1(\cS)^4}{2}.
    \]

    From the same short exact sequence follow the relations 
    \[
        2c_1(\cS)c_2(\cS) = c_1(\cS)^3\textrm{ and } c_2(\cS)^2 = c_1(\cS)^2c_2(\cS)
    \]
    so we obtain
    \[
        \Td(\cS^\vee \otimes \cQ) = 
        1 - 2 c_1(\cS) + \frac{23}{12}c_1(\cS)^2  - \frac{7}{6}c_1(\cS)^3
        + \frac{1}{2}c_1(\cS)^4.
    \]
    Since $c_1(\cS) = -\sigma_{(1)}$, where $\sigma_{(1)}$ is the Schubert class corresponding to the partition $(1)$, and $\sigma_{(1)}^4 = 2\sigma_{(2, 2)}$, we get that 
    \[
        \int_{\Gr(2, 4)} c_1(\cS)^4 = \int_{\Gr(2, 4)} 2\sigma_{(2, 2)} = 2.
    \]
    Let $\Gamma = c_1(\cS)\Z \oplus c_2(\cS)\Z$, then we get that the degree polynomial on $\Gamma$ is given by
    \begin{align*}
        g(x_1, x_2) &= \frac{\int c_1(S)^4}{4!}x_1^4 + \frac{\int c_1(\cS)^2c_2(\cS)}{2!}x_1^2x_2 + \frac{\int c_2(\cS)^2}{2!}x_2^2\\
        &= \frac{1}{12}x_1^4 + \frac{1}{2}x_1^2 x_2 + \frac{1}{2}x_2^2,
    \end{align*}
    in other words, $A(\Gr(2, 4)) \cong A_g$ as Poincar\'e duality rings.

    Write $K(\Gr(2, 4)) \cong K_f$ for some function $f: \Sigma \to \Q$, where $\Sigma = [\cS] \N \oplus [\det(\cS)] \Z$.
    By \Cref{rem:hrr-equivalent-statement}, $f$ is uniquely determined from $g$ and $\td = \Td(\cS^\vee \otimes \cQ)$ via the equation
    \[f(u) := (\td \cdot \ch(T^u) \cdot g) (0).\]

    Note that $[\cS]$ has rank 2 and does not split into line elements in $K_f$,
    so this example lies outside the standard graded setting of \Cref{sec:standard}.
\end{example}

We may also formulate Grothendieck-Riemann-Roch in this setting.

\begin{definition}
    A \emph{morphism of Riemann-Roch pairs} $\phi: (K_{f_1}, A_{g_1}) \to (K_{f_2}, A_{g_2})$
    is a pair of maps $\phi_K: K_{f_1} \to K_{f_2}$
    and $\phi_A: A_{g_1} \to A_{g_2}$,
    such that the diagram
    \[
        \begin{tikzcd}
            \Q K_{f_1} \arrow[d, "\ch"] \arrow[r, "\phi_K"] & \Q K_{f_2} \arrow[d, "\ch"] \\
            \Q A_{g_1} \arrow[r, "\phi_A"]                  & \Q A_{g_2}                 
        \end{tikzcd}
    \]
    commutes.
\end{definition}

We have that $\phi_K$ induces a pullback $\phi_K^*: \Hom_\Q(\Q K_{f_2}, \Q) \to \Hom_\Q(\Q K_{f_1}, \Q)$. Using the Frobenius pairing, we identify $\Q K_{f_i} \cong \Hom_\Q(\Q K_{g_2}, \Q)$, so we obtain an induced map $\phi_K^*: \Q K_{f_2} \to \Q_{K_{f_1}}$. Similarly, we have a pullback $\phi_A^*: \Q A_{g_2} \to \Q_{A_{g_1}}$.
The same proof as in \Cref{thm:grr} shows the following theorem.

\begin{theorem}
    Let $\phi: (K_{f_1}, A_{g_1}) \to (K_{f_2}, A_{g_2})$ be a morphism of Riemann-Roch pairs. Then the following diagram commutes:
\[
\begin{tikzcd}
\Q K_{f_2} \arrow[d, "\td_2\cdot \ch(-)"'] \arrow[r, "\phi_K^*"] & \Q K_{f_1} \arrow[d, "\td_1\cdot\ch(-)"] \\
\Q A_{g_2} \arrow[r, "\phi_A^*"]                          & \Q A_{g_1}                         
\end{tikzcd}
\]
\end{theorem}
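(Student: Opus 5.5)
We will follow the proof of \Cref{thm:grr} essentially verbatim, dualizing the commutative square in the definition of a morphism of Riemann-Roch pairs. First, since $\ch\colon \Q K_{f_i}\to \Q A_{g_i}$ are isomorphisms, the defining square can be rewritten as $\phi_K\circ \ch^{-1} = \ch^{-1}\circ \phi_A$ as maps $\Q A_{g_1}\to \Q K_{f_2}$. Applying $\Hom_\Q(-,\Q)$ gives a commutative square
\[
\begin{tikzcd}
\Hom_\Q(\Q K_{f_2}, \Q) \arrow[r, "\phi_K^*"] \arrow[d, "(\ch^{-1})^*"] & \Hom_\Q(\Q K_{f_1}, \Q) \arrow[d, "(\ch^{-1})^*"] \\
\Hom_\Q(\Q A_{g_2}, \Q) \arrow[r, "\phi_A^*"] & \Hom_\Q(\Q A_{g_1}, \Q).
\end{tikzcd}
\]

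Next we identify the vertical maps under the Frobenius identifications $\Q K_{f_i}\cong \Hom_\Q(\Q K_{f_i},\Q)$, $\xi\mapsto \chi_{f_i}(-\cdot\xi)$, and $\Q A_{g_i}\cong \Hom_\Q(\Q A_{g_i},\Q)$, $\alpha\mapsto \deg_{g_i}(-\cdot\alpha)$; these are isomorphisms because the pairings are perfect on finite-dimensional $\Q$-vector spaces. The key computation is that, by \Cref{thm:map-descends-iff-hrr} applied to the surjective map $\ch\colon\Sh_\Q(\Sigma_i)\to\Q A_{g_i}$ (surjectivity being the preceding proposition), we have for $\xi\in\Q K_{f_i}$ and $\alpha\in \Q A_{g_i}$
\[
\chi_{f_i}(\ch^{-1}(\alpha)\cdot\xi)=\deg_{g_i}\big(\td_i\cdot\ch(\ch^{-1}(\alpha)\cdot\xi)\big)=\deg_{g_i}\big(\alpha\cdot \td_i\cdot\ch(\xi)\big),
\]
using multiplicativity of $\ch$ on the quotient. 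Hence $(\ch^{-1})^*$ corresponds to $\xi\mapsto \td_i\cdot\ch(\xi)$, and since $\phi_K^*,\phi_A^*$ were defined precisely by transporting the dual maps through these identifications, the dual square becomes the diagram in the theorem.

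The only point requiring care (the mild obstacle) is verifying that $\ch$ is a ring isomorphism $\Q K_{f_i}\to\Q A_{g_i}$ in this general setting, so that $\ch(\eta\xi)=\ch(\eta)\ch(\xi)$; this holds because $\ch$ on $\Sh_\Q(\Sigma)$ is the restriction of the ring homomorphism $\ch$ on $\Sh_\Q(\Lambda)$ via $\iota$ and $\nu$, and the Riemann-Roch pair hypothesis says it descends to an isomorphism. One should also note the typo-level point that the identification used for $\phi_K^*$ is $\Q K_{f_i}\cong\Hom_\Q(\Q K_{f_i},\Q)$ via $\chi_{f_i}$, matching the one above.
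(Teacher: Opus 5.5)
Your proposal is correct and is essentially the paper's argument: the paper proves this theorem by saying the proof of \Cref{thm:grr} carries over, which is exactly what you do. That proof dualizes the square $\ch\circ\phi_K=\phi_A\circ\ch$ and uses the relation from \Cref{thm:map-descends-iff-hrr} together with the Frobenius pairings to identify $(\ch^{-1})^*$ with $\td_i\cdot\ch(-)$. Your additional remarks, that $\ch$ is multiplicative on $\Q K_{f_i}$ in the general setting and that the paper's $\Hom_\Q(\Q K_{g_2},\Q)$ should read $\Hom_\Q(\Q K_{f_i},\Q)$, are both accurate.
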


\subsection{The splitting principle}

Given a Riemann-Roch pair as above, we will show that we can embed it in a standard graded Riemann-Roch pair, enabling the use of the splitting principle.
This will allow us to carry out calculations on $K_f$ assuming positive elements split into sums of line elements, provided that the result does not depend on said elements.
By embedding $(K_f, A_g)$ in a standard graded pair, we may also pull back the structures which we explored in the previous section.

To construct the embedding, we will find a polynomial $h$, such that the diagram
\[
\begin{tikzcd}
	\Sh_\Q(\Sigma) & \Diff_\Q \llbracket \Gamma \rrbracket\\
	\Sh_\Q(\Lambda) & \Diff_\Q\llbracket \Lambda \rrbracket
	\arrow["\ch", from=1-1, to=1-2]
	\arrow["\iota", hook, from=1-1, to=2-1]
	\arrow["\nu", hook, from=1-2, to=2-2]
	\arrow["\ch", from=2-1, to=2-2]
\end{tikzcd}
\]
descends to a commutative diagram
\[\begin{tikzcd}
	\Q K_f & \Q A_g \\
	\Q K_{h} & \Q A_{h}
	\arrow["\ch", from=1-1, to=1-2]
	\arrow["\iota", hook, from=1-1, to=2-1]
	\arrow["\nu", hook, from=1-2, to=2-2]
	\arrow["\ch", from=2-1, to=2-2]
\end{tikzcd}\]

We will construct this polynomial $h$ by induction. Recall that we defined $\Sigma$ and $\Gamma$ using a finite set $S$ with a rank function $\rk: S \to \N_{> 0}$. We call $e(S) = \sum_{E \in S} (\rk(E) - 1)$ the \emph{excess rank} of $S$.

Suppose $e(S) > 0$, and let $E \in S$ of rank $r > 1$. Then $\nu(E) = T^{u_1} + \dots + T^{u_r}$ for some $u_i \in \Lambda$.
Let $S' = S\setminus\{E\}\cup \{Q, L\}$, where we set $\rk(Q) = r - 1$ and $\rk(L) = 1$.
Let $\Gamma'$ be the graded lattice given by $S'$ as before.
Define $\nu' : \Diff(\Gamma') \hookrightarrow \Diff(\Lambda)$ by 
\begin{align*}
    \nu'(c_k(Q)) &= \sigma_k(\partial_{u_1}, \dots, \partial_{u_{r - 1}}),\\
    \nu'(c_1(L)) &= \partial_{u_r}.
\end{align*}
Then $\im(\nu') \supseteq \im(\nu)$, and so in particular $\nu$ factors as $\nu' \circ \hat \nu$ for some
$\hat \nu : \Diff(\Gamma) \hookrightarrow \Diff(\Gamma')$. It can be easily seen that $\hat \nu$
sends $c_k(E)$ to $c_k(Q) + c_{k-1}(Q)c_1(L)$ for all $1 \leq k \leq r-1$,
and $c_r(E)$ to $c_{r-1}(Q) c_1(L)$.

\begin{proposition}\label{prop:proj-bundle-construction}
    There exists a homogeneous polynomial $g'$ on $\Gamma'$, such that $\hat \nu$ induces an embedding on the quotients $A_g \hookrightarrow A_{g'}$.
\end{proposition}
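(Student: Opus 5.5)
The plan is to mimic the projective bundle construction $\p(E)\to X$ with its tautological line subbundle $L$. We build $g'$ on $\Gamma'$ as a pushforward-type polynomial: $g'$ is the degree functional on $A_{g'}$, and $A_{g'}$ should be $A_g[c_1(L)]$ modulo a relation of degree $r$. Concretely, $\hat\nu$ makes $\Diff_\Q(\Gamma')$ a module over $\Diff_\Q(\Gamma)$. Indeed, $c_k(Q)$ is recovered from $c_k(E)$ and $c_1(L)$ via $c(Q)=c(E)(1+c_1(L))^{-1}$, so $\Diff_\Q(\Gamma')$ is generated over $\hat\nu(\Diff_\Q(\Gamma))$ by $\ell:=\partial_{c_1(L)}$. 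The relation $c_r(E)=c_{r-1}(Q)c_1(L)$ then becomes the monic relation $\sum_{k}(-1)^k c_{r-k}(E)\ell^k=0$ of degree $r$ in $\ell$, up to sign conventions. I would define the linear functional
\[
\deg'\colon \Diff_\Q(\Gamma')\to\Q
\]
by first reducing modulo the ideal $\mathcal{R}$ generated by the expressions of $c_k(Q)$ and the relation above. This presents every element uniquely as $\sum_{j<r}\hat\nu(D_j)\ell^j$, and we set $\deg'(\cdot):=\deg_g(D_{r-1})$, i.e.\ we take the coefficient of $\ell^{r-1}$ (a Segre-class style pushforward). Via Proposition~\ref{prop:solution set to differential equation}, this functional is then $\Psi(\deg')=:g'$. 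Since $\deg'$ is supported in degree $d+r-1$, $g'$ is weighted homogeneous of that degree.

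The key steps are as follows. First, show that $\Diff_\Q(\Gamma')/\mathcal{R}'$ is a free module of rank $r$ over $\Diff_\Q(\Gamma)/\Ann(g)$ with basis $1,\ell,\dots,\ell^{r-1}$, where $\mathcal{R}'=\mathcal{R}+\hat\nu(\Ann(g))$; this is the algebraic projective bundle formula. Second, show that the pairing $(a,b)\mapsto\deg'(ab)$ on this free module is nondegenerate. In the basis $\ell^i$ the Gram matrix has $A_g$-entries $\pi(\ell^{i+j})$, which are $0$ for $i+j<r-1$ and $1$ for $i+j=r-1$. It is therefore block anti-triangular with unit anti-diagonal, so nondegeneracy reduces to nondegeneracy of $\deg_g$ on $A_g$. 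Third, conclude as in Theorem~\ref{thm:diffalg} that $\Ann(g')=\mathcal{R}'$, so $A_{g'}=\Diff(\Gamma')/\mathcal{R}'$ rationally. Fourth, observe that $D\in\Diff(\Gamma)$ maps into $\Ann(g')$ iff $\deg'(\hat\nu(D)\ell^{r-1}\cdot x)=0$ for all $x$, iff $\deg_g(D\cdot y)=0$ for all $y$, iff $D\in\Ann(g)$. This gives the injection $A_g\hookrightarrow A_{g'}$. Integrality follows since annihilators are $\Z$-saturated, as in Proposition~\ref{prop:ann-diff-rational}.

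The main obstacle I expect is the first step, namely checking that the reduction is well defined and that $\hat\nu$ composed with reduction is injective. This amounts to verifying that $\hat\nu(\Diff_\Q(\Gamma))[\ell]\cong\Diff_\Q(\Gamma')$ (with the $c_k(Q)$ eliminated) and that the monic relation generates exactly the kernel. To handle it, I would pass through $\nu'$ into $\Diff_\Q(\Lambda)$ and use that symmetric polynomials in $r$ variables are free over symmetric polynomials in $r-1$ variables times the last variable, with the standard relation $\prod_{i}(\ell-\partial_{u_i})$-type identity. Alternatively, one can define $g'$ directly by the explicit formula $g'=\sum\ell$-coefficient extraction and verify the annihilator inclusions by hand.
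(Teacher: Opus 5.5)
Your argument is correct, and its core is the same as the paper's. The functional ``expand in the basis $1,\eta,\dots,\eta^{r-1}$, take the coefficient of $\eta^{r-1}$, apply $\deg_g$'' is exactly the paper's $\deg_A$, and $g'$ is recovered from it as in Theorem~\ref{thm:diffalg}.

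The difference is where the freeness comes from. The paper never works inside $\Diff_\Q(\Gamma')$. It defines the abstract ring $A:=A_g[\eta]/\bigl(c_r(E)-c_{r-1}(E)\eta+\dots+(-1)^r\eta^r\bigr)$, which is free over $A_g$ with basis $1,\dots,\eta^{r-1}$ simply because the relation is monic. It then proves Poincar\'e duality for $\deg_A$. After that it only needs the surjection $\psi\colon\Diff(\Gamma')\to A$, given by $c_k(Q)\mapsto\sum_i(-1)^ic_{k-i}(E)\eta^i$ and $c_1(L)\mapsto\eta$, together with Theorem~\ref{thm:diffalg}, to get $A\cong A_{g'}$. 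The embedding $A_g\hookrightarrow A_{g'}$ is then automatic, and integral from the start, without even needing injectivity of $\hat\nu$. So the step you flag as the main obstacle never arises there.

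In your set-up that step is also easy. The same substitution $c_k(Q)\mapsto\sum_i(-1)^ic_{k-i}(E)\ell^i$ is an explicit inverse to $\Diff_\Q(\Gamma)[\ell]/\bigl(\sum_k(-1)^kc_{r-k}(E)\ell^k\bigr)\to\Diff_\Q(\Gamma')$. In particular, the relations you use to generate $\mathcal R$ already hold identically in $\Diff_\Q(\Gamma')$, so $\mathcal R'$ is just the ideal generated by $\hat\nu(\Ann(g))$. Note also that you stated the symmetric-function fact the wrong way round. The correct version is that polynomials symmetric in $x_1,\dots,x_{r-1}$ form a free module over those symmetric in all of $x_1,\dots,x_r$, with basis $1,x_r,\dots,x_r^{r-1}$.

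Your route buys an explicit description of $\Ann(g')$ as the extension of $\Ann(g)$, and a closed formula for $\deg'$ on all of $\Diff_\Q(\Gamma')$. Your anti-triangular Gram-matrix argument for nondegeneracy is also more careful than the paper's. The paper pairs $\alpha=\sum\xi_i\eta^i$ with $\eta^{r-1-i}\xi'$ for ``some'' $i$ with $\xi_i\neq0$. That only isolates $\deg_g(\xi_i\xi')$ when $i$ is the largest such index, because the terms $\xi_j\eta^{j+r-1-i}$ with $j>i$ must be reduced and then contribute to the $\eta^{r-1}$-coefficient.
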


\begin{proof}
    Let \[A := A_g[\eta]/(c_r(E) - c_{r-1}(E)\eta + \dots + (-1)^r\eta^r).\]
    The ring $A$ is a free $A_g$-module with basis $1, \dots, \eta^{r-1}$. $A$ is also a graded ring, where $\eta$ has degree~$1$. We define a linear map $\deg_A: A \to \Q$ by
    \[
        \sum_{i = 0}^{r - 1}\xi_i \eta^i \mapsto \deg_g(\xi_{r-1}).
    \]
    This linear map equips $A$ with the structure of a Poincar\'e duality ring.
    Indeed, suppose $\alpha = \sum_{i = 0}^{r-1} \xi_i \eta^i \neq 0$, then there is some $i$ such that $\xi_i \neq 0$. Since $A_g$ is a Poincar\'e duality ring, there is some $\xi' \in A_g$ such that $\deg_g(\xi_i \cdot \xi') \neq 0$. In particular,
    $\deg_A(\alpha \cdot \eta^{r - i - 1} \xi') = \deg_g(\xi_i \cdot \xi') \neq 0$, and hence the bilinear form $\deg_A(-\cdot -)$ is non-degenerate.
    
    We have a map $\psi: \Diff(\Gamma') \to A$, given by
    \begin{align*}    
        c_k(Q) &\mapsto \sum_{i = 0}^k (-1)^i c_{k - i}(E) \eta^i,\\
        c_1(L) &\mapsto \eta.
    \end{align*}
    This map is surjective, because we have
    \[
        c_k(E) = \psi(c_k(Q) + c_{k-1}(Q)c_1(L))
    \]
    for $k = 1, \dots, r-1$, and because in $A$ we have the relation 
    \[
        c_r(E) = c_{r-1}(E)\eta + \dots + (-1)^{r+1}\eta^r = \psi(c_{r-1}(Q)c_1(L)).
    \]
    
    The kernel of $\psi$ is the annihilator of a homogeneous polynomial $g'$ on $\Gamma'$, hence $A \cong A_{g'}$.
    It can be easily verified that the diagram
    \[
    \begin{tikzcd}
    \Diff(\Gamma) \arrow[d, two heads] \arrow[r, "\hat \nu", hook] & \Diff(\Gamma') \arrow[d, two heads] \\
    A_g \arrow[r, hook]                                         & A_{g'}                                 
    \end{tikzcd}
    \]
    commutes, where the bottom map is the inclusion $A_g \hookrightarrow A = A_{g'}$.
\end{proof}

\begin{remark}
    If $A_g$ is the Chow ring of some smooth complete variety, then $E$ corresponds to a vector bundle of rank $r$, and $A_{g'}$ is the Chow ring of the projective bundle of $E$. Here $L$ represents the tautological bundle on $\mathbb{P}(E)$ and $Q$ is the quotient bundle.
\end{remark}

\begin{corollary}\label{cor:h-exists-A_h}
    There exists some homogeneous polynomial $h$ on $\Lambda$, such that the inclusion $\nu:~\Diff(\Gamma) \to \Diff(\Lambda)$ induces an inclusion $A_g \hookrightarrow A_h$.
\end{corollary}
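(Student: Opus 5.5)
We argue by induction on the excess rank $e(S)$, using \Cref{prop:proj-bundle-construction} as the inductive step. If $e(S)=0$, every $E_i$ has rank $1$. Then $\Gamma$ is concentrated in degree $1$, with basis $c_1(E_i)$, and $\nu$ sends $c_1(E_i)\mapsto \partial_{u_{i,1}}$. Hence $\nu$ is an isomorphism of graded lattices $\Gamma\cong\Lambda$, and we may take $h=g$ transported along this identification. The induced map $A_g\to A_h$ is then an isomorphism, in particular an inclusion.

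Now suppose $e(S)>0$. Pick $E\in S$ of rank $r>1$ and form $S'=S\setminus\{E\}\cup\{Q,L\}$ with $\rk(Q)=r-1$ and $\rk(L)=1$. Then $e(S')=e(S)-1$. The lattice $\Lambda$ built from $S'$ has the same rank as the one built from $S$, since the variables $u_1,\dots,u_{r-1}$ go to $Q$ and $u_r$ goes to $L$. So we may identify the two copies of $\Lambda$, and under this identification the map $\nu'$ defined before \Cref{prop:proj-bundle-construction} is exactly the embedding $\Diff(\Gamma')\hookrightarrow\Diff(\Lambda)$ attached to $S'$. \Cref{prop:proj-bundle-construction} gives a homogeneous $g'$ on $\Gamma'$ with $\hat\nu$ inducing an injection $A_g\hookrightarrow A_{g'}$. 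By the induction hypothesis applied to $(S',g')$, there is a homogeneous $h$ on $\Lambda$ such that $\nu'$ induces an injection $A_{g'}\hookrightarrow A_h$. Since $\nu=\nu'\circ\hat\nu$, the composite $A_g\hookrightarrow A_{g'}\hookrightarrow A_h$ is induced by $\nu$. It is injective, being a composite of injections.

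The main point to check, and the only real obstacle, is that ``induces'' is compatible with composition. We need $\nu(\Ann(g))\subseteq\Ann(h)$ and the injectivity statement $\nu^{-1}(\Ann(h))=\Ann(g)$. Both follow formally from the corresponding statements for $\hat\nu$ and $\nu'$, using the commutative squares of \Cref{prop:proj-bundle-construction}. The other thing to verify is that the induction hypothesis genuinely applies to $S'$. This requires that the embedding $\nu'$ coincides with the canonical one for $S'$, which amounts to matching the indexing of the $u_{i,j}$. Homogeneity of $h$ is preserved at each step, since $g'$ is homogeneous.
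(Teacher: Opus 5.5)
Your proof is correct and follows the paper's argument exactly. The paper also inducts on the excess rank $e(S)$, takes the base case $\Gamma=\Lambda$ with $h=g$, uses \Cref{prop:proj-bundle-construction} for the inductive step, and composes the two inclusions via $\nu=\nu'\circ\hat\nu$. Your extra bookkeeping only makes explicit what the paper leaves implicit: you match $\nu'$ with the canonical embedding for $S'$ and check $\nu^{-1}(\Ann(h))=\hat\nu^{-1}(\nu'^{-1}(\Ann(h)))=\Ann(g)$.
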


\begin{proof}
    We proceed by induction on the excess rank $e(S)$. If $e(S) = 0$, then $\Gamma = \Lambda$ and we can just pick $g = h$. Otherwise we construct $S'$ as above and let $g'$ be given by \Cref{prop:proj-bundle-construction}, then $e(S') = e(S) - 1 < e(S)$. By induction hypothesis there exists some $h$ such that the inclusion 
     $\nu': \Diff(\Gamma_{S'}) \to \Diff(\Lambda)$ induces an inclusion $A_{g'} \hookrightarrow A_h$ on the quotient.
     Since also $\hat \nu$ induces an inclusion $A_{g} \hookrightarrow A_{g'}$, and $\nu = \nu' \circ \hat\nu$, this finishes the proof.
\end{proof}

We now have constructed a polynomial $h$, such that we get an inclusion of the continuous Frobenius quotients. To conclude, it suffices to note that the same polynomial can be used for the discrete Frobenius quotient to get the desired splitting.

\begin{theorem}
    There exists a homogeneous polynomial $h$ on $\Lambda$, such that the diagram
    \[
    \begin{tikzcd}
    	\Sh_\Q(\Sigma) & \Diff_\Q \llbracket \Gamma \rrbracket\\
    	\Sh_\Q(\Lambda) & \Diff_\Q\llbracket \Lambda \rrbracket
    	\arrow["\ch", from=1-1, to=1-2]
    	\arrow["\iota", hook, from=1-1, to=2-1]
    	\arrow["\nu", hook, from=1-2, to=2-2]
    	\arrow["\ch", from=2-1, to=2-2]
    \end{tikzcd}
    \]
    descends to a commutative diagram
    \[\begin{tikzcd}
    	\Q K_f & \Q A_g \\
    	\Q K_{h} & \Q A_{h}
    	\arrow["\ch", from=1-1, to=1-2]
    	\arrow["\iota", hook, from=1-1, to=2-1]
    	\arrow["\nu", hook, from=1-2, to=2-2]
    	\arrow["\ch", from=2-1, to=2-2]
    \end{tikzcd}\]
    on the quotients.
\end{theorem}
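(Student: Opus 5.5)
Take $h$ to be the homogeneous polynomial on $\Lambda$ provided by \Cref{cor:h-exists-A_h}, so that $\nu$ induces an injection $\Q A_g\hookrightarrow \Q A_h$. The plan is to pair $A_h$ with a discrete quotient on $\Lambda$. Since $h$ is homogeneous, it is differentially equivalent to itself ($\td=1$), so $(K_h,A_h)$ is a standard graded Riemann--Roch pair. By \Cref{thm:ch-descends-iff-todd} the Chern character then gives an isomorphism $\ch\colon \Q K_h\to \Q A_h$, and on $\Sh_\Q(\Lambda)$ one has $\ker(\Sh_\Q(\Lambda)\to \Q A_h)=\Ann_{\Sh_\Q}(h)$.

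The key steps, in order, are as follows.

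First, the upper square commutes at the level of $\Sh_\Q(\Sigma)$, $\Diff_\Q\llbracket\Gamma\rrbracket$ and $\Diff_\Q\llbracket\Lambda\rrbracket$, because that is how $\ch$ on $\Sh_\Q(\Sigma)$ was defined: as the restriction of $\ch\circ\iota$, landing in the image of $\nu$.

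Second, we show that $\iota$ descends to $\Q K_f\to\Q K_h$ and is injective. Take $E\in \Sh_\Q(\Sigma)$. Since $(K_f,A_g)$ is a Riemann--Roch pair, the proof of \Cref{thm:map-descends-iff-hrr} gives $E\in\Ann(f)$ if and only if $\ch(E)=0$ in $\Q A_g$. Because $\Q A_g\to\Q A_h$ induced by $\nu$ is injective, this holds if and only if $\nu(\ch(E))=0$ in $\Q A_h$. That element equals $\ch(\iota(E))$ in $\Q A_h$, and by the first step this vanishes if and only if $\iota(E)\in\Ann_{\Sh_\Q}(h)$. Hence $\ker(\Sh_\Q(\Sigma)\to \Q K_h)=\Ann_{\Sh_\Q(\Sigma)}(f)$. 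This yields simultaneously that $\iota$ is well defined on $\Q K_f$ and that it is injective.

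Third, commutativity of the lower square follows immediately from the first step, since all maps are induced from the top diagram through surjections.

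The one point that needs care is the identification of $\ker(\Sh_\Q(\Sigma)\to\Q A_g)$ with $\Ann(f)$, using completed operators. Elements of $\Diff_\Q\llbracket\Gamma\rrbracket$ act on polynomials and map to $\Q A_g$ because terms of degree greater than $\deg g$ vanish. The relevant statement is exactly what was shown in \Cref{thm:map-descends-iff-hrr}, combined with surjectivity of $\ch\colon\Sh_\Q(\Sigma)\to\Q A_g$ from the preceding proposition. A second small check is that the injectivity $A_g\hookrightarrow A_h$ from \Cref{cor:h-exists-A_h} persists after tensoring with $\Q$; this holds because $\Q$ is flat over $\Z$, or alternatively because the construction in \Cref{prop:proj-bundle-construction} works verbatim over $\Q$.
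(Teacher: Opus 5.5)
Your proposal is correct and follows essentially the paper's argument. The paper takes $h$ from \Cref{cor:h-exists-A_h}, defines the map $\Q K_f\to\Q K_h$ as $\ch^{-1}\circ\nu\circ\ch$, and checks by a diagram chase that it agrees with $\iota$; this is your kernel comparison $\ker(\Sh_\Q(\Sigma)\to\Q K_h)=\Ann(f)$ written diagrammatically. Your explicit remarks that $(K_h,A_h)$ is a Riemann--Roch pair with $\td=1$, and that injectivity of $A_g\hookrightarrow A_h$ survives tensoring with $\Q$, fill in points the paper leaves implicit.
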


\begin{proof}
    Let $h$ be as in \Cref{cor:h-exists-A_h}. Then since the Chern characters are isomorphisms, we have an inclusion $\Q K_f \to \Q K_h$, defined by $\ch^{-1} \circ \nu \circ \ch$. The fact that this inclusion agrees with $\iota: \Sh_\Q(\Sigma) \to \Sh_\Q(\Lambda)$ is equivalent to saying that in the following diagram
    \[
    \begin{tikzcd}
    \Q K_f \arrow[rrr, "\ch", bend left] \arrow[d, "\ch^{-1}\circ \nu \circ \ch", hook] & \Sh_\Q(\Sigma) \arrow[l, two heads] \arrow[r, "\ch"] \arrow[d, "\iota", hook] & \Diff_\Q\llbracket \Gamma \rrbracket \arrow[r, two heads] \arrow[d, "\nu", hook] & \Q A_g \arrow[d, "\nu", hook]\\
    \Q K_h \arrow[rrr, "\ch", hook', bend right]     & \Sh_\Q(\Lambda) \arrow[l, two heads] \arrow[r, "\ch"]                         & \Diff_\Q\llbracket \Lambda \rrbracket \arrow[r, two heads]                       & \Q A_h
    \end{tikzcd}
    \]
    the left square commutes. This follows from the fact that all the other squares commute.
\end{proof}

In the rest of this section, we will see $K_f$ and $A_g$ as a subset of $K_h$ and $A_h$, so that we may assume for computations that any positive element of $K_f$ splits into a sum of line elements.

\subsection{Multiplicative maps}

We will use the splitting $(K_h, A_h)$ of $(K_f, A_g)$ to define multiplicative maps, like we did in \Cref{subsec:multiplicative-maps}.

\begin{proposition}
    \label{prop:mult-map-A_g-general}
    Let $R$ be a ring of characteristic $0$ and let
    $a \in R \llbracket x \rrbracket$ be a formal series with $a(0)$ invertible in $R$.
    There exists a unique multiplicative map $\psi: K_f \to R \otimes A_g$, such that for any 
    $\xi = \left[\sum_{i = 1}^r T^{u_i}\right] \in K_f^+$, we have
    $\psi(\xi) = \left[\prod_{i = 1}^r a(\partial_{u_i})\right]$.
\end{proposition}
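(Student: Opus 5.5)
We reduce to the standard graded case by means of the splitting $(K_h, A_h)$ constructed in the previous subsection. Here $K_f\subseteq \Q K_h$ via $\iota$ and $A_g\subseteq A_h$ via $\nu$, compatibly with the Chern characters. The statement is read inside this splitting: a positive element $\xi\in K_f^+$ is viewed in $K_h$ as a sum $\sum_{i=1}^r [T^{u_i}]$ of line elements with $u_i\in\Lambda$, and the product $\prod_i a(\partial_{u_i})$ is computed in $R\otimes A_h$. Theorem~\ref{thm:mult-map-A_g} applied to the Riemann-Roch pair $(K_h,A_h)$ gives a multiplicative map $\psi_h\colon K_h\to R\otimes A_h$ with $\psi_h([T^u])=a([\partial_u])$, and hence $\psi_h(\xi)=\prod_i a([\partial_{u_i}])$ for such $\xi$. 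We then define $\psi$ as the restriction of $\psi_h$ to $K_f$, and the task is to show that its image lies in $R\otimes A_g$.

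Since $K_f$ is the groupification of $K_f^+$ and $\psi_h$ is a group homomorphism to units, it suffices to check generators. Moreover, $\Sigma$ is generated as a monoid by the $\lambda^k(E_i)$, so every positive element is a sum of monomials in these. By multiplicativity of $\psi_h$ (sums go to products), it is enough to treat a single monomial $m=\prod_{i,k}\lambda^k(E_i)^{m_{ik}}$. Under $\iota$, this monomial becomes a polynomial in the $T^{u_{i,j}}$ with nonnegative integer coefficients that is symmetric in $u_{i,1},\dots,u_{i,\rk(E_i)}$ for each fixed $i$. Writing it as a sum of monomials $T^{v}$, with each $v$ a nonnegative combination of the $u_{i,j}$, we get
\[
\psi_h(m)=\prod_{v} a(\partial_v)^{\text{mult}(v)}.
\]
This product is invariant under the product of symmetric groups permuting the $u_{i,j}$ for each fixed $i$, because that group permutes the multiset of exponents $v$.

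The key step is to conclude that a series in the $\partial_{u_{i,j}}$ with coefficients in $R$ that is invariant under $\prod_i S_{\rk(E_i)}$ lies in the image of $R\otimes \Diff\llbracket\Gamma\rrbracket$ under $\nu$. This follows degree by degree from the fundamental theorem of symmetric polynomials over $R$, since the $c_k(E_i)$ map to the elementary symmetric polynomials. Passing to quotients, the image then lies in $R\otimes A_g\subseteq R\otimes A_h$. I expect this to be the main obstacle, because the inclusion $A_g\hookrightarrow A_h$ was only established over $\Z$ (and after tensoring with $\Q$), and we must make sure that injectivity and the identification survive tensoring with $R$. Since $A_g$ and $A_h$ are free $\Z$-modules and, by the projective-bundle construction of Proposition~\ref{prop:proj-bundle-construction}, $A_h$ is free over $A_g$ with a basis containing $1$, the map $R\otimes A_g\to R\otimes A_h$ remains injective.

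Uniqueness follows because $K_f$ is additively generated by positive elements, and on those the values of $\psi$ are prescribed. Well-definedness of $\psi$ on $K_f$, that is, independence from the chosen representative of $\xi$ and of its splitting, is inherited from Theorem~\ref{thm:mult-map-A_g} for $K_h$: two representatives that agree in $K_f$ agree in $K_h$, and $\psi_h$ is already well defined there.
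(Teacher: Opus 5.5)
Your proposal is correct and follows the paper's proof. Both restrict the map $\psi_h\colon K_h\to R\otimes A_h$ of Theorem~\ref{thm:mult-map-A_g} along the splitting, use invariance under permutations of the $u_{i,j}$ to show that the values on $K_f^+$ lie in $R\otimes A_g$, and then pass to differences. On two points you are more careful than the paper: you treat arbitrary monomials in the $\lambda^k(E_i)$ (needed because $\psi$ turns sums into products but says nothing about products), and you justify injectivity of $R\otimes A_g\to R\otimes A_h$.

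The one step you skip is showing that $\psi_h(\xi)^{-1}$ also lies in $R\otimes A_g$. The paper gets this because $\psi_h(\xi)$ is the unit $a(0)^{\rk(\xi)}$ plus a nilpotent part inside $R\otimes A_g$. Alternatively, your symmetry argument applies verbatim to $\prod_v a(\partial_v)^{-\mathrm{mult}(v)}$, since $a^{-1}\in R\llbracket x\rrbracket$.
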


\begin{proof}
    Let $\psi: K_h \to R\otimes A_h$ be the unique multiplicative map determined by
    $\psi([T^u]) = a([\partial_u])$ for all $u \in \Lambda$. Then clearly $\psi(\xi) = \left[\prod_{i = 1}^r a(\partial_{u_i})\right]$ for all $\xi = \left[ \sum_{i = 1}^r T^{u_i} \right] \in K_f^+$ by multiplicativity of $\psi$. It suffices to show that $\psi$ restricts to a map $K_f \to R \otimes A_g$.
    
    For any $E_i \in S$, we have that $\lambda^k(E_i) = \sigma_k(T^{u_{i, 1}}, \dots, T^{u_{i, \rk(E_i)}})$. It follows by multiplicativity of $\psi$ that
    \[
    \psi([\lambda^k(E_i)]) = \prod_{1 \leq i_1 < \dots < i_k \leq \rk(E_i)}a([\partial_{u_{i_1}} + \dots + \partial_{u_{i_k}}])
    \]
    is symmetric in the $[\partial_{u_{i}, j}]$ for any fixed $i$, and so in particular $\psi([\lambda^k(E_i)]) \in A_g$. Since the elements of this form generate $K_f^+$, we have that
    $\psi(K_f^+) \subseteq A_g$.

    Note that for any $\xi \in K_f^+$, $\psi(\xi)$ has constant coefficient $a(0)^{\rk(\xi)}$, which is invertible in $R$, and so in particular $\psi(\xi) \in (R \otimes A_g)^\times$. We conclude that $\psi(-\xi) \in R \otimes A_g$. Since $K_f$ is the group of differences associated to $K_f^+$, we get by multiplicativity of $\psi$
    that $\psi(K_f) \subseteq R \otimes A_g$, which finishes the proof.
\end{proof}

For $a(x) = 1 + x$ we recover the total Chern class
$c: K_f \to A_g$, and for $a(x) = \frac{x}{1 - \exp(-x)}$, we get the Todd operator $\Td: K_f \to \Q A_g$.
It is easy to see that $c([E_i]) = 1 + \sum_{k = 1}^{\rk(E_i)}[c_k(E_i)]$, which justifies the choice of notation for the generators of $\Gamma$.

\begin{proposition}
    \label{prop:mult-map-Kf-general}
    Let $p \in R[x]$ be a polynomial with $p(1)$ invertible in $R$. There exists a unique multiplicative map
    $\mu: K_f \to R \otimes K_f$, such that for any $\xi = \left[\sum_{i = 1}^r T^{u_i}\right] \in K_f^+$, we have
    $\mu(\xi) = \left[ \prod_{i = 1}^{r}p(T^{u_{i}}) \right]$.
\end{proposition}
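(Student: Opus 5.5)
The plan mirrors the proof of \Cref{prop:mult-map-A_g-general}, with the standard graded corollary to \Cref{thm:mult-map-A_g} playing the role of the multiplicative map on $K_f$. We work inside the splitting $(K_h, A_h)$ and view $K_f \subseteq K_h$ via $\iota$. In the standard graded setting (the corollary following \Cref{thm:mult-map-A_g}), there is a unique multiplicative map
\[
\mu_h\colon K_h \to R\otimes K_h \quad\text{with}\quad \mu_h([T^u]) = p([T^u]) \text{ for all } u\in\Lambda .
\]
By multiplicativity, for any $\xi = [\sum_{i=1}^r T^{u_i}] \in K_f^+$ we get $\mu_h(\xi) = [\prod_i p(T^{u_i})]$. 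So it suffices to show that $\mu_h$ restricts to a map $K_f \to R\otimes K_f$. Uniqueness then follows because $K_f$ is the group completion of $K_f^+$, and a multiplicative map is determined by its values on positive elements.

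To show that the restriction lands in $R\otimes K_f$, we first treat the generators $\lambda^k(E_i)$ of $\Sigma$. Under $\iota$, the generator $\lambda^k(E_i)$ maps to $\sigma_k(T^{u_{i,1}},\dots,T^{u_{i,r_i}})$, a sum of $\binom{r_i}{k}$ line elements $T^{u_{i,j_1}+\dots+u_{i,j_k}}$. Hence
\[
\mu_h(\lambda^k(E_i)) = \prod_{j_1<\dots<j_k} p\bigl(T^{u_{i,j_1}}\cdots T^{u_{i,j_k}}\bigr).
\]
This is a polynomial with coefficients in $R$ in the $T^{u_{i,j}}$, and it is symmetric in $j$ for each fixed $i$. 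By the fundamental theorem of symmetric polynomials, it is therefore an $R$-polynomial in the elementary symmetric functions $\sigma_k(T^{u_{i,1}},\dots)=\iota(\lambda^k(E_i))$, so it lies in $\iota(R\otimes \Sh(\Sigma))$. A general element of $\Sigma$ is a product of such generators, so positive elements of $K_f$ correspond to sums of monomials in these generators. The multiplicativity of $\mu_h$ turns sums into products, so $\mu_h(K_f^+) \subseteq R\otimes K_f$ (using that products stay in the subring).

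The remaining step is negatives. For $\xi\in K_f^+$ of rank $r$, $\mu_h(\xi)$ has rank $p(1)^r$, which is invertible in $R$. Writing $\mu_h(\xi) = p(1)^r + \eta$ with $\eta$ in the rank-zero ideal of $R\otimes K_f$ (nilpotent, as in \Cref{prop:polyart}, since high powers of difference operators vanish), its inverse is a finite geometric series in $\eta$ and lies in $R\otimes K_f$. Hence $\mu_h(-\xi)\in R\otimes K_f$, and by additivity-to-multiplicativity, $\mu_h(K_f)\subseteq R\otimes K_f$.

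I expect the main subtlety to be bookkeeping: checking that the sub-object $R\otimes K_f$ inside $R\otimes K_h$ is really the image of $R\otimes \Sh(\Sigma)$ under $\iota$ (rather than something larger). One must also check that the integral structure is compatible, since the embedding was constructed rationally through $\ch^{-1}\circ\nu\circ\ch$. This is handled by the preceding theorem identifying that inclusion with $\iota$.
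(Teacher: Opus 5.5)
Your strategy is the paper's. You obtain $\mu$ on $K_f$ from the split pair $(K_h,A_h)$; the paper writes it as $\ch^{-1}\circ\psi$ with $a=p\circ\exp$, which agrees with your restriction of $\mu_h$. You then show that positive elements land in $R\otimes K_f$, and invert $\mu(\xi)=p(1)^r+\eta$ using nilpotence of $\eta$.

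\textbf{The gap.} The step that fails is the passage from the generators $\lambda^k(E_i)$ to all of $K_f^+$. The map $\mu_h$ turns sums into products, but it does not respect ring multiplication. For example, $\mu_h([T^u][T^v])=p([T^{u+v}])$, whereas $\mu_h([T^u])\,\mu_h([T^v])=p([T^u])\,p([T^v])$. For $p=1+tx$ this says $1+[T^{u+v}]t\neq(1+[T^u]t)(1+[T^v]t)$. Elements of $\Sigma$ are monomials $\prod_{i,k}\lambda^k(E_i)^{n_{ik}}$, and a positive element is a sum of such monomials. So knowing $\mu_h(\lambda^k(E_i))\in R\otimes K_f$ only handles sums of generators. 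The remark that ``products stay in the subring'' does not help, because $\mu_h$ of a monomial is not the product of its values on the factors.

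\textbf{The repair.} Apply your own symmetry argument to an arbitrary $m\in\Sigma$ rather than to generators. Since $\iota(m)$ is a product of blockwise elementary symmetric polynomials, $\iota(m)=\sum_{w\in W}T^w$ for a multiset $W$ of non-negative combinations of the $u_{i,j}$. This multiset is stable under $G=\prod_i S_{r_i}$ permuting $u_{i,1},\dots,u_{i,r_i}$ within each block. Hence $\mu_h(\iota(m))=\prod_{w\in W}p(T^w)$ is a $G$-invariant polynomial in the $T^{u_{i,j}}$ with coefficients in $R$. The fundamental theorem of symmetric polynomials, applied block by block, holds over any commutative ring. It expresses this product as an $R$-polynomial in the $\sigma_k(T^{u_{i,1}},\dots,T^{u_{i,r_i}})=\iota(\lambda^k(E_i))$, i.e.\ as an element of $\iota(R\otimes\Sh(\Sigma))$. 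Additivity-to-multiplicativity then gives $\mu_h(K_f^+)\subseteq R\otimes K_f$, and your treatment of negatives finishes the proof.

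This integrality on positive elements is a point the paper's proof takes for granted. It simply asserts $\eta=\mu(\xi)-p(1)^r\in R\otimes K_f$, although composing with $\ch^{-1}$ a priori only lands in $R\otimes\Q K_f$. The point matters when $R$ is not a $\Q$-algebra, e.g.\ $R=\Z$ for $\det$ or $R=\Z\llbracket t\rrbracket$ for $\lambda$.
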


\begin{proof}
    Let $a = p \circ \exp$. Then $a(0) = p(1)$ is invertible in $R$, and so let $\psi: K_f \to R \otimes A_g$ the map given by \Cref{prop:mult-map-A_g-general}. Let $\mu := \ch^{-1} \circ \psi$, then by construction we have that $\mu(\xi) = \left[ \prod_{i = 1}^{r}p(T^{u_{i}}) \right]$ for all $\xi = \left[\sum_{i = 1}^r T^{u_i}\right] \in K_f^+$. To conclude, it suffices to show that $\im(\mu) \subseteq R \otimes K_f$.
    
    For any $\xi \in K_{f}^+$, we can write $\xi = \sum_{i = 1}^r [T^{u_i}]$, for some $u_i \in \Lambda$, where $r = \rk(\xi)$. In particular, 
    \[
        \mu(\xi) = \prod_{i = 1}^r p([T^{u_i}]) = \prod_{i = 1}^r p(1 - [\Delta_{u_i}]) = p(1)^r + \eta,
    \]
    where $\eta \in ([\Delta_{u_1}], \dots, [\Delta_{u_r}]) \cdot R \otimes K_{h}$, and hence $\eta$ is nilpotent.
    We also have $\eta = \mu(\xi) - p(1)^r$, so $\eta \in R \otimes K_{f}$. We conclude that $\mu(\xi)$ is invertible in $R \otimes K_f$, and hence $\mu(-\xi) \in R \otimes K_{f}$. Since $K_f$ is the group of differences associated to $K_f^+$,
    we get by multiplicativity of $\mu$ that $\mu(K_f) \subseteq R \otimes K_f$, which finishes the proof.
\end{proof}

Setting $p(x) = 1 + tx$ recovers the $\lambda$-operator $\lambda: K_f \to K_f\llbracket t \rrbracket$,
and $p(x) = x$ yields the determinant $\det: K_f \to K_f^1$. Again, it is easy to see that $\lambda([E_i]) = 1 + \sum_{k = 1}^{\rk(E_i)} \lambda^k(E_i)$, which justifies the choice of notation for the generators of $\Sigma$.

\begin{corollary}
    $K_f$ is a $\lambda$-ring.
\end{corollary}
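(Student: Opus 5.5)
The plan is to transfer the $\lambda$-ring structure from the standard graded splitting $K_h$, for which \Cref{thm:lambda-ring} already applies, to the subring $K_f$ via the embedding $\iota\colon \Q K_f \hookrightarrow \Q K_h$ of the splitting theorem. I will proceed in three steps.

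First, I check that the $\lambda$-operator on $K_f$ given by \Cref{prop:mult-map-Kf-general} with $p(x)=1+tx$ is the restriction of the $\lambda$-operator on $K_h$. On a positive element $\xi=[\sum_i T^{u_i}]\in K_f^+$, both maps give $\prod_i(1+[T^{u_i}]t)$, computed inside $K_h\llbracket t\rrbracket$. Both maps are multiplicative, and $K_f$ is the group of differences of $K_f^+$. Hence the two operators agree on all of $K_f$. In particular, $\lambda(K_f)\subseteq K_f\llbracket t\rrbracket$, and $\lambda^0=1$, $\lambda^1=\mathrm{id}$ hold.

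Second, I verify the $\lambda$-ring axioms. These are the identities
\[
\lambda^k(\xi\eta)=P_k\bigl(\lambda^1(\xi),\dots,\lambda^k(\xi);\lambda^1(\eta),\dots,\lambda^k(\eta)\bigr)
\]
and
\[
\lambda^k(\lambda^l(\xi))=P_{k,l}\bigl(\lambda^1(\xi),\dots,\lambda^{kl}(\xi)\bigr),
\]
together with $\lambda(1)=1+t$. Each of them is a polynomial identity in finitely many values of $\lambda$. Since the inclusion $K_f\hookrightarrow K_h$ is an injective ring map (injective because $K_f\subseteq \Q K_f\hookrightarrow \Q K_h$ and $K_h$ is torsion-free) and it commutes with all $\lambda^k$ by the first step, each identity may be checked inside $K_h$. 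There it holds by \Cref{thm:lambda-ring}. Therefore $K_f$ is a $\lambda$-subring of $K_h$.

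The main obstacle is the integrality of the embedding. We need $\iota$ to send $K_f$ into $K_h$ and not merely into $\Q K_h$. This holds because $\iota$ is induced by $\Sh(\Sigma)\to\Sh(\Lambda)$, which sends each generator $\lambda^k(E_i)$ to the integral element $\sigma_k(T^{u_{i,1}},\dots)$. Alternatively, I could avoid this point entirely by working in $\Q K_h$, which is itself a $\lambda$-ring by \Cref{thm:lambda-ring}, and using that $\lambda$ on $\Q K_h$ restricts to the integral one on $K_h$.
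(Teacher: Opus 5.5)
Your proposal is correct and follows the same route the paper intends (the paper gives no written proof): the $\lambda$-operator on $K_f$ from \Cref{prop:mult-map-Kf-general} is by construction the restriction of the one on the splitting $K_h$, which is a $\lambda$-ring by \Cref{thm:lambda-ring}, so $K_f$ is a $\lambda$-subring. Your check that $\iota$ lands integrally in $K_h$, because it is induced by $\iota\colon \Sh(\Sigma)\to\Sh(\Lambda)$, fills in a detail the paper leaves unstated.
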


\subsection{Serre duality}

The splitting principle will also allow us to show that Serre duality and the notion of dualizing and cotangent classes naturally transfer to this general setting.
In order to be able to talk about Serre duality, we first need to verify that the dualizing morphism is also well-defined in this situation. Recall that there is an involution on $A_g$, which is given by multiplying the $k$-th graded component by $(-1)^k$ for each $k$.

\begin{proposition}
    There is an involution $(-)^\vee: K_f \to K_f$, such that $\ch(\xi^\vee) = \ch(\xi)^\vee$.
\end{proposition}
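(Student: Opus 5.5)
We define the involution by transport of structure through the Chern character, and then show that it preserves the integral lattice $K_f\subseteq \Q K_f$ using the splitting $(K_h, A_h)$ constructed above.

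First, the involution $(-)^\vee$ on $\Diff_\Q(\Gamma)$ multiplies the degree-$k$ part by $(-1)^k$. Since $g$ is weighted homogeneous, $\Ann(g)$ is a homogeneous ideal by \Cref{lem:lambda-homogeneous}, so this involution descends to a ring involution of $\Q A_g$. Set
\[
\xi^\vee := \ch^{-1}(\ch(\xi)^\vee)
\]
for $\xi \in \Q K_f$. This is a ring involution of $\Q K_f$ satisfying $\ch(\xi^\vee)=\ch(\xi)^\vee$ by construction. It remains to prove that $K_f^\vee \subseteq K_f$.

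For integrality, we compare with the standard graded pair $(K_h,A_h)$. There, \Cref{prop:dualizing-morphism} gives an involution with $[T^u]^\vee=[T^{-u}]$ compatible with $\ch$. The embedding $\nu\colon \Diff(\Gamma)\to\Diff(\Lambda)$ preserves degrees, because $c_k(E_i)$ has degree $k$ and maps to $\sigma_k$ of degree-one elements. Hence $\nu$ commutes with the two involutions. Combined with the commutative square of the splitting theorem, this shows that $\iota$ intertwines the involution on $\Q K_f$ with that on $\Q K_h$.

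Since $\iota$ is injective, it suffices to show that the generators $\lambda^k(E_i)$ of $K_f$ map into $\iota(K_f)$. We compute
\[
\iota(\lambda^k(E_i))^\vee=\sigma_k(T^{-u_{i,1}},\dots,T^{-u_{i,r}}) = \sigma_{r-k}(T^{u_{i,1}},\dots,T^{u_{i,r}})\cdot \big(\sigma_r(T^{u_{i,1}},\dots,T^{u_{i,r}})\big)^{-1},
\]
with $r=\rk(E_i)$. This is $\iota\big(\lambda^{r-k}(E_i)\cdot \lambda^r(E_i)^{-1}\big)$, where $\lambda^r(E_i)$, the determinant, is a line element.

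The main obstacle is the inverse of the line element. We need to know that $\lambda^r(E_i)$ is invertible in $K_f$ itself, and not merely in $K_h$ or in $\Q K_f$. To handle this, we write $\ell=\lambda^r(E_i)$. In $K_h$ we have $\ell = 1+\eta$ with $\eta = \ell - 1 \in K_f$, and $\eta$ is nilpotent because its image under $\ch$ has no constant term. The inverse $\sum_j(-\eta)^j$ is then a finite sum lying in $K_f$. This is the same argument as in \Cref{prop:mult-map-Kf-general}, and alternatively $\ell^{-1}=\det(\ldots)$ follows from that proposition directly.

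Once every generator is shown to be sent into $K_f$, multiplicativity and additivity give $K_f^\vee\subseteq K_f$. The involution property is inherited from $\Q A_g$.
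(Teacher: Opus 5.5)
Your proof is correct and follows essentially the same route as the paper. Both arguments realize the involution inside the splitting $(K_h, A_h)$ and prove integrality via $\sigma_k(T^{-u_1},\dots,T^{-u_r}) = \sigma_{r-k}(T^{u_1},\dots,T^{u_r})/\sigma_r(T^{u_1},\dots,T^{u_r})$, together with invertibility in $K_f$ of the rank-one determinant. The differences are minor: you define $(-)^\vee$ on $\Q K_f$ by transport through $\ch$ and check it on the ring generators $\lambda^k(E_i)$, so $\lambda^{r-k}(E_i)\in K_f$ is automatic, whereas the paper restricts the involution of $K_h$ and checks arbitrary positive $\xi$ via $\xi^\vee=\lambda^{r-1}(\xi)/\lambda^r(\xi)$, which uses that the $\lambda$-operations preserve $K_f$.
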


\begin{proof}
    Let $\xi \in K_f^+$ be a positive element. Then $\iota(\xi) = \sum_{i = 0}^r \ell_i$ for some $\ell_i \in K_h^1$.
    Then
    \[
        \xi^\vee = \sum_{i = 1}^r \ell_i^{-1}
        = \frac{\sigma_{r - 1}(\ell_1, \dots, \ell_r)}{\ell_1 \cdots \ell_r}
        = \frac{\lambda^{r - 1}(\xi)}{\lambda^r(\xi)},
    \]
    which is well-defined in $K_f$ since $\lambda^r(\xi)$ is an element of rank $1$ and hence a unit in $K_f$.
    Since $\ch(\xi^\vee) = \ch(\xi)^\vee$ for all $\xi \in K_h$, the same holds for the restricted involution.
\end{proof}

\begin{definition}
We define the dualizing class $\omega$ as the unique element of $\Q K_f$, such that the equation 
\[
    \chi_f(\xi) = (-1)^d \chi_f(\omega \cdot \xi^\vee)
\]
is satisfied for all $\xi \in \Q K_f$.
\end{definition}

Now, essentially the same proof
as in \ref{prop:cotangent-todd-dualizing} along with the splitting principle shows the
following statement.

\begin{proposition}\label{prop:canonical-equals-cotangent-determinant-general}
Suppose there is some positive $\Omega \in K_f$ is such that
the Todd class is equal to $\Td(\Omega^\vee)$. Then
$\det(\Omega) \in K_f$ is the dualizing class of $\Q K_f$.
\end{proposition}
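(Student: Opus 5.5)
We argue exactly as in the proof of \Cref{prop:cotangent-todd-dualizing}, but carry out every identity inside the standard graded pair $(K_h, A_h)$ that contains $(K_f, A_g)$. Throughout we regard $K_f \subseteq K_h$ and $A_g \subseteq A_h$ via $\iota$ and $\nu$. The theorem on the splitting principle says that $\ch$ commutes with these inclusions. The maps $\det$, $\Td$ and $(-)^\vee$ on $K_f$ are restrictions of the corresponding maps on $K_h$: for $\det$ and $\Td$ this holds by construction in \Cref{prop:mult-map-A_g-general} and \Cref{prop:mult-map-Kf-general}, and for the involution by the preceding proposition. Note also that $\det(\Omega)\in K_f$ is integral, so the claim is meaningful.

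The proof then has three steps.

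\textbf{Step 1 (HRR for both sides).} Fix $\xi\in\Q K_f$. By the defining property of the Riemann--Roch pair (\Cref{thm:map-descends-iff-hrr}) we have
\[\chi_f(\xi)=\deg_g(\td\cdot\ch(\xi)),\qquad \chi_f(\det(\Omega)\xi^\vee)=\deg_g\big(\td\cdot\ch(\det\Omega)\cdot\ch(\xi)^\vee\big).\]
Here we use $\ch(\xi^\vee)=\ch(\xi)^\vee$.

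\textbf{Step 2 (apply the involution).} Since $g$ is weighted homogeneous of degree $d$, we have $\deg_g(\alpha^\vee)=(-1)^d\deg_g(\alpha)$, because $\deg_g$ is supported in top degree. Applying $(-)^\vee$ inside the second expression gives
\[(-1)^d\chi_f(\det(\Omega)\xi^\vee)=\deg_g\big(\ch(\xi)\cdot\ch(\det\Omega)^\vee\cdot\td^\vee\big).\]
Now $\td=\Td(\Omega^\vee)$, so $\td^\vee=\Td(\Omega)$. This holds because $\Td(\eta)^\vee=\Td(\eta^\vee)$, which we check in $A_h$ on line elements and then extend by multiplicativity. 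It therefore suffices to prove
\[\ch(\det\Omega)^\vee\,\Td(\Omega)=\Td(\Omega^\vee)\quad\text{in }\Q A_g.\]

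\textbf{Step 3 (the identity, via splitting).} Since $A_g\hookrightarrow A_h$ is injective, it is enough to verify this identity in $\Q A_h$. There $\Omega$ is positive, so $\iota(\Omega)=\sum_i \ell_i$ with $\ell_i$ line elements, and we write $\alpha_i=c_1(\ell_i)$. Both sides are multiplicative, and $\ch(\det\Omega)^\vee=\exp(-\sum_i\alpha_i)$. The identity therefore reduces to the one-variable identity
\[e^{-x}\frac{x}{1-e^{-x}}=\frac{-x}{1-e^{x}},\]
which is exactly the computation in the proof of \Cref{prop:cotangent-todd-dualizing}.

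Combining Steps 1–3 gives $\chi_f(\xi)=(-1)^d\chi_f(\det(\Omega)\xi^\vee)$ for all $\xi$. By uniqueness of the dualizing class, $\omega=\det(\Omega)$.

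The main obstacle is bookkeeping rather than mathematics. One must check that the Todd class used for $(K_f,A_g)$ is compatible with the structures computed in $(K_h,A_h)$. In particular, Step 3 only needs the identity in $\Q A_h$ and then uses injectivity of $\nu$, and Step 2 uses $\deg_g$ directly, so the Todd class of $h$ never enters. We never need $\chi_h$ to restrict to $\chi_f$. Keeping the whole argument on the $A_g$ side, and using $A_h$ only to verify a ring identity, sidesteps this issue.
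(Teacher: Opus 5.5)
Your proposal is correct and takes essentially the same approach as the paper, whose proof is a single line: rerun the argument of \Cref{prop:cotangent-todd-dualizing} using the splitting principle. You carry this out in full, including the sign identity $\deg_g(\alpha^\vee)=(-1)^d\deg_g(\alpha)$ and the identity $\ch(\det\Omega)^\vee\,\Td(\Omega)=\Td(\Omega^\vee)$, which you verify in $\Q A_h$ and pull back to $\Q A_g$ using that $\ch$, $\det$, $\Td$ and $(-)^\vee$ are compatible with the embedding $(K_f,A_g)\hookrightarrow(K_h,A_h)$.
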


\subsection{Associated graded ring}

We will now construct the associated graded ring of $\Q K_f$ and show that it is naturally isomorphic to $\Q A_g$.

Define
\[J_i :=  \{ \xi \in K_f \mid \ch(\xi)_k = 0 \textrm{ for all $0 \leq k < i$} \}.\]
Let $(K_h, A_h)$ be a splitting of $(K_f, A_g)$ as before, and let 
\[K_h = I^{(0)} \supset I^{(1)} \supset I^{(2)} \supset \dots\]
be the filtration on $K_h$ as defined in \Cref{sec:associated-graded}.
Then it is not hard to see that $J_i = I^{(i)} \cap K_f$. In particular, 
\[K_f = J_0 \supset J_1 \supset J_2 \supset \dots\]
defines a multiplicative filtration, 
and so we may consider the associated graded ring
\[
    \gr K_f := \bigoplus_{i = 0}^\infty J_{i}/J_{i + 1}.
\]
Since $J_i = I^{(i)} \cap K_f$ for all $i$, it follows that the inclusion $\iota: K_f \hookrightarrow K_{h}$ induces an inclusion $J_i/J_{i + 1} \hookrightarrow I^{(i)}/I^{(i+1)}$ and hence also 
$\gr K_f \hookrightarrow \gr K_{h}$, which we denote by $\bar\iota$.

\begin{proposition}
    The isomorphism of graded rings $\Phi': \Q A_{h} \xrightarrow{\sim} \Q \gr K_{h}$
    restricts to an isomorphism $\Phi: \Q A_g \xrightarrow{\sim} \Q \gr K_f$.
\end{proposition}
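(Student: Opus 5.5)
We want to show that the composite $\bar\iota\circ\Phi$-candidate is well defined: the isomorphism $\Phi'\colon \Q A_h\to \Q\gr K_h$ maps the subring $\nu(\Q A_g)$ onto $\bar\iota(\Q\gr K_f)$. Since $\nu$ and $\bar\iota$ are injective and $\Phi'$ is an isomorphism, it suffices to prove two things: $\Phi'(\nu(\Q A_g))\subseteq \bar\iota(\Q\gr K_f)$, and $\dim_\Q \Q A_g = \dim_\Q \Q\gr K_f$. The restriction is then an injective graded map between spaces of equal dimension, hence an isomorphism.

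For the dimension count, first note that $\dim \Q\gr K_f=\dim\Q K_f$ because the filtration $J_\bullet$ is finite. Indeed, $J_{d+1}=0$, since $\ch$ is injective on $\Q K_f$ and $A_g$ vanishes above degree $d$. Second, $\dim\Q K_f=\dim \Q A_g$ via the Chern character isomorphism. So the main work is the inclusion. To handle it cleanly I would describe $\Phi'$ intrinsically. For $\alpha\in\Q A_h$ homogeneous of degree $i$, I claim $\Phi'(\alpha)$ is the class in $\Q I^{(i)}/\Q I^{(i+1)}$ of $\ch^{-1}(\alpha)$.

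To verify the claim, first check that $\ch^{-1}(\alpha)$ lies in $\Q I^{(i)}$. This holds because $\ch(\ch^{-1}\alpha)=\alpha$ has no components of degree $<i$, so $\ch^{-1}(\alpha)\cdot h=\alpha\cdot h$ has degree $\le d-i$, and we can apply \Cref{lem:filtration-characterization}. Next, the map $\alpha\mapsto[\ch^{-1}\alpha]$ is multiplicative on homogeneous elements, since $\ch^{-1}$ is a ring map. On generators it agrees with $\Phi'$: we have $\ch(D_u)=1-\exp(-\partial_u)=\partial_u+(\text{higher order terms})$, so $\ch^{-1}(\partial_u)-D_u$ has Chern character concentrated in degrees $\ge2$ and therefore lies in $\Q I^{(2)}$. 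Both maps are ring homomorphisms agreeing on the generators $[\partial_u]$, so they agree on all of $\Q A_h$. This compatibility check is the step I regard as the crux. It is essentially the computation already in the proof of \Cref{thm:ass-graded}, but one must make sure that "graded piece of $\ch$" is taken in $A_h$, which is compatible with the grading on $A_g$ because $\nu$ is graded.

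Given the description, the inclusion follows. Let $\alpha\in \Q A_g$ be homogeneous of degree $i$. Then $\xi:=\ch^{-1}(\alpha)\in \Q K_f$ (Chern character isomorphism for the pair $(K_f,A_g)$), and $\iota(\xi)=\ch^{-1}(\nu(\alpha))$ by the commutative splitting diagram. Moreover $\ch(\xi)_k=0$ for $k<i$, so $\xi\in \Q J_i$. Hence $\Phi'(\nu\alpha)=[\iota\xi]=\bar\iota([\xi])$, using $J_i=I^{(i)}\cap K_f$, which tensored with $\Q$ gives $\Q J_i=\Q I^{(i)}\cap\Q K_f$. This defines $\Phi(\alpha)=[\xi]$, a graded ring homomorphism $\Q A_g\to\Q\gr K_f$ compatible with $\Phi'$. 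It is injective as a restriction of an injective map, and it is therefore bijective by the dimension count.
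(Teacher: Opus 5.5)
Your argument is correct, but it is organized differently from the paper's proof.

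The paper never identifies $\Phi'$ intrinsically. It checks the containment $\Phi'(\nu(\Q A_g))\subseteq\bar\iota(\Q\gr K_f)$ on the ring generators $c_k(\xi)$, $\xi\in K_f^+$, by computing $c_k(\xi)\mapsto[\sigma_k(\partial_{u_1},\dots,\partial_{u_r})]\mapsto[\sigma_k(D_{u_1},\dots,D_{u_r})]$. It then notes that the last element is symmetric in the $T^{u_i}$ and hence comes from $K_f$, and finishes with the same dimension count you use.

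You instead isolate the lemma $\Phi'(\alpha)=[\ch^{-1}(\alpha)]$ for homogeneous $\alpha$. This is implicit in the proof of \Cref{thm:ass-graded} but never stated there. Once you have it, the containment follows formally from the square $\ch\circ\iota=\nu\circ\ch$ of the splitting theorem.

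Your route gives the explicit formula $\Phi(\alpha)=[\ch^{-1}(\alpha)]\in\Q J_i/\Q J_{i+1}$. This makes it evident that $\Phi$ does not depend on the chosen splitting, and that it is injective, since $\ch(\ch^{-1}\alpha)_i=\alpha$. It also avoids a step the paper leaves somewhat implicit: that symmetric expressions in the $T^{\pm u_i}$, such as $\sigma_k(D_{u_1},\dots,D_{u_r})$, lie in $\iota(K_f)$. That step really uses the $\lambda$-operations and the duality on $K_f$.

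The paper's route, in return, exhibits explicit integral representatives $\sigma_k(D_{u_1},\dots,D_{u_r})$ of the images of Chern classes. This fits the remark comparing $J_\bullet$ with the $\gamma$-filtration. Your representatives $\ch^{-1}(\alpha)$ are only rational.
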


\begin{proof}
    We claim that the image of the composite
    \[
    \begin{tikzcd}
        \Q A_g \arrow[r, "\nu", hook] & \Q A_{h} \arrow[r, "\Phi'"] & \Q \gr K_{h}
    \end{tikzcd}
    \]
    is contained in $\bar\iota(\Q \gr K_f) \subseteq \Q \gr K_{h}$. This would
    then imply we have an injective map $\Phi: \Q A_g \hookrightarrow \Q \gr K_{f}$.
    Because the two rings have the same dimension, since we have an isomorphism $\Q K_f \cong \Q A_g$, and $\Q \gr K_f$ has the same dimension as $\Q K_f$, this forces $\Phi$ to be surjective, and hence an isomorphism.

    So let $\xi \in K_f^+$, and write $\iota(\xi) = [T^{u_1}] + \dots + [T^{u_r}]$ for some $u_i \in \Lambda$,
    then the composite map from above sends
    \[
    \begin{tikzcd}
        c_k(\xi) \arrow[r, "\nu", maps to] & \left[\sigma_k(\partial_{u_1}, \dots, \partial_{u_r})\right] \arrow[r, "\Phi'", maps to] & \left[\sigma_k(D_{u_1}, \dots, D_{u_r})\right] \in I^{(k)}/I^{(k+1)}
    \end{tikzcd}.
    \]
    Note that we may write $[\sigma_k(D_{u_1}, \dots, D_{u_r})]$ as a symmetric polynomial in the $T^{u_i}$, it lies in the image of $\bar\iota$, and so the claim follows.
\end{proof}

\begin{remark}
    There is another filtration
    \[
    K_f = F_0^\gamma \supseteq F_1^\gamma \supseteq F_2^\gamma \supseteq \dots,
    \]
    called the $\gamma$-filtration, which is defined purely from the $\lambda$-ring structure of $K_f$ (see \cite[Chapter III]{FultonLang85}). These filtrations agree after tensoring with $\Q$, and so yield the same associated graded ring (over $\Q$).
    More precisely, we have the relation $J_i = \Q F_i^\gamma \cap K_f$.
\end{remark}

\section{Further examples and applications}\label{sec:examples}
\subsection{Linear Families of Polytopes}
\label{subsec:linear-families}

Let $C$ be a full-dimensional rational polyhedral cone in some finite-dimensional vector space $V$.
A linear family of polytopes is a linear map $\Delta: C \to \cP_n(\Q)$, where $\cP_n(\Q)$ is the collection of all rational polytopes in $\R^n$, equipped with Minkowski sum and scaling by elements of $\Q_{\geq 0}$. The group of differences of $\cP_n(\Q)$ is denoted $\cV_n(\Q)$ and its elements are called \emph{virtual polytopes}.
Let $\Lambda \subseteq V$ be a full-rank sublattice such that $\Delta(\Lambda \cap C) \subseteq \cP_n(\Z)$ are all lattice polytopes.
In \cite{kavvil}, the authors define the notion of the anticanonical polytope of such a linear family, to be a polytope $\Delta(\kappa) \in \cV_n(\Z)$, where $\kappa \in \Lambda$, such that for all $\gamma \in C \cap \Lambda$,
\begin{equation}\label{eq:serre-duality-lattice-counts}
    \#(\Delta(\gamma - \kappa) \cap \Z^n) = \#(\Delta(\gamma)^\circ \cap \Z^n).
\end{equation}
We may express this in our setting as follows.

The function $\gamma \in C \cap \Lambda \mapsto \#(\Delta(\gamma) \cap \Z^n)$ agrees with a polynomial on $C \cap \Lambda$. We extend this polynomial uniquely to a polynomial function on $\Lambda$, which we denote by $\Ehr_\Delta$.
Note that by Ehrhart reciprocity, we have
\[
    \Ehr_\Delta(-\gamma) = (-1)^d \#(\Delta(\gamma)^\circ \cap \Z^n),
\]
and so \Cref{eq:serre-duality-lattice-counts} may be rewritten as
\[
    \Ehr_\Delta(\gamma - \kappa) = (-1)^d\Ehr_\Delta(-\gamma).
\]
This means precisely that $\kappa$ is the dualizing class of $K_\Ehr$.

Similarly, the volume polynomial $\Vol_\Delta: \Lambda \cap C \to \R, \gamma \mapsto \Vol_d(\Delta(\gamma))$ is a polynomial on $\Lambda \cap C$ is a polynomial which may be extended to all of $\Lambda$.
Assuming that we have $\Ehr_\Delta = \td \cdot \Vol_\Delta$, we get that $c_1(\kappa) = -\frac{1}{2} \td_1$,
which determines $\kappa$ uniquely.

\subsection{McMullen's polytope algebra as a limit of Frobenius quotients}
\label{subsec:polytope-algebra-lambda-structure}

In this section we will equip McMullen's polytope algebra $\Pi(\Q^n)$ with a natural $\lambda$-ring structure.

By \cite[Theorem 5.2]{FS97}, $\Pi(\Q^n)$ is isomorphic to the direct limit
$\varinjlim \Q A(X)$ of $n$-dimensional complete toric varieties. Since $n$-dimensional smooth projective
toric varieties form a cofinal directed subset, it suffices to take the direct limit over toric varieties of this kind. Let $f: X \to Y$ be an equivariant morphism of such toric varieties, then $f$ induces inclusions
$f^*: \Q A(Y) \to \Q A(X)$ and $f^*: \Q K(Y) \to \Q K(X)$, and the following diagram commutes
\[
\begin{tikzcd}
\Q K(Y) \arrow[d, "\ch"] \arrow[r, "f^*"] & \Q K(X) \arrow[d, "\ch"] \\
\Q A(Y) \arrow[r, "f^*"]          & \Q A(X)         
\end{tikzcd}
\]
It follows that $\Pi(\Q^n) \cong \varinjlim \Q K(X)$. Each one of these rings is a $\lambda$-ring and the maps
$\lambda^i$ clearly commute with the pullbacks, inducing the structure of $\lambda$-ring on the direct limit.

\begin{remark}\label{rem:ring-conditions}
The direct limit $\varinjlim A(X)$ of the Chow rings of complete $n$-dimensional toric varieties is an example of the \emph{ring of conditions} introduced by De Concini and Procesi in  \cite{de1985complete}. More generally, they defined the ring of conditions $C(G/H)$ for any spherical homogeneous space $G/H$ and showed that it is given by the direct limit 
\[
C(G/H) = \varinjlim A(X),
\]
where the limit is taken over all $G$-equivariant compactifications of $G/H$.
In the case of $G/H= (\C^*)^n$ as the ring of conditions we exactly obtain the limit of Chow rings of complete toric varieties of dimension $n$. The ring of conditions of $(\C^*)^n$ was intensively studied; see \cite[Section 7]{kazarnovskii2021newton} for a survey. More generally, the ring of conditions of more general spherical homogeneous spaces was computed in \cite{strickland2008ring, strickland2011equivariant, esterov2017characteristic, gibson2018rings, khovanskii2021gorenstein}.
In particular, \cite{khovanskii2021gorenstein} realizes the ring of conditions of a horospherical homogeneous space as a (infinitely generated) continuous Frobenius quotient.

Motivated by the definition of the ring of conditions we would like to introduce its $K$-theoretic version $KC(G/H)=\varinjlim K(X)$ given by the direct limit of $K$-rings of $G$-equivariant compactifications of $G/H$. 
\end{remark}

This is an example of the following general construction. 
Let $I$ be a directed set and assume that for every $\alpha\in I$ we get a triple $\Lambda_\alpha, f_\alpha, g_\alpha$ where $\Lambda_\alpha$ is a finite rank lattice and $f_\alpha, g_\alpha$ are polynomials on $\Lambda_\alpha$ such that $(K_{f_n}, A_{g_n})$ is a Riemann-Roch pair. Assume further that $\Lambda_\alpha$ form a directed system such that for any $\alpha\geq\beta \in I$ the lattice homomorphism $\phi_{\alpha\beta}\colon \Lambda_\alpha \to \Lambda_\beta$ induce well-defined pushforwards:
\[
(\phi_{\alpha\beta})_*\colon K_{f_\alpha}\to K_{f_\beta},\quad (\phi_{\alpha\beta})_*
\colon A_{g_\alpha}\to A_{g_\beta}.
\]
In that case we can define the direct limits
\[
K = \varinjlim K_{f_\alpha},\quad A = \varinjlim A_{g_\alpha}.
\]
\begin{theorem}\label{thm:limit-lambda-chern}
  Let $K = \varinjlim K_{f_\alpha}, A = \varinjlim A_{g_\alpha}$  be as before. Then the direct limits inherit
  \begin{itemize}
      \item a Chern character $\ch: \Q K \xrightarrow{\cong} \Q A$;
      \item a total Chern class map $c: \Q K \to \Q A$;
      \item a $\lambda$-ring structure on $\Q K$.
  \end{itemize}
\end{theorem}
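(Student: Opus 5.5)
The plan is to show that each of the three structures is natural with respect to the transition maps $(\phi_{\alpha\beta})_*$. Once that is done, the universal property of the direct limit produces the induced maps on $K$ and $A$. Throughout, I use that $\Q$ is flat, so $\Q K = \varinjlim \Q K_{f_\alpha}$ and $\Q A = \varinjlim \Q A_{g_\alpha}$.

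\emph{Chern character.} For each $\alpha$, \Cref{thm:ch-descends-iff-todd} gives an isomorphism $\ch_\alpha\colon \Q K_{f_\alpha}\to \Q A_{g_\alpha}$ determined by $[T^u]\mapsto \exp([\partial_u])$. The lifts to $\Sh_\Q(\Lambda_\alpha)\to \Diff_\Q\llbracket\Lambda_\alpha\rrbracket$ commute with the pushforwards, since
\[
(\phi_{\alpha\beta})_*\exp(\partial_u)=\exp(\partial_{\phi_{\alpha\beta}(u)}).
\]
This is the observation made before \Cref{thm:grr}. Because both pushforwards descend to the quotients by hypothesis, the squares
\[
\ch_\beta\circ(\phi_{\alpha\beta})_* = (\phi_{\alpha\beta})_*\circ \ch_\alpha
\]
commute on $\Q K_{f_\alpha}$, which can be checked on the generators $[T^u]$. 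A compatible family of isomorphisms between two directed systems induces an isomorphism $\ch$ on the direct limits. Its inverse is the limit of the maps $\ch_\alpha^{-1}$, which are compatible because the squares commute with isomorphisms in the vertical direction.

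\emph{Chern classes.} The total Chern class $c_\alpha\colon K_{f_\alpha}\to \Q A_{g_\alpha}$ of \Cref{def:chern-class-todd} comes from \Cref{thm:mult-map-A_g}. It is a group homomorphism from the additive group into the multiplicative group $\Q A_{g_\alpha}^\times$. I extend it to $\Q K_{f_\alpha}$ by rational powers: elements $1+n$ with $n$ nilpotent have unique rational powers given by the binomial series. Naturality $c_\beta\circ(\phi_{\alpha\beta})_*=(\phi_{\alpha\beta})_*\circ c_\alpha$ follows because the two sides are multiplicative maps that agree on the generators, where $[T^u]\mapsto 1+[\partial_{\phi(u)}]$. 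By the uniqueness in \Cref{thm:mult-map-A_g}, or simply because the $[T^u]$ generate $K_{f_\alpha}$ as a group, they must coincide. Passing to the limit gives $c\colon \Q K\to \Q A$.

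\emph{$\lambda$-ring structure.} \Cref{thm:lambda-ring} makes each $\Q K_{f_\alpha}$ a $\lambda$-ring. The pushforward $(\phi_{\alpha\beta})_*$ is a $\lambda$-ring homomorphism: the maps $\lambda_t\circ\phi_*$ and $\phi_*\circ\lambda_t$ into $\Q K_{f_\beta}\llbracket t\rrbracket^\times$ are both multiplicative and agree on the $[T^u]$, each giving $1+[T^{\phi(u)}]t$. Over $\Q$ they also agree on rational multiples, by the formula \eqref{rem:lambda-rational}. A direct limit of $\lambda$-rings along $\lambda$-homomorphisms is a $\lambda$-ring, because the $\lambda$-ring axioms are identities involving finitely many elements, and any such identity can be checked at a single stage $\alpha$.

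The main obstacle I expect is purely one of bookkeeping: making sure the extension of $c$ and $\lambda$ to rational coefficients is well defined on $\Q K_{f_\alpha}$ and compatible with the transition maps. I will handle it in one place, using uniqueness of rational powers of unipotent elements, rather than repeating the check for each structure.
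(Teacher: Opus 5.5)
Your proposal is correct and takes the same route as the paper, whose proof is just the observation that the pushforwards commute with $\ch$, $c$ and the $\lambda$-operator, so these structures pass to the direct limit. You additionally supply details the paper leaves implicit: checking compatibility on the additive generators $[T^u]$, extending $c$ and $\lambda$ to rational coefficients via unique rational powers of unipotent elements, and verifying the $\lambda$-ring axioms in the limit at a single finite stage.
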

\begin{proof}
   By definition of the pushforwards, they commute with the Chern character, the total Chern class map and $\lambda$-operator. 
\end{proof}

\subsection{Chow and \texorpdfstring{$K$}{K}-rings of toric variety bundles}\label{sec:toric variety bundles}
In this subsection we briefly discuss another family of algebraic varieties for which we can provide a combinatorial model for Chow and $K$-rings using continuous and discrete quotient constructions.

Let us start with a quick introduction to toric variety bundles. For a more thorough introduction see \cite[Section 1]{carocci2026chow} and references there. Toric variety bundles are certain partial compactifications of principal $T$-bundles over an irreducible base $X$. The isomorphism classes of toric variety bundles over $X$ are given by \emph{bundle data}.

\begin{definition} \emph{Bundle data} over a base variety $X$ consists of:
\begin{enumerate}
\item \emph{Fibre fan:} A fan $\Sigma$ in a lattice $N$.\smallskip
\item \emph{Mixing collection:} For each $m \in M=\Hom(N,\Z)$ a line bundle $L(m)$ on $X$ together with compatible isomorphisms
\[ L(m_1) \otimes L(m_2) \cong L(m_1+m_2), \qquad L(0) \cong \Om_X. \]
\end{enumerate}
\end{definition}

Fix some bundle data $(\Sigma,L)$ over $X$. The fiber fan $\Sigma$ produces a toric variety $Z_\Sigma$ with dense torus $T$. The mixing collection $L$ produces a principal $T$-bundle $P \to X$.

\begin{definition} \label{def: toric variety bundle}The \emph{toric variety bundle} associated to the bundle data $(\Sigma,L)$ is obtained as the geometric quotient
\[
Y_\Sigma = (P\times Z_\Sigma) \slash T \xrightarrow{p}  X
\]
where $T$ acts anti-diagonally on $P \times Z_\Sigma$, so that $(tp,z) \sim (p,tz)$. 
\end{definition}
Toric variety bundles are equivalent to Zariski-locally trivial toric variety fibrations containing a dense principal torus bundle \cite[Lemma~3.4]{CarocciNabijou2}. They are constructed by gluing trivial bundles via transition functions valued in $T$ 
(as opposed to some larger subgroup of $\operatorname{Aut}
(Z_\Sigma)$).

For the purpose of this paper it will be enough to consider \emph{coarse mixing collections}, i.e. maps recording only the isomorphism classes of the $L(m)$:
\[ 
[L] \colon M \to  \Pic X.
\]
This determines the associated toric variety bundle up to (non-unique) isomorphism. 

For the rest of the subsection we fix some bundle data $(\Sigma,L)$ over a base variety $X$ and study the associated toric variety bundle $p \colon Y_\Sigma \to X$. We also assume that both the base variety $X$ and the fiber $Z_\Sigma$ are smooth and projective. 

In this case, the total space of the toric variety bundle satisfies the Leray-Hirsch theorem and, in particular, the Chow ring $A(Y_\Sigma)$ is generated as an algebra over $A(X)$ by the classes of torus invariant divisor of the form
\[
D_{ h}=\sum_{\rho\in \Sigma(1)} h_\rho D_\rho \text{ for some } h = (h_\rho)_\rho \in \Z^{\Sigma(1)}. 
\]
In particular, $A_\mathrm{num}(Y_\Sigma)$ is generated as a ring by the classes $D_h$ and $\pi^*\alpha_1,\ldots, \pi^*\alpha_r$ for any set of generators $\alpha_1,\ldots, \alpha_r$ of $A_\mathrm{num}(X)$.
Further, as in the classical toric case, to every divisor $D_h$ one can associate a (possibly virtual) polytope $P_h$ in $M_\R$. The main result of
\cite{hof2020} computes the degrees of elements of the form $\pi^*\alpha \cdot D_h^d$ in terms of the polytope $P_h$ and the intersection theory of the base variety $X$.
\begin{theorem}[{\cite[Theorem 4.1]{hof2020}}]\label{thm:bundleBKK}
    Let $\pi\colon Y_\Sigma \to X$ be as before with $\dim X = k$ and $\dim Y_\Sigma = k+n$. Let further $\alpha\in A^{k-i}(X)$, and $h \in \Z^{\Sigma(1)}$, then we get
    \[
    \deg_{Y_\Sigma} (D_h^{n+i}\cdot \pi^*\alpha) = \frac{(n+i)!}{i!}\int_{P_h} \deg_X(c_1(L(u))^i\cdot \alpha) du.
    \]
\end{theorem}
In the statement of \Cref{thm:bundleBKK}, we extend the map $[L]\colon M\to \Pic(X)$ to a map $M_\R\to \Pic(X)\otimes \R$ by linearity. Thus the integration on the right hand side of the formula makes sense.

Similar to the Chow ring, the numerical $K$-theory of the toric variety bundle $Y_\Sigma$ is generated by the classes  $\Om(D_h)$  and  $\pi^*\beta_1,\ldots \pi^*\beta_r$ for any generators $\beta_1,\ldots,\beta_r$ of $K_\mathrm{num}(X)$. The following theorem is a $K$-theoretic analogue of \Cref{thm:bundleBKK}.

\begin{theorem}\label{thm:coh}
Let $\pi\colon Y_\Sigma \to X$ and $D_h$ be as before and assume further that  the restriction of $\Om(D_h)$ to any fiber $Z_\Sigma$ of a toric variety bundle is nef. Let further $\mathcal{F}$ be a locally free sheaf on $X$.
Then the cohomology of the twisted line bundle $\Om(D_h) \otimes \pi^* \mathcal{F}$ 
on the total space $Y_\Sigma$ is given by:
\[
H^k(Y_\Sigma, \Om(D_h) \otimes \pi^* \mathcal{F}) \cong \bigoplus_{u \in P_h \cap M} H^k(X, L(u)\otimes \mathcal{F}).
\]
\end{theorem}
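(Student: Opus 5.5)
The strategy is to compute the cohomology through the Leray spectral sequence for $\pi\colon Y_\Sigma \to X$. By the projection formula,
\[
R^q\pi_*\bigl(\Om(D_h)\otimes \pi^*\mathcal F\bigr) \cong R^q\pi_*\Om(D_h)\otimes \mathcal F .
\]
It therefore suffices to prove two facts:
\[
R^q\pi_*\Om(D_h)=0 \quad\text{for } q>0, \qquad \pi_*\Om(D_h)\cong \bigoplus_{u\in P_h\cap M} L(u).
\]
Once these hold, the spectral sequence degenerates, and $H^k(Y_\Sigma,\Om(D_h)\otimes\pi^*\mathcal F)\cong H^k\bigl(X,\pi_*\Om(D_h)\otimes\mathcal F\bigr)$. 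The latter is the claimed direct sum, because cohomology commutes with finite direct sums.

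The vanishing of the higher direct images is local on $X$. Over an open set $U\subseteq X$ on which $P$ is trivial, we have $Y_\Sigma|_U\cong U\times Z_\Sigma$. Here $\Om(D_h)$ restricts to the pullback of the torus-invariant line bundle $\Om_{Z_\Sigma}(D_h)$, since the $D_\rho$ are glued from $U\times D_\rho$. By assumption this bundle is nef on the fiber, so Demazure vanishing gives $H^q(Z_\Sigma,\Om_{Z_\Sigma}(D_h))=0$ for $q>0$. Flat base change (Künneth over $U$) then shows that $R^q\pi_*\Om(D_h)|_U = H^q(Z_\Sigma,\Om(D_h))\otimes\Om_U = 0$.

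The main obstacle is identifying $\pi_*\Om(D_h)$ globally, together with the correct twist. Locally, $\pi_*\Om(D_h)|_U \cong H^0(Z_\Sigma,\Om(D_h))\otimes \Om_U$. The torus $T$ acts on $H^0(Z_\Sigma,\Om(D_h))$, which decomposes into characters as $\bigoplus_{u\in P_h\cap M}\C\cdot\chi^u$. To get the global description, I would write $Y_\Sigma=(P\times Z_\Sigma)/T$. Then $\pi_*\Om(D_h)$ is the sheaf on $X$ associated, via the principal $T$-bundle $P$, to the $T$-representation $H^0(Z_\Sigma,\Om(D_h))$. In other words, it is $P\times_T H^0(Z_\Sigma,\Om(D_h))$. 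Since this representation splits into characters, the associated bundle splits as $\bigoplus_u P\times_T \C_{\chi^u}$, and by the definition of the mixing collection $P\times_T\C_{\chi^u}\cong L(u)$.

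Concretely, on overlaps the transition functions $t_{ij}\in T$ of $P$ act on the section $\chi^u$ by the scalar $\chi^u(t_{ij})$, and these scalars are exactly the transition functions of $L(u)$. The point that needs care is the sign and normalization convention: whether one gets $L(u)$ or $L(-u)$ depends on the anti-diagonal action. I would fix this by checking the case of a single ray against the convention of \cite{hof2020}, which matches the integrand $c_1(L(u))$ in \Cref{thm:bundleBKK}. Combining this identification with the Leray degeneration above completes the proof.
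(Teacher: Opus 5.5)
Your proposal is correct and follows essentially the same route as the paper: the Leray spectral sequence with the projection formula, Demazure vanishing on the fibres to kill $R^q\pi_*\Om(D_h)$ for $q>0$, and identifying $\pi_*\Om(D_h)$ with the bundle associated to $P$ and the $T$-representation $H^0(Z_\Sigma,\Om_{Z_\Sigma}(D_h))=\bigoplus_{u\in P_h\cap M}\C_u$, which splits as $\bigoplus_u L(u)$. Your version is slightly more careful than the paper's in three places: you pass from fibres to the global statement explicitly via local trivializations and flat base change, you apply Demazure vanishing under the stated nef hypothesis (the paper's proof says ``ample''), and you flag the $L(u)$ versus $L(-u)$ convention, which the paper leaves implicit.
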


\begin{proof}
We compute the cohomology using the Leray spectral sequence for the fibration $\pi: Y_\Sigma \to X$:
\[
E_2^{p,q} = H^p(X, R^q \pi_* (\Om(D_h) \otimes \pi^* \mathcal{F})) \Rightarrow H^{p+q}(E_\Sigma, \Om(D_h) \otimes \pi^* \mathcal{F}).
\]
By the projection formula for the morphism $\pi$, we have an isomorphism of sheaves on $X$:
\[
R^q \pi_* (\Om(D_h) \otimes \pi^* \mathcal{F}) \cong (R^q \pi_* \Om(D_h)) \otimes \mathcal{F}.
\]
We proceed to compute the higher direct image sheaves $R^q \pi_*\Om(D_h)$. The fiber of $\pi$ over a point $x \in X$ is isomorphic to the toric variety $Z_\Sigma$. The restriction of $\Om(D_h)$ to the fiber corresponds to the torus-invariant divisor $D_h$ with corresponding polytope $P_h$. Since this restriction is ample, by the Demazure vanishing Theorem, the cohomology of the fiber satisfies:
\begin{enumerate}
    \item $H^q(Z_\Sigma, \Om_{Z_\Sigma}(D_h)) = 0$ for all $q > 0$.
    \item $H^0(Z_\Sigma, \Om_{Z_\Sigma}(D_h)) \cong \bigoplus_{u \in P_h \cap M} \C_u$ as a $T$-representation.
\end{enumerate}
Consequently, the higher direct images vanish for all $q > 0$:
\[
R^q \pi_* \Om(D_h) = 0 \quad \text{for all } q > 0.
\]
For $q=0$, the direct image $\pi_* \Om(D_h)$ is the vector bundle associated to the principal bundle $P$ and the $T$-representation $H^0(Z_\Sigma, \Om_{Z_\Sigma}(D_h))$. So we get:
\[
\pi_* \Om(D_h) \cong \bigoplus_{u \in P \cap M} L(u).
\]
Since $E_2^{p,q} = 0$ for all $q > 0$,
the spectral sequence degenerates at the $E_2$ page and the cohomology of the total space is isomorphic to the cohomology of the pushforward:
\[
H^k(Y_\Sigma, \Om(D_h)\otimes \pi^* \mathcal{F}) \cong H^k(X, \pi_* (\Om(D_h) \otimes \pi^* \mathcal{F})).
\]
Moreover, we have
\[
\pi_* (\Om(D_h) \otimes \pi^* \mathcal{F}) \cong (\pi_* \Om(D_h)) \otimes \mathcal{F} \cong  \bigoplus_{u \in P_h \cap M}L(u)  \otimes \mathcal{F},
\]
and hence
\[
H^k(Y_\Sigma, \Om(D_h) \otimes \pi^* \mathcal{F}) \cong \bigoplus_{u \in P_h \cap M} H^k(X, L(u) \otimes \mathcal{F}). \qedhere
\]
\end{proof}

Let $Y_\Sigma \to X$ be as before and let $\beta_1,\ldots, \beta_r$ be generators of $K_\num(X)$. Let us further assume that $\beta_1,\ldots, \beta_r$ are classes of vector bundles on $X$. Denote by $\chi_{\beta}(\mathbf{t})$ the function computing the Euler characteristic of the tensor product of the $\beta_i$'s:
    \[
    \chi_{\beta}(t_1,\ldots,t_r) = \chi(X, \beta_1^{t_1}\otimes\cdots\otimes\beta_r^{t_r}).
    \]

For the computation of the numerical $K$-theory of $Y_\Sigma$, we need to compute the Euler characteristic of line bundles $\Om(D_h)$ associated to divisors $D_h$, whose restrictions to fibers of $\pi:Y_\Sigma\to X$ are not necessarily ample. In this general case, we associate a virtual polytope to the divisor $D_h$. Any virtual polytope has a geometric realization as a convex chain i.e.\ a linear combination of characteristic functions of convex polytopes. For details on virtual polytopes and their realization as convex chains we refer to \cite{PK92}. We denote by $\cP_\Sigma$ the vector space of virtual polytopes with normal fan $\Sigma$ and by $\cP_\Sigma^+$ the cone of convex polytopes in $\cP_\Sigma$. By the projectivity assumption, $\cP_\Sigma^+\subset \cP_\Sigma$ is a full-dimensional polyhedral cone. Furthermore, we denote by $\cP_{\Sigma,\Z}$ the lattice of integer virtual polytopes, and by $\cP_{\Sigma,\Z}^+$ the semigroup of convex polytopes with normal fan $\Sigma$.
\begin{corollary}\label{thm:bundleEuler}
We have that for all $h \in \Z^{\Sigma(1)}$, and $\mathbf{t} \in \Z^r$,
\[
\chi(Y_\Sigma, \Om(D_h)\otimes \pi^*\beta_1^{t_1}\otimes\ldots\otimes \pi^*\beta_r^{t_r})
=\sum_{u\in M} \mathbf{1}_{P_h}(u) \cdot \chi(X,\beta_1^{t_1}\otimes\ldots\otimes\beta_r^{t_r} \otimes L(u)),
\]
where $\mathbf{1}_{P_h}$ denotes the convex chain associated to the virtual polytope $P_h$.
\end{corollary}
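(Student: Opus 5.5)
The plan is to reduce the general statement to the nef case, \Cref{thm:coh}, in two steps: pass from cohomology to Euler characteristics, then extend from the nef cone to all $h$ using polynomiality on both sides.

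\textbf{Step 1: the case $P_h$ nef and $t_i\ge 0$.} Since the $\beta_i$ are classes of vector bundles, for $\mathbf{t}\in\N^r$ the class $\beta_1^{t_1}\otimes\cdots\otimes\beta_r^{t_r}$ is represented by a locally free sheaf $\mathcal{F}$. If the restriction of $\Om(D_h)$ to the fibers is nef, then $\mathbf{1}_{P_h}$ is the indicator function of an honest lattice polytope. Taking the alternating sum of dimensions in \Cref{thm:coh} gives the formula directly.

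\textbf{Step 2: extend to all $\mathbf{t}\in\Z^r$ with $P_h$ still nef.} Both sides are additive in the $K$-class. Concretely, I will prove the more general identity
\[
\chi(Y_\Sigma,\Om(D_h)\otimes\pi^*\xi)=\sum_{u}\mathbf{1}_{P_h}(u)\,\chi(X,\xi\otimes L(u))
\]
for every $\xi\in K(X)$. It holds for locally free $\xi$ by Step 1, both sides are additive in $\xi$, and $K(X)$ is generated as a group by vector bundle classes. It therefore holds for all $\xi$, including $\xi=\beta_1^{t_1}\cdots\beta_r^{t_r}$ with negative exponents, interpreted in the ring $K(X)$.

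\textbf{Step 3: extend from the nef cone to all $h\in\Z^{\Sigma(1)}$.} Fix $\xi$. The left-hand side, as a function of $h$, is a polynomial: by Hirzebruch--Riemann--Roch it equals $\deg(\td(Y_\Sigma)\ch(\pi^*\xi)\exp(D_h))$, which is polynomial in $h$. For the right-hand side I will use two facts.
\begin{itemize}
\item The map $h\mapsto\mathbf{1}_{P_h}$ is additive in $h$ modulo the linear equivalences: the convex-chain realization is a group homomorphism from $\cP_{\Sigma,\Z}$ to convex chains \cite{PK92}.
\item The function $u\mapsto\chi(X,\xi\otimes L(u))$ is a polynomial on $M$, again by HRR, since $u\mapsto c_1(L(u))$ is linear.
\end{itemize}
It then suffices to show that $h\mapsto\sum_u\mathbf{1}_{P_h}(u)q(u)$ is polynomial for a polynomial $q$ on $M$. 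This is the Euler--Maclaurin/Khovanskii--Pukhlikov statement that weighted lattice point sums over virtual polytopes with fixed normal fan are polynomial in the support numbers and agree with the genuine sums on the nef cone \cite{PK92}.

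The two sides are polynomials agreeing on the full-dimensional semigroup $\cP^+_{\Sigma,\Z}$ (nef $h$). They therefore agree everywhere, and the dependence on $h$ factors through $P_h$, which is compatible with linear equivalence.

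\textbf{Main obstacle.} I expect the main obstacle to be the polynomiality of the weighted convex-chain sum in Step 3. I would handle it by expanding $q$ in a basis of products of binomials and invoking the Khovanskii--Pukhlikov polynomiality for $\sum_u\mathbf{1}_{P}(u)u^{\mathbf{k}}$, which they prove is polynomial and additive on virtual polytopes. A minor check is that Step 1 requires only nefness rather than ampleness; Demazure vanishing holds for nef toric line bundles, so \Cref{thm:coh} applies as stated.
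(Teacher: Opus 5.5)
Your proposal is correct and follows the paper's argument: fix the class on the base, note that both sides are polynomial in $h$, and conclude from their agreement on the full-dimensional semigroup $\cP^+_{\Sigma,\Z}$, which \Cref{thm:coh} provides. The paper's two-line proof leaves implicit the details you supply: Step~2, which extends the identity from locally free $\mathcal{F}$ to arbitrary classes in $K(X)$ by additivity and is what makes negative exponents $t_i$ legitimate, and your justification of polynomiality via HRR and Khovanskii--Pukhlikov.
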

 
\begin{proof}
    Identify $\Z^{\Sigma(1)} \cong \cP_{\Sigma, \Z}$ via $h \mapsto P_h$.
    For fixed $t_1,\ldots, t_r$, both sides of the equality are polynomials when restricted to $\cP_\Sigma$. Moreover, they agree on the full rank sub-semigroup of convex integral polytopes $\cP_{\Sigma,\Z}^+$ by \Cref{thm:coh} and hence they agree everywhere.
\end{proof}

As an immediate corollary of \Cref{thm:bundleBKK} and \Cref{thm:bundleEuler} we obtain the following polyhedral description for numerical $K$ and Chow rings of $Y_\Sigma$.
\begin{theorem}\label{thm:bundle K and Chow}
Let $\pi:Y_\Sigma\to X$ be a toric variety bundle as before and let $\Gamma$ be a graded semigroup generating numerical $K$-ring of $X$. Then we get:
\[
K_{\mathrm{num}}(Y_\Sigma) = K_{\Ehr_X},\quad A_{\mathrm{num}}(Y_\Sigma) = A_{\Vol_X},
\]
where functions $\Ehr_X$ and $\Vol_X$ are given by
\begin{align*}
    \Ehr_X\colon \cP_{\Sigma,\Z}\times \Gamma \to \Z, \quad &(P,\gamma)\mapsto \sum_{u\in M} \mathbf{1}_{P}(u) \cdot \chi(X,\gamma\otimes L(u)),\\
    \Vol_X \colon \cP_{\Sigma,\Z}\times \Gamma \to \Z, \quad &(P,\gamma)\mapsto \frac{(n+k -\rk(\gamma))!}{(k-\rk(\gamma))!}\int_{P} \deg_X(c_1(L(u))^{k-\rk(\gamma)}\cdot \gamma) du\,.
\end{align*}
\end{theorem}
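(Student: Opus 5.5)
The statement follows from the Frobenius-quotient construction in \Cref{thm:shiftalg} and \Cref{thm:diffalg}. It suffices to check that the generating sets map surjectively onto the numerical rings and that the two functionals $\chi$ and $\deg$, evaluated on monomials in the generators, agree with $\Ehr_X$ and $\Vol_X$.

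\textbf{The $K$-theoretic statement.} Consider the monoid $\cP_{\Sigma,\Z}\times\Gamma$, where $P_h$ is identified with the line bundle $\Om(D_h)$ and $\gamma\in\Gamma$ with $\pi^*\gamma$.
\begin{enumerate}
\item This map generates $K_\num(Y_\Sigma)$ as a ring, by the Leray--Hirsch type generation statement recalled before \Cref{thm:coh}. Addition in $\cP_{\Sigma,\Z}$ corresponds to tensor product, since $h\mapsto\Om(D_h)$ is a homomorphism.
\item Hence we obtain a surjection $\Sh(\cP_{\Sigma,\Z}\times\Gamma)\to K_\num(Y_\Sigma)$.
\item Exactly as in the proof of \Cref{thm:shiftalg}, its kernel is $\Ann(f)$, where $f(P,\gamma)=\chi(Y_\Sigma,\Om(D_h)\otimes\pi^*\gamma)$. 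The only input used there is non-degeneracy of the pairing on $K_\num$, which holds by construction (\Cref{ex:numerical-K}).
\item \Cref{thm:bundleEuler}, applied with $\gamma$ written as a product of the $\beta_i$, identifies $f$ with $\Ehr_X$.
\end{enumerate}
Two details need care. The first is that $\Gamma$ may be a monoid rather than a lattice; \Cref{thm:shiftalg} allows an arbitrary affine semigroup. The second is that $\cP_{\Sigma,\Z}$ is a lattice, so we must work with $\Z^{\Sigma(1)}$ modulo the kernel of $h\mapsto P_h$. I would handle this by noting that divisors $D_h$ with the same virtual polytope differ by the pullback of a line bundle $L(m)$. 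So either one indexes by $h$ directly, or one absorbs the translation into $\Gamma$; the formula is consistent with this because of the factor $\chi(\gamma\otimes L(u))$.

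\textbf{The Chow statement.} Here I use \Cref{thm:diffalg} (in its graded-semigroup form), with generators $D_h$ in degree $1$ and $\pi^*\gamma$ in the degree of $\gamma$. The degree polynomial assigns to an element the degree of the top component of $\exp(D_h)\cdot\pi^*\gamma$. For $\gamma$ of degree $\rk(\gamma)=:j$, only the term $D_h^{n+k-j}\pi^*\gamma/(n+k-j)!$ contributes. Applying \Cref{thm:bundleBKK} with $i=k-j$ gives
\[
\frac{1}{(n+k-j)!}\cdot\frac{(n+k-j)!}{(k-j)!}\int_{P_h}\deg_X\bigl(c_1(L(u))^{k-j}\gamma\bigr)\,du .
\]
This should match $\Vol_X$ up to the normalization of the factorials. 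Accordingly, I would verify the factorial bookkeeping against the paper's convention for $\Vol_X$ (the displayed prefactor) and adjust the identification of generators if needed. Non-degeneracy again holds because we are in the numerical quotient (\Cref{ex:numerical-chow}).

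\textbf{Main obstacle.} The main obstacle is the generation claim: that $\Om(D_h)$ together with $\pi^*\beta_i$ generate $K_\num(Y_\Sigma)$ as a ring, and likewise for Chow. I would deduce it from the projective-bundle-style Leray--Hirsch decomposition, $K(Y_\Sigma)\cong K(X)\otimes K(Z_\Sigma)$ as $K(X)$-modules, together with the fact that $K(Z_\Sigma)$ is generated by torus-invariant line bundles (\Cref{example:toric}). Lifting these fiber generators to the $\Om(D_h)$ on $Y_\Sigma$ then suffices, since surjectivity onto $K$ implies surjectivity onto $K_\num$.
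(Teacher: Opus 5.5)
Your plan is the paper's own argument. The paper obtains the statement as an immediate consequence of three ingredients: the generation statements quoted before \Cref{thm:coh}, the Frobenius-quotient presentations, and the evaluations in \Cref{thm:bundleBKK} and \Cref{thm:bundleEuler}. Your $K$-theoretic half is exactly this and is fine.

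\textbf{The factorials on the Chow side.} This is the one step you leave open, and it should be settled in favour of your own computation, not by adjusting generators.

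Your normalization is the right one. Under $\partial_P\mapsto D_P$ and $T^\gamma\mapsto\pi^*\gamma$ (derivatives in the polytope variable, multiplicative shifts in $\gamma$), the kernel is the annihilator of
$F(P,\gamma)=\deg_{Y_\Sigma}(\exp(D_P)\,\pi^*\gamma)=\frac{1}{(k-j)!}\int_P\deg_X(c_1(L(u))^{k-j}\gamma)\,du$ for $\gamma$ of degree $j$, exactly as you found.

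The displayed prefactor $\frac{(n+k-j)!}{(k-j)!}$ is what \Cref{thm:bundleBKK} gives for $\deg_{Y_\Sigma}(D_P^{n+k-j}\pi^*\gamma)$. In other words, it drops the factor $1/(n+k-j)!$ coming from $\exp$. For a single $j$ this is a harmless constant, but it varies with $j$:
\begin{enumerate}
\item No rescaling of the generators absorbs it, since a multiplicative rescaling of $\Gamma$ has the form $\gamma\mapsto c^{j}\gamma$.
\item It changes the annihilator, because $A_\num(Y_\Sigma)$ has mixed relations $D_{P+\{m\}}=D_P+\pi^*c_1(L(m))$.
\end{enumerate}

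Concretely, take $X=Z_\Sigma=\p^1$ and $L(m)=\cO(m)$ (a Hirzebruch surface). Let $\gamma_0$ be the point class and write $P=[a,b]$.
\begin{itemize}
\item Your $F$ takes the values $(b^2-a^2)/2$ at $\gamma=1$ and $b-a$ at $\gamma=\gamma_0$. It is killed by $(\partial_a+\partial_b)-T^{\gamma_0}$, where $T^{\gamma_0}$ acts by $\gamma\mapsto\gamma_0\gamma$; this operator encodes $D_{\{1\}}=\pi^*c_1(L(1))$.
\item The displayed function takes the values $b^2-a^2$ and $b-a$. It is not killed by this operator; it is in fact the correct function for $L(m)=\cO(2m)$.
\end{itemize}
So keep your normalization and state the result with prefactor $\frac{1}{(k-\rk\gamma)!}$, with values in $\Q$ rather than $\Z$.

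\textbf{Two smaller points.}

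First, there is no kernel to quotient by. The map $h\mapsto P_h$ is an isomorphism $\Z^{\Sigma(1)}\cong\cP_{\Sigma,\Z}$, since polytopes are not taken up to translation (as in the proof of \Cref{thm:bundleEuler}). What you actually describe is that translating $P$ by $m$ twists by $\pi^*L(m)$, and the formula already encodes this.

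Second, your Leray--Hirsch argument gives generation by the $\cO(D_h)$ together with all of $\pi^*K(X)$ (resp.\ $\pi^*A(X)$). To replace these by a set $\Gamma$ that only generates the numerical ring of $X$, you need $\pi^*$ to preserve numerical triviality. This follows from the projection formulas $\chi(Y_\Sigma,\pi^*\xi\cdot\eta)=\chi(X,\xi\cdot\pi_!\eta)$ and $\deg_{Y_\Sigma}(\pi^*\alpha\cdot\beta)=\deg_X(\alpha\cdot\pi_*\beta)$. Surjectivity onto $K(Y_\Sigma)$ alone does not give it.
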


A special case of toric variety are toroidal horospherical varieties which are $G$-equivariant compactifications of horospherical homogeneous spaces $G/H$. The  ring of conditions of horospherical homogeneous space $G/H$ was computed in \cite{khovanskii2021gorenstein}  using the Frobenius quotient description of the Chow rings of toric variety bundles. Using \Cref{thm:bundle K and Chow}, we can give a description for the $K$-theoretic version of the ring of conditions $KC(G/H)$ as in \Cref{rem:ring-conditions}. Moreover, by \Cref{thm:limit-lambda-chern}, we obtain the following statement.
\begin{corollary}\label{cor:conditions-ring-horospherical}
Let $G/H$ be a horospherical homogeneous space and let $KC(G/H)$ and $C(G/H)$ be its $K$-theoretic and classical rings of conditions. Then we have
 \begin{itemize}
      \item a Chern character $\ch: \Q KC(G/H) \xrightarrow{\cong} \Q C(G/H)$;
      \item a total Chern class map $c: \Q KC(G/H) \to \Q C(G/H)$;
      \item a $\lambda$-ring structure on $\Q KC(G/H)$.
  \end{itemize}
\end{corollary}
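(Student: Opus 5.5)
The plan is to realize the $K$-theoretic and classical rings of conditions of $G/H$ as direct limits of Riemann-Roch pairs, and then invoke \Cref{thm:limit-lambda-chern}.

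First, I would write $C(G/H)=\varinjlim A(X)$ and $KC(G/H)=\varinjlim K(X)$ as in \Cref{rem:ring-conditions}. The limit runs over $G$-equivariant compactifications $X$ of $G/H$. I would restrict to the cofinal subsystem of smooth projective toroidal horospherical compactifications. Cofinality follows from the standard fact that every equivariant compactification is dominated by a smooth projective toroidal one, obtained by refining the colored fan to an uncolored smooth projective fan.

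Second, each such $X$ is a toric variety bundle $Y_\Sigma\to G/P$ over a flag variety, with fibre the smooth projective toric variety $Z_\Sigma$. For these, \Cref{thm:bundle K and Chow} gives $K(X)=K_{\Ehr_X}$ and $A(X)=A_{\Vol_X}$. Here we use that $K$ and $A$ of these cellular varieties coincide with their numerical versions, since there is an affine paving and hence Poincaré duality. The classical Hirzebruch-Riemann-Roch theorem, in the form of \Cref{thm:map-descends-iff-hrr}, shows that $\ch$ identifies $\Q K(X)\cong\Q A(X)$. The $K$-ring of $G/P$ is not generated by line elements, so this is a Riemann-Roch pair in the general sense of \Cref{sec:hrr-pairs}.

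Third, for a refinement $X'\to X$ of fans, the pullbacks $f^*$ on $K$ and $A$ commute with $\ch$, with the total Chern class $c$, and with the $\lambda$-operations. This holds because the pullback of vector bundles commutes with all these constructions. Concretely, the maps are induced by the inclusion $\cP_{\Sigma,\Z}\times\Gamma\to\cP_{\Sigma',\Z}\times\Gamma$ of lattices of virtual polytopes, so they are the pushforwards of \Cref{thm:limit-lambda-chern}. That theorem then yields the three structures on the limit.

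The main obstacle is that \Cref{thm:limit-lambda-chern} is stated for standard graded pairs on lattices $\Lambda_\alpha$, whereas here the base contributes non-line generators. I would handle this either by rerunning the argument of \Cref{thm:limit-lambda-chern} verbatim with the general multiplicative maps of \Cref{prop:mult-map-A_g-general} and \Cref{prop:mult-map-Kf-general}, or, more simply, by using the geometric compatibility of $f^*$ with $\ch$, $c$ and $\lambda$ directly. Finally, the direct limit of isomorphisms $\ch$ is an isomorphism, because filtered colimits are exact.
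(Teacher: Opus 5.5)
Your proposal is correct and follows the paper's route: toroidal horospherical compactifications are toric variety bundles over $G/P$, \Cref{thm:bundle K and Chow} presents their $K$- and Chow rings as Frobenius quotients forming Riemann--Roch pairs, and \Cref{thm:limit-lambda-chern} carries the three structures to the direct limit. Your additions fill in details the paper leaves implicit: cofinality of smooth projective toroidal compactifications, the identification of $K$ and $A$ with their numerical quotients via the affine paving, and the fact that \Cref{thm:limit-lambda-chern} is only stated for standard graded pairs, although its one-line proof carries over verbatim using \Cref{prop:mult-map-A_g-general} and \Cref{prop:mult-map-Kf-general}.
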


\subsection{Rational polyhedral fans}

Chow rings and $K$-rings of toric varieties can be described explicitly in terms of their fans. Although the associated toric varieties may be non-complete, these Chow rings often behave much like those of complete varieties. Consequently, Chow and $K$-rings defined for fans have attracted considerable attention, often independently of the study of toric varieties themselves. In this subsection, we study these rings from the viewpoint of shift and differential operators.

\subsubsection{Chow and $K$-rings of fans}
Let $\Sigma$ be a smooth, not necessarily complete fan in $N_\Q$, where $N$ is a lattice with dual $M=\Hom(N,\Z)$. Let $\PL(\Sigma)$ denote the integral piecewise linear functions on $\Sigma$, that is, those functions $\vert\Sigma\vert\to \R$ which are linear on each cone and have integral value on the primitive generator $u_\rho$ of every ray $\rho$. There is a natural basis for $\PL(\Sigma)$ consisting of the Courant functions. For an element $\rho$ of the set $\Sigma(1)$ of rays of $\Sigma$, the Courant function $x_\rho$ is the unique element of $\PL(\Sigma)$ with value $1$ on $u_\rho$ and constant value $0$ on all other rays. 

As all linear maps $N \to \Z$ restrict to integral piecewise linear functions on $\Sigma$, we obtain a map
\[
M\to \PL(\Sigma),\;\; m\mapsto \sum_{\rho\in\Sigma(1)} \langle m,u_\rho\rangle x_\rho.
\]

The (rational) Chow ring $\Q A(\Sigma)$ of $\Sigma$,
is the graded ring given by
\[
    \Q A(\Sigma)=\Sym(\PL_\Q(\Sigma))/(\mc I_\Sigma +\mc J_\Sigma),
\]
where $\mc I_\Sigma$ is the Stanley-Reisner ideal given by
\[
    \mc I_\Sigma =\left\langle \prod_{\rho\in I} x_\rho : I\subseteq \Sigma(1) \text{ does not span a cone of }\Sigma \right\rangle,
\]
and 
\[
    \mc J_\Sigma =\langle M\rangle   
\]
is generated by the linear functions.
This coincides with the Chow ring of the toric variety $X_\Sigma$, under the identification
$x_\rho \mapsto [D_\rho]$, where $D_\rho$ is the torus-invariant divisor corresponding to the ray $\rho$ on $X_\Sigma$.

We may describe the $K$-ring of $X_\Sigma$ using the ring of piecewise exponential functions on $\Sigma$, which is the subring $\PE(\Sigma)$
of $\Fun(|\Sigma|, \R)$ generated by the functions of the form $e^f$ for $f \in \PL(\Sigma)$.
We let
\[
    K(\Sigma) = \frac{\PE(\Sigma)}{\langle e^m - 1 \mid m \in M\rangle}.
\]
Then by \cite{Brion1997, Merkurjev1997} we have that $K(\Sigma) \cong K(X_\Sigma)$ via the map induced by $e^{x_\rho} \mapsto [\cO(D_\rho)]$. 

Under these identifications, the
Chern character  $\ch\colon \Q K(X_\Sigma)\xrightarrow{\cong} \Q A(X_\Sigma)$ may be written as
\[
    \ch\colon \Q K(\Sigma) \to \Q A(\Sigma), \;\; e^{x_\rho} \mapsto \exp(x_\rho).
\]

\subsubsection{Differential operators for fans}

Denote by $\partial_\rho$ the derivative on $\PL_\Q(\Sigma)$ in the direction of $x_\rho$. This is the element corresponding to $x_\rho$ under the natural isomorphism $\Sym(\PL_\Q(\Sigma))\cong \Diff_\Q(\PL(\Sigma))$. Using this isomorphism, we obtain a description of $\Q A(\Sigma)$ in terms of differential operators as
\[
\Q A^*(\Sigma)=\Diff_\Q(\PL(\Sigma))/(\mc I'_\Sigma+\mc J'_\Sigma), 
\]
where 
\[
    \mathcal I_\Sigma' =\left\langle \prod_{\rho\in I} \partial_\rho : I\subseteq \Sigma(1) \text{ does not span a cone of }\Sigma \right\rangle ,
\]
and 
\[
    \mathcal J_\Sigma' =\langle \partial_m : m\in M\rangle.
\]

Recall that \emph{$k$-dimensional Minkowski weights} are maps $\Sigma(k)\to \Q$ that satisfy the so-called balancing condition. If $\MW_k(\Sigma)$ denotes the group of $k$-dimensional Minkowski weights, then by \cite[Theorem 1]{FMSSspherical}, there is a canonical isomorphism
\[
    \Hom_\Q(\Q A^k(\Sigma),\Q)\cong \MW_k(\Sigma),\;\; 
    \phi\mapsto \left(\sigma\mapsto \phi\left(\prod_{\rho\in \sigma(\rho)}x_\rho\right)\right).
\]

As a corollary to Proposition \ref{prop:solution set to differential equation}, we obtain a description of Minkowski weights on a fan as polynomial function satisfying the differential equations given by $\mc I_\Sigma'$ and $\mc J_\Sigma'$.

\begin{corollary}
\label{cor:solution set to differential equations}
There is a canonical isomorphism of $\Q A(\Sigma)$-modules between $\MW(\Sigma)$ and the module consisting of 
polynomials $f$ on $\PL(\Sigma)$ satisfying the partial differential equations
\begin{align*}
\partial_m f&=0  \qquad \text{for all }m\in M, \\
\left(\prod_{\rho\in I}\partial_\rho \right) f &= 0  \qquad \text{for all }I\subseteq \Sigma(1)\text{ not spanning a cone.}  
\end{align*}
\end{corollary}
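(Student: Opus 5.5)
The plan is to deduce this directly from \Cref{prop:solution set to differential equation}, applied to the lattice $\PL(\Sigma)$ and the ideal $I=\mc I'_\Sigma+\mc J'_\Sigma\subseteq\Diff(\PL(\Sigma))$. Then we compose with the isomorphism $\Hom_\Q(\Q A^k(\Sigma),\Q)\cong\MW_k(\Sigma)$ of \cite[Theorem 1]{FMSSspherical}, summed over all $k$.

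\textbf{Step 1.} With $A=\Diff(\PL(\Sigma))/I$, \Cref{prop:solution set to differential equation} gives an $A$-module isomorphism $\Psi\colon\Hom_\Z(A,\Q)\to S$. Here $S$ is the set of polynomial solutions of the system $I$, which is exactly the system in the statement, since a polynomial is killed by an ideal iff it is killed by its generators. Before invoking the proposition I check its implicit finiteness hypothesis: each $[\partial_\rho]$ is nilpotent in $A$, so $[\partial^{\mathbf k}]=0$ for $\mathbf k$ large and the sum defining $\Psi$ is finite. This holds because $\partial_\rho^{m}$ is not in $\mc I'_\Sigma$ directly, but $A\otimes\Q=\Q A(\Sigma)$ is a graded ring generated in degree one with $\Q A^k(\Sigma)=0$ for $k>\dim N$, since every squarefree monomial of degree $>\dim N$ lies in $\mc I_\Sigma$ and the quotient is spanned by such monomials times powers, which reduce via $\mc J_\Sigma$. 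Concretely, the standard fact that $A(X_\Sigma)$ vanishes above $\dim X_\Sigma$ settles this. I also note that $\Hom_\Z(A,\Q)=\Hom_\Q(\Q A(\Sigma),\Q)$.

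\textbf{Step 2.} Grading. $\Hom_\Q(\Q A(\Sigma),\Q)=\bigoplus_k\Hom_\Q(\Q A^k(\Sigma),\Q)$ because $I$ is homogeneous, and the right side is $\bigoplus_k\MW_k(\Sigma)=\MW(\Sigma)$ by \cite{FMSSspherical}. Composing gives the canonical isomorphism $\MW(\Sigma)\cong S$. Explicitly, a weight $c$ maps to $\sum_k\frac1{k!}\sum_{\sigma\in\Sigma(k)}c(\sigma)\prod_{\rho\in\sigma(1)}x_\rho^{\,\cdot}$ summed over monomials via $\Psi$. The $\Q A(\Sigma)$-module structure on $\MW(\Sigma)$ is by definition the one transported from the dual (cap product), so $A$-linearity of $\Psi$ gives $\Q A(\Sigma)$-linearity.

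\textbf{Main obstacle.} The only real subtlety is the module-structure compatibility: one must confirm that the action of $\Q A(\Sigma)$ on Minkowski weights used in the literature agrees with the contragredient action $(\alpha\cdot\varphi)(\beta)=\varphi(\alpha\beta)$ on the dual. This is true for the cap product in \cite{FMSSspherical}, so I would simply cite it.
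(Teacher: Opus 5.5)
Your proposal is correct and follows the argument the paper intends (the paper writes no proof beyond ``as a corollary to'' \Cref{prop:solution set to differential equation}): apply that proposition to the ideal $\mc I'_\Sigma+\mc J'_\Sigma$, then compose with the duality $\Hom_\Q(\Q A^k(\Sigma),\Q)\cong\MW_k(\Sigma)$ of \cite[Theorem 1]{FMSSspherical}. Your check that $\Q A(\Sigma)$ vanishes above the dimension of the fan is a detail the paper leaves implicit; it makes the sum defining $\Psi$ finite and identifies the dual with the direct sum of the graded duals. The only slip is the optional ``explicit'' formula in Step 2, which is not needed and is wrong as written: squarefree monomials have $\mathbf{k}!=1$, and non-squarefree monomials $\partial^{\mathbf{k}}$ also contribute to $\Psi(\varphi)$ through the values of $\varphi$ on their reductions.
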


We say that $\Sigma$ is balanced, if it is pure-dimensional, say of dimension $d$, and assigning $1$ to all $d$-dimensional cones defines a Minkowski weight. The associated linear map
\[
    \deg\colon \Q A(\Sigma) \to \Q
\]
is the \emph{degree} map. The polynomial function 
\[
    \Vol_\Sigma \colon \PL_\Q(\Sigma) \to \Q,\;\; f\mapsto \deg(\exp(f))= \frac{1}{d!}\deg(f^n)
\]
is the \emph{volume polynomial.} 

Denote by $\Q A_\num(\Sigma)$ the \emph{numerical Chow ring of $\Sigma$}, that is, the quotient of $\Q A(\Sigma)$ by the ideal 
\[
    \{a\in \Q A(\Sigma) \mid \deg(a\cdot b)=0 \text{ for all }b\in \Q A(\Sigma)\}.
\]
By definition, it is a Poincaré duality ring generated in degree $1$ by $\PL_\Q(\Sigma)$ and whose degree map $\deg$ (which descends to $\Q A_\num(\Sigma)$) corresponds to the volume polynomial. By \Cref{thm:diffalg}, we obtain that $\Q A_\num(\Sigma)\cong \Q A_{\Vol_\Sigma}$.

\subsubsection{Ehrhart fans}

The holomorphic Euler characteristic defines morphisms of abelian groups from the $K$-rings of compact varieties to the integers. In particular, one obtains morphisms $K(\Sigma)\to \Q$ whenever $\Sigma$ is complete or supported on the Bergman fan of a representable matroid. This was generalized to non-representable matroids in \cite{LLPP24} and further to a larger class of fans, called \emph{Ehrhart fans}, which were introduced in  \cite{EhrhartFans}. 
More specifically, if $\Ehr_\Sigma: \PL(\Sigma) \to \Z$ denotes the Ehrhart function of $\Sigma$, then by \cite[Corollary 5.20]{EhrhartFans}
we obtain a morphism
\[
    \chi_\Sigma\colon K(\Sigma)\to \Z \ 
\]
generated by $e^f \mapsto \Ehr_\Sigma(f)$ for all $f \in \PL(\Sigma)$.

As before, we may consider the \emph{numerical $K$-ring $K_\num(\Sigma)$ of $\Sigma$}, defined by the quotient of $K(\Sigma)$ by the ideal 
\[
    \{\xi \in K(\Sigma) \mid \chi_\Sigma(\xi \cdot \eta) = 0 \text{ for all } \eta \in K(\Sigma) \}.
\]
Then $K_\num(\Sigma)$ is a Frobenius ring with non-degenerate pairing induced by $\chi_\Sigma$.
Note that we have a natural surjection $\Sh(\PL(\Sigma)) \to K(\Sigma)$ given by $T^f \mapsto e^f$ for all $f \in \PL(\Sigma)$.
In particular, by \Cref{thm:shiftalg}, we obtain that $K_\num(\Sigma) \cong K_{\Ehr_\Sigma}$.

We would like to know when does the Chern character $\Q K(\Sigma) \xrightarrow{\cong} \Q A(\Sigma)$ descend to an isomorphism on the numerical quotients $\Q K_\num(\Sigma)$ and $\Q A_\num(\Sigma)$. By \Cref{thm:ch-descends-iff-todd}, this is the case if and only if $\Ehr_\Sigma \sim \Vol_\Sigma$. This prompts the following question:

\begin{question}
\label{quest:Ehrhart function same annihilator as volume function?}
Let $\Sigma$ be a purely $d$-dimensional Ehrhart fan. Do we have $\Ehr_\Sigma \sim \Vol_\Sigma$?
\end{question}

We give a positive answer to this question when $\Sigma$ has Poincar\'e duality, meaning that $\Q A(\Sigma)$ is a Poincar\'e duality ring.

\begin{proposition}\label{prop:poincare-ehrhart-hrr}
If $\Sigma$ is an Ehrhart fan with Poincar\'e duality, then $\Ehr_\Sigma \sim \Vol_\Sigma$.
\end{proposition}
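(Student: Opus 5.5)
Since $\Q A(\Sigma)$ is a Poincaré duality ring, the numerical quotient is trivial: $\Q A_\num(\Sigma)=\Q A(\Sigma)$, hence $\Q A(\Sigma)\cong \Q A_{\Vol_\Sigma}$ via $\partial_f\mapsto f$. The plan is to transport the Ehrhart functional $\chi_\Sigma$ to a linear functional on $\Q A(\Sigma)$ through the Chern character $\ch\colon \Q K(\Sigma)\to \Q A(\Sigma)$, $e^{x_\rho}\mapsto \exp(x_\rho)$, which is an isomorphism (the toric Chern character, or directly by the $\log$ argument of \Cref{lem:ch-surjective}). By Poincaré duality for $\Q A(\Sigma)$ this functional is represented by a unique class $\td\in \Q A(\Sigma)$, i.e.
\[
\chi_\Sigma(\xi)=\deg(\td\cdot \ch(\xi))\qquad\text{for all }\xi\in \Q K(\Sigma).
\]
Evaluating at $\xi=e^f$ gives $\Ehr_\Sigma(f)=\deg(\td\cdot\exp(f))=(\td\cdot \Vol_\Sigma)(f)$, using \Cref{lem:exponential and shifts} and $\deg_{\Vol_\Sigma}([D])=(D\cdot\Vol_\Sigma)(0)$. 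Thus $\Ehr_\Sigma=\td\cdot\Vol_\Sigma$ with $\td\in\Q A_{\Vol_\Sigma}$.

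By \Cref{cor:polynomial-equivalence} (or \Cref{thm:ch-descends-iff-todd}), it then remains to show that $\td$ is invertible, i.e.\ that its degree-$0$ component is nonzero, since $\Q A(\Sigma)$ is graded and connected with nilpotent positive part. The constant term of $\td$ is read off from top-degree parts: $\Ehr_\Sigma = \td_0\Vol_\Sigma+(\text{lower degree})$. So the claim reduces to the statement that the leading homogeneous part of $\Ehr_\Sigma$ equals $\Vol_\Sigma$ (or at least a nonzero multiple of it). This is the degree-$d$ analogue of ``top part of Ehrhart polynomial is volume.'' I would take it from the construction of $\Ehr_\Sigma$ in \cite{EhrhartFans}, where the Ehrhart function is a sum over cones of lattice-point counts whose leading term is the normalized volume, that is, $\frac1{d!}\deg(f^d)$ under the balancing weight $1$.

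The main obstacle is this last step: ensuring that the leading term of $\Ehr_\Sigma$ is exactly $\Vol_\Sigma\ne0$, rather than having degree-$d$ part vanish identically. Should a direct citation be unavailable, I would argue via the local Euler–Maclaurin / Todd-class expansion: for an Ehrhart fan, $\chi_\Sigma(e^f)$ should equal $\deg(\Td(\text{tangent})\exp(f))$ with $\Td$ having constant term $1$, giving $\td_0=1$. As a fallback, in degree $d$ the functional $\chi_\Sigma$ applied to $\prod (1-e^{-x_{\rho}})$ over a maximal cone equals the lattice index $1$ (smoothness), which matches $\deg$ of the corresponding monomial and forces $\td_0=1$.
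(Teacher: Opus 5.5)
Your proposal is correct and follows essentially the paper's route. Poincaré duality gives $\Q A(\Sigma)\cong\Q A_{\Vol_\Sigma}$, and invertibility of $\td$ comes from the top-degree part of $\Ehr_\Sigma$ being $\Vol_\Sigma$, which the paper cites as \cite[Proposition 3.13]{EhrhartFans}. The one difference is how you obtain $\Ehr_\Sigma=\td\cdot\Vol_\Sigma$: you represent $\chi_\Sigma\circ\ch^{-1}$ directly through the perfect pairing, while the paper deduces $\Ann(\Vol_\Sigma)\subseteq\Ann(\Ehr_\Sigma)$ and applies \Cref{prop:polynomial-equivalence-inclusion}. That difference is minor, since the latter is itself proved by the same duality.

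Your second fallback for $\td_0=1$ also works, but justify $\chi_\Sigma\bigl(\prod_{\rho\in\sigma(1)}(1-e^{-x_\rho})\bigr)=1$ by condition (iii) of \Cref{prop:IVP for Ehrharticity}, not by smoothness. Likewise, only the isomorphism $\Q K(\Sigma)\cong\Q A(\Sigma)$ makes $\ch^{-1}$ well defined; the surjectivity argument of \Cref{lem:ch-surjective} is not enough.
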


\begin{proof}
If $\Q A(\Sigma)$ is a Poincaré duality ring, we have 
\[
    \Q A(\Sigma)=\Q A_\num(\Sigma)\cong \Q A_{\Vol_\Sigma}.
\]
In particular, we obtain a surjective map $\Q A_{\Vol_\Sigma} \twoheadrightarrow \Q K_{\Ehr_\Sigma}$ induced by $\ch^{-1}$. In particular, the fact that this map is well-defined forces $\Q \Ann_\Diff(\Vol_\Sigma) \subseteq \Q \Ann_\Diff(\Ehr_\Sigma)$.
By \Cref{prop:polynomial-equivalence-inclusion}, this implies that
there exists some differential operator $\alpha \in \Diff_\Q(\PL(\Sigma))$, such that
$\Ehr_\Sigma = \alpha \cdot \Vol_\Sigma$.
By \cite[Proposition 3.13]{EhrhartFans} the degree-$d$ part of $\Ehr_\Sigma$ equals $\Vol_\Sigma$ (note that our volume polynomial differs by a factor of $1/d!$ from the one in \cite{EhrhartFans}), implying that $\alpha$ has constant coefficient $1$ and so  $\alpha \in \Q A_{\Vol_\Sigma}^\times$.
\end{proof}

\subsubsection{Exceptional isomorphisms for fans}

The Courant functions are a natural basis of $\PL(\Sigma)$.
We now study conditions for the existence of an exceptional isomorphism
$\Q K_{\Ehr_\Sigma}\to \Q A_{\Vol_\Sigma}$ with respect to this basis.
Define as in \Cref{sec:exceptional-isoms} the map $\varphi_{\frac{1}{1 - t}}: \Sh_\Q(\PL(\Sigma)) \to \Q A_{\Vol_\Sigma}$ determined by sending $D_{x_\rho}$ to $\partial_{\rho}$ for all $\rho$. As noted in the proof of \Cref{thm:exceptional-general}, this map is surjective.

To understand when this map descends to an isomorphism, it will be useful to rephrase Ehrharticity in the language of difference equations:

\begin{proposition}
\label{prop:IVP for Ehrharticity}
A unimodular fan $\Sigma$ is Ehrhart if and only if there exists a polynomial function $\Ehr_\Sigma$ on $\PL(\Sigma)$ satisfying the following sets of difference equations:

\begin{enumerate}[label=(\roman*)]
\item 
$D_m\Ehr_\Sigma=0$ \quad for all $m\in M$
\item
$\left(\prod_{\rho\in I} D_{x_\rho} \right)\Ehr_\Sigma =0$ \quad for all $I\subseteq\Sigma(1)$
not spanning a cone.
\item
$\left(\left(\prod_{\rho\in I} D_{x_\rho}\right) \Ehr_\Sigma\right)(0) =1$ \quad for all $I\subseteq\Sigma(1)$ spanning a cone.
\end{enumerate}
\end{proposition}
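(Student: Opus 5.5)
The plan is to translate the definition of an Ehrhart fan from \cite{EhrhartFans} into the operator language of \Cref{sec:rings-with-duality}, via the discrete analogue of \Cref{prop:solution set to differential equation}. I take that definition to be the following (this must be checked against \cite{EhrhartFans}; it is the step I trust least). $\Sigma$ is Ehrhart if there is an additive map $\chi_\Sigma\colon K(\Sigma)\to\Z$ sending the class $\prod_{\rho\in\sigma}(1-e^{-x_\rho})$ to $1$ for every cone $\sigma\in\Sigma$. This class corresponds to $[\cO_{V(\sigma)}]$. In that case $\Ehr_\Sigma(f)=\chi_\Sigma(e^f)$.

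\textbf{Step 1: a presentation of $K(\Sigma)$.} Write $\Sh(\PL(\Sigma))\to K(\Sigma)$, $T^f\mapsto e^f$, as in the text. For a unimodular fan I expect the kernel to be the ideal $\mathcal I^{\Sh}_\Sigma+\mathcal J^{\Sh}_\Sigma$. Here $\mathcal J^{\Sh}_\Sigma=\langle D_m\mid m\in M\rangle$; this matches $\langle e^m-1\rangle$, since $D_m=1-T^{-m}$ and $T^{-m}$ is a unit. And $\mathcal I^{\Sh}_\Sigma$ is generated by the products $\prod_{\rho\in I}D_{x_\rho}$ with $I$ not spanning a cone. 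These products vanish in $\PE(\Sigma)$, because on every cone some $x_\rho$ with $\rho\in I$ is identically $0$. The reverse inclusion is the standard Stanley--Reisner-type description of $K(X_\Sigma)$ of \cite{Brion1997, Merkurjev1997}, which I will cite. I also use that the cone monomials $\prod_{\rho\in\sigma}D_{x_\rho}$, $\sigma\in\Sigma$, span $K(\Sigma)$ additively. This follows by the argument of \Cref{prop:polyart}: expand every $T^f$ in the $D_{x_\rho}^{\pm}$ and kill non-faces; here unimodularity is used to write each $f$ as an integral combination of Courant functions.

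\textbf{Step 2: functionals versus functions.} As in the footnote to \Cref{prop:pushforward-pullback-rational}, additive maps $\varphi\colon\Sh(\PL(\Sigma))\to\Q$ correspond bijectively to functions $h$ on $\PL(\Sigma)$ via $h(f)=\varphi(T^f)$, and then $\varphi(E)=(E\cdot h)(0)$. Moreover, $\varphi$ vanishes on an ideal $J$ if and only if $E\cdot h=0$ for all $E\in J$: evaluate $(T^gE\cdot h)(0)$ for all $g$. Hence functionals on $K(\Sigma)$ are exactly functions $h$ satisfying (i) and (ii). Such an $h$ is automatically a polynomial. Indeed, the products of more than $\dim$ many $D_{x_\rho}$ are non-faces, so they annihilate $h$; so $h$ is killed by a power of the difference ideal in every coordinate direction, which forces polynomiality. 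Under this correspondence, condition (iii) says precisely that the functional takes value $1$ on each cone class.

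\textbf{Step 3: conclusion.} If $\Sigma$ is Ehrhart, take $\Ehr_\Sigma(f)=\chi_\Sigma(e^f)$; then (i)--(iii) hold by Steps 1--2. Conversely, given a polynomial satisfying (i)--(iii), Step 2 produces a functional $\chi_\Sigma$ on $K(\Sigma)$ with value $1$ on cone classes. Since these classes span $K(\Sigma)$, $\chi_\Sigma$ is $\Z$-valued, and it is the required Euler characteristic. It is also unique, so the polynomial equals $\Ehr_\Sigma$.

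The main obstacle is Step 1, namely the exact kernel of $\Sh(\PL(\Sigma))\to K(\Sigma)$, together with aligning the precise definition of Ehrharticity in \cite{EhrhartFans} with ``functional equal to $1$ on all cone classes''. If the kernel description is only available rationally, I would first run the argument over $\Q$ using \Cref{prop:ann-sh-rational} and recover integrality from the spanning cone classes.
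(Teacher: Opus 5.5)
Your proof has a genuine gap, and it is the step you flagged yourself: the definition of an Ehrhart fan. In \cite{EhrhartFans}, Definition~3.3 is the condition the paper's proof verifies, and it is a \emph{recursive} condition on star fans. Roughly: $\Ehr_\Sigma$ is an $M$-invariant function with $\Ehr_\Sigma(0)=1$, every star fan $\Sigma^\rho$ is Ehrhart, and $D_{x_\rho}\Ehr_\Sigma$ is the pullback of $\Ehr_{\Sigma^\rho}$ along $\PL(\Sigma)\to\PL(\Sigma^\rho)$, $f\mapsto [f]^\rho$. The Euler characteristic $\chi_\Sigma\colon K(\Sigma)\to\Z$ with $e^f\mapsto\Ehr_\Sigma(f)$ is a \emph{consequence} there (the paper cites Corollary~5.20 for it), not the definition.

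So your Steps~1--3 actually prove a different equivalence: (i)--(iii) have a polynomial solution if and only if $K(\Sigma)$ carries an additive functional equal to $1$ on every class $\prod_{\rho\in\sigma}(1-e^{-x_\rho})$. Against the real definition, the forward direction is only partly covered. Corollary~5.20 together with the easy half of your Step~1 does give (i) and (ii). But (iii) is \emph{literally} the statement $\chi_\Sigma\big(\prod_{\rho\in\sigma}(1-e^{-x_\rho})\big)=1$, which you assumed rather than derived. In the backward direction you produce a functional on $K(\Sigma)$. Nothing in your argument shows that each star fan $\Sigma^\rho$ is Ehrhart with Ehrhart function $D_{x_\rho}\Ehr_\Sigma$, which is what the definition demands.

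The missing ingredient is the star-fan recursion, and supplying it is the whole content of the paper's proof.
\begin{itemize}
\item \emph{Forward:} iterate $D_{x_\rho}\Ehr_\Sigma=\Ehr_{\Sigma^\rho}\circ[-]^\rho$. For $\tau\neq\rho$, $[x_\tau]^\rho=0$ exactly when $\tau$ and $\rho$ do not span a cone, so an induction over stars gives (ii). The normalization $\Ehr_{\Sigma'}(0)=1$ for every Ehrhart fan $\Sigma'$ then gives (iii).
\item \emph{Backward}, by induction on $\dim\Sigma$: (i) makes $\Ehr_\Sigma$ a function on $\PL(\Sigma)/M$. Then $D_{x_\rho}\Ehr_\Sigma$ descends to $\PL(\Sigma^\rho)/M$ and satisfies (i)--(iii) there, so $\Sigma^\rho$ is Ehrhart with this Ehrhart polynomial, which is the recursive condition.
\end{itemize}
In your K-theoretic language, the lemma you would need is $\chi_\Sigma\big((1-e^{-x_\rho})\xi\big)=\chi_{\Sigma^\rho}(\xi|_{\Sigma^\rho})$, i.e.\ compatibility of Euler characteristics with restriction to stars. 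Without it, a functional on $K(\Sigma)$ says nothing about $\Sigma^\rho$.

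A smaller, repairable point concerns Step~2 and the spanning claim in Step~1. Condition (ii) only involves square-free products, so it cannot by itself kill $D_{x_\rho}^k$ or reduce repeated factors. Nilpotence of the $D_{x_\rho}$ is what gives polynomiality of $h$ and spanning by cone classes. It needs the linear relations (i), as in the usual Stanley--Reisner reduction, or else the known spanning of $K(X_\Sigma)$ by the classes $[\cO_{V(\sigma)}]$.
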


\begin{proof}
Suppose $\Sigma$ is Ehrhart with $\Ehr_\Sigma$ the Ehrhart polynomial on $\PL(\Sigma)$. Since $\Ehr_\Sigma$ descends to the quotient $\PL(\Sigma)/M$, it satisfies (i). Let $\Sigma^\rho$ be the star fan of $\rho$, then we may see $\Ehr_{\Sigma^\rho}$ as a polynomial on $\PL(\Sigma)$ using the homomorphism $\PL(\Sigma) \to \PL(\Sigma^\rho), [f] \mapsto [f]^\rho$. Then $D_{x_\rho} \Ehr_{\Sigma} = \Ehr_{\Sigma^\rho}$.
In particular, since for any $\tau, \rho \in \Sigma(1)$, 
$[x_\tau]^\rho = 0$ if and only if $\tau$ and $\rho$ do not span a cone, (ii) and (iii) follow by induction, and from the fact that $\Ehr_{\Sigma'}(0) = 1$ for all Ehrhart fans $\Sigma'$.

Now, suppose there is a polynomial $\Ehr_\Sigma$ such that conditions (i)-(iii) are satisfied. We will show that $\Sigma$ is Ehrhart with Ehrhart polynomial given by $\Ehr_\Sigma$ by induction on the dimension of $\Sigma$. By (i), $\Ehr_\Sigma$ descends to a well-defined function on $\PL(\Sigma)/M$. Furthermore, $D_{x_\rho} \Ehr_\Sigma$ is a polynomial function on $\PL(\Sigma^\rho)/M$ which also satisfies (i)-(iii), and hence by induction $\Sigma^\rho$ is Ehrhart with Ehrhart polynomial of $\Sigma^\rho$. Hence the conditions (1) and (2) in \cite[Definition 3.3]{EhrhartFans} are both satisfied and we are done.
\end{proof}

We use this characterization of Ehrhart fans to give an equivalent condition for the existence of the exceptional isomorphism.

\begin{proposition}
\label{prop:condition on exceptional iso for Ehrhart fans}
Let $\Sigma$ be an Ehrhart fan with Poincar\'e duality. $\varphi_{\frac{1}{1 - t}}$ descends to an isomorphism $\Q K_{\Ehr_\Sigma}\cong \Q A_{\Vol_\Sigma}$ if and only if
 \[
    \left(\sum_{\rho\in\Sigma(1)} \langle m,u_\rho\rangle D_{x_\rho}\right) \cdot \Ehr_\Sigma=0
 \]
 for all $m\in M$. 
\end{proposition}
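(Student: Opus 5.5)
The plan is to identify $\ker\bigl(\varphi_{\frac{1}{1-t}}\colon \Sh_\Q(\PL(\Sigma))\to \Q A_{\Vol_\Sigma}\bigr)$ explicitly and then compare it with $\Ann_{\Sh_\Q}(\Ehr_\Sigma)$.

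\emph{Step 1 (ring structure).} Since $\Sigma$ has Poincar\'e duality, $\Q A_{\Vol_\Sigma}=\Q A(\Sigma)=\Diff_\Q(\PL(\Sigma))/(\mc I'_\Sigma+\mc J'_\Sigma)$. Write $\Sh_\Q(\PL(\Sigma))=\Q[D_{x_\rho}]_\rho$ localized at the elements $1-D_{x_\rho}=T^{-x_\rho}$. The map $\varphi_{\frac{1}{1-t}}$ is the ring homomorphism $D_{x_\rho}\mapsto\partial_\rho$. Hence $\sum_\rho\langle m,u_\rho\rangle D_{x_\rho}\mapsto\partial_m$.

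\emph{Step 2 (the kernel).} Let $L\subseteq\Sh_\Q(\PL(\Sigma))$ be the ideal generated by
\begin{itemize}
\item the products $\prod_{\rho\in I}D_{x_\rho}$ for $I$ not spanning a cone, and
\item the linear combinations $\sum_\rho\langle m,u_\rho\rangle D_{x_\rho}$ for $m\in M$.
\end{itemize}
I claim $L=\ker\varphi_{\frac{1}{1-t}}$. First, $\Q[D_{x_\rho}]$ modulo these generators is isomorphic to $\Q A(\Sigma)$. That ring is a finite-dimensional local ring in which every $D_{x_\rho}$ is nilpotent. Therefore each $1-D_{x_\rho}$ is already a unit there, and localizing does not change the quotient. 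This gives $\Sh_\Q/L\cong\Q A(\Sigma)$ via $\varphi_{\frac{1}{1-t}}$, which proves the claim. This step is the main technical point: one must check that inverting the $T^{x_\rho}$ commutes with taking the quotient, and that is exactly where nilpotence in the Artinian ring is used.

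\emph{Step 3 (comparison).} By \Cref{thm:map-descends-iff-hrr} and the remark following \Cref{rem:hrr-equivalent-statement}, $\varphi_{\frac{1}{1-t}}$ descends to an isomorphism iff $L=\Ann_{\Sh_\Q}(\Ehr_\Sigma)$. Suppose the equation in the statement holds. The products $\prod_{\rho\in I}D_{x_\rho}$ for non-cones $I$ annihilate $\Ehr_\Sigma$ by \Cref{prop:IVP for Ehrharticity}(ii), so $L\subseteq\Ann(\Ehr_\Sigma)$. This gives a surjection $\Q A_{\Vol_\Sigma}\cong\Sh_\Q/L\twoheadrightarrow\Q K_{\Ehr_\Sigma}$. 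By \Cref{prop:poincare-ehrhart-hrr} and \Cref{thm:ch-descends-iff-todd}, $\Q K_{\Ehr_\Sigma}\cong\Q A_{\Vol_\Sigma}$, so both sides have the same finite dimension. Hence the surjection is an isomorphism and $L=\Ann(\Ehr_\Sigma)$. Conversely, if $\varphi_{\frac{1}{1-t}}$ descends to an isomorphism, then $\Ann(\Ehr_\Sigma)=\ker\varphi_{\frac{1}{1-t}}=L$. This contains $\sum_\rho\langle m,u_\rho\rangle D_{x_\rho}$, which gives the stated equation.
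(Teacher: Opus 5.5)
Your proof is correct and follows essentially the paper's route. Both arguments identify the relevant ideal as the image of $\mc I'_\Sigma+\mc J'_\Sigma$ under $\partial_\rho\leftrightarrow D_{x_\rho}$. Both use \Cref{prop:IVP for Ehrharticity}(ii) for the Stanley--Reisner generators and the hypothesis for the linear ones, and both conclude by the dimension count from \Cref{prop:poincare-ehrhart-hrr}. The only difference is bookkeeping: the paper descends the inverse map $\zeta\colon\partial_\rho\mapsto D_{x_\rho}$ from the polynomial ring $\Diff_\Q(\PL(\Sigma))$ and simply asserts this is equivalent to $\varphi_{\frac{1}{1-t}}$ descending, whereas you compute $\ker\varphi_{\frac{1}{1-t}}$ directly, which makes explicit the localization/nilpotence step the paper passes over with ``clearly''.
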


\begin{proof}
Let $\zeta: \Diff_\Q(\PL(\Sigma)) \to \Sh_\Q(\PL(\Sigma))$ be the map given by $\partial_\rho \mapsto D_{x_\rho}$ for all $\rho \in \Sigma(1)$. Then clearly $\varphi_{\frac{1}{1-t}}$ descends to an isomorphism if and only if $\zeta$ does.
Note that 
$\sum_{\rho\in\Sigma(1)} \langle m,u_\rho\rangle D_{x_\rho} \cdot \Ehr_\Sigma = 0$ holds for all $m\in M$
if and only if $\zeta(\mc J'_\Sigma) \subseteq \Ann(\Ehr_\Sigma)$. Because $\Sigma$ has Poincar\'e duality, we have $\Q A(\Sigma) \cong \Q A_{\Vol_\Sigma}$, and hence $\Ann(\Vol_\Sigma) = \mc I'_\Sigma + \mc J'_\Sigma$.

If $\zeta$ descends to an isomorphism, then we necessarily have $\zeta(\mc J'_\Sigma) \subseteq \Ann(\Ehr_\Sigma)$.
In the other direction, suppose  $\zeta(\mc J'_\Sigma) \subseteq \Ann(\Ehr_\Sigma)$, then 
\Cref{prop:IVP for Ehrharticity}, implies that $\zeta(\mc I'_\Sigma) \subseteq \Ann(\Ehr_\Sigma)$.
Since $\Ann(\Vol_\Sigma) = \mc I'_\Sigma + \mc J'_\Sigma$, this shows that $\zeta$ descends to a well-defined map
$\Q A_{\Vol_\Sigma} \to \Q K_{\Ehr_\Sigma}$. Since the $[D_{x_\rho}]$ generate $\Q K_{\Ehr_\Sigma}$ as a $\Q$-module, the map is surjective. Finally, since $\Sigma$ has Poincaré duality ring, we have by \Cref{prop:poincare-ehrhart-hrr} that the Chern character induces an isomorphism between $\Q K_{\Ehr_\Sigma}$ and $\Q A_{\Vol_\Sigma}$, so both rings have the same dimension and so the map induced by $\zeta$ must be an isomorphism.
\end{proof}

When the exceptional isomorphism exists, we also fully understand the exceptional Todd class.

\begin{proposition}\label{prop:exceptional-implies-balanced}
Let $\Sigma$ be an Ehrhart fan. If 
$\varphi_{\frac{1}{1 - t}}$ descends to an isomorphism $\Q K_{\Ehr_\Sigma}\cong \Q A_{\Vol_\Sigma}$, then
the exceptional Todd class $\td_\ex$ is represented by the all-ones Minkowski weight 
\[
    \Sigma\ni \sigma\mapsto 1\in \Q .
\]
In particular, all skeleta of $\Sigma$ are balanced.
\end{proposition}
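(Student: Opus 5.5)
The plan is to identify $\td_\ex$ through the Frobenius pairing on $\Q A_{\Vol_\Sigma}$. We pair it against the monomials $\prod_{\rho\in\sigma}\partial_\rho$ and then use the initial conditions of \Cref{prop:IVP for Ehrharticity}.

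First, write $\varphi:=\varphi_{\frac{1}{1-t}}$ and observe that $\varphi(T^{-x_\rho})=1-\partial_\rho$. Hence
\[
\varphi(D_{x_\rho})=\varphi(1-T^{-x_\rho})=\partial_\rho .
\]
This is the normalization already used in the definition of $\varphi$ in this subsection. By multiplicativity, $\varphi\bigl(\prod_{\rho\in I}D_{x_\rho}\bigr)=\prod_{\rho\in I}\partial_\rho$ for every subset $I\subseteq\Sigma(1)$.

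Since $\varphi$ is assumed to descend to an isomorphism, \Cref{thm:exceptional-general} (equivalently \Cref{thm:map-descends-iff-hrr}) provides $\td_\ex\in\Q A_{\Vol_\Sigma}^\times$ such that $\chi_{\Ehr_\Sigma}(E)=\deg_{\Vol_\Sigma}(\td_\ex\cdot\varphi(E))$ for all $E$. Applying this to $E=\prod_{\rho\in\sigma(1)}D_{x_\rho}$, for a cone $\sigma$ of any dimension $k$, gives
\[
\deg_{\Vol_\Sigma}\Bigl(\td_\ex\cdot\prod_{\rho\in\sigma(1)}\partial_\rho\Bigr)=\Bigl(\bigl(\textstyle\prod_{\rho\in\sigma(1)}D_{x_\rho}\bigr)\Ehr_\Sigma\Bigr)(0)=1,
\]
where the last equality is condition (iii) of \Cref{prop:IVP for Ehrharticity}. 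The same argument with condition (ii) shows that the pairing vanishes on products indexed by sets not spanning a cone, which is consistent with the Stanley--Reisner relations.

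Next, turn this into a statement about Minkowski weights. The polynomial $\Vol_\Sigma$ is defined through $\deg$ on $\Q A(\Sigma)$, so $\mc I'_\Sigma+\mc J'_\Sigma\subseteq\Ann(\Vol_\Sigma)$. Consequently there is a surjection $\Q A(\Sigma)\to\Q A_{\Vol_\Sigma}$ sending $x_\rho\mapsto[\partial_\rho]$.

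Compose it with the functional $\alpha\mapsto\deg_{\Vol_\Sigma}(\td_\ex\cdot\alpha)$ and restrict to $\Q A^k(\Sigma)$. This gives an element $\phi_k\in\Hom_\Q(\Q A^k(\Sigma),\Q)$. By the isomorphism of \cite[Theorem 1]{FMSSspherical}, $\phi_k$ corresponds to the $k$-dimensional Minkowski weight $\sigma\mapsto\phi_k\bigl(\prod_{\rho\in\sigma(1)}x_\rho\bigr)$, and the display above shows that this weight is identically $1$.

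Thus $\td_\ex$, viewed as a functional via the pairing, is represented in each degree by the all-ones weight on $\Sigma(k)$. Since elements in the image of the FMSS isomorphism satisfy the balancing condition, every $k$-skeleton of $\Sigma$ is balanced.

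The step I expect to need the most care is the passage from $\Q A_{\Vol_\Sigma}$ back to $\Q A(\Sigma)$. The FMSS identification is stated for $\Q A(\Sigma)$, not for the numerical quotient, so the well-definedness of the surjection $\Q A(\Sigma)\to\Q A_{\Vol_\Sigma}$ must be checked honestly. This amounts to verifying $\mc I'_\Sigma\cdot\Vol_\Sigma=0$ and $\mc J'_\Sigma\cdot\Vol_\Sigma=0$ directly from $\Vol_\Sigma(f)=\deg(\exp f)$. Beyond that, one must keep track of the grading so that the degree-$(d-k)$ component of $\td_\ex$ is what pairs with $\Q A^k$. If a rigorous version of this is awkward, I would fall back to checking the balancing condition directly, using $\partial_m=\sum_\rho\langle m,u_\rho\rangle\partial_\rho\in\Ann(\Vol_\Sigma)$.
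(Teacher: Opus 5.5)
Your proposal is correct and is essentially the paper's own argument. The paper likewise identifies $\td_\ex$ with the functional $\alpha\mapsto\chi_\Sigma\bigl(\varphi_{\frac{1}{1-t}}^{-1}(\alpha)\bigr)$ on $\Q A(\Sigma)$, evaluates it along $\prod_{\rho\in\sigma(1)}x_\rho\mapsto\prod_{\rho\in\sigma(1)}\partial_\rho\mapsto\prod_{\rho\in\sigma(1)}D_{x_\rho}\mapsto 1$ using condition (iii), and reads off the all-ones Minkowski weight through \cite{FMSSspherical}. Your use of the surjection $\Q A(\Sigma)\to\Q A_{\Vol_\Sigma}$, rather than the paper's isomorphism $\Q A(\Sigma)\cong\Q A_{\Vol_\Sigma}$, is a small improvement, since the statement does not assume Poincar\'e duality.
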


\begin{proof}
The exceptional Todd class $\td_\ex$ is the Poincar\'e dual of the composite
\[
    \Q A(\Sigma)\xrightarrow{\cong} \Q A_{\Vol_\Sigma}\xrightarrow{{\varphi_{\frac{1}{1 - t}}}^{-1}}\Q K_{\Ehr_\Sigma}\xrightarrow{\chi_\Sigma} \Q.
\]
On the square free monomial corresponding to a cone $\sigma\in\Sigma$, this composite acts as
\[
    \prod_{\rho\in\sigma(1)}x_\rho\mapsto\prod_{\rho\in\sigma(1)}\partial_\rho \mapsto \prod_{\rho\in\sigma(1)}D_{x_\rho}\mapsto 1.
\]
It follows that the Minkowski weight associated to the composite has weight $1$ on all cones.
\end{proof}

Finally, we show that in dimension 2 the condition of \Cref{prop:exceptional-implies-balanced} is also sufficient for existence of the exceptional isomorphism.

\begin{proposition}\label{prop:balanced-implies-exceptional}
Let $\Sigma$ be a two-dimensional Ehrhart fan with Poincaré duality. If the $1$-skeleton of $\Sigma$ is balanced, 
$\varphi_{\frac{1}{1 - t}}$ descends to an isomorphism $\Q K_{\Ehr_\Sigma}\cong \Q A_{\Vol_\Sigma}$.
\end{proposition}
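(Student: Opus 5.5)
By \Cref{prop:condition on exceptional iso for Ehrhart fans}, it suffices to show that for every $m\in M$ the operator
\[
L_m:=\sum_{\rho\in\Sigma(1)}\langle m,u_\rho\rangle D_{x_\rho}
\]
annihilates $\Ehr_\Sigma$. My plan is to use \Cref{prop:IVP for Ehrharticity} and reduce this vanishing to finitely many evaluations. Set $h:=L_m\cdot\Ehr_\Sigma$. Since shift operators commute, $h$ again satisfies the difference equations (i) and (ii) of \Cref{prop:IVP for Ehrharticity}, because those annihilators are ideals. So $h$ lies in the $\Sh_\Q$-orbit-type module of solutions of (i) and (ii).

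The first step is a uniqueness principle. Let $h'$ be any polynomial satisfying (i) and (ii). I claim $h'$ is determined by the values
\[
\Bigl(\prod_{\rho\in I}D_{x_\rho}\Bigr)h'(0), \qquad I \text{ spanning a cone of } \Sigma.
\]
To see this, I will use the discrete analogue of \Cref{prop:solution set to differential equation}. Solutions of (i) and (ii) are dual to $\Sh_\Q(\PL(\Sigma))/(\text{ideal generated by (i),(ii)})$. By the Brion--Merkurjev presentation, this quotient is $\Q K(\Sigma)$, which is spanned by the square-free monomials $\prod_{\rho\in I}D_{x_\rho}$ over cones (the $K$-theoretic Stanley--Reisner basis). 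Hence $h'$ vanishes as soon as all these evaluations vanish. It therefore suffices to check that $\bigl(\prod_{\rho\in I}D_{x_\rho}\bigr)h(0)=0$ for every cone $I$. Since $\dim\Sigma=2$, the cone $I$ is $\emptyset$, a ray, or a $2$-cone.

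The second step computes these numbers using (iii). For a cone $I$ we have
\[
\Bigl(\prod_{\rho\in I}D_{x_\rho}\Bigr)h(0)=\sum_\rho\langle m,u_\rho\rangle\Bigl(D_{x_\rho}\prod_{\tau\in I}D_{x_\tau}\Bigr)\Ehr_\Sigma(0).
\]
The term for $\rho$ is handled by cases.

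\textbf{Case 1: $I\cup\{\rho\}$ does not span a cone.} The term vanishes by (ii).

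\textbf{Case 2: $\rho\notin I$ and $I\cup\{\rho\}$ spans a cone.} The term equals $1$ by (iii).

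\textbf{Case 3: $\rho\in I$.} We get a squared factor $D_{x_\rho}^2$, which needs extra work. Note $D^2=D-T^{-1}D$, so $D_{x_\rho}^2\prod_{I\setminus\rho}=\prod_I D-T^{-x_\rho}\prod_I D$. The value at $0$ becomes $1-\bigl(\prod_I D\,\Ehr_\Sigma\bigr)(-x_\rho)$. To evaluate it, I use the star-fan identity from the proof of \Cref{prop:IVP for Ehrharticity}, namely $\bigl(\prod_I D\bigr)\Ehr_\Sigma=\Ehr_{\Sigma^I}$ pulled back. This reduces the computation to Ehrhart polynomials of star fans of dimension at most $1$, evaluated at the restriction of $-x_\rho$.

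Now I go through the cones. For $I$ a $2$-cone, the star fan is a point, so $\Ehr=1$ and all terms vanish. For $I=\{\tau\}$, the star fan $\Sigma^\tau$ is a complete $1$-dimensional fan with Ehrhart polynomial $t_++t_-+1$ in the Courant coordinates of its two rays. Restricting $-x_\tau$ to it, and adding the Case 2 contributions from the neighbours of $\tau$, the total becomes $\langle m,\sum_{\rho\sim\tau}u_\rho\rangle$ plus a correction proportional to $\langle m,u_\tau\rangle$. This vanishes by the balancing of the star of $\tau$, which holds by the Poincar\'e duality/Minkowski weight structure; in the unimodular $2$-dimensional case it reads $u_{\rho_1}+u_{\rho_2}=c\,u_\tau$. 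For $I=\emptyset$, the value is $\sum_\rho\langle m,u_\rho\rangle=\langle m,\sum_\rho u_\rho\rangle$, which is $0$ precisely by balancing of the $1$-skeleton.

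The main obstacle is the ray case. There I must identify the correction constant $c$ coming from $\Ehr_{\Sigma^\tau}(-x_\tau|)$ with the balancing coefficient in the star of $\tau$. I would first try to show directly, using unimodularity, that $x_\tau$ restricts on $\Sigma^\tau$ to the linear function determined by $-c$.
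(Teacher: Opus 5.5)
Your strategy is viable and genuinely different from the paper's. The uniqueness step is correct; you only need that the classes $\prod_{\rho\in\sigma(1)}D_{x_\rho}$ \emph{span} $\Q K(\Sigma)$, not that they form a basis. Your evaluations for $I=\emptyset$ and for $2$-cones are also correct. The gap is the ray case. You leave it open, and you set it up under an assumption that fails in the generality of the statement. Nothing forces $\Sigma$ to be complete, and in the non-complete examples (e.g.\ Bergman fans of rank-$3$ matroids) a ray $\tau$ can lie in more than two $2$-cones. So $\Sigma^\tau$ is a balanced one-dimensional fan with one ray $\bar\rho$ for each neighbour $\rho\sim\tau$. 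It is not a complete fan with Ehrhart polynomial $t_{+}+t_{-}+1$, and balancing of $\Sigma$ at $\tau$ reads $\sum_{\rho\sim\tau}u_\rho=c\,u_\tau$.

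The missing computation is short, and it does not require identifying $x_\tau$ with a linear function.

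\begin{itemize}
\item Applying the star recursion to $\Sigma^\tau$ gives $D_{x_{\bar\rho}}\Ehr_{\Sigma^\tau}=1$ for each of its rays. Hence $\Ehr_{\Sigma^\tau}=1+\sum_{\rho\sim\tau}t_{\bar\rho}$, and condition (i) for $\Sigma^\tau$ is exactly the balancing at $\tau$.
\item To restrict $x_\tau$, choose $m_0\in M$ with $\langle m_0,u_\tau\rangle=1$. Then $x_\tau-m_0$ vanishes at $u_\tau$ and descends to the piecewise linear function on $\Sigma^\tau$ with value $-\langle m_0,u_\rho\rangle$ at $\bar u_\rho$.
\item Therefore $(D_{x_\tau}\Ehr_\Sigma)(-x_\tau)=1+\langle m_0,\sum_{\rho\sim\tau}u_\rho\rangle=1+c$.
\item The $\rho=\tau$ term is then $-c\,\langle m,u_\tau\rangle$, and the total is $\langle m,\sum_{\rho\sim\tau}u_\rho-c\,u_\tau\rangle=0$. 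This closes your argument.
\end{itemize}

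For comparison, the paper never touches star fans. It shows that $z=\sum_\rho\ch(D_{x_\rho})\otimes u_\rho\in\Q A_{\Vol_\Sigma}\otimes N_\Q$ vanishes as follows:
\begin{itemize}
\item the degree-$0$ part of $z$ is trivially zero;
\item the degree-$1$ part $\sum_\rho\partial_\rho\otimes u_\rho$ vanishes because $\partial_m\Vol_\Sigma=0$;
\item so only the one-dimensional top degree remains, where Hirzebruch--Riemann--Roch gives $\deg(z)=\deg(\td\cdot z)=\sum_\rho(D_{x_\rho}\cdot\Ehr_\Sigma)(0)\,u_\rho=\sum_\rho u_\rho=0$.
\end{itemize}
This last step is exactly your $I=\emptyset$ evaluation. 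Your ray and $2$-cone cases are what the grading absorbs for free: $\chi_\Sigma(L_m\cdot D_{x_\tau})=\deg(\td\cdot\ch(L_m)\ch(D_{x_\tau}))$ vanishes for degree reasons once $\ch(L_m)$ is known to start in degree $2$.
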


\begin{proof}
Consider the element
\[
z=\sum_{\rho\in \Sigma(1)}\ch(D_{x_\rho})\otimes u_\rho \in \Q A_{\Vol_\Sigma} \otimes_\Q N_\Q \ .
\]
Note that $z=0$ if and only if $\sum_{\rho\in\Sigma(1)}\langle m,u_\rho\rangle \ch(D_{x_\rho})=0$ in $A_{\Vol_\Sigma}$ for all $m 
\in M$, which is equivalent to the existence of the exceptional isomorphism by Proposition \ref{prop:condition on exceptional iso for Ehrhart fans}. In other words, it suffices to show that $z=0$. The degree-$0$ graded part of $z$ is $0$, because the same is true for all the terms $\ch(D_{x_\rho})$ appearing in the expression. The degree-$1$ part of $z$ equals
\[
    \sum_{\rho\in\Sigma(1)}\partial_{\rho}\otimes u_\rho=0,
\]
which also vanishes because $\partial_m\Vol_\Sigma=0$ for all $m\in M$. Therefore, $z$ is concentrated in degree $2$. As the degree map is an isomorphism in degree $2$, it suffices to show that $\deg(z)=0$. But because $z$ is concentrated in degree $2$, we have $z=\td\cdot z$ and hence
\[
    \deg(z)= \deg(\td\cdot z)= \sum_{\rho\in \Sigma(1)} (D_{x_\rho}\cdot \Ehr_\Sigma(0)) \otimes u_\rho = \sum_{\rho\in\Sigma(1)}u_\rho =0 \ ,
\]
where the second equality is by Hirzebruch-Riemann-Roch, the third one is by Proposition \ref{prop:IVP for Ehrharticity}, and the final one is precisely the balancing condition for the $1$-skeleton.
\end{proof}

\begin{question}
Does the equivalence between the existence of an exceptional isomorphism and the balancing of the skeleta also hold in dimension greater than $2$?
\end{question}

\printbibliography

@preamble{
   "\def\cprime{$'$} "
}

@article{BEST2023,
  title={Tautological classes of matroids},
  author={Berget, Andrew and Eur, Christopher and Spink, Hunter and Tseng, Dennis},
  journal={Invent. math.},
  volume={233},
  pages={951--1039},
  year={2023},
  publisher={Springer},
  doi={10.1007/s00222-023-01194-5}
}

@article{BerZel,
	Author = {Berenstein, A. and Zelevinsky, A.},
	Doi = {10.1007/s002220000102},
	Journal = {Invent. math.},
	Mrclass = {17B10 (12K10 17B20 17B37 20G05)},
	Mrnumber = {1802793},
	Mrreviewer = {Ivan Arzhantsev},
	Pages = {77--128},
	Title = {Tensor product multiplicities, canonical bases and totally positive varieties},
	Volume = {143},
	Year = {2001},
}

@online{EhrhartFans,
  title = {Piecewise-Exponential Functions and {{Ehrhart}} Fans},
  author = {Chan, Melody and Clader, Emily and Klivans, Caroline and Ross, Dustin},
  year = 2025,
  month = jul,
  number = {arXiv:2503.22636},
  eprint = {2503.22636},
  primaryclass = {math.CO},
  publisher = {arXiv},
  archiveprefix = {arXiv}
}

@online{Cheng2025,
  title = {On the Tangent Bundle and the Divisor Theory of a General Matroid},
  author = {Cheng, Ronnie},
  year = 2026,
  month = mar,
  eprint = {2510.06609},
  primaryclass = {math.AG},
  publisher = {arXiv},
  archiveprefix = {arXiv},
}

@online{cheng2026tangent,
  title={Tangent classes for matroid building sets},
  author={Cheng, Ronnie},
  year={2026},
  eprint = {2606.22650},
  primaryclass = {math.AG},
  publisher = {arXiv},
  archiveprefix = {arXiv}
}

@article{carocci2026chow,
  title={Chow theory of toric variety bundles},
  author={Carocci, Francesca and Monin, Leonid and Nabijou, Navid},
  journal={International Mathematics Research Notices},
  volume={2026},
  number={1},
  pages={rnaf369},
  year={2026},
  publisher={Oxford University Press},
doi = {10.1093/imrn/rnaf369},
}

@article{CarocciNabijou2,
       author = {{Carocci}, F. and {Nabijou}, N.},
        title = "{Tropical expansions and toric variety bundles}",
        doi = {10.1007/s00229-025-01684-1},
        journal = {manuscripta math.},
        volume={177},
        number={3},
        year={2026}
}

@book {Eis,
    AUTHOR = {Eisenbud, David},
     TITLE = {Commutative algebra, with a view toward algebraic geometry},
    SERIES = {Graduate Texts in Mathematics},
    VOLUME = {150},
      NOTE = {},
  publisher = {Springer New York},
      YEAR = {1995},
       DOI = {10.1007/978-1-4612-5350-1},
}

@book{FultonLang85,
  title = {Riemann-{{Roch Algebra}}},
  author = {Fulton, William and Lang, Serge},
  year = 1985,
  series = {Grundlehren Der Mathematischen {{Wissenschaften}}},
  volume = {277},
  publisher = {Springer New York},
  %address = {New York, NY},
  doi = {10.1007/978-1-4757-1858-4},
  copyright = {http://www.springer.com/tdm},
  langid = {english},
  doi = {10.1007/978-1-4757-1858-4}
}

@article{FS97,
  title = {Intersection Theory on Toric Varieties},
  author = {Fulton, William and Sturmfels, Bernd},
  year = 1997,
  month = mar,
  journal = {Topology},
  volume = {36},
  number = {2},
  pages = {335--353},
  %issn = {0040-9383},
  doi = {10.1016/0040-9383(96)00016-X},
}

@book{fulton93,
  title = {Introduction to Toric Varieties},
  author = {Fulton, William},
  year = 1993,
  series = {Annals of Mathematics Studies},
  number = {no. 131},
  publisher = {Princeton University Press},
  %address = {Princeton, NJ},
  isbn = {978-0-691-03332-7},
  lccn = {QA571 .F85 1993},
  keywords = {Toric varieties},
}

@article{FY04,
  title = {Chow Rings of Toric Varieties Defined by Atomic Lattices},
  author = {Feichtner, Eva Maria and Yuzvinsky, Sergey},
  year = 2004,
  month = mar,
  journal = {Inventiones Mathematicae},
  volume = {155},
  number = {3},
  pages = {515--536},
  %issn = {0020-9910, 1432-1297},
  doi = {10.1007/s00222-003-0327-2},
}

@article{hof2020,
  title={Cohomology rings of toric bundles and the ring of conditions},
  author={Hofscheier, Johannes and Khovanskii, Askold and Monin, Leonid},
  journal={Arnold Mathematical Journal},
  volume={10},
  number={2},
  pages={171--221},
  year={2024},
  publisher={Springer},
  doi={10.1007/s40598-023-00233-6}
}

@book{iarrobino1999,
  title = {Power {{Sums}}, {{Gorenstein Algebras}}, and {{Determinantal Loci}}},
  author = {Iarrobino, Anthony and Kanev, Vassil},
  year = 1999,
  series = {Lecture {{Notes}} in {{Mathematics}}},
  volume = {1721},
  publisher = {Springer Berlin Heidelberg},
  %address = {Berlin, Heidelberg},
  doi = {10.1007/BFb0093426},
}

@article{Kaveh11,
	Author = {Kaveh, K.},
	Fjournal = {Journal of Lie Theory},
	Issn = {0949-5932},
	Journal = {J. Lie Theory},
	Number = {2},
	Pages = {263--283},
	Title = {Note on cohomology rings of spherical varieties and volume polynomial},
	Volume = {21},
	Year = {2011}}

@online{khovanskii2021gorenstein,
      title={Gorenstein algebras and toric bundles}, 
      author={Askold Khovanskii and Leonid Monin},
      year={2021},
      eprint={2106.15562},
      archivePrefix={arXiv},
      primaryClass={math.AC},
}

@book{Knu73,
  title = {{$\lambda$}-{{Rings}} and the {{Representation Theory}} of the {{Symmetric Group}}},
  author = {Knutson, Donald},
  year = 1973,
  series = {Lecture {{Notes}} in {{Mathematics}}},
  volume = {308},
  publisher = {Springer Berlin Heidelberg},
  %address = {Berlin, Heidelberg},
  doi = {10.1007/BFb0069217},
}

@online{kavvil,
      title={On a notion of anticanonical class for families of convex polytopes}, 
      author={Kiumars Kaveh and Elise Villella},
      year={2018},
      month={2},
      eprint={1802.06674},
      archivePrefix={arXiv},
      primaryClass={math.AG},
}

@article{littelmanncones,
	Author = {Littelmann, P.},
	Journal = {Transformation groups},
	Number = {2},
	Pages = {145--179},
	Publisher = {Springer},
	Title = {Cones, crystals, and patterns},
	Volume = {3},
	Year = {1998},
    doi = {10.1007/BF01236431}}

@article{LLPP24,
title = {K-rings of wonderful varieties and matroids},
journal = {Advances in Mathematics},
volume = {441},
year = {2024},
%issn = {0001-8708},
doi = {10.1016/j.aim.2024.109554},
author = {Matt Larson and Shiyue Li and Sam Payne and Nicholas Proudfoot},
}

@misc{MoninSmirnov23,
 author = {Leonid Monin and Evgeny Smirnov},
 title = {Polyhedral models for {K}-theory of toric and flag varieties},
 year = {2026},
eprint={2606.07142},
archivePrefix={arXiv},
primaryClass={math.AG},
}

@article{PK92,
 author = {Pukhlikov, A. V. and Khovanskij, A. G.},
 title = {Finitely additive measures of virtual polytopes},
 fjournal = {St. Petersburg Mathematical Journal},
 journal = {St. Petersbg. Math. J.},
 issn = {1061-0022},
 volume = {4},
 number = {2},
 pages = {337--356},
 year = {1993},
 zbMATH = {146391},
 Zbl = {0791.52010}
}

@article{totaro14, 
    title={Chow groups, {C}how cohomology, and linear varieties},
    volume={2}, 
    DOI={10.1017/fms.2014.15}, 
    journal={Forum of Mathematics, Sigma}, 
    author={Totaro, Burt}, 
    year={2014}, 
    pages={e17}
}

@article{VanDerWaerden,
 author = {Van der Waerden, B. L.},
 title = {On {Hilbert}'s function, series of composition of ideals and a generalisation of the theorem of {B{\'e}zout}.},
 fjournal = {Proceedings. Akadamie van Wetenschappen Amsterdam},
 journal = {Proc. Akad. Wet. Amsterdam},
 issn = {0370-0348},
 volume = {31},
 pages = {749--770},
 year = {1928},
 zbMATH = {2576183},
 JFM = {54.0190.02}
}

@article{KleimanSnapper,
 author = {Kleiman, S. L.},
 title = {Toward a numerical theory of ampleness},
 fjournal = {Annals of Mathematics. Second Series},
 journal = {Ann. Math. (2)},
 %issn = {0003-486X},
 volume = {84},
 pages = {293--344},
 year = {1966},
 doi = {10.2307/1970447},
 zbMATH = {3235756},
 Zbl = {0146.17001}
}

@article{CombinatorialHodge,
 author = {Adiprasito, Karim and Huh, June and Katz, Eric},
 title = {Hodge theory for combinatorial geometries},
 fjournal = {Annals of Mathematics. Second Series},
 journal = {Ann. Math. (2)},
 %issn = {0003-486X},
 volume = {188},
 number = {2},
 pages = {381--452},
 year = {2018},
 doi = {10.4007/annals.2018.188.2.1},
 keywords = {14T15,05A99,05E16,14F99},
 zbMATH = {6921184},
 Zbl = {1442.14194}
}

@article{NormalComplexes,
 author = {Nathanson, Anastasia and Ross, Dustin},
 title = {Tropical fans and normal complexes: putting the ``volume'' back in ``volume polynomials''},
 fjournal = {Advances in Mathematics},
 journal = {Adv. Math.},
 %issn = {0001-8708},
 volume = {420},
 note = {108981},
 year = {2023},
 doi = {10.1016/j.aim.2023.108981},
 keywords = {14T15,52B40,05B35},
 zbMATH = {7679037},
 Zbl = {1536.14059}
}

@article{Snapper,
 %ISSN = {00959057, 19435274},
 URL = {http://www.jstor.org/stable/24900666},
 author = {Ernst Snapper},
 journal = {Journal of Mathematics and Mechanics},
 number = {6},
 pages = {967--992},
 publisher = {Indiana University Mathematics Department},
 title = {Multiples of Divisors},
 volume = {8},
 year = {1959}
}

@misc{InverseSystem,
 author = {Macaulay, F. S.},
 title = {The algebraic theory of modular systems.},
 year = {1916},
 howpublished = {Cambridge: {University} press, {XIV} u. 112 {S}. {{\(8^{\circ}\)}}.},
 zbMATH = {2610079},
 JFM = {46.0167.01},
 doi = {10.3792/chmm/1263317740}
}

@book{hirzebruch1956neue,
  title={Neue topologische {M}ethoden in der algebraischen {G}eometrie},
  author={Hirzebruch, Friedrich},
  year={1956},
  publisher={Springer Berlin, Heidelberg},
  doi={10.1007/978-3-662-41083-7},
}

@incollection{de1985complete,
  title={Complete symmetric varieties II Intersection theory},
  author={De Concini, Corrado and Procesi, Claudio},
  booktitle={Algebraic groups and related topics},
  volume={6},
  pages={481--514},
  year={1985},
  publisher={Mathematical Society of Japan},
  doi={10.2969/aspm/00610481}
}

@article{kazarnovskii2021newton,
  title={Newton polytopes and tropical geometry},
  author={Kazarnovskii, B. Ya. and Khovanskii, A. G. and Esterov, A. I.},
  journal={Russian Mathematical Surveys},
  volume={76},
  number={1},
  pages={91--175},
  year={2021},
  publisher={London Mathematical Society, Turpion Ltd and the Russian Academy of Sciences},
  doi={10.1070/RM9937}
}

@article{strickland2011equivariant,
  title={The equivariant ring of conditions of conics},
  author={Strickland, E},
  journal={Journal of Algebra},
  volume={329},
  number={1},
  pages={274--285},
  year={2011},
  publisher={Elsevier},
  doi={10.1016/j.jalgebra.2009.12.002}
}

@article{strickland2008ring,
  title={The ring of conditions of a semisimple group},
  author={Strickland, Elisabetta},
  journal={Journal of Algebra},
  volume={320},
  number={7},
  pages={3069--3078},
  year={2008},
  publisher={Elsevier},
  doi = {10.1016/j.jalgebra.2008.07.003},
}

@article{esterov2017characteristic,
  title={Characteristic classes of affine varieties and Pl{\"u}cker formulas for affine morphisms},
  author={Esterov, Alexander},
  journal={Journal of the European Mathematical Society},
  volume={20},
  number={1},
  pages={15--59},
  year={2017},
  doi={10.4171/jems/758}
}

@misc{gibson2018rings,
  title={Rings of Conditions of Rank 1 Spherical Varieties},
  author={Gibson, Julia},
  year={2018},
  note={Master's Thesis, McMaster University},
  url={http://hdl.handle.net/11375/22737}
}

@article{FMSSspherical,
 author = {Fulton, William and MacPherson, Robert and Sottile, F. and Sturmfels, Bernd},
 title = {Intersection theory on spherical varieties},
 fjournal = {Journal of Algebraic Geometry},
 journal = {J. Algebr. Geom.},
 issn = {1056-3911},
 volume = {4},
 number = {1},
 pages = {181--193},
 year = {1995},
 language = {English},
 zbMATH = {729200},
 Zbl = {0819.14019}
}

@article{Brion1997,
author = {Brion, Michel and Vergne, Michèle},
journal = {Journal für die reine und angewandte Mathematik},
pages = {67-92},
title = {An equivariant Riemann-Roch theorem for complete, simplicial toric varieties.},
url = {http://eudml.org/doc/153880},
volume = {482},
year = {1997},
}

@article{Merkurjev1997,
author = {Merkurjev, A. S.},
journal = {Algebra i Analiz},
pages = {175-214},
volume = {9},
issue = {4},
title = {Comparison of the equivariant and the standard $K$-theory of algebraic varieties},
year = {1997},
}

\Addresses

\end{document}